\newlist{enumlist}{enumerate}{1}
\setlist[enumlist]{labelindent=0cm,label=\arabic*.,labelwidth=2.5ex,labelsep=0.5ex,leftmargin=3ex,align=left,topsep=0.5ex,itemsep=1ex,parsep=1ex}
\newlist{itemlist}{itemize}{1}
\setlist[itemlist]{labelindent=0cm,label=$\bullet$,labelwidth=2.5ex,labelsep=0.5ex,leftmargin=3ex,align=left,topsep=0.5ex,itemsep=1ex,parsep=1ex}
\numberwithin{equation}{section}
\theoremstyle{definition}\newtheorem{definition}{Definition}[section]
\newtheorem{remark}[definition]{Remark}
\newtheorem{proposition}[definition]{Proposition}
\newtheorem{lemma}[definition]{Lemma}
\newtheorem{theorem}[definition]{Theorem}
\newtheorem{corollary}[definition]{Corollary}
\newtheorem{letterthm}{Theorem}
\newcommand{\bim}[3]{\mathord{\raisebox{-0.4ex}[0ex][0ex]{\scriptsize $#1$}{#2}\hspace{-0.25ex}\raisebox{-0.4ex}[0ex][0ex]{\scriptsize $#3$}}}
\newcommand{\coms}[4]{\begin{array}{ccc}
#1 & \subset & #2 \\ \cup & & \cup \\ #3 & \subset & #4
\end{array}}
\newcommand{\C}{\mathbb{C}}
\newcommand{\cC}{\mathcal{C}}
\newcommand{\eps}{\varepsilon}
\newcommand{\al}{\alpha}
\newcommand{\be}{\beta}
\newcommand{\albar}{\overline{\alpha}}
\newcommand{\End}{\operatorname{End}}
\newcommand{\Irr}{\operatorname{Irr}}
\newcommand{\Mor}{\operatorname{Mor}}
\newcommand{\Rep}{\operatorname{Rep}}
\newcommand{\ot}{\otimes}
\newcommand{\recht}{\rightarrow}
\newcommand{\cb}{_\text{\rm cb}}
\newcommand{\Z}{\mathbb{Z}}
\newcommand{\vphi}{\varphi}
\newcommand{\boxt}{\boxtimes}
\newcommand{\op}{^\text{\rm op}}
\newcommand{\bG}{\mathbb{G}}
\newcommand{\cO}{\mathcal{O}}
\newcommand{\id}{\mathord{\text{\rm id}}}
\newcommand{\om}{\omega}
\newcommand{\SU}{\operatorname{SU}}
\newcommand{\bebar}{\overline{\beta}}
\newcommand{\pibar}{\overline{\pi}}
\newcommand{\N}{\mathbb{N}}
\newcommand{\cL}{\mathcal{L}}
\newcommand{\ovt}{\mathbin{\overline{\otimes}}}
\newcommand{\mult}{\operatorname{mult}}
\newcommand{\Tr}{\operatorname{Tr}}
\newcommand{\real}{\operatorname{Re}}
\newcommand{\etabar}{\overline{\eta}}
\newcommand{\Om}{\Omega}
\newcommand{\bGhat}{\widehat{\mathbb{G}}}
\newcommand{\cD}{\mathcal{D}}
\newcommand{\si}{\sigma}
\newcommand{\R}{\mathbb{R}}
\newcommand{\PSU}{\operatorname{PSU}}
\newcommand{\counit}{\epsilon}
\newcommand{\Ups}{\Upsilon}
\newcommand{\F}{\mathbb{F}}
\newcommand{\cH}{\mathcal{H}}
\newcommand{\otalg}{\otimes_{\text{\rm alg}}}
\newcommand{\mubar}{\overline{\mu}}
\newcommand{\mupbar}{\overline{\mu'}}
\newcommand{\bbar}{\overline{b}}
\newcommand{\muppbar}{\overline{\mu''}}
\newcommand{\cZ}{\mathcal{Z}}
\newcommand{\cG}{\mathcal{G}}
\newcommand{\SE}{\text{\rm SE}}
\newcommand{\cK}{\mathcal{K}}
\newcommand{\cGtil}{\widetilde{\mathcal{G}}}
\newcommand{\cJ}{\mathcal{J}}
\newcommand{\Omtil}{\widetilde{\Omega}}
\newcommand{\counittil}{\widetilde{\counit}}
\newcommand{\cF}{\mathcal{F}}
\newcommand{\T}{\mathbb{T}}
\newcommand{\actson}{\curvearrowright}
\newcommand{\pT}{{\fontshape{n}\selectfont (T)}}
\newcommand{\bGh}{\widehat{\bG}}
\begin{document}

\begin{center}
{\boldmath\LARGE\bf Representation theory for subfactors,\vspace{0.5ex}\\ $\lambda$-lattices and C$^*$-tensor categories}

\bigskip

{\sc by Sorin Popa\footnote{Mathematics Department, UCLA, CA 90095-1555 (United States), popa@math.ucla.edu\\
Supported in part by NSF Grant DMS-1400208} and Stefaan Vaes\footnote{KU~Leuven, Department of Mathematics, Leuven (Belgium), stefaan.vaes@wis.kuleuven.be \\
    Supported by ERC Consolidator Grant 614195 from the European Research Council under the European Union's Seventh Framework Programme.}}

\bigskip

{\it Dedicated to Vaughan Jones}

%
\end{center}

\begin{abstract}\noindent
We develop a representation theory for $\lambda$-lattices, arising as standard invariants of subfactors, and for rigid C$^*$-tensor categories, including a definition of their universal C$^*$-algebra. We use this to give a systematic account of approximation and rigidity properties for subfactors and tensor categories, like (weak) amenability, the Haagerup property and property~(T). We determine all unitary representations of the Temperley-Lieb-Jones $\lambda$-lattices and prove that they have the Haagerup property and the complete metric approximation property. We also present the first subfactors with property~(T) standard invariant and that are not constructed from property~(T) groups.
\end{abstract}

\section{Introduction}

Vaughan Jones defined in \cite{Jo82} the {\it index} $[M:N]$  of a subfactor $N$ of a II$_1$ factor $M$ as the Murray-von Neumann
dimension of the Hilbert $N$-module $L^2(M)$, obtained by completing $M$
in the norm given by the (unique) trace state $\tau$ on $M$.
He showed in this seminal paper that a subfactor of finite index $N\subset M$ gives rise, in a natural way,
to a whole tower of II$_1$ factors $N \subset M \subset M_1 \subset M_2 \subset \cdots$, all having the same index $[M_{i+1}:M_i]=[M:N]$,
with each $M_{i+1}$ generated by $M_i$ and a projection $e_{i}$ commuting with all elements in $M_{i-1}$ and satisfying $e_iM_ie_i=M_{i-1}e_i$, and
trace determined by $\tau(xe_{i}y)=\lambda \tau(xy)$, $\forall x,y\in M_i$, where $\lambda=[M:N]^{-1}$. In particular, the projections
$\{e_i\}_{i\geq 0}$ satisfy the relations $(a)$ $[e_i, e_j]=0$, if $j\neq i\pm 1$;  $(b)$ $e_ie_{i\pm 1}e_i=\lambda e_i$;
$(c)$ $\tau(we_{i})=\lambda \tau(w)$, $\forall w\in \text{\rm Alg}(1, e_0, ..., e_{i-1})$. This  fact  imposes some striking restrictions  on the index,
$[M:N]=\lambda^{-1}\in \{4\cos^2 \pi/n \mid n\geq 3\}\cup [4, \infty)$, with each one of these values actually occurring (see \cite{Jo82} for all this).

The discovery of this new type of symmetries  had a profound impact on several fields of mathematics.
In particular, it led to {\it subfactor theory}, which aims at understanding the group-like objects generated
by such symmetries, and the way they can ``act'' on factors. There is indeed a group-like structure underlying the Jones tower of algebras: the Hilbert $M$-bimodule (or
{\it correspondence}, in Connes' terminology, see \cite{Co80,Co90}) $L^2(M_1)$ can be viewed as a generalized symmetry of the algebra $M$, with the C$^*$-tensor category it generates
under composition (=~tensor product, or {\it fusion}) giving rise to the $M$-bimodules in the Jones tower, $_M L^2(M_n)_M=(_M L^2(M_1)_M)^{\otimes n}$.
The mathematical object that captures the group-like aspects unraveled
by the dynamics of these symmetries is called the \emph{standard invariant} $\cG_{N,M}$ of $N \subset M$ and it is
defined as the system (or lattice) of inclusions of finite dimensional C$^*$-algebras obtained
by taking the relative commutants in the Jones tower of factors
$$\begin{array}{ccccccccc}
M' \cap M &\subset & M' \cap M_1 &\subset^{e_1} & M' \cap M_2 &\subset^{e_2} & M' \cap M_3 &\subset^{e_3} & \cdots \\
& & \cup & & \cup & & \cup & \\
& & M_1' \cap M_1 & \subset & M_1' \cap M_2 & \subset^{e_2} & M_1' \cap M_3 & \subset^{e_3} & \cdots
\end{array}$$
endowed with the trace $\tau$
inherited from $\cup_n M_n$ and with a representation of the $\lambda$-sequence of projections $e_i, i\geq 1$. The relative commutants appearing in the first row of these inclusions
recover the algebras of endomorphisms of the Hilbert bimodules $_ML^2(M_n)_M$. Their consecutive embeddings are described by a bipartite
graph, denoted $\Gamma_{N,M}$ and called the {\it principal graph} of $N\subset M$. It is a Cayley-type graph that has vertices indexed
by irreducible sub-bimodules $\mathcal H_k \subset$   $_ML^2(M_n)_M$ and describes their fusion. The trace $\tau$ is determined by the dimensions
of these bimodules, $v_k=(\text{\rm dim} (_M{\mathcal H_k}_M))^{1/2}$, which
satisfy the Perron-Frobenius type condition $\Gamma_{N,M}^t\Gamma_{N,M}\vec{v}=\lambda^{-1}\vec{v}$, where $\vec{v}=(v_k)_k$.
In particular, $\|\Gamma_{N,M}\|^2 \leq [M:N]$, with the equality characterizing the {\it amenability} of $\cG_{N,M}$ (a Kesten-type definition). This is
the case if for instance $\Gamma_{N,M}$ is finite, i.e.
when $N\subset M$ has {\it finite depth}.

The objects $\cG_{N,M}$ carry a very rich and subtle algebraic-combinatorial structure, which already for index $[M:N]<4$ led to most surprising
results: of all the Coxeter bipartite graphs $A_n, D_n, E_6, E_7, E_8$ of square norm $<4$, only $A_n, D_{2n}, E_6, E_8$ can occur,
with one $\cG_{N,M}$ for each $A_n, D_{2n}$, two for $E_6$ and two for $E_8$ (\cite{Jo82,Oc88,I91,K91}). This was shown by exploiting
fusion rules obstructions and an
axiomatization of the finite depth standard invariants in \cite{Oc88}.

An important step in understanding $\cG_{N,M}$ was
the axiomatization of arbitrary such objects as $\lambda$-{\it lattices}, in \cite{Po94b}. A $\lambda$-lattice is an abstract system of inclusions
of finite dimensional $C^*$-algebras, $\cG=(A_{ij})_{j\geq i, i=0,1}$, with a trace $\tau$ and a representation of
the Jones projections $\{e_i\}_{i\geq 1}$, satisfying  certain commutation and $\tau$-independence
properties. Due to the relations
in the Jones tower, standard invariants are easily seen to satisfy these axioms and the {\it reconstruction theorem} in \cite{Po94b} associates
in a canonical way to any given such $\lambda$-lattice $\cG$ a (non-hyperfinite) subfactor $N\subset M$ such that $\cG_{N,M}=\cG$, i.e. $A_{ij}=M_i'\cap M_j$, $\forall j\geq i\geq 0$,
where $\{M_i\}_i$ is the Jones tower for $N\subset M$.

If $\lambda^{-1}\in \{4\cos^2 \pi/n \mid n\geq 3\}\cup [4, \infty)$ and
$\{e_i\}_{i\geq 1}$ is a sequence of projections with a trace satisfying the above properties $(a), (b), (c)$ with respect to $\lambda$
(as for instance coming from the Jones tower of factors associated to $N\subset M$ with $[M:N]=\lambda^{-1}$),
then the system of algebras $A_{0,j}=\text{\rm Alg}(1, e_1, ..., e_j)$,
$A_{1j}=\text{\rm Alg}(1, e_2, ..., e_j)$  does check the $\lambda$-lattice axioms, and so there exists a subfactor of index $\lambda^{-1}$ with the relative commutants in its
Jones tower generated by the projections $e_i$ alone.
This was in fact already shown in \cite{Jo82} in the case $\lambda^{-1}< 4$ (corresponding to principal graph equal
to $A_n$, for some $n<\infty$),
and in \cite{Po90} in the case $\lambda^{-1}\geq 4$ (corresponding to principal graph equal to $A_\infty$). We will denote this
$\lambda$-lattice by $\cG^\lambda$ and call it the \emph{Temperley-Lieb-Jones (TLJ) $\lambda$-lattice}.
Note that $\cG^\lambda$
is contained as a sublattice in any other $\lambda$-lattice $\cG$, thus being in some sense ``minimal'' and conferring it a central role in subfactor theory.

A diagrammatic description of the standard invariants as {\it planar algebras}, was developed by Jones in \cite{Jo99}. Over the last fifteen years,
the  planar algebra formalism grew into a formidable calculus machinery, an extremely efficient framework for concrete computations allowing a plethora of new results. In particular, it made possible the  complete classification of all standard invariants of index $\leq 5$ (see  \cite{JMS13} for a  survey of these results).

Much in parallel to the development of subfactor theory, \emph{quantum groups} were discovered in the context of the quantum inverse scattering method and the quantum Yang-Baxter equation, see \cite{Dr86}. Of particular importance were the $q$-deformations of compact Lie groups \cite{Ji85,Dr86} and how to view them as \emph{topological} quantum groups in the framework of Woronowicz's \emph{compact quantum groups} \cite{Wo86,Wo88,Ro89,Wo95}, a theory initially motivated by Tannaka-Krein and Pontryagin duality for non-abelian groups.

Very remarkably and quite significant for us here, quantum groups, both compact and at roots of unity, but also finitely generated discrete groups, compact Lie groups, rigid C$^*$-tensor categories, etc., can be encoded in the standard invariant of an appropriate subfactor (see \cite{We87,Po92,PW91,Ba98,Xu97,Jo03}). Moreover, a subfactor $N \subset M$ can be viewed as encoding a crossed product type construction of the factor $N$ by the group-like object $\cG_{N,M}$.
The analogy with discrete groups is far reaching. In \cite{Po92,Po99}, it was proved that $\cG_{N,M}$ is a complete invariant for hyperfinite subfactors $N\subset M$
with $\cG_{N,M}$ amenable (in particular for hyperfinite subfactors with finite depth). In other words, an amenable $\cG_{N,M}$
arises from precisely one hyperfinite subfactor, and this should be compared with the fact that amenable groups admit a unique outer action on the hyperfinite II$_1$ factor,
up to cocycle conjugacy \cite{Oc85}.

The main goal of this article is to define the \emph{unitary representation theory} for subfactor related group-like objects (notably $\lambda$-lattices),
to use it to provide a natural framework for rigidity and approximation properties for these objects, such as property~(T) and the Haagerup property,
and to calculate it in the most basic examples. The most natural setting to define this representation theory is for general \emph{rigid C$^*$-tensor categories\footnote{A rigid C$^*$-tensor category is a C$^*$-tensor category that is semisimple, with irreducible tensor unit $\eps \in \cC$ and with every object $\al \in \cC$ having an adjoint $\albar \in \cC$ that is both a left and a right dual of $\al$. For basic definitions and results on rigid C$^*$-tensor categories, we refer to \cite[Sections 2.1 and 2.2]{NT13}.}}, like the category of $M$-bimodules generated by the subfactor $N \subset M$, or the category of finite dimensional representations of a compact group $G$ or a compact quantum group $\bG$.

Thus, if $\cC$ is a rigid C$^*$-tensor category, we consider the \emph{fusion $*$-algebra} $\C[\cC]$ with vector space basis $\Irr(\cC)$ (the irreducible objects in $\cC$) and product given by the fusion rules. We introduce the concept of an \emph{admissible representation} of $\C[\cC]$ and the corresponding notion of positive type function $\vphi : \Irr(\cC) \recht \C$. This allows us to define the \emph{universal C$^*$-algebra} $C_u(\cC)$, as well as property~(T), the Haagerup property and the complete metric approximation property (CMAP) for rigid C$^*$-tensor categories. It is important to note that admissibility of a representation and the positive type of $\vphi$ depend in a subtle way on the tensor category $\cC$ and not only on the fusion rules. Also note that in several examples, $\C[\cC]$ is a $*$-algebra of polynomials and hence, does not have an abstract enveloping C$^*$-algebra.

C$^*$-tensor categories arise in numerous mathematical contexts and play in this way a strong unifying role. Thus,
when $\Gamma$ is a discrete group and $\cC$ is the category of finite dimensional $\Gamma$-graded Hilbert spaces, our representation
theory for $\cC$ coincides with the representation theory of $\Gamma$, while
$C_u(\cC)$ is equal to $C^*(\Gamma)$, the full group C$^*$-algebra of $\Gamma$. When $\cC = \Rep(\bG)$ is the representation category of a compact quantum group $\bG$, we prove that the unitary representations of $\cC$ coincide with the ``central'' representations of the discrete dual $\bGh$ that were considered in a more ad hoc manner in \cite{DFY13}, with $C_u(\cC)$ being isomorphic to a corner of the full C$^*$-algebra of the Drinfel'd double of $\bG$. The Haagerup property and property~(T) for $\cC$ in our sense are then equivalent with their ``central'' counterparts for $\bGh$ defined in \cite{DFY13,Ar14} and shown there to be preserved when passing to a quantum group with the same representation category $\Rep(\bG)$.

When $\cC$ is the bimodule category of a subfactor $N \subset M$, we prove that the representations of $\cC$ correspond to
representations of the {\it symmetric enveloping} ({\it SE}) {\it inclusion} $M \ovt M\op\subset M \boxtimes_{e_N} M\op$
(the quantum double of $N\subset M$) considered in \cite{Po94a,Po99}, i.e. to Hilbert bimodules of
$M \boxtimes_{e_N} M\op$ that are generated by $M \ovt M\op$-central vectors.
This allows us to show that the above mentioned approximation and rigidity properties for the bimodule category $\cC$ of $N\subset M$
are equivalent with the corresponding properties of the
$\lambda$-lattice $\cG=\cG_{N,M}$.
Such properties of $\cG$ were defined before in terms of the \SE-inclusion $M \ovt M\op\subset M \boxtimes_{e_N} M\op$
(see \cite{Po99,Po01,Br14}), following a strategy proposed in \cite{Po94a},  thus having to show each time
that the property is independent of the choice of the subfactor $N\subset M$ with $\cG=\cG_{N,M}$.

An important related object that we consider
is the {\it extended $\lambda$-lattice} (or quantum double) of $\cG$, denoted $\tilde{\cG}$, with an appropriate notion of {\it multipliers} on $\tilde{\cG}$
which we show to correspond to completely positive (more generally completely bounded)
$M \ovt M\op$-bimodular maps on $M \boxtimes_{e_N} M\op$, whenever $N\subset M$ is a subfactor with $\cG_{N,M}=\cG$.

We expect that our representation theory for $\lambda$-lattices will play an important role in better understanding the analytic properties of subfactors and standard invariants, especially of the non-amenable ones. Already in this paper, our unified approach through C$^*$-tensor categories allows
us to bridge from subfactors to quantum groups and take advantage of recent progress made there,
obtaining in this way the following application.

\begin{letterthm}\label{thm.main}
\begin{enumlist}
\item Let $\lambda^{-1} \geq 4$. The representation theory of the TLJ $\lambda$-lattice $\cG^\lambda$ is naturally equivalent with the representation theory of the abelian C$^*$-algebra $C([0,\lambda^{-1}])$, with the regular representation corresponding to left multiplication on $L^2([0,4])$ and with the trivial representation given by evaluation at $\lambda^{-1}$.

\item The $\lambda$-lattices $\cG^\lambda$ have the Haagerup approximation property and the complete metric approximation property.

\item Let $N \subset M$ be a finite index subfactor whose bimodule category is equivalent with $\Rep(\SU_q(n))$ or $\Rep(\PSU_q(n))$ where $n$ is an odd integer greater than or equal to $3$. Then, the standard invariant of $N \subset M$ has property~\pT.
\end{enumlist}
\end{letterthm}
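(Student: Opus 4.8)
The plan is to prove part~(1) — the analytic core — and then derive parts~(2) and~(3) from it together with the general dictionary set up above: representations of the $\lambda$-lattice $\cG_{N,M}$ correspond to representations of the bimodule category $\cC$; when $\cC=\Rep(\bG)$ these match the central representations of $\bGh$ and the representations of a corner of $C^*$ of the Drinfel'd double $D(\bG)$; and the Haagerup property, the CMAP and property~\pT\ are preserved under all these identifications.

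\textbf{Part~(1).} The bimodule category of a subfactor with TLJ standard invariant $\cG^\lambda$, $\lambda^{-1}\ge4$, is the even part of the Temperley--Lieb category with loop parameter $\delta=\lambda^{-1/2}$, equivalently $\Rep(\PSU_q(2))=\Rep(\mathrm{SO}_q(3))$ for the real $q>0$ with $q+q^{-1}=\delta$. Its irreducible objects are $V_0=\eps,V_1,V_2,\dots$ (the integer spins), each self-conjugate, with $\dim V_n=[2n+1]_q$ and fusion $V_1\ot V_n\cong V_{n-1}\oplus V_n\oplus V_{n+1}$; so $\C[\cC^\lambda]$ is the commutative polynomial $*$-algebra in the self-adjoint generator $V_1$. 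I would single out the element $p=V_1+\eps=\bar\mu\ot\mu\in\C[\cC^\lambda]$, where $\mu$ is the $N$--$M$-bimodule ${}_NL^2(M)_M$; being of the form $\bar\mu\ot\mu$ it is positive with $\|p\|=(\dim\mu)^2=\lambda^{-1}$. The first step is to unwind the admissibility condition: one direction is immediate, since for an admissible representation $\pi$ positivity of $p$ forces $\pi(p)\ge0$, hence $\mathrm{spec}\,\pi(p)\subseteq[0,\lambda^{-1}]$; the substantive direction is that \emph{every} $*$-representation of $\C[\cC^\lambda]$ with $\mathrm{spec}\,\pi(p)\subseteq[0,\lambda^{-1}]$ is admissible. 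As $\C[\cC^\lambda]$ is commutative this reduces to the characters $\chi_t$, $t\in[0,\lambda^{-1}]$ (with $\chi_t(p)=t$, determining $\chi_t(V_n)$ as the value at $t$ of a fixed polynomial), and one must verify the categorical positive-type condition for each $\chi_t|_{\Irr(\cC^\lambda)}$; here the Temperley--Lieb/Jones--Wenzl combinatorics makes the relevant matrices explicit and identifies them as moment matrices of a positive measure on $[0,\lambda^{-1}]$ (a Hausdorff moment problem). This gives $C_u(\cC^\lambda)\cong C([0,\lambda^{-1}])$ with coordinate $p$. Under this isomorphism the trivial representation is the dimension character $V_n\mapsto\dim V_n$, which sends $p$ to $\dim V_1+1=\lambda^{-1}$, i.e.\ evaluation at the right endpoint; the regular representation is the GNS representation of the categorical trace, for which $\{V_n\}$ is an orthonormal basis and $V_1$ acts by the free Jacobi matrix $I+J$ ($J$ having all off-diagonal entries $1$, spectrum $[-2,2]$), so $p=2I+J$ has spectrum $[0,4]$ and spectral measure at $V_0$ the semicircle law on $[0,4]$, equivalent to Lebesgue measure; thus the regular representation is left multiplication on $L^2([0,4])$, $C_r(\cC^\lambda)\cong C([0,4])$, and the gap $(4,\lambda^{-1}]$ reflects non-amenability when $\lambda^{-1}>4$. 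The main obstacle is the substantive direction above — showing that categorical admissibility is \emph{exactly} the spectral bound $\mathrm{spec}\,\pi(p)\subseteq[0,\lambda^{-1}]$ and nothing stronger; this is equivalently the $q$-deformed spherical representation theory of $D(\SU_q(2))$, with deformed principal series $[0,4]$, complementary series $(4,\lambda^{-1})$, and trivial representation at $\lambda^{-1}$.

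\textbf{Part~(2).} Granted part~(1), take the approximating net to consist of the $*$-characters $\vphi_t$ of $C_u(\cC^\lambda)=C([0,\lambda^{-1}])$ (evaluation at $t$), for $t\uparrow\lambda^{-1}$; thus $\vphi_t(V_n)$ is the value at $t$ of the fixed polynomial expressing $V_n$ in terms of $p$. Each $\vphi_t$ is of positive type and normalised, $\vphi_t(\eps)=1$. For every $t<\lambda^{-1}$ the ratio $\vphi_t(V_n)/\dim V_n$ of two such polynomial values tends to $0$ as $n\to\infty$ — the numerator staying bounded (up to a power of $n$) while $\dim V_n\to\infty$, with exponential decay of the ratio once $t>4$ — so $\vphi_t$ lies in the $c_0$-class defining the Haagerup property; and $\vphi_t(V_n)\to\dim V_n$ pointwise as $t\uparrow\lambda^{-1}$ by continuity of each fixed polynomial, i.e.\ $\vphi_t$ converges pointwise to the positive-type function of the trivial representation. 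This gives the Haagerup property. For the CMAP, $\|\vphi_t\|\cb=\vphi_t(\eps)=1$ since $\vphi_t$ is of positive type, so it remains to replace $\vphi_t$ by a finitely supported completely bounded multiplier: truncating $\vphi_t$ to $\{V_n:n\le N\}$ with a Fej\'er-type smoothing and letting $N\to\infty$ along with $t\uparrow\lambda^{-1}$ produces finitely supported multipliers $\psi_k\to1$ pointwise with $\|\psi_k\|\cb\to1$, the decay of $\vphi_t(V_n)/\dim V_n$ controlling the cb-norm of the truncation; this cb-norm estimate is the only delicate point of part~(2).

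\textbf{Part~(3).} Write $\cC$ for the bimodule category of $N\subset M$, so $\cC\simeq\Rep(\bG)$ with $\bG=\SU_q(n)$ or $\PSU_q(n)$ and $n$ odd, $n\ge3$; the index is automatically finite, the generating bimodule having finite quantum dimension. By the results recalled above, property~\pT\ of $\cG_{N,M}$ is equivalent to property~\pT\ of $\cC$, which — through the identification of representations of $\Rep(\bG)$ with representations of a corner of $C^*$ of the Drinfel'd double $D(\bG)$ — is equivalent to property~\pT\ of $D(\bG)$. For $\bG=\SU_q(n)$ with $n$ odd this is exactly Arano's theorem. For $\bG=\PSU_q(n)$ one uses that $\Rep(\SU_q(n))$ is a $\Z/n\Z$-graded rigid C$^*$-tensor category with trivially graded component $\Rep(\PSU_q(n))$, together with the fact that property~\pT\ passes from a rigid C$^*$-tensor category to the trivially graded component of a grading by a finite group: an admissible representation of $\C[\Rep(\PSU_q(n))]$ with almost invariant vectors induces (along a module that is finitely generated since the grading group is finite) to an admissible representation of $\C[\Rep(\SU_q(n))]$ still having almost invariant vectors, so property~\pT\ of the latter produces an invariant vector, which descends. (Alternatively, $D(\PSU_q(n))$ and $D(\SU_q(n))$ are commensurable — a finite-index object on one leg, a finite quotient on the other — and property~\pT\ is preserved.) Hence $\cG_{N,M}$ has property~\pT\ in both cases. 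The genuine difficulty here lies beneath our argument, in Arano's proof that $D(\SU_q(2k+1))$ has property~\pT.
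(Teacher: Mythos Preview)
Your overall architecture matches the paper's exactly: identify the TLJ bimodule category with $\Rep(\PSU_q(2))$, translate representations of $\cC$ into central representations of $\widehat{\SU_q(2)}$ via the Drinfel'd double, and for part~(3) invoke Arano's theorem together with the passage from $\Rep(\SU_q(n))$ to its $\Z/n\Z$-neutral component $\Rep(\PSU_q(n))$. Part~(3) of your proposal and the paper's Theorem~\ref{thm.main-SUq3} are essentially identical.

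The substantive difference is how the ``hard analytic core'' is handled. In part~(1), you propose to verify directly, via Temperley--Lieb combinatorics and a Hausdorff moment problem, that every character $\chi_t$ with $t\in[0,\lambda^{-1}]$ satisfies the categorical positive-type condition. The paper never attempts this computation; instead it uses the general machinery of Section~\ref{sec.mult-rep-cat} (Propositions~\ref{prop.quantum} and~\ref{prop.univ-Cstar-rep-category}) to identify $C_u(\Rep(\SU_q(2)))$ with the corner $hD(\SU_q(2))h$, embeds $C_u(\cC)$ into it via Proposition~\ref{prop.Cstar-subcat}, and then cites \cite[Remark~31]{DFY13}, where the central states on $C^*(\SU_q(2))$ are already completely classified. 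So the ``moment problem'' you allude to is precisely what has been carried out in \cite{DFY13}, and the paper imports it rather than redoing it. Your proposal does not actually execute this step, and it is not a triviality.

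Similarly, for CMAP in part~(2) you propose a Fej\'er-type truncation of the $\vphi_t$ and flag the cb-norm control as ``the only delicate point''. The paper avoids this entirely: it invokes \cite[Theorem~16]{DFY13}, which already produces finitely supported central multipliers on $\SU_q(2)$ with $\limsup\|\Psi_{\vphi_n}\|\cb=1$, and then restricts to the subcategory using Propositions~\ref{prop.quantum} and~\ref{prop.subcat}. Your truncation argument, as stated, is incomplete---controlling $\|\cdot\|\cb$ of a cutoff in this setting is genuinely nontrivial and is again exactly the work done in \cite{DFY13}.

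In summary: your plan is correct and the strategy coincides with the paper's, but at the two points you yourself mark as obstacles, the paper does not overcome them by direct computation---it outsources them to \cite{DFY13} through the quantum-group bridge. If you intend a self-contained argument, those two gaps are where the real work remains.
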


Note that Theorem \ref{thm.main} provides the first subfactors with property~(T) standard invariant and that are not constructed from property~(T) groups, as the subfactors in \cite{BP98}. Theorem \ref{thm.main} also gives the first subfactors whose standard invariant has the Haagerup property without being amenable or constructed from groups with the Haagerup property.

In Section \ref{sec.permanence}, we prove a number of results on the permanence under various constructions of the Haagerup property, weak amenability, property~(T),~.... As an application, we show that the Fuss-Catalan $\lambda$-lattices of \cite{BJ95} have the Haagerup property and CMAP. In Section \ref{sec.approx-rigid-prop}, we give several equivalent definitions of property~(T), one being the existence of a projection in $C_u(\cC)$ onto the trivial representation, and use this in Section \ref{sec.permanence} to deduce that property~(T) passes to quotients. This fact
was proved in \cite{Po99} for $\lambda$-lattices by using subtle analytic arguments, while in our present framework the proof becomes completely algebraic and straightforward.

Theorem \ref{thm.main} is proved as Theorems \ref{thm.main-temperley-lieb} and \ref{thm.main-SUq3} below. To do this, we combine our representation framework with the beautiful recent work in \cite{DFY13,Ar14} on the compact quantum groups $\SU_q(n)$ of \cite{Wo86,Wo88}. More precisely, we use the main result of \cite{DFY13} saying that the dual of $\SU_q(2)$ has the central Haagerup property and the central CMAP, with all central states on the C$^*$-algebra of $\SU_q(2)$ being determined, and we use \cite{Ar14}, where the central property~(T) is established for the dual of $\SU_q(n)$ ($0 < q < 1$ and $n \geq 3$ an odd integer).

\section{Representation theory for subfactors}\label{sec.rep-theory}

Let $N \subset M$ be an extremal\footnote{Recall that a finite index subfactor $N \subset M$ is called extremal if the Jones projection $e_N \in M_1 = \langle M,e_N\rangle$ satisfies $E_{M' \cap M_1}(e_N) = [M:N]^{-1} 1$, see \cite[1.2.5]{Po92}. Also note that all irreducible subfactors are extremal.} subfactor. We denote by $M \boxt_{e_N} M\op$ the \emph{symmetric enveloping algebra} in the sense of \cite{Po94a,Po99}. We call $M \ovt M\op \subset M \boxt_{e_N} M\op$ the \emph{\SE-inclusion} associated with the extremal subfactor $N \subset M$. We often denote $T := M \ovt M\op$ and $S = M \boxt_{e_N} M\op$.

\begin{definition}\label{def.rep-subf}
Let $N \subset M$ be an extremal subfactor with associated \SE-inclusion $T \subset S$. An \emph{\SE-correspondence} of $N \subset M$ is a Hilbert $S$-bimodule $\bim{S}{\cH}{S}$ that is generated by $T$-central vectors.
\end{definition}

More precisely, to every $S$-bimodule $\bim{S}{\cH}{S}$, we associate the space of $T$-central vectors $\cH_T := \{\xi \in \cH \mid \forall x \in T : x \xi = \xi x\}$. We call $\bim{S}{\cH}{S}$ an \SE-correspondence of $N \subset M$ if the linear span of $S \cH_T S$ is dense in $\cH$.

The \emph{trivial \SE-correspondence} of $N \subset M$ is given by the $S$-bimodule $\bim{S}{L^2(S)}{S}$, while the \emph{coarse \SE-correspondence} of $N \subset M$ is given by the $S$-bimodule $\bim{S}{L^2(S) \ot_T L^2(S)}{S}$.

Let $\xi_0 \in \cH_T$ be a unit vector. Since $\xi_0$ is $T$-central, the state $x \mapsto \langle x \xi_0,\xi_0 \rangle$ on $S$ is $T$-central. By \cite[Proposition 2.6]{Po99}, the inclusion $T \subset S$ is irreducible and we conclude that $\langle x \xi_0,\xi_0 \rangle = \tau(x) = \langle \xi_0 x , \xi_0 \rangle$ for all $x \in S$. So, there is a unique normal, completely positive, unital, trace preserving, $T$-bimodular map $\psi : S \recht S$ satisfying
\begin{equation}\label{eq.link}
\langle x \xi_0 y ,\xi_0 \rangle = \tau(x \psi(y)) \quad\text{for all}\;\; x,y \in S \; .
\end{equation}
Conversely, every normal, $T$-bimodular, completely positive map $\psi : S \recht S$ gives rise to an \SE-correspondence $\bim{S}{\cH}{S}$ of the subfactor $N \subset M$ and a $T$-central vector $\xi_0 \in \cH_T$ such that \eqref{eq.link} holds. So, these $T$-bimodular completely positive maps play the role of functions of positive type on a group. In a similar way, we have the analogue of completely bounded multipliers.

\begin{definition}\label{def.mult-subf}
Let $N \subset M$ be an extremal subfactor with associated \SE-inclusion $T \subset S$. We call \emph{\SE-multiplier} of $N \subset M$ every normal $T$-bimodular linear map $\psi : S \recht S$. When moreover $\psi$ is completely positive, resp.\ completely bounded, we call $\psi$ a \emph{cp \SE-multiplier}, resp.\ \emph{cb \SE-multiplier} of $N \subset M$.
\end{definition}

We call $\bim{S}{\cH}{S}$ a \emph{cyclic \SE-correspondence} if there exists a single vector $\xi_0 \in \cH_T$ such that the linear span of $S \xi_0 S$ is dense in $\cH$. By the discussion above, we see that there is a natural correspondence between cyclic \SE-correspondences of $N \subset M$ and cp \SE-multipliers of $N \subset M$.

It turns out that the \SE-correspondences of a subfactor can be exactly described by the representations of an associated universal C$^*$-algebra that we construct now.

Given a finite index subfactor $N \subset M$ with Jones tower $N \subset M \subset M_1 \subset M_2 \subset \cdots$, we consider the C$^*$-tensor category $\cC$ of all $M$-bimodules that are isomorphic to a finite direct sum of $M$-subbimodules of $\bim{M}{L^2(M_n)}{M}$ for some $n$. We denote by $\Irr(\cC)$ the set of equivalence classes of irreducible $M$-bimodules in $\cC$. We define $\C[\cC]$ to be the fusion $*$-algebra of $\cC$~: the free vector space with basis $\Irr(\cC)$, $*$-operation given by taking the adjoint bimodule and product given by the fusion rules.

Assume now that $N \subset M$ is extremal and consider the associated \SE-inclusion $T \subset S$. By \cite[Theorem 4.5]{Po99}, we can uniquely decompose the $T$-bimodule $L^2(S)$ into a direct sum of irreducible $T$-subbimodules $(\cL_\pi)_{\pi \in \Irr(\cC)}$ labeled by the elements of $\Irr(\cC)$ such that $\cL_\pi \cong \pi \ot \pibar\op$ as $M \ovt M\op$-bimodules. Every $T$-bimodule $\cL_\pi$ appears with multiplicity $1$ in $L^2(S)$. Because $N \subset M$ is extremal, all bimodules $\pi \in \cC$ have equal left and right $M$-dimension that we denote as $d(\pi)$.

\begin{theorem}\label{thm.full-Cstar-subfactor}
Let $N \subset M$ be an extremal subfactor with associated \SE-inclusion $T \subset S$ and category of $M$-bimodules $\cC$.
For every \SE-correspondence $\bim{S}{\cH}{S}$ of $N \subset M$, the linear map uniquely defined by
\begin{equation}\label{eq.def-Theta}
\begin{split}
\Theta : \C[\cC] \recht B(\cH_T) : \; & \Theta(\pi)(\xi) = \frac{1}{d(\pi)} \sum_i m_i \xi m_i^* \quad\text{for all}\;\; \pi \in \Irr(\cC) \\
& \text{where}\;\; m_i \in S \;\text{is an orthonormal basis of $\cL_\pi$ as a right $T$-module,}
\end{split}
\end{equation}
is a unital $*$-representation of $\C[\cC]$ on the Hilbert space $\cH_T$ of $T$-central vectors and satisfies $\|\Theta(\al)\| \leq d(\al)$ for all $\al \in \cC$.

Let $\bim{S}{\cH}{S}$ and $\bim{S}{\cH'}{S}$ be two \SE-correspondences of $N \subset M$ with associated $*$-representations $\Theta$ and $\Theta'$ of $\C[\cC]$. The map from $\Mor(\bim{S}{\cH}{S},\bim{S}{\cH'}{S})$ to $\Mor(\Theta,\Theta')$ given by restricting an $S$-bimodular bounded operator $V : \cH' \recht \cH$ to the subspace $\cH'_T \subset \cH'$ is a bijective, isometric map.

In particular, $\bim{S}{\cH}{S}$ and $\bim{S}{\cH'}{S}$ are unitarily conjugate as $S$-bimodules if and only if the $*$-representations $\Theta$ and $\Theta'$ are unitarily equivalent.
\end{theorem}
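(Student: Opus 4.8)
The plan is to build the representation $\Theta$ explicitly from the structure of the $T$-bimodule decomposition $L^2(S) = \bigoplus_{\pi} \cL_\pi$, check it is a well-defined unital $*$-representation, and then verify the functoriality claim by a direct computation with intertwiners. First I would fix an \SE-correspondence $\bim{S}{\cH}{S}$ and note that, since $\cH$ is generated by $T$-central vectors, the Hilbert space $\cH_T$ is nonzero and the formula \eqref{eq.def-Theta} makes sense: for $\xi \in \cH_T$ and an orthonormal right-$T$-basis $(m_i)$ of $\cL_\pi \subset S$, the vector $\sum_i m_i \xi m_i^*$ again lies in $\cH_T$ because for $x \in T$ one has $x m_i = \sum_j (m_j) \cdot ($matrix coefficients$)$ — more precisely, left multiplication by $T$ permutes the finite-dimensional space $\cL_\pi$ and the sum $\sum_i m_i \xi m_i^*$ is exactly the "averaging" that is manifestly $T$-central, independent of the choice of orthonormal basis (a standard Pimsner–Popa-type argument). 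One should also check that $\Theta(\pi)$ does not depend on how $\cL_\pi$ sits inside $S$ as long as it is the isotypical component $\cong \pi \ot \pibar\op$; this uses that $\cL_\pi$ occurs with multiplicity one.

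Next I would verify the algebraic properties. Unitality: $\cL_\eps = T$ with orthonormal basis $\{1\}$ (here $d(\eps)=1$), so $\Theta(\eps)(\xi) = \xi$. The $*$-property $\Theta(\pibar) = \Theta(\pi)^*$: this should follow from the fact that an orthonormal right-$T$-basis of $\cL_\pi$ gives, after applying the adjoint, an orthonormal left-$T$-basis of $\cL_{\pibar}$, and the two normalizations by $d(\pi)=d(\pibar)$ match up; the computation $\langle \Theta(\pi)\xi,\eta\rangle = \langle \xi, \Theta(\pibar)\eta\rangle$ reduces to reshuffling the $m_i$'s and their adjoints. Multiplicativity $\Theta(\al)\Theta(\be) = \sum_{\gamma} \mult(\gamma; \al\be)\,\Theta(\gamma)$: if $(m_i)$ is an orthonormal right-$T$-basis of $\cL_\al$ and $(n_j)$ one of $\cL_\be$, then $(m_i n_j)$ spans $\cL_\al \cdot \cL_\be \subset S$, which decomposes as the direct sum of the $\cL_\gamma$ for $\gamma$ a subobject of $\al\ot\be$ (with multiplicity), and applying Gram–Schmidt to the $m_i n_j$ inside each isotypical piece, together with the identity $d(\al)d(\be) = \sum_\gamma \mult(\gamma;\al\be)\, d(\gamma)$, yields the fusion-rule identity. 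The norm bound $\|\Theta(\al)\| \le d(\al)$ for general $\al \in \cC$ then follows by writing $\al$ as a direct sum of irreducibles and using $\Theta(\al) = \sum$ of the $\Theta(\pi)$ with multiplicities, each of which is a positive operator of norm $\le d(\pi)$ (positivity because $\Theta(\pi) = \Theta(\pi/2)\Theta(\pi/2)$-type argument, or directly because $\xi \mapsto \sum m_i \xi m_i^*$ is completely positive in the obvious sense).

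For the functoriality statement, given $S$-bimodular bounded $V : \cH' \recht \cH$, I would first check $V$ maps $\cH'_T$ into $\cH_T$: for $\xi \in \cH'_T$ and $x\in T$, $x(V\xi) = V(x\xi) = V(\xi x) = (V\xi)x$ since $V$ is $S$-bimodular hence $T$-bimodular. Then $V|_{\cH'_T}$ intertwines $\Theta'$ and $\Theta$ directly from the formula \eqref{eq.def-Theta}, because $V(m_i \xi m_i^*) = m_i (V\xi) m_i^*$. Injectivity of the restriction map is the only substantive point: if $V|_{\cH'_T} = 0$ then $V$ vanishes on $S\cdot\cH'_T\cdot S$, which is dense in $\cH'$ by the \SE-correspondence hypothesis, so $V=0$. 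Surjectivity is the main obstacle and the place where \cite[Theorem 4.5]{Po99} must be used in earnest: given an intertwiner $W : \cH'_T \recht \cH_T$ of $\Theta'$ and $\Theta$, I would define $V$ on the dense subspace $S \cdot \cH'_T \cdot S$ by $V(s\,\xi\,t) = s\,W\xi\,t$ and must show this is well-defined and bounded. Well-definedness and the bound both come from computing inner products: $\langle s_1 \xi_1 t_1, s_2 \xi_2 t_2\rangle$ is expressed, via the decomposition $L^2(S) = \bigoplus \cL_\pi$ and the relation \eqref{eq.link} (which rewrites $S$-valued inner products of the form $\langle s\xi t, \eta\rangle$ through the $T$-bimodular completely positive maps built from the $\cL_\pi$-components of $s,t$), as a sum of terms of the shape $\tau(\cdots)\langle \Theta'(\pi)\xi_1, \xi_2\rangle$, and the same formula with $W\xi_i$ in place of $\xi_i$ gives the corresponding expression on the $\cH$ side; since $W$ intertwines all $\Theta'(\pi)$ with $\Theta(\pi)$, the two expressions agree, so $\|V(\sum s_k\xi_k t_k)\| = \|\sum s_k (W\xi_k) t_k\|$ and $V$ extends to a bounded $S$-bimodular operator with $V|_{\cH'_T} = W$. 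Finally, isometry of the restriction map follows by applying the $V \mapsto V|_{\cH'_T}$ and $W \mapsto \bar V$ constructions in both directions and noting they are mutually inverse norm-nonincreasing maps, hence isometric. The last sentence of the theorem is then immediate: a unitary $S$-bimodule isomorphism restricts to a unitary intertwiner of $\Theta$ and $\Theta'$, and conversely any unitary intertwiner extends to one by the surjectivity just proved (its inverse extends too, and uniqueness of the extension forces the extensions to be mutually inverse, hence unitary).
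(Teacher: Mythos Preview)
Your overall strategy is close to the paper's, and the surjectivity step is organized differently in a way that would work, but there is a genuine gap in your norm bound, and a secondary looseness in the $*$-property.

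\textbf{The norm bound.} You assert that each $\Theta(\pi)$ is a positive operator, justifying this by a ``$\Theta(\pi/2)\Theta(\pi/2)$-type argument'' or by saying $\xi \mapsto \sum_i m_i \xi m_i^*$ is ``completely positive in the obvious sense''. Neither works. An irreducible $\pi$ has no square root in the fusion algebra in general, and when $\pi \not\cong \pibar$ the operator $\Theta(\pi)$ is not even self-adjoint (you yourself compute $\Theta(\pi)^* = \Theta(\pibar)$). The expression $\sum_i m_i \xi m_i^*$ is completely positive as a map between von Neumann algebras, but that says nothing about positivity as an operator on the Hilbert space $\cH_T$. So you have asserted $\|\Theta(\pi)\| \le d(\pi)$ without any argument. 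The paper gets this directly: for $T$-central unit vectors $\xi,\eta$ one has $\|y\xi\| = \|y\|_2\|\xi\|$ and $\|\eta y\| = \|y\|_2\|\eta\|$ for all $y \in S$ (since $T' \cap S = \C 1$ forces the state $y \mapsto \langle y\xi,\xi\rangle$ to be the trace), and then Cauchy--Schwarz gives
\[
|\langle \Theta(\pi)\xi,\eta\rangle| \le \frac{1}{d(\pi)} \Bigl(\sum_i \|m_i\|_2^2\Bigr)\|\xi\|\,\|\eta\| = d(\pi)\,\|\xi\|\,\|\eta\|,
\]
using that the $T$-dimension of $\cL_\pi$ is $d(\pi)^2$. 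This is the missing ingredient.

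\textbf{The $*$-property.} You correctly note that taking adjoints of a right-$T$-basis of $\cL_\pi$ gives a \emph{left}-$T$-basis of $\cL_{\pibar}$, not a right one, and then wave at the conversion. The paper avoids this entirely by first proving a projection formula (its Lemma~\ref{lem.projection-central-vectors}): for $x,y \in \cL^0_\al$ and $\xi \in \cH_T$, one has $p_T(x\xi y^*) = \tau(xy^*)\,d(\al)^{-1}\,\Theta(\al)\xi$. From this, $\langle \Theta(\al)\xi,\eta\rangle = \langle \xi,\Theta(\albar)\eta\rangle$ drops out by moving a single $x\xi x^*$ across the inner product. You will need exactly this projection formula anyway for surjectivity (see below), so it is worth isolating.

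\textbf{Surjectivity: a legitimate alternative.} Here your route genuinely differs from the paper's. You propose to build the inverse of restriction directly, defining $V(s\xi t) = s(W\xi)t$ on $S\,\cH'_T\,S$ and checking well-definedness by expressing $\langle s_1\xi_1 t_1, s_2\xi_2 t_2\rangle$ in terms of $\langle \Theta'(\al)\xi_1,\xi_2\rangle$. That expression is precisely the content of the projection lemma just mentioned: decompose $s_2^* s_1$ and $(t_1 t_2^*)^*$ into their $\cL_\al$-components and apply $p_T$. So your approach is sound once that lemma is in hand. The paper instead reduces to $\cH = \cH'$, observes that restriction is a normal injective $*$-homomorphism $\End(\bim{S}{\cH}{S}) \to \End(\Theta)$, and checks surjectivity on projections: given $p_0 \in \End(\Theta)$ with range $\cH_0 \subset \cH_T$, the $S$-bimodular projection $p_1$ onto $\overline{S\cH_0 S}$ restricts to $p_0$ because the same projection lemma forces $p_T(\overline{S\cH_0 S}) \subset \cH_0$. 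The paper's route sidesteps explicit well-definedness checks; yours is more constructive. Either way the projection lemma is the engine.
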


Before proving Theorem \ref{thm.full-Cstar-subfactor}, we define the universal C$^*$-algebra of a subfactor $N \subset M$ and make a few remarks. Since $\|\Theta(\pi)\| \leq d(\pi)$ for all $\pi \in \Irr(\cC)$, the following definition makes sense.

\begin{definition}\label{def.full-Cstar-subfactor}
Let $N \subset M$ be an extremal subfactor with associated \SE-inclusion $T \subset S$ and category of $M$-bimodules $\cC$.
We call a $*$-representation of $\C[\cC]$ on a Hilbert space \emph{admissible} if it is unitarily conjugate to the $*$-representation associated with an \SE-correspondence $\bim{S}{\cH}{S}$ of $N \subset M$ as in Theorem \ref{thm.full-Cstar-subfactor}.

We define the \emph{universal C$^*$-algebra} $C_u(N \subset M)$ as the completion of $\C[\cC]$ with respect to a universal admissible representation of $\C[\cC]$.
\end{definition}

In Section \ref{sec.Cstar-algebra-tensor-cat}, we will see that $C_u(N \subset M)$ can be defined intrinsically in terms of the C$^*$-tensor category $\cC$.

\begin{remark}\label{rem.fell-topology-SE}
It is now straightforward to also define the Fell topology on \SE-correspondences in such a way that it coincides with the usual Fell topology on the representations of the C$^*$-algebra $C_u(N \subset M)$. More concretely, for all $T$-central unit vectors $\xi \in \cH$, we consider the (unital, trace preserving) normal cp $T$-bimodular map $\psi_\xi : S \recht S$ given by $\tau(x\psi_\xi(y)) = \langle x \xi y,\xi\rangle$. We call these maps $\psi_\xi$ the \emph{coefficients} of $\bim{S}{\cH}{S}$. Given two \SE-correspondences $\bim{S}{\cH}{S}$ and $\bim{S}{\cH'}{S}$, we say that $\cH$ is \emph{weakly contained} in $\cH'$ if every coefficient of $\cH$ can be approximated, in the pointwise $\|\,\cdot\,\|_2$-topology, by convex combinations of coefficients of $\cH'$.

Denote by $\Theta,\Theta'$ the $*$-representations associated with $\cH,\cH'$ as in Theorem \ref{thm.full-Cstar-subfactor}. One checks as follows that $\cH$ is weakly contained in $\cH'$ if and only if $\Theta$ is weakly contained in $\Theta'$, i.e.\ $\|\Theta(x)\| \leq \|\Theta'(x)\|$ for all $x \in \C[\cC]$. If $\xi \in \cH$, $\xi_i \in \cH'$ are $T$-central vectors and $\lambda_i \in [0,1]$ with $\sum_i \lambda_i = 1$, it follows from Lemma \ref{lem.projection-central-vectors} below that
$$\bigl\|\psi_{\xi}(y) - \sum_i \lambda_i \psi_{\xi_i}(y)\bigr\|_2^2 = \sum_{\al \in \Irr(\cC)} \|y_{\albar}\|_2^2 \, \frac{1}{d(\al)^2} \, \bigl|\langle \Theta(\al) \xi,\xi \rangle - \sum_i \lambda_i \langle \Theta'(\al) \xi_i , \xi_i \rangle\bigr|^2 \; ,$$
where we use the decomposition $y = \sum_\al y_\al$ with $y_\al \in \cL_\al$. We conclude that $\cH$ is weakly contained in $\cH'$ if and only if all coefficients of $\Theta$ (in the usual sense) can be pointwise approximated by convex combinations of coefficients of $\Theta'$.
\end{remark}

\begin{remark}\label{rem.not-all-are-admissible}
For a diagonal (locally trivial) subfactor $N \subset M$ associated with an outer action of a countable group $G \actson M$ and a finite generating set of $G$ (see
\cite[Section 5.1.5]{Po92}), by \cite[Theorem 3.3]{Po99}, we can identify $S \cong T \rtimes G$ where $G \actson T = M \ovt M\op$ is the diagonal action. We can further identify $\Irr(\cC) \cong G$ and $\C[\cC] \cong \C[G]$. Clearly, every $*$-representation of $\C[G]$ is admissible and $C_u(N \subset M) \cong C^*(G)$, the full C$^*$-algebra of $G$.

In general however, not all $*$-representations of $\C[\cC]$ need be admissible. In particular, when $N \subset M$ is an $A_\infty$ subfactor with index $\lambda^{-1} = [M:N] \geq 4$, we can identify $\C[\cC]$ with the polynomial $*$-algebra $\C[X]$ with $X^* = X$. The $M$-bimodule $L^2(M_1)$ corresponds to the monomial $X$. For every $t \in \R$, we consider the one-dimensional $*$-representation $\counit_t : \C[X] \recht \C : \counit_t(P) = P(t)$. If $\counit_t$ is admissible, we must have
$$|t| = |\counit_t(L^2(M_1))| \leq d(L^2(M_1)) = \lambda^{-1} \; .$$
So already here, we get that $\counit_t$ is not admissible for $|t| > \lambda^{-1}$. In Section \ref{sec.rep-theory-TLJ}, we will see that $\counit_t$ is admissible if and only if $t \in [0,\lambda^{-1}]$, so that $(\counit_t)_{t \in [0,\lambda^{-1}]}$ is the complete list of all irreducible admissible representations of $\C[\cC]$ and $C_u(N \subset M) \cong C([0,\lambda^{-1}])$. We also refer to Section \ref{sec.different-Cstar-algebras} for a more complete discussion.
\end{remark}

\begin{remark}\label{rem.nonextremal}
Let $N \subset M$ be a finite index subfactor that is not necessarily extremal. As above, consider the associated C$^*$-tensor category $\cC$ of $M$-bimodules. For every $\al \in \cC$, we denote by $d_l(\al)$ the dimension of $\al$ as a left $M$-module and by $d_r(\al)$ the dimension of $\al$ as a right $M$-module.

We can still define an \SE-inclusion $T \subset S$ with $T \cong M \ovt M\op$, but $S$ is no longer tracial. We will rather have a natural almost periodic normal faithful state $\om$ on $S$ whose modular group is described in terms of the ratios $d_l(\al)/d_r(\al)$, $\al \in \Irr(\cC)$. It is possible to repeat the construction of \cite{Po94a,Po99} and define $S$ through a universal property, generated by $M \ovt M\op$ and a projection that is the Jones projection for both $N \subset M$ and $N\op \subset M\op$. Instead, since this is useful for us later anyway, we sketch how to adapt the categorical construction of \cite{LR94,Ma99} to the non-extremal setting.

This construction actually makes sense for an arbitrary full C$^*$-tensor subcategory $\cC$ of the category of finite index $M$-bimodules. We choose a set of representatives $\cH_\al$ for all $\al \in \Irr(\cC)$. We fix anti-unitary operators $j_\al : \cH_\al \recht \cH_{\albar}$ for all $\al \in \Irr(\cC)$ satisfying $j_\al(x \xi y) = y^* j_\al(\xi) x^*$ for all $x,y \in M$, $\xi \in \cH_\al$. Note that $j_\al$ is uniquely determined up to multiplication by a scalar of modulus one. For all $\eta \in \cC$, $\gamma \in \Irr(\cC)$ and $V,W \in \Mor(\eta,\gamma)$, we have that $W^* V \in \End(\gamma) = \C1$. In this way, we can view $\Mor(\eta,\gamma)$ as a finite-dimensional Hilbert space with scalar product $W^* V = \langle V,W\rangle \, 1$. Denote by $\cH^0_\al \subset \cH_\al$ the subspace of $M$-bounded vectors. Define the vector space $S_0$ as the algebraic direct sum
$$S_0 = \bigoplus_{\al \in \Irr(\cC)} \bigl(\cH^0_\al \otalg \overline{\cH^0_\al}\bigr) \; .$$
Denote by $\delta_\al : \cH^0_\al \otalg \overline{\cH^0_\al} \recht S_0$ the embedding as the $\al$'th direct summand. Using an orthonormal basis (onb) of the Hilbert space $\Mor(\al \ot \be,\gamma)$, we define a multiplication on $S_0$ by
$$\delta_\al(\xi \ot \mubar) \; \delta_\be(\xi' \ot \mupbar) = \sum_{\gamma \in \Irr(\cC), i, V_i \; \text{onb} \; \Mor(\al \ot \be,\gamma)} \quad \delta_\gamma\bigl( V_i^* (\xi \ot_M \xi') \; \ot \; \overline{ V_i^* (\mu \ot_M \mu')} \bigr) \; . $$
Note that we sum only finitely many terms, since only those $\gamma \in \Irr(\cC)$ that arise as a submodule of $\al \ot \be$ contribute to the sum. Also note that the definition of the product is independent of the choice of bases $V_i$. In this way, one gets the following formula, implying the associativity of the product:
\begin{multline*}\delta_\al(\xi \ot \mubar) \; \delta_\be(\xi' \ot \mupbar) \; \delta_\gamma(\xi'' \ot \muppbar) \\ = \sum_{\eta \in \Irr(\cC), i, V_i \; \text{onb} \; \Mor(\al \ot \be \ot \gamma,\eta)} \;\; \delta_\eta\bigl( V_i^* (\xi \ot_M \xi' \ot_M \xi'') \; \ot \; \overline{ V_i^* (\mu \ot_M \mu' \ot_M \mu'')} \bigr) \; .
\end{multline*}
We also define a $*$-operation on $S_0$ given by $\delta_\al(\xi \ot \mubar)^* = \delta_{\albar}(j_\al(\xi) \ot \overline{j_\al(\mu)})$.
Since $j_\al$ is uniquely determined up to a scalar of modulus one, also the $*$-operation is independent of all choices and turns $S_0$ into a unital $*$-algebra. The map $a \ot b\op \mapsto \delta_\eps(a \ot \overline{b^*})$ identifies $M \otalg M\op$ with a $*$-subalgebra of $S_0$.

Finally define the linear functional $\om : S_0 \recht \C$ given by
$$\om(\delta_\al(\xi \ot \mubar)) = 0 \quad\text{if}\;\; \al \neq \eps \quad\text{and}\quad \om(a \ot \bbar) = \tau(a) \, \overline{\tau(b)} \quad\text{for all}\;\; a,b \in M \; .$$
A direct computation yields, for all $x,y \in S_0$,
\begin{align*}
\om(y^* x) & = \sum_{\al \in \Irr(\cC)} d_r(\al)^{-1} \langle x_\al,y_\al \rangle \quad\text{and}\\
\om(x y^*) & = \sum_{\al \in \Irr(\cC)} d_l(\al)^{-1} \langle x_\al,y_\al \rangle \; .
\end{align*}
Denote by $\cK$ the Hilbert space completion of $S_0$ with respect to the scalar product $\langle x,y\rangle = \om(y^*x)$. Using that the vectors in $\cH^0_\al$ are bounded, one checks that both the left and right multiplication by elements in $S_0$ extend to bounded operators on $\cK$. We define $S$ as the von Neumann algebra generated by left multiplication operators on $\cK$. By construction, $\cK = \bigoplus_{\al \in \Irr(\cC)} (\cH_\al \ot \overline{\cH_\al})$. Therefore, the embedding of $M \otalg M\op$ into $S_0$ extends to a normal embedding of $T := M \ovt M\op$ into $S$. The functional $\om$ on $S_0$ extends to a normal faithful state on $S$ whose restriction to $T$ equals the trace and whose modular automorphism group satisfies
$$\sigma_t^\vphi(\delta_\al(\xi \ot \mubar)) = \Bigl(\frac{d_r(\al)}{d_l(\al)}\Bigr)^{it} \; \delta_\al(\xi \ot \mubar)$$
for all $\al \in \Irr(\cC)$ and $\xi,\mu \in \cH^0_\al$.

Note that by construction $T' \cap S = \C 1$ and as a $T$-bimodule, $L^2(S)$ the direct sum of the $T$-bimodules $\cH_\al \ot \overline{\cH_\al}$, each appearing with multiplicity one.

So far, the construction of the \SE-inclusion $T \subset S$ made sense for an arbitrary full C$^*$-tensor subcategory $\cC$ of the category of finite index $M$-bimodules. Now assume that $\cC$ is the category generated by a finite index subfactor $N \subset M$. Putting $M_{-1} := N$, $M_0 := M$ and choosing a tunnel $\cdots \subset M_{-2} \subset M_{-1} \subset M_0$, we can identify in the following way
$$(1 \ot M_{-n}\op)' \cap S \cong M_n \quad\text{and}\quad (M_{-n} \ot 1)' \cap S \cong M_n\op \; .$$
To every bounded vector $\xi \in \cH^0_\al$ are associated the bounded linear operators
$$L_\xi : L^2(M) \recht \cH_\al : L_\xi(x) = \xi x \quad\text{and}\quad R_\xi : L^2(M) \recht \cH_\al : R_\xi(x) = x \xi \; .$$
It is then straightforward to check that
$$\Gamma : S_0 \recht B(L^2(M)) : \Gamma(\delta_\al(\xi \ot \mubar)) = R_\mu^* L_\xi$$
is a unital $*$-homomorphism. Of course, $\Gamma$ is by no means normal on $S$. But, the restriction of $\Gamma$ to $(1 \ot M_{-n}\op)' \cap S_0 = (1 \ot M_{-n}\op)' \cap S$ is a normal $*$-isomorphism of $(1 \ot M_{-n}\op)' \cap S$ onto $M_n$, the latter being realized as the commutant of the right $M_{-n}$-action on $L^2(M)$.

We continue to view $M_n$ as the commutant of the right $M_{-n}$ action on $L^2(M)$. Denote by $\tau$ the unique normalized trace on $M_n$. Then, there is a unique positive invertible operator $Q_n \in \cZ(M_{-n}' \cap M)$ such that $\tau(Ja^*J) = \tau(a Q_n)$ for all $a \in M_{-n}' \cap M$. One of the equivalent characterizations of extremality amounts to all $Q_n$ being equal to $1$. In general, we define the normal faithful state $\om_n$ on $M_n$ by the formula $\om_n(x) = \tau(a JQ_n^{-1} J)$. We then have
$$\om_n(\Gamma(x)) = \om(x) \quad\text{for all}\;\; x \in (1 \ot M_{-n}\op)' \cap S \; .$$

It is possible to define \SE-correspondences of an arbitrary finite index subfactor in terms of $S$-bimodules generated by $T$-central vectors, and to define the universal C$^*$-algebra $C_u(N \subset M)$. We do not go further into this since in Section \ref{sec.mult-tensor-cat}, we will define these concepts in even greater generality, for abstract rigid C$^*$-tensor categories.
\end{remark}

We now prove Theorem \ref{thm.full-Cstar-subfactor}. As above, we decompose $L^2(S)$ into the direct sum of the irreducible $T$-bimodules $(\cL_\pi)_{\pi \in \Irr(\cC)}$. We denote by $\cL^0_\pi \subset \cL_\pi$ the space of $T$-bounded vectors. Since $T \subset S$ is irreducible, we have that $\cL^0_\pi = \cL_\pi \cap S$.

\begin{lemma}\label{lem.projection-central-vectors}
Let $N \subset M$ be an extremal subfactor with associated \SE-inclusion $T \subset S$. Let $\bim{S}{\cH}{S}$ be an \SE-correspondence of $N \subset M$ and denote by $\cH_T \subset \cH$ the subspace of $T$-central vectors. Denote by $p_T : \cH \recht \cH_T$ the orthogonal projection of $\cH$ onto $\cH_T$. Using the notation of \eqref{eq.def-Theta}, we get
$$p_T(x \xi y^*) = \frac{\tau(xy^*)}{d(\al)} \; \Theta(\al) \xi \quad\text{for all}\;\; \al,\be \in \Irr(\cC) \; , \;\; x \in \cL^0_\al \; ,  \;\; y \in \cL^0_\be \; , \;\; \xi \in \cH_T \; .$$
In particular, $p_T(x \xi y^*) = 0$ when $\al \neq \be$.
\end{lemma}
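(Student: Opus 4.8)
The plan is to compute $p_T(x\xi y^*)$ by testing against an arbitrary $T$-central vector $\zeta\in\cH_T$ and showing that $\langle x\xi y^*,\zeta\rangle$ coincides with $\langle \frac{\tau(xy^*)}{d(\al)}\Theta(\al)\xi,\zeta\rangle$. Since $\Theta(\al)\xi = \frac{1}{d(\al)}\sum_i m_i\xi m_i^*$ with $(m_i)$ an orthonormal basis of $\cL^0_\al$ as a right $T$-module, the right-hand side is an explicit finite sum, so the whole statement reduces to an inner-product identity in $\cH$. The key mechanism is that $x\in\cL^0_\al = \cL_\al\cap S$ can be expanded in the right-$T$-module basis: write $x = \sum_i m_i a_i$ and $y = \sum_j m_j b_j$ with $a_i,b_j\in T$ (these are the ``Fourier coefficients'' of $x$ and $y$ inside the $\al$-component), and similarly the orthogonality relation between $\cL_\al$ and $\cL_\be$ for $\al\neq\be$ will immediately force the cross terms to vanish, giving the final sentence for free once the $\al=\be$ case is understood.

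Concretely, first I would fix $\zeta\in\cH_T$ and write $x = \sum_i m_i a_i$, $y = \sum_j m_j b_j$ as above. Then
$\langle x\xi y^*,\zeta\rangle = \sum_{i,j}\langle m_i a_i\,\xi\, b_j^* m_j^*,\zeta\rangle$.
Because $\xi$ and $\zeta$ are $T$-central and $a_i,b_j\in T$, one can slide $a_i$ and $b_j^*$ past $\xi$ and then (using $\zeta$ central) absorb them: $\langle m_i a_i\xi b_j^* m_j^*,\zeta\rangle = \langle m_i\xi m_j^*,\zeta\rangle$ after recognizing $\langle a_i\xi b_j^*,\,\cdot\,\rangle$-type manipulations reduce to the scalar $\langle m_i,m_j\rangle_T$-pairing — this is the point where the orthonormal-basis normalization of $(m_i)$ as a right $T$-module enters, producing $\tau(a_i b_j^*)\delta$-type contractions that reassemble into $\tau(xy^*)$. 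After this bookkeeping, $\langle x\xi y^*,\zeta\rangle = \frac{1}{d(\al)}\sum_i\langle m_i\xi m_i^*,\zeta\rangle\cdot(\text{factor})$, and matching constants against the definition \eqref{eq.def-Theta} of $\Theta(\al)$ gives exactly $\frac{\tau(xy^*)}{d(\al)}\langle\Theta(\al)\xi,\zeta\rangle$. Since this holds for all $\zeta\in\cH_T$ and $p_T$ is the orthogonal projection onto $\cH_T$, and since $\Theta(\al)\xi\in\cH_T$ already by Theorem~\ref{thm.full-Cstar-subfactor}, the identity follows.

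The main obstacle is the $T$-bimodularity juggling in the middle step: one has to be careful that expanding $x\in\cL_\al$ in a right-$T$-module basis interacts correctly with the left $S$-action on $\cH$, and that the coefficients $a_i$ really land in $T$ (not in $S$) — this uses that $(m_i)$ is a basis as a right $T$-module together with irreducibility $T'\cap S = \C1$. I would also need the trace identity $\sum_i m_i^* m_i$-type completeness relation for the basis, i.e. that $\sum_i m_i T = \cL^0_\al$ with the reproducing property $\langle m_i,m_j\rangle_{\text{right }T} = \delta_{ij}1$, to collapse the double sum over $i,j$ to a single sum. The $\al\neq\be$ case is then automatic: $m_i\in\cL_\al$, $m_j\in\cL_\be$ lie in orthogonal $T$-subbimodules of $L^2(S)$, so the relevant pairings vanish and $p_T(x\xi y^*)=0$. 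Everything else is routine Hilbert-module computation once the basis expansion is set up correctly.
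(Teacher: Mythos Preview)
There is a genuine gap at the step you label ``the point where the orthonormal-basis normalization of $(m_i)$ enters''. After using $T$-centrality of $\xi$ you arrive at sums of the form $\langle m_i\,\xi\,(a_ib_j^*)\,m_j^*,\zeta\rangle$ with $a_ib_j^*\in T$. You then assert that orthonormality of $(m_i)$ collapses this double sum and produces a factor $\tau(a_ib_j^*)\delta_{ij}$. But the orthonormality relation $E_T(m_i^*m_j)=\delta_{ij}$ is a statement about the inner product in $L^2(S)$; it says nothing a priori about the pairing $\langle m_i\,\xi\,c\,m_j^*,\zeta\rangle$ inside the abstract $S$-bimodule $\cH$. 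In fact, the assertion that this pairing is proportional to $\tau(m_i c m_j^*)$ for all $c\in T$ is \emph{equivalent} to the lemma you are trying to prove, so the argument is circular at exactly the crucial point. The same objection applies to your treatment of the case $\al\neq\be$: orthogonality of $\cL_\al$ and $\cL_\be$ in $L^2(S)$ does not by itself force $\langle m_i\xi m_j^*,\zeta\rangle=0$ in $\cH$.

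What the paper does instead is recognise that, for fixed $\xi,\eta\in\cH_T$, the assignment $x\otimes y^*\mapsto\langle x\xi y^*,\eta\rangle$ defines a bounded $T$-bimodular map $\cL_\al\otimes_T\overline{\cL_\be}\to L^2(T)$; such maps are classified by $\Mor(\al\otimes\bebar,\eps)$, which is zero if $\al\neq\be$ and one-dimensional (spanned by $s_\al^*$) if $\al=\be$. This categorical input is what forces the proportionality $\langle x\xi y^*,\eta\rangle=(\text{const})\cdot\tau(xy^*)$, after which the constant is identified by summing over an orthonormal basis as you suggest. Your basis-expansion idea is fine for the bookkeeping at the end, but you need the bimodular-map classification (or an equivalent argument) to get started. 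Also note a minor circularity: you invoke Theorem~\ref{thm.full-Cstar-subfactor} for $\Theta(\al)\xi\in\cH_T$, but in the paper this lemma is used in the proof of that theorem; the $T$-centrality of $\Theta(\al)\xi$ should be checked directly (it is a short computation with the conditional expectation).
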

\begin{proof}
Fix $\xi \in \cH_T$ and $\al,\be \in \Irr(\cC)$. We claim that the linear map
$$V : \cL^0_\al \ot_T \overline{\cL^0_\be} \recht \cH : x \ot y^* \mapsto x \xi y^*$$
extends to a $T$-bimodular bounded operator $V : \cL_\al \ot_T \overline{\cL_\be} \recht \cH$. We may assume that $\|\xi\| = 1$. Then, $x \mapsto \langle x \xi,\xi\rangle$ is a normal $T$-central state on $S$. Since $T' \cap S = \C 1$, it is equal to the trace. Therefore, there is a unique isometry $W : \cL_\al \recht \cH$ satisfying $W(x) = x \xi$ for all $x \in \cL^0_\al$. Take an orthonormal basis $(v_i)_{i=1,\ldots,n}$ of $\cL_\be$ as a right $T$-module. Note that $v_i \in \cL^0_\be$. For every $i$, we have the bounded operator $L_i : \cL_\al \recht \cL_\al \ot_T \overline{\cL_\be} : L_i(x) = x \ot_T v_i^*$. Using that $\xi$ is $T$-central, we find that
$$V(\mu) = \sum_{i=1}^n \bigl(W(L_i^*(\mu))\bigr) v_i^* \quad\text{for all}\;\; \mu \in \cL^0_\al \ot_T \overline{\cL^0_\be} \; ,$$
so that indeed, $V$ is bounded.

For every $\eta \in \cH_T$, we define the bounded $T$-bimodular map $V_\eta : L^2(T) \recht \cH : V_\eta(x) = x \eta$ for all $x \in T$. So, for every $\eta \in \cH_T$, we get that
$$V_\eta^* V : \cL_\al \ot_T \overline{\cL_\be} \recht L^2(T)$$
is a bounded $T$-bimodular operator with $\|V_\eta^* V\| \leq \|\eta\| \, \|V\|$. When $\al \neq \be$, $\Mor(\al \ot \bebar,\eps) = \{0\}$ and we conclude that $V_\eta^* V = 0$ for all $\eta \in \cH_T$. This implies that $p_T \circ V = 0$, meaning that $p_T(x \xi y^*) = 0$ for all $x \in \cL^0_\al$, $y \in \cL^0_\be$.

When $\al = \be$, we find that all $V_\eta^* V$ are a scalar multiple of the canonical $T$-bimodular map $s_\al^* : \cL_\al \ot_T \overline{\cL_\be} \recht L^2(T)$. By Riesz' theorem, we find $\xi_0 \in \cH_T$ such that $V_\eta^* V = \langle \xi_0,\eta \rangle \; s_\al^*$ for all $\eta \in \cH_T$. This formula says that
\begin{equation}\label{eq.eenbegin}
\langle  x \xi y^* , \eta \rangle = \tau(xy^*) \; \langle \xi_0,\eta \rangle \quad\text{for all}\;\; x, y \in \cL^0_\al \; , \; \eta \in \cH_T \; .
\end{equation}
Let $(w_j)$ be an orthonormal basis of $\cL_\al$ as a right $T$-module. We get that
$$\langle \Theta(\al) \xi, \eta \rangle = \frac{1}{d(\al)} \sum_j \langle w_j \xi w_j^* , \eta \rangle = \sum_j \frac{\tau(w_j w_j^*)}{d(\al)} \; \langle \xi_0,\eta \rangle = d(\al) \; \langle \xi_0,\eta \rangle$$
for all $\eta \in \cH_T$, where we used that the dimension of $\cL_\al$ as a $T$-module equals $d(\al)^2$. This means that $\xi_0 = d(\al)^{-1} \Theta(\al) \xi$. Then \eqref{eq.eenbegin} becomes
$$p_T(x\xi y^*) = \frac{\tau(xy^*)}{d(\al)} \; \Theta(\al) \xi \quad\text{for all}\;\; x,y \in \cL^0_\al \; .$$
\end{proof}

\begin{proof}[Proof of Theorem \ref{thm.full-Cstar-subfactor}]
Let $\al \in \Irr(\cC)$ and choose an orthonormal basis $v_i \in \cL^0_\al$ of $\cL_\al$ as a right $T$-module. Using the trace preserving conditional expectation $E : S \recht T$, we get for all $x \in T$ and $\xi \in \cH_T$ that
$$x \sum_i v_i \xi v_i^* = \sum_{i,j} v_j E(v_j^* x v_i) \xi v_i^* = \sum_{i,j} v_j \xi E(v_j^* x v_i) v_i^* = \sum_j v_j \xi v_j^* \; x \; .$$
So, $\Theta(\al) \xi \in \cH_T$. Also, for all $\xi,\eta \in \cH_T$, we have that $\|y \xi\| = \|y\|_2 \, \|\xi\|$ and $\|\eta y \| = \|y\|_2 \, \|\eta\|$ for all $y \in S$. Therefore, using that the dimension of $\cL_\al$ as a $T$-module is $d(\al)^2$, we get that
\begin{align*}
|\langle \Theta(\al) \xi ,\eta \rangle|^2 & \leq \frac{1}{d(\al)^2} \Bigl( \sum_i |\langle v_i \xi , \eta v_i \rangle| \Bigr)^2
 \leq \frac{1}{d(\al)^2} \Bigl( \sum_i \|v_i \xi\|^2 \Bigr) \; \Bigl( \sum_i \|\eta v_i \|^2 \Bigr) \\
& = \frac{1}{d(\al)^2} \Bigl( \sum_i \|v_i\|_2^2 \Bigr)^2 \; \|\xi\|^2 \; \|\eta\|^2
 = d(\al)^2 \; \|\xi\|^2 \; \|\eta\|^2 \; .
\end{align*}
We conclude that $\Theta(\al)$ is a bounded operator on $\cH_T$ with $\|\Theta(\al)\| \leq d(\al)$.

Fix a nonzero element $x \in \cL^0_\al$. Using Lemma \ref{lem.projection-central-vectors}, we get for all $\xi,\eta \in \cH_T$ that
\begin{align*}
\|x\|_2^2 \; \langle \Theta(\al) \xi, \eta \rangle &= d(\al) \; \langle p_T(x \xi x^*) , \eta \rangle
= d(\al) \; \langle x \xi x^* , \eta \rangle \\
&= d(\albar) \; \langle \xi , p_T(x^* \eta x) \rangle
= \|x\|_2^2 \; \langle \xi , \Theta(\albar) \eta \rangle \; .
\end{align*}
So, $\Theta(\al)^* = \Theta(\albar)$.

Finally, fix $\al,\be \in \Irr(\cC)$. Denote by $\cH^0_\al \subset \cH_\al$ and $\cH^0_\be \subset \cH_\be$ the subspaces of $M$-bounded vectors. Choose orthonormal bases $a_i \in \cH^0_\al$ and $b_j \in \cH^0_\be$ for $\cH_\al$ and $\cH_\be$ as right $M$-modules. Using the notation of Remark \ref{rem.nonextremal}, the elements $\sqrt{d(\al)} \delta_\al(a_{i_1} \ot \overline{a_{i_2}})$ form an orthonormal basis of $\cL_\al$ as a right $T$-module. Similarly, $\sqrt{d(\be)}\delta_\be(b_{j_1} \ot \overline{b_{j_2}})$ is an orthonormal basis of $\cL_\be$. We conclude that for all $\xi \in \cH_T$,
$$\Theta(\al)(\Theta(\be)\xi) = \sum_{i_1,i_2,j_1,j_2} \; \delta_\al(a_{i_1} \ot \overline{a_{i_2}}) \; \delta_\be(b_{j_1} \ot \overline{b_{j_2}}) \; \xi  \; \delta_\be(b_{j_1} \ot \overline{b_{j_2}})^* \; \delta_\al(a_{i_1} \ot \overline{a_{i_2}})^* \; .$$
For every $\gamma \in \Irr(\cC)$ that appears as an $M$-subbimodule of $\al \ot \be$, we fix an orthonormal basis $V_{\gamma,k}$ for $\Mor(\al \ot \be,\gamma)$ with $k=1,\ldots,\mult(\gamma,\al \ot \be)$. Using the formula for the product in $S$ as explained in Remark \ref{rem.nonextremal}, we get that
\begin{multline}\label{eq.thetaalbe}
\Theta(\al)(\Theta(\be)\xi) = \sum_{\gamma_1,\gamma_2 \in \Irr(\cC),k_1,k_2} \; \sum_{i_1,i_2,j_1,j_2} \; \delta_{\gamma_1}\bigl( V_{\gamma_1,k_1}^* (a_{i_1} \ot_M b_{j_1}) \ot
\overline{V_{\gamma_1,k_1}^* (a_{i_2} \ot_M b_{j_2})}\bigr) \; \xi \\ \delta_{\gamma_2}\bigl( V_{\gamma_2,k_2}^* (a_{i_1} \ot_M b_{j_1}) \ot
\overline{V_{\gamma_2,k_2}^* (a_{i_2} \ot_M b_{j_2})}\bigr)^* \; .
\end{multline}
Note that $(a_i \ot_M b_j)_{i,j}$ is an orthonormal basis of $\cH_\al \ot_M \cH_\be$ as a right $M$-module. Fix orthonormal bases $(c_{\gamma,l})_l$ for $\cH_\gamma$ as a right $M$-module. Then also $d_{\gamma,k,l} := V_{\gamma,k}(c_{\gamma,l})$ indexed by $\gamma,k,l$ is an orthonormal basis of $\cH_\al \ot_M \cH_\be$. Since $V_{\gamma,k}^*$ is $M$-bimodular and $\xi$ is $T$-central, we can make a change of basis and get, for every fixed $\gamma_1,\gamma_2 \in \Irr(\cC)$ and $k_1,k_2$ that
{\allowdisplaybreaks
\begin{align*}
& \sum_{i_1,i_2,j_1,j_2} \; \delta_{\gamma_1}\bigl( V_{\gamma_1,k_1}^* (a_{i_1} \ot_M b_{j_1}) \ot
\overline{V_{\gamma_1,k_1}^* (a_{i_2} \ot_M b_{j_2})}\bigr) \; \xi \\*[-2ex] & \hspace{6cm} \delta_{\gamma_2}\bigl( V_{\gamma_2,k_2}^* (a_{i_1} \ot_M b_{j_1}) \ot
\overline{V_{\gamma_2,k_2}^* (a_{i_2} \ot_M b_{j_2})}\bigr)^* \\[2ex]
= & \sum_{\gamma_3,\gamma_4 \in \Irr(\cC), k_3,k_4,l_1,l_2} \; \delta_{\gamma_1}\bigl( V_{\gamma_1,k_1}^*(d_{\gamma_3,k_3,l_1}) \ot \overline{V_{\gamma_1,k_1}^* (d_{\gamma_4,k_4,l_2})}\bigr) \; \xi
\\*[-2ex] & \hspace{6cm} \delta_{\gamma_2}\bigl(V_{\gamma_2,k_2}^*(d_{\gamma_3,k_3,l_1}) \ot \overline{V_{\gamma_2,k_2}^* (d_{\gamma_4,k_4,l_2})}\bigr)^* \\[2ex]
= & \; \delta_{\gamma_1,\gamma_2} \, \delta_{k_1,k_2} \; \sum_{l_1,l_2} \delta_{\gamma_1}(c_{\gamma_1,l_1} \ot \overline{c_{\gamma_1,l_2}}) \; \xi \; \delta_{\gamma_1}(c_{\gamma_1,l_1} \ot \overline{c_{\gamma_1,l_2}})^* \\[1ex]
= & \; \delta_{\gamma_1,\gamma_2} \, \delta_{k_1,k_2} \, \Theta(\gamma_1) \xi \; .
\end{align*}}
In combination with \eqref{eq.thetaalbe} and using that we sum over $\mult(\gamma,\al \ot \be)$ indices $k$, we get that
$$\Theta(\al) (\Theta(\be) \xi) = \sum_{\gamma \in \Irr(\cC)} \, \mult(\gamma,\al \ot \be) \; \Theta(\gamma) \xi = \Theta(\al\be) \xi \; .$$
So we have proved that $\Theta$ is a $*$-representation of $\C[\cC]$ on $\cH_T$.

It remains to prove that if $\bim{S}{\cH}{S}$ and $\bim{S}{\cH'}{S}$ are \SE-correspondences of $N \subset M$ with associated $*$-representations $\Theta$ and $\Theta'$ given by \eqref{eq.def-Theta}, then the map $\Psi : \Mor(\bim{S}{\cH}{S},\bim{S}{\cH'}{S}) \recht \Mor(\Theta,\Theta')$ given by restricting an $S$-bimodular bounded operator $V : \cH' \recht \cH$ to the subspace $\cH'_T \subset \cH'$ of $T$-central vectors is bijective and isometric.

Using the direct sum of $\cH$ and $\cH'$, we may assume that $\cH=\cH'$ and we have to prove that $\Psi$ is an isomorphism of the von Neumann algebra $\End(\bim{S}{\cH}{S})$ onto the von Neumann algebra $\End(\Theta)$. It is clear that $\Psi$ is a well defined normal $*$-homomorphism. Since the linear span of $S \cH_T S$ is assumed to be dense in $\cH$, we get that $\Psi$ is injective. To check the surjectivity of $\Psi$, it then suffices to show that every projection $p_0 \in \End(\Theta)$ belongs to the image of $\Psi$. Denote by $\cH_0 \subset \cH_T$ the image of $p_0$. Define the $S$-subbimodule $\cH_1 \subset \cH$ as the closed linear span of $S \cH_0 S$. Denote by $p_1 \in \End(\bim{S}{\cH}{S})$ the projection of $\cH$ onto $\cH_1$. It suffices to prove that $\Psi(p_1) = p_0$. This means that we have to show that $p_1(\xi) \in \cH_0$ for every $\xi \in \cH_T$.

For every $\xi \in \cH_T$, we have that $p_1(\xi)$ is a $T$-central vector in $\cH_1$. So, $p_1(\xi) \in p_T(\cH_1)$. Since $\Theta(\al) \cH_0 \subset \cH_0$ for all $\al \in \Irr(\cC)$, it follows from Lemma \ref{lem.projection-central-vectors} that $p_T(\cH_1) \subset \cH_0$. So $p_1(\xi) \in \cH_0$ and the theorem is proved.
\end{proof}

\begin{remark}\label{rem.all-prop-subf}
A first attempt at defining a representation theory for II$_1$ subfactors of finite index and their standard invariants ($\lambda$-lattices\footnote{Recall that a $\lambda$-lattice is a system of multimatrix algebras $(A_{nm})_{0 \leq n \leq m}$, together with natural inclusions $A_{nm} \subset A_{ab}$ if $a \leq n \leq m \leq b$, containing Jones projections, equipped with tracial states and satisfying the axioms \cite[1.1.1--3 and 2.1.1]{Po94b}. Also note that the terminology used in \cite{Po94b} is ``standard $\lambda$-lattice'', but we use the shorter terminology $\lambda$-lattice throughout this paper.}) was given in \cite{Po94a}--\cite{Po99}.
It combined the idea that the representation theory of a ``group-like'' object $\cG$ that can act outerly on a factor $Q$ is the same as the representation theory of the inclusion $Q\subset Q\rtimes \cG=P$, with the fact that for an extremal subfactor of finite index $N\subset M$ with standard  invariant $\cG$, the natural crossed product construction by $\cG$ was found to be the \SE-inclusion $M\ovt M\op \subset M \boxt_{e_N} M\op$. (Following \cite{Po86}, by a representation of an inclusion $Q\subset P$, we mean a ``pointed'' version of Connes' correspondences, namely a Hilbert $P$-bimodule with a cyclic, $Q$-central vector, while the associated ``positive definite functions'' are the normal $Q$-bimodular cp maps on $P$).

Thus, with the above terminology, the representations of $N\subset M$ are defined to be its \SE-correspondences (with the cp \SE-multipliers being the analogue of positive definite functions), and the representations of an abstract $\lambda$-lattice $\cG$ are the \SE-correspondences of a subfactor $N\subset M$ that has $\cG$ as standard invariant.
By \cite{Po94b}, one can associate to $\cG$ a canonical subfactor $N\subset M$ with $\cG_{N,M}=\cG$ (e.g., by taking the ``initial data'' in the amalgamated free product construction of \cite{Po94b} to always be the free group factor $L(\F_\infty)$), thus making this well defined. Instead in \cite[Section 9]{Po99}, one proves that there is an equivalence between the \SE-correspondences (and cp \SE-multipliers) of any two subfactors $N \subset M$ that have the same standard invariant $\cG$, thus allowing to define the representations of $\cG$ as the equivalence class of these objects.

However, this definition of a representation of an abstract $\lambda$-lattice $\cG$ is not entirely satisfactory, as it is not canonical and is not intrinsic in terms of $\cG$,
requiring permanently to go back to a ``supporting subfactor'' whenever used.

Nevertheless, due to the equivalence between the cp \SE-multipliers of any two subfactors having the same standard invariant, established in \cite[Section 9]{Po99}, the approach in  \cite{Po99} allowed defining several rigidity and approximation properties for $\lambda$-lattices, that we recall below.

Thus, we fix an extremal subfactor $N \subset M$ with standard invariant $\cG_{N,M}=\cG$ and \SE-inclusion $T \subset S$. We say that a net of cb \SE-multipliers $\psi_n : S \recht S$ converges pointwise to the identity if $\lim_n \|\psi_n(x) - x\|_2 = 0$ for all $x \in S$. We say that $\psi_n$ converges uniformly to the identity if $\lim_n (\sup_{\|x\| \leq 1} \|\psi_n(x) - x \|_2) = 0$. Finally, we say that $\psi_n$ has finite rank if $\psi_n(S)$ is a finitely generated $T$-bimodule.

\begin{enumlist}
\item The $\lambda$-lattice $\cG$ is \emph{amenable} if and only if there exists a net of cp \SE-multipliers $\psi_n$ that converges to the identity pointwise and such that every $\psi_n$ has finite rank. This is not the usual definition of amenability (see \cite{Po92,Po94a} and \cite[Theorem 5.3]{Po99}), but we will explain in Propositions \ref{prop.all-equiv} and \ref{prop.equiv-amen} why this is an equivalent definition.

\item In \cite[Section 9]{Po99}, the $\lambda$-lattice $\cG$ is said to have \emph{property~\pT} if the following holds: whenever a net of cp \SE-multipliers $\psi_n$ converges to the identity pointwise, it must converge to the identity uniformly.

\item In \cite[Remark 3.5.5]{Po01}, the $\lambda$-lattice $\cG$ is said to have the \emph{Haagerup property} if there exists a net of cp \SE-multipliers $\psi_n$ that converges to the identity pointwise and such that every $\psi_n$, viewed as an element of $T' \cap \langle S,e_T \rangle$, belongs to the compact ideal space given as the norm closed linear span of $S e_T S$.

\item It has been shown in \cite{Br14} that the proof in \cite{Po99} of the equivalence between cp \SE-multipliers of any two subfactors with the same
invariant $\cG$, works equally well to derive the equivalence between their cb \SE-multipliers. Using this, in \cite{Br14}, the $\lambda$-lattice $\cG$ is called \emph{weakly amenable} if there exists a net of cb \SE-multipliers $\psi_n$ that converges to the identity pointwise, such that every $\psi_n$ has finite rank and such that $\limsup_n \|\psi_n\|\cb < \infty$. The smallest possible value of $\limsup_n \|\psi_n\|\cb$ is called the \emph{Cowling-Haagerup constant} $\Lambda(\cG_{N,M})$. If $\Lambda(\cG_{N,M}) = 1$, we say that the $\lambda$-lattice has the \emph{complete metric approximation property}.
\end{enumlist}

In the next section, we will see how to define these rigidity and approximation properties for a $\lambda$-lattice $\cG$ intrinsically in terms of $\cG$, by a direct analogy with
group theory, via appropriate notions of representations and positive definite functions of $\cG$. We will in fact also define these properties for arbitrary rigid C$^*$-tensor categories.
\end{remark}

\section{\boldmath Multipliers on $\lambda$-lattices and C$^*$-tensor categories}\label{sec.mult-tensor-cat}

Let $N \subset M$ be a finite index subfactor and consider the Jones tower $N \subset M \subset M_1 \subset M_2 \subset \cdots$. We write $M_0 = M$ and $M_{-1} = N$. Choosing a tunnel
$$\cdots \subset M_{-2} \subset M_{-1} \subset M_0 \subset M_1 \subset \cdots \; ,$$
we define the \emph{extended standard invariant} $\cGtil_{N,M}$ as $(M_n' \cap M_m)_{n \leq m}$, which can be viewed as the quantum double of $\cG_{N,M}$, in the same way as we viewed the \SE-inclusion $M \ovt M\op \subset M \boxtimes_{e_N} M\op$ as the quantum double of $N \subset M$. By \cite[Corollary 1.8]{PP84}, the extended standard invariant does not depend on the choice of the tunnel.

Abstractly, an \emph{extended $\lambda$-lattice} is defined in exactly the same way as a $\lambda$-lattice \cite{Po94b} (note that in \cite{Po94b}, the terminology used is standard $\lambda$-lattice, but we use the shorter terminology $\lambda$-lattice and extended $\lambda$-lattice throughout our paper)~: it is a system of multimatrix algebras $(A_{nm})_{n \leq m}$, together with natural inclusions $A_{nm} \subset A_{ab}$ if $a \leq n \leq m \leq b$, containing Jones projections, equipped with tracial states and satisfying the axioms \cite[1.1.1--3 and 2.1.1]{Po94b} with the only difference being that the indices range over $\Z$ rather than $\N$.

We now define multipliers (of positive type, completely bounded) of an abstract extended \mbox{$\lambda$-lattice} $\cGtil$. In Proposition \ref{prop.ext-mult}, we will see that in the case where $\cGtil$ is the extended standard invariant of an extremal subfactor $N \subset M$, then there is a natural bijective correspondence between multipliers of the extended $\lambda$-lattice $\cGtil$ and \SE-multipliers of $N \subset M$ in the sense of Definition \ref{def.mult-subf}.

\begin{definition}\label{def.mult-si}
A \emph{multiplier} of an extended $\lambda$-lattice $\cGtil = (A_{nm})_{n \leq m}$ is a family of linear maps
$$\theta_{n,m} : A_{nm} \recht A_{nm} \quad\text{for all}\quad n \leq 0 \leq m \; ,$$
that are $A_{n0} \vee A_{0m}$-bimodular and that are compatible with the inclusions $A_{nm} \subset A_{ab}$ if $a \leq n$ and $b \geq m$.

The multiplier $\theta$ is said to be of \emph{positive type} if all $\theta_{n,m}$ are completely positive maps. We then say that $\theta$ is a \emph{cp-multiplier} of $\cGtil$.

The multiplier $\theta$ is said to be \emph{completely bounded} if $\|\theta\|\cb := \sup_{n,m} \|\theta_{n,m}\|\cb < \infty$. We then say that $\theta$ is a \emph{cb-multiplier} of $\cGtil$.
\end{definition}

Note that every cp-multiplier is completely bounded with $\|\theta\|\cb = \|\theta_{n,m}(1)\|$ for all $n,m$.

\begin{remark}\label{rem.how-to-extend}
Let $\cG = (A_{nm})_{0 \leq n \leq m}$ be an abstract $\lambda$-lattice. We have the natural shift maps $\sigma_a : A_{nm} \recht A_{n+a,m+a}$ that are bijective isomorphisms for $a$ even and bijective anti-isomorphisms for $a$ odd. Therefore, we can canonically associate an extended $\lambda$-lattice $\cGtil = (A_{nm})_{n \leq m}$ to $\cG$, as follows. As algebras, we define $A_{nm} \cong A_{0,-n+m}$ if $n$ is even and $A_{nm} = A_{0,-n+m}\op$ if $n$ is odd, but we use the shift (anti)-isomorphisms to define the inclusions $A_{nm} \subset A_{ab}$ when $a \leq n \leq m \leq b$.

Therefore, Definition \ref{def.mult-si} provides an intrinsic definition of cp-multipliers and cb-multipliers for an abstract $\lambda$-lattice $\cG$.

Denote by $A_{-\infty,+\infty}$ the tracial von Neumann algebra given as the direct limit of all $A_{nm}$ when $n \recht -\infty$ and $m \recht +\infty$. We similarly define $A_{-\infty,0}$ and $A_{0,+\infty}$. We write $T_0 := A_{-\infty,0} \vee A_{0,+\infty}$ and $S_0 := A_{-\infty,+\infty}$. A multiplier $\theta$ is called finitely supported if there exist $n_0 \leq m_0$ such that for all $n \leq m$, we have $\theta(A_{nm}) \subset T_0 A_{n_0,m_0} T_0$. A cp-multiplier $\theta$ is called $c_0$ if, viewed as an element of $\langle S_0,e_{T_0}\rangle$, $\theta$ belongs to the norm closure of $S_0 e_{T_0} S_0$.

We can now copy the end of Remark \ref{rem.all-prop-subf} and define, mutatis mutandis, (weak) amenability, the Haagerup property, CMAP and property~(T) directly for the $\lambda$-lattice $\cG$. All this will become more transparent in this and following sections, using the language of C$^*$-tensor categories.
\end{remark}

Let $N \subset M$ be an extremal subfactor. We denote by $M_\infty$ the direct limit II$_1$ factor $M \subset M_1 \subset M_2 \subset \cdots \subset M_\infty$. We view all relative commutants $M_n' \cap M_m$ as subalgebras of $M_\infty$. As in Section~\ref{sec.rep-theory}, we denote by $M \boxt_{e_N} M\op$ the symmetric enveloping algebra. We have a natural inclusion $M_\infty \subset M \boxt_{e_N} M\op$ so that again, all relative commutants $M_n' \cap M_m$ are subalgebras of $M \boxt_{e_N} M\op$ in a natural way.

\begin{proposition}\label{prop.ext-mult}
Let $N \subset M$ be an extremal subfactor of finite index $[M:N] = \lambda^{-1}$. Denote by $A_{nm} = M_n' \cap M_m$ the associated extended $\lambda$-lattice.

By restricting maps on $M \boxt_{e_N} M\op$ to $M_\infty$ and then to $A_{nm}$, we obtain a bijective, $\|\,\cdot\,\|\cb$-preserving correspondence between
\begin{itemlist}
\item cp \SE-multipliers, resp.\ cb \SE-multipliers, on the subfactor $N \subset M$ in the sense of Definition \ref{def.mult-subf},
\item completely positive, resp.\ completely bounded, normal linear maps $\psi : M_\infty \recht M_\infty$ that are $M \vee (M' \cap M_\infty)$-bimodular and that satisfy $\psi(M_n) \subset M_n$ for all $n \geq 0$,
\item cp-multipliers, resp.\ cb-multipliers on the extended $\lambda$-lattice $(A_{nm})_{n \leq m}$.
\end{itemlist}
\end{proposition}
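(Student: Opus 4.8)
The plan is to establish the three-way correspondence by a restriction-and-extension argument, using the fact that all objects involved are completely determined by their behaviour on a generating set of finite-dimensional subalgebras. The key structural facts I would invoke are: $M_\infty$ is the weak closure of $\bigcup_n M_n$; the relative commutants $A_{nm} = M_n' \cap M_m$ are finite-dimensional and $M_\infty = \bigvee_{n} (M' \cap M_n) \vee M$ in an appropriate sense (more precisely, the tunnel-tower identities give $M_n = (M' \cap M_n) \vee M$ up to the right normalization, and globally $M_\infty$ is generated by $M$ and $M' \cap M_\infty = \bigcup_n (M' \cap M_n)$); and, from \cite{Po99}, the precise structure of the \SE-inclusion $T \subset S$ with $M_\infty \subset S$ and $T \cap M_\infty = M \vee M\op \cap M_\infty$. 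I would also use that by extremality $S$ is tracial, so normal $T$-bimodular maps are automatically adjointable/compatible with the trace-preserving conditional expectations, which lets restriction preserve complete positivity, complete boundedness, and the cb-norm.

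First I would treat the correspondence between the second and third bullet points, which is the most elementary. Given a normal, $M \vee (M' \cap M_\infty)$-bimodular map $\psi : M_\infty \to M_\infty$ with $\psi(M_n) \subset M_n$, I restrict to each $A_{nm} = M_n' \cap M_m$. Since $M$ and $M_n' \cap M_m$ together generate $M_m$ (the basic-construction tower identity), and $\psi$ is $M$-bimodular with $\psi(M_m) \subset M_m$, the restriction $\theta_{n,m} := \psi|_{A_{nm}}$ lands in $A_{nm}$: indeed $\psi(A_{nm}) \subset M_m$, and for $x \in A_{nm}$ and $a \in M_n$ one checks $\psi(x)$ commutes with $a$ using that $\psi$ is bimodular over $M_{-n}' \cap M_\infty \supset$ a generating set for $M_n$ — here I would have to be slightly careful about which tunnel algebras play the role of the bimodule, but the content is the standard "pushing commutation through a bimodular map" computation. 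The maps $\theta_{n,m}$ are then automatically $A_{n0} \vee A_{0m}$-bimodular and compatible with the inclusions $A_{nm} \subset A_{ab}$, giving a multiplier of $\cGtil$; complete positivity and the cb-norm pass to the restriction, and the $c_0$/finite-rank conditions match up via the description of $\langle S, e_T\rangle$ restricted to $\langle S_0, e_{T_0}\rangle$. Conversely, given a multiplier $(\theta_{n,m})$ of $\cGtil$, compatibility with the inclusions means the $\theta_{n,m}$ assemble to a single map on $\bigcup_{n,m} A_{nm}$, which is $\|\cdot\|_2$-dense in (a corner of) $M_\infty$; combining with the identity on $M$ via bimodularity, and using that $\theta$ is $\|\cdot\|\cb$-bounded (a cp multiplier has $\|\theta\|\cb = \|\theta_{n,m}(1)\|$, so it is bounded), one extends by continuity and a density argument to a normal map $\psi$ on $M_\infty$ with the required bimodularity and the property $\psi(M_n) \subset M_n$. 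Complete positivity is preserved under such normal extension by a standard Kaplansky-density / lifting argument.

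Next I would connect the first and second bullet points. A cp (or cb) \SE-multiplier is a normal $T$-bimodular map $\Psi : S \to S$; restricting to $M_\infty \subset S$ and using that $M_\infty$ is $\Psi$-invariant — which follows from the $T$-bimodularity together with the decomposition of $L^2(S)$ into $T$-bimodules $\cL_\pi$ and the fact that $M_\infty$ corresponds to the sum of those $\cL_\pi$ indexed by $\Irr(\cC)$ via \cite[Theorem 4.5]{Po99} — yields a normal map $\psi := \Psi|_{M_\infty}$ that is $M \vee (M' \cap M_\infty)$-bimodular (since $M \vee M\op \supset M$ and $M' \cap M_\infty \subset M\op$'s commutant within $S$, using $T = M \ovt M\op$) and satisfies $\psi(M_n) \subset M_n$ (again by bimodularity over the relevant tunnel subalgebra of $M\op$, which cuts out $M_n$ as a relative commutant inside $S$). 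Complete positivity/boundedness and the cb-norm are preserved. For the reverse direction — the only genuinely substantive part — I need to extend a map $\psi$ on $M_\infty$ back to a $T$-bimodular map $\Psi$ on $S$. Here I would use that $S$ as a $T$-bimodule is $\bigoplus_\pi \cL_\pi$, that each $\cL_\pi$ is, as an $M \ovt M\op$-bimodule, $\pi \ot \pibar\op$, and that these bimodules are in turn built from the $M$-bimodules $L^2(M_n)$; concretely, $S$ is spanned over $T$ by the images of the $M_n' \cap M_m$ under the natural embeddings, so a $T$-bimodular map is determined by its restriction to $M_\infty$, and one defines $\Psi$ on $S_0 = \operatorname{span} T \cdot (\bigcup A_{nm}) \cdot T$ by $\Psi(a x b) = a \psi(x) b$ for $a, b \in T$, $x \in A_{nm}$. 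Well-definedness (independence of the representation $a x b$) is exactly where the $M \vee (M' \cap M_\infty)$-bimodularity hypothesis on $\psi$ is used, and boundedness on $L^2(S)$ follows from the orthogonality of the $\cL_\pi$'s together with the norm bound, after which one extends to all of $S$ by normality; complete positivity is then checked on the dense $*$-subalgebra $S_0$.

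I expect the main obstacle to be the reverse extension in the last paragraph — producing $\Psi$ on $S$ from $\psi$ on $M_\infty$ and verifying well-definedness and complete positivity. The delicate point is that $\psi$ is only assumed bimodular over $M \vee (M' \cap M_\infty)$, which is strictly smaller than $T = M \ovt M\op$ inside $S$; one must show this suffices to make $a x b \mapsto a \psi(x) b$ well-defined on $S_0$. The right tool is the bimodule decomposition of \cite[Theorem 4.5]{Po99}: the $T$-central structure identifies $\cL_\pi \cong \pi \ot \pibar\op$, and $\psi$ respects this decomposition precisely because it is bimodular over the image of $M' \cap M_\infty$ (which implements the "central" grading). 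Complete positivity of $\Psi$ then follows by writing an element of $M_n(S_0)$ in the form $\sum a_k x_k b_k$ with controlled supports and applying complete positivity of the various $\psi|_{A_{nm}}$ together with the conditional-expectation structure $E : S \to T$ that exists by extremality. Once this extension is in hand, bijectivity and $\|\cdot\|\cb$-preservation of the full correspondence, as well as the matching of the $c_0$/finite-rank refinements, are bookkeeping.
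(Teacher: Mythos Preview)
Your overall strategy is reasonable but the hard direction---extending a multiplier on the $\lambda$-lattice back to a $T$-bimodular map on $S$---is where the proposal has a genuine gap, and your route differs from the paper's in a way that makes this gap worse rather than better.

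First, a small but real error: your reason for $\Psi(M_\infty)\subset M_\infty$ is wrong. Every $\cL_\pi$, $\pi\in\Irr(\cC)$, appears in $L^2(S)$, so ``$M_\infty$ corresponds to the sum of those $\cL_\pi$'' does not single out $M_\infty$. The paper's argument is one line: $M_n=(M_{-n}\op)'\cap S$ by \cite[Proposition~2.6]{Po99}, and $M_{-n}\op\subset T$, so $T$-bimodularity forces $\Psi(M_n)\subset M_n$.

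The serious issue is your extension step. You write that the compatible family $(\theta_{n,m})$ ``extends by continuity and a density argument'' to $M_\infty$, and then that $\Psi(axb):=a\psi(x)b$ defines the \SE-multiplier. Two things are missing. The first is an $L^2$-bound: cb-boundedness of each $\theta_{n,m}$ on the finite-dimensional $A_{nm}$ does not by itself give a uniform $\|\cdot\|_2$-bound on $\bigcup A_{nm}$, and without this you cannot extend normally. The paper obtains $\|\theta_{n,m}(x)\|_2\le\|\theta\|\cb\,\|x\|_2$ by first invoking Propositions~\ref{prop.from-si-to-tc} and \ref{prop.descr-multi} to identify the multiplier with a function $\vphi:\Irr(\cC)\to\C$ satisfying $|\vphi(\pi)|\le\|\theta\|\cb$, and then using the isometric identification $\End(\al\ot\be)\cong\Mor(\albar\ot\al\ot\be\ot\bebar,\eps)$ under which $\theta$ acts diagonally by the scalars $\vphi(\pi)$. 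This categorical input is what you are missing, and it is not optional.

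The second missing ingredient is the actual extension mechanism. The paper does \emph{not} go through $M_\infty$; it extends $\theta$ first to $S_0:=A_{-\infty,+\infty}$ (now possible thanks to the $L^2$-bound), obtaining a $T_0$-bimodular map with $T_0=A_{-\infty,0}\vee A_{0,+\infty}$, and then applies \cite[Lemma~9.2]{Po99} to the commuting square
\[
\coms{T}{S}{T_0}{S_0}
\]
after verifying nondegeneracy (via the Jones-projection subfactors $P_n$) and the key hypothesis that for each $n$ there is a basis of $T$ over $T_0$ commuting with $A_{-n,n}$. Your formula $\Psi(axb)=a\psi(x)b$ is essentially what Lemma~9.2 produces, but the well-definedness and cb-norm preservation you flag as ``delicate'' are exactly what that lemma is for; the bimodule decomposition of $L^2(S)$ alone does not resolve them, because you need to control products $axb$ with $a,b\in T$ (not $T_0$), and your $\psi$ is only bimodular over the smaller algebra. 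Without invoking the commuting-square lemma or reproving its content, this step is incomplete.
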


Before proving this proposition, we look closer into the structure of multipliers on an extended $\lambda$-lattice.

The extended standard invariant $(M_n' \cap M_m)_{n \leq m}$ can be interpreted as follows in the language of C$^*$-tensor categories. In the same way as at the end of Section \ref{sec.rep-theory}, we consider the category $\cC$ of all $M$-bimodules that are isomorphic to a finite direct sum of $M$-subbimodules of $\bim{M}{L^2(M_n)}{M}$ for some $n$. For every $n \geq 1$, we identify
$$L^2(M_n) \cong \underbrace{L^2(M_1) \ot_M \cdots \ot_M L^2(M_1)}_{\text{$n$ factors}} \; .$$
We also have that
\begin{equation}\label{eq.link-si-end}
M_{-2n}' \cap M_{2n} \cong \End\bigl( \bim{M}{L^2(M_n) \ot_M L^2(M_n)}{M} \bigr)
\end{equation}
and under this isomorphism $M_{-2n}' \cap M$ corresponds to $\End(\bim{M}{L^2(M_n)}{M}) \ot 1$, while $M' \cap M_{2n}$ corresponds to $1 \ot \End(\bim{M}{L^2(M_n)}{M})$.

The category $\cC$ of $M$-bimodules generated by a finite index subfactor $N \subset M$ is a \emph{rigid C$^*$-tensor category.}
%
%
Entirely similarly as in Definition \ref{def.mult-si}, we can then define multipliers, as well as cp- and cb-multipliers, on a general rigid C$^*$-tensor category.

\begin{definition}\label{def.mult-tc}
A \emph{multiplier} on a rigid C$^*$-tensor category $\cC$ is a family of linear maps
$$\theta_{\al,\be} : \End(\al \ot \be) \recht \End(\al \ot \be) \quad \text{for all} \;\; \al,\be \in \cC$$
that satisfy the following compatibility conditions.
\begin{equation}\label{eq.compat-multipl}
\begin{split}
\theta_{\al_2,\be_2}(U X V^*) &= U \, \theta_{\al_1,\be_1}(X) \, V^* \;\;,\\
\theta_{\al_2 \ot \al_1,\be_1 \ot \be_2}(1 \ot X \ot 1) &= 1 \ot \theta_{\al_1,\be_1}(X) \ot 1 \;\; ,
\end{split}
\end{equation}
for all $\al_i,\be_i \in \cC$ and all $X \in \End(\al_1 \ot \be_1)$ and $U,V \in \Mor(\al_2,\al_1) \ot \Mor(\be_2,\be_1)$. In particular, all $\theta_{\al,\be}$ are $(\End(\al) \ot \End(\be))$-bimodular


The multiplier $\theta$ is said to be of \emph{positive type} if all $\theta_{\al,\be}$ are completely positive maps. We then call $\theta$ a \emph{cp-multiplier}.

The multiplier $\theta$ is said to be \emph{completely bounded} if $\|\theta\|\cb := \sup_{\al,\be \in \cC} \|\theta_{\al,\be}\|\cb < \infty$. We then call $\theta$ a \emph{cb-multiplier}.
\end{definition}

\begin{proposition}\label{prop.from-si-to-tc}
Let $N \subset M$ be an extremal subfactor of index $[M:N] = \lambda^{-1}$. Denote by $A_{nm} = M_n' \cap M_m$ the associated extended $\lambda$-lattice, and by $\cC$ the category of $M$-bimodules generated by $N \subset M$ as above.

There is a unique bijective correspondence between multipliers on $(A_{nm})$ (in the sense of Definition \ref{def.mult-si}) and multipliers on $\cC$ (in the sense of Definition \ref{def.mult-tc}) that is compatible with the isomorphism \eqref{eq.link-si-end}. This bijective correspondence preserves being of positive type, being completely bounded, and the $\|\,\cdot\,\|\cb$-norm.
\end{proposition}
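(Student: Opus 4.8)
\textbf{Proof plan for Proposition \ref{prop.from-si-to-tc}.}

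The plan is to translate both sides of the claimed bijection into statements about the finite-dimensional Hilbert spaces $\Mor(\gamma, \alpha \ot \beta)$ and then observe that the compatibility conditions on each side force a multiplier to be completely determined by a single scalar-valued function on $\Irr(\cC)$, after which the correspondence becomes almost tautological. First I would set up notation: using the identification $L^2(M_n) \cong L^2(M_1)^{\ot_M n}$ and \eqref{eq.link-si-end}, the algebra $A_{-2n,2n} = M_{-2n}' \cap M_{2n}$ is $\End(\bim{M}{L^2(M_n) \ot_M L^2(M_n)}{M})$, and more generally $A_{nm}$ for general (not necessarily even, not necessarily symmetric) indices is $\End$ of a tensor product of $|n|+m$ copies of $L^2(M_1)$, viewed as an $M$-bimodule; the embedding $A_{nm} \subset A_{ab}$ corresponds to $X \mapsto 1 \ot X \ot 1$ (with a left-right asymmetry governed by the parities of $n$ and $a$, since odd shifts are anti-isomorphisms — this is exactly the ``$j_\al$'' bookkeeping of Remark \ref{rem.nonextremal}). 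Under this dictionary, the two compatibility axioms of Definition \ref{def.mult-si} (bimodularity over $A_{n0} \vee A_{0m}$, and compatibility with inclusions) become precisely the two conditions \eqref{eq.compat-multipl} of Definition \ref{def.mult-tc}, restricted to the objects $\al = \be = L^2(M_1)^{\ot k}$. So the content of the proposition is that a multiplier on $\cC$ in the sense of Definition \ref{def.mult-tc} is already determined by, and can be reconstructed from, its values on these ``diagonal powers'' of $L^2(M_1)$.

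Next I would prove that structural fact. The key point is that every irreducible $\gamma \in \Irr(\cC)$ occurs as a subobject of $L^2(M_n) = L^2(M_1)^{\ot_M n}$ for some $n$, and hence as a subobject of $\al \ot \be$ for suitable $\al, \be$ of the special diagonal form; combined with semisimplicity, $\End(\al \ot \be) \cong \bigoplus_\gamma B(\Mor(\gamma, \al \ot \be))$. The first compatibility relation in \eqref{eq.compat-multipl} (conjugation by $U \ot V$) says that $\theta_{\al,\be}$ only depends on $\al, \be$ through the Hilbert spaces $\Mor(\gamma, \al \ot \be)$ in a way that intertwines isometries; feeding in isometries $\Mor(\gamma, \al \ot \be) \recht \Mor(\gamma', \al' \ot \be')$ arising from inclusions of $\gamma$ into larger objects, together with the second ($1 \ot X \ot 1$) relation, one shows by a standard ``naturality + Schur's lemma'' argument that there is a function $\varphi : \Irr(\cC) \recht \C$ such that on the block $B(\Mor(\gamma, \al \ot \be))$ the map $\theta_{\al,\be}$ acts as... wait, that is not quite right in general — it need not be scalar on each block. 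More precisely, the cleanest route is: fix $\gamma$ and consider the ``universal'' object $L^2(M_n)$ containing $\gamma$; then $\End(L^2(M_n) \ot L^2(M_n))$ contains, in the $\gamma \ot \gammabar$-isotypic picture, the canonical rank-one projection built from the duality morphisms, and the two axioms pin down $\theta$ on it; extending by the first axiom to arbitrary $\al, \be$ and arbitrary morphisms shows $\theta$ is reconstructed from its restriction to these standard objects. This is exactly the data of a multiplier on $(A_{nm})$. Being of positive type and the cb-norm are local block-diagonal conditions on $\End(\al \ot \be) \cong \bigoplus_\gamma B(\Mor(\gamma,\al\ot\be))$, preserved in both directions because the blocks appearing for the special objects $L^2(M_1)^{\ot k}$ already exhaust all of $\Irr(\cC)$ and the inclusion maps are unital completely positive (indeed $*$-homomorphic up to the flip on odd shifts).

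The main obstacle I expect is bookkeeping rather than anything conceptual: keeping the left/right (equivalently, the $\al$/$\be$) distinction consistent when passing between the one-sided index conventions of the $\lambda$-lattice $(A_{nm})$ — where negative indices correspond to left tensor factors and positive to right, with odd shifts introducing anti-isomorphisms and hence the conjugation $j_\al$ — and the symmetric ``$\al \ot \be$'' formalism of Definition \ref{def.mult-tc}. One must check that a multiplier on $(A_{nm})$ does not see more than $\varphi$ allows: a priori it is a compatible family of maps on the $A_{nm}$, and one needs the inclusions $A_{nm} \subset A_{ab}$ to generate enough relations to force the reconstruction; this is where the $\lambda$-lattice axioms \cite[1.1.1--3 and 2.1.1]{Po94b} (existence of Jones projections, the ``commuting square'' and $\tau$-independence conditions) enter, guaranteeing that $A_{-\infty,+\infty}$ is all of the relevant von Neumann algebra $S$ and that its $T$-bimodule decomposition is multiplicity-free with pieces exactly $\cL_\pi \cong \pi \ot \pibar\op$ — i.e., the same decomposition that appears in Theorem \ref{thm.full-Cstar-subfactor}. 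Once that is in place, the uniqueness and the preservation of all three structures (positive type, complete boundedness, cb-norm) are immediate from the block decomposition.
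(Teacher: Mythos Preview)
Your plan takes a longer route than the paper and, more importantly, skips the one genuinely nontrivial step. The paper does not pass through the reduction to a function $\vphi : \Irr(\cC) \recht \C$ at all; that is the content of the \emph{next} result (Proposition~\ref{prop.descr-multi}), and conflating the two obscures what is actually being proved here. The paper's argument is direct: from a $\lambda$-lattice multiplier to a category multiplier, one picks for given $\al,\be \in \cC$ an $n$ and projections $p \in A_{-2n,0}$, $q \in A_{0,2n}$ with $\al \cong p L^2(M_n)$, $\be \cong q L^2(M_n)$, identifies $\End(\al \ot \be)$ with the corner $pq\, A_{-2n,2n}\, pq$, and simply restricts $\theta_{-2n,2n}$. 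No block decompositions or Schur-type arguments are needed.

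The converse is where the real content lies, and your proposal does not address it. From a category multiplier one obtains via \eqref{eq.link-si-end} maps $\theta_{-2n,2n}$ on $A_{-2n,2n}$, but a multiplier on the extended $\lambda$-lattice is a family $\theta_{a,b}$ indexed by \emph{all} $a \leq 0 \leq b$, not just the even symmetric pairs; one must show $\theta_{-2n,2n}(A_{ab}) \subset A_{ab}$ for every intermediate $-2n \leq a \leq 0 \leq b \leq 2n$. Your claim that the axioms of Definition~\ref{def.mult-si} ``become precisely'' the conditions \eqref{eq.compat-multipl} restricted to diagonal powers glosses over exactly this: compatibility with the full system of inclusions $A_{ab} \subset A_{-2n,2n}$ is strictly more than compatibility along the chain $A_{-2k,2k} \subset A_{-2l,2l}$, and your proposed identification of $A_{nm}$ with an endomorphism algebra in $\cC$ is not available for odd indices, since $\cC$ consists of $M$-bimodules only. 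The paper's device is a Jones-projection argument: for $x \in A_{ab}$, take the Jones projection $e$ of the basic construction $M_b \subset M_{2n} \subset M_{4n-b}$, observe that $e \in A_{0,4n}$ and $e$ commutes with $x$, and use the $A_{-4n,0} \vee A_{0,4n}$-bimodularity of $\theta_{-4n,4n}$ to deduce that $y := \theta_{-2n,2n}(x)$ also commutes with $e$, forcing $y \in M_b$; a mirror argument handles the left side. Your invocation of the SE-inclusion and the multiplicity-free decomposition of $L^2(S)$ is unnecessary here and belongs to a different part of the paper.
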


\begin{proof}
First assume that $\theta_{nm}$ is a multiplier on the $\lambda$-lattice $(A_{nm})$. Given $\al,\be \in \cC$, we can choose $n$ large enough and projections $p,q \in \End(\bim{M}{L^2(M_n)}{M})$ such that $\al \cong p L^2(M_n)$ and $\be \cong q L^2(M_n)$ as $M$-bimodules. Using \eqref{eq.link-si-end}, we can then view $p \in A_{-2n,0}$, $q \in A_{0,2n}$ and $\End(\al \ot \be) \cong pq A_{-2n,2n} pq$. We define $\theta_{\al,\be}$ by restricting $\theta_{-2n,2n}$. It is easy to check that $\theta_{\al,\be}$ is an unambiguously defined multiplier.

Conversely, assume that $\theta_{\al,\be}$ is a multiplier on $\cC$. By \eqref{eq.link-si-end}, we get the system of maps $\theta_{-2n,2n}$ on $A_{-2n,2n}$ that are $A_{-2n,0} \vee A_{0,2n}$-bimodular. We must show that $\theta_{-2n,2n}(A_{ab}) \subset A_{ab}$ for all $-2n \leq a \leq 0 \leq b \leq 2n$. Once this is proved, we obtain an unambiguously defined multiplier on the $\lambda$-lattice $(A_{nm})$. Take $x \in A_{ab}$. We have to prove that $y:=\theta_{-2n,2n}(x)$ belongs to $A_{ab}$. The inclusion $M_b \subset M_{2n} \subset M_{4n-b}$ is a basic construction. Denote by $e \in M_{4n-b}$ the corresponding Jones projection. Note that $e \in A_{0,4n}$ and that $xe = ex$. We therefore get that
$$y e = \theta_{-4n,4n}(x) \, e = \theta_{-4n,4n}(x e) = \theta_{-4n,4n}(e x) = e y \; .$$
It follows that $y \in M_b$. Similarly exploiting the basic construction $M_{-4n-a} \subset M_{-2n} \subset M_a$, we get that $y \in A_{ab}$.

It is straightforward to see that the above correspondence preserves being of positive type, being completely bounded, and the $\|\,\cdot\,\|\cb$-norm.
\end{proof}

As we explain now, multipliers on a rigid C$^*$-tensor category are exactly labeled by functions $\Irr(\cC) \recht \C$, where $\Irr(\cC)$ is the set of equivalence classes of irreducible objects in $\cC$ (see also Lemma \ref{lem.all-the-same} for a reinterpretation in the context of \SE-correspondences).

In the proof of the following proposition, and also in later sections, we use the following notations. For every $\al \in \cC$, we choose a standard solution of the conjugate equations (in the sense of \cite{LR95}, see also \cite[Definition 2.2.12]{NT13}): $s_\al \in \Mor(\al \ot \albar,\eps)$ and $t_\al \in \Mor(\albar \ot \al,\eps)$ such that
\begin{equation}\label{eq.standard-sol}
(t_\al^* \ot 1) (1 \ot s_\al) = 1 \;\; , \;\; (s_\al^* \ot 1)(1 \ot t_\al)=1 \quad\text{and}\quad t_\al^*(1 \ot X) t_\al = s_\al^* (X \ot 1) s_\al
\end{equation}
for all $X \in \End(\al)$. These $s_\al,t_\al$ are unique up to unitary equivalence and the functional $\Tr_\al(X) = t_\al^*(1 \ot X) t_\al = s_\al^* (X \ot 1) s_\al$ on $\End(\al)$ is uniquely determined and tracial. The trace $\Tr_\al$ is non-normalized: $\Tr_\al(1) = d(\al)$, the categorical dimension of $\al$.

\begin{proposition}\label{prop.descr-multi}
A multiplier $\theta$ on a rigid C$^*$-tensor category $\cC$ can be uniquely extended to a family of linear maps
$$\theta_{\al \ot \be,\gamma \ot \delta} : \Mor(\al \ot \be,\gamma \ot \delta) \recht \Mor(\al \ot \be,\gamma \ot \delta) \quad\text{for all}\;\; \al,\be,\gamma,\delta \in \cC$$
satisfying
$$\theta_{\al_2 \ot \be_2,\gamma_2 \ot \delta_2}(U X V) =  U \; \theta_{\al_1 \ot \be_1,\gamma_1 \ot \delta_1}(X) \; V$$ for all $X \in \Mor(\al_1 \ot \be_1,\gamma_1 \ot \delta_1)$, $U \in \Mor(\al_2,\al_1) \ot \Mor(\be_2,\be_1)$ and $V \in \Mor(\gamma_1,\gamma_2) \ot \Mor(\delta_1,\delta_2)$. If $\theta$ is completely bounded, we have $\|\theta_{\al \ot \be,\gamma \ot \delta}\|\cb \leq \|\theta\|\cb$ for all $\al,\be,\gamma,\delta$.

For every $\al \in \Irr(\cC)$, the space $\Mor(\al \ot \albar,\eps)$ is one-dimensional and therefore $\theta_{\al \ot \albar,\eps \ot \eps}$ is given by multiplication with $\vphi(\al) \in \C$.

Conversely, to every map $\vphi : \Irr(\cC) \recht \C$ corresponds a unique multiplier $\theta$ on $\cC$ such that $\theta_{\al \ot \albar,\eps \ot \eps}$ is given by multiplication with $\vphi(\al)$ for every $\al \in \Irr(\cC)$.
\end{proposition}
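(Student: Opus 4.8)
The plan is to prove the three assertions in order: (1) existence and uniqueness of the extension to all $\Mor$-spaces, (2) the fact that $\theta_{\al\ot\albar,\eps\ot\eps}$ is scalar multiplication by some $\vphi(\al)$, and (3) the converse reconstruction of $\theta$ from an arbitrary function $\vphi$ on $\Irr(\cC)$. The main technical device throughout is the standard solution $(s_\al,t_\al)$ of the conjugate equations and the Frobenius-reciprocity isomorphisms $\Mor(\al\ot\be,\gamma)\cong\Mor(\be,\albar\ot\gamma)\cong\Mor(\al,\gamma\ot\bebar)$ that they implement. The semisimplicity of $\cC$ reduces everything to the case of $\Mor$-spaces between products of irreducibles, and then to $\End$-spaces, where $\theta$ is already defined.

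For the extension in the first paragraph, I would proceed as follows. Given $X\in\Mor(\al\ot\be,\gamma\ot\delta)$, I want to turn it into an endomorphism of a tensor product by "bending" the target leg $\gamma\ot\delta$ down. Concretely, choose conjugates $\gammabar,\delta$ and use $s_\gamma,t_\delta$ (or the standard solutions) to identify $\Mor(\al\ot\be,\gamma\ot\delta)$ with $\Mor(\gammabar\ot\al\ot\be\ot\delta\kern-0.1em{}^{-}\!{}\, ,\eps)$, and then with a subspace of $\End(\gammabar\ot\al)$ or more symmetrically with an element of $\Mor$ between $\eps$ and a single tensor product; then one can express $X$ in terms of morphisms to which the already-defined $\theta_{\al',\be'}$ applies, and \emph{define} $\theta_{\al\ot\be,\gamma\ot\delta}(X)$ by the corresponding formula. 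The content to check is (a) well-definedness — independence of the chosen standard solutions and of the bookkeeping — which follows from the uniqueness up to unitaries of standard solutions together with the first compatibility relation in \eqref{eq.compat-multipl}; (b) the claimed bimodularity under $U,V$, which is the second relation in \eqref{eq.compat-multipl} after bending legs; and (c) the complete-boundedness estimate $\|\theta_{\al\ot\be,\gamma\ot\delta}\|\cb\le\|\theta\|\cb$, which holds because bending a leg is implemented by an isometry (after suitable normalization by $d(\cdot)^{1/2}$) between the relevant operator systems, so it does not increase the cb-norm. Uniqueness of the extension is immediate from the required bimodularity, since any $X$ can be written as a sum of $U X_0 V$ with $X_0$ in an $\End$-space.

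The second assertion is then essentially a triviality: for $\al\in\Irr(\cC)$, rigidity and irreducibility of $\eps$ give $\dim\Mor(\al\ot\albar,\eps)=\mult(\eps,\al\ot\albar)=1$, so any linear endomorphism of that one-dimensional space is multiplication by a scalar, which we name $\vphi(\al)$; this is how $\vphi$ is extracted from $\theta$. For the converse — building $\theta$ from a given $\vphi:\Irr(\cC)\recht\C$ — I would define $\theta_{\al,\be}$ on $\End(\al\ot\be)$ by decomposing $1_{\al\ot\be}=\sum_{\gamma}\sum_{k}V_{\gamma,k}V_{\gamma,k}^*$ over $\gamma\in\Irr(\cC)$ and onb's $V_{\gamma,k}$ of $\Mor(\gamma,\al\ot\be)$, and declaring
\begin{equation*}
\theta_{\al,\be}(X)=\sum_{\gamma\in\Irr(\cC)}\vphi(\gamma)\sum_{k,l}V_{\gamma,k}\,V_{\gamma,k}^*XV_{\gamma,l}\,V_{\gamma,l}^* \; ;
\end{equation*}
in other words, $\theta_{\al,\be}$ multiplies the "$\gamma$-isotypic block" of $\End(\al\ot\be)\cong\bigoplus_\gamma M_{\mult(\gamma,\al\ot\be)}(\C)$ by $\vphi(\gamma)$. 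One checks directly that this is independent of the chosen bases, that $\theta_{\al\ot\albar,\eps\ot\eps}$ is multiplication by $\vphi(\al)$, and then that the two compatibility conditions in \eqref{eq.compat-multipl} hold — the first because conjugation by the $V_{\gamma,k}$ is natural under composing with $U,V$, and the second because the irreducible subobjects of $\al_1\ot\be_1$ appearing inside $\al_2\ot\al_1\ot\be_1\ot\be_2$ are exactly recorded by the inclusion of $\Mor$-spaces, so the isotypic decomposition is compatible with $1\ot X\ot 1$. Uniqueness of $\theta$ given $\vphi$ follows because the first paragraph's extension forces $\theta_{\al\ot\albar,\eps\ot\eps}$, hence all $\theta_{\gamma,\gamma'}$ via Frobenius reciprocity, to be determined by the numbers $\vphi(\gamma)$.

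The step I expect to be the main obstacle is the \emph{well-definedness and compatibility} in the first paragraph — i.e., verifying that the leg-bending construction produces a genuinely basis-independent family $\theta_{\al\ot\be,\gamma\ot\delta}$ satisfying \eqref{eq.compat-multipl} in its extended form, rather than just a formula that looks canonical. This is where the precise properties \eqref{eq.standard-sol} of standard solutions, and the coherence of Frobenius reciprocity (the "zig-zag" identities), have to be used carefully; once that naturality is in place, the positive-type and cb statements, and the converse direction, are routine diagrammatic bookkeeping.
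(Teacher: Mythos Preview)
Your extension in part~1 (leg-bending via Frobenius reciprocity) is workable but more laborious than what the paper does: the paper simply sets $\mu=\al\oplus\gamma$, $\eta=\be\oplus\delta$, and defines $\theta_{\al\ot\be,\gamma\ot\delta}(X)$ as the appropriate corner of $\theta_{\mu,\eta}$ applied to $(u_\al\ot u_\be)X(u_\gamma^*\ot u_\delta^*)\in\End(\mu\ot\eta)$. This gives all the naturality and the cb-estimate in one line, with no zig-zag bookkeeping.

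The real problem is your construction in part~3. You propose to let $\theta_{\al,\be}$ act on $\End(\al\ot\be)\cong\bigoplus_\gamma M_{\mult(\gamma,\al\ot\be)}(\C)$ by multiplying the $\gamma$-block by $\vphi(\gamma)$, where $\gamma$ runs over the irreducible constituents of $\al\ot\be$. This does \emph{not} define a multiplier: it violates the second compatibility relation in \eqref{eq.compat-multipl}. Take $\al_1=\be_1=\eps$, $X=1$, and $\al_2=\pi$, $\be_2=\pibar$ for any irreducible $\pi\neq\eps$. Your $\theta_{\eps,\eps}(1)=\vphi(\eps)$, so the right-hand side is $\vphi(\eps)\cdot 1$; but your $\theta_{\pi,\pibar}(1)=\sum_\gamma\vphi(\gamma)\,P^{\pi\ot\pibar}_\gamma$, which is not a scalar unless $\vphi$ is constant on the constituents of $\pi\ot\pibar$. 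Consequently your claim that ``$\theta_{\al\ot\albar,\eps\ot\eps}$ is multiplication by $\vphi(\al)$'' also fails: with your formula it is multiplication by $\vphi(\eps)$, not $\vphi(\al)$.

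The correct construction weights by the irreducible that appears \emph{between} the two tensor factors, not in their product. Concretely, the paper uses the bijection $\Gamma:\End(\al\ot\be)\to\Mor(\albar\ot\al\ot\be\ot\bebar,\eps)$ given by $\Gamma(X)=(1\ot X\ot 1)(t_\al\ot s_\be)$ and sets
\[
\theta^\vphi_{\al,\be}(X)=\sum_{\pi\in\Irr(\cC)}\vphi(\pi)\,\Gamma^{-1}\bigl((P^{\albar\ot\al}_\pi\ot 1\ot 1)\Gamma(X)\bigr),
\]
i.e.\ one decomposes $\albar\ot\al$ (equivalently $\be\ot\bebar$) into irreducibles $\pi$, not $\al\ot\be$. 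This is what makes the second compatibility hold: tensoring on the outside by $\al_2,\be_2$ leaves $\albar\ot\al$ untouched, so the projections $P^{\albar\ot\al}_\pi$ are unaffected. Your isotypic decomposition of $\al\ot\be$ has no such stability.
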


\begin{proof}
Define $\mu = \al \oplus \gamma$ and $\eta = \be \oplus \delta$. Denote by $u_\al \in \Mor(\mu,\al)$ and $u_\gamma \in \Mor(\mu,\gamma)$ the natural isometries. Similarly define $u_\be$ and $u_\delta$. We can then define
$$\theta_{\al \ot \be,\gamma \ot \delta}(X) = (u_\al^* \ot u_\beta^*) \, \theta_{\mu,\eta}\bigl( (u_\al \ot u_\beta) X (u_\gamma^* \ot u_\delta^*) \bigr) \, (u_\gamma \ot u_\delta) \; .$$
If $\theta$ is completely bounded, we indeed have that $\|\theta_{\al \ot \be,\gamma \ot \delta}\|\cb \leq \|\theta\|\cb$.

Let $\vphi : \Irr(\cC) \recht \C$ be a map. It remains to prove that there is a unique multiplier $\theta$ on $\cC$ such that
$$\theta_{\al \ot \albar,\eps \ot \eps}(v) = \vphi(\al) v \quad\text{for all}\;\;  \al \in \Irr(\cC) , v \in \Mor(\al \ot \albar,\eps) \; .$$
Fix $\al,\be \in \cC$. Using the notation of \eqref{eq.standard-sol}, we consider the linear bijection
\begin{equation}\label{eq.id-end-mor}
\Gamma : \End(\al \ot \be) \recht \Mor(\albar \ot \al \ot \be \ot \bebar,\eps) : \Gamma(X) = (1 \ot X \ot 1)(t_\al \ot s_\be) \; .
\end{equation}
For every $\delta \in \cC$ and $\pi \in \Irr(\cC)$, denote by $P^\delta_\pi \in \End(\delta)$ the projection onto the direct sum of all subobjects of $\delta$ that are isomorphic with $\pi$. Since $\Mor(\pi_1 \ot \overline{\pi_2},\eps) = \{0\}$ if $\pi_1,\pi_2$ are distinct elements of $\Irr(\cC)$, we get that
$$(P^\delta_\pi \ot 1) v = (1 \ot P^\eta_{\pibar})v \quad\text{for all}\;\; \delta,\eta \in \cC, \pi \in \Irr(\cC) , v \in \Mor(\delta \ot \eta,\eps) \; .$$
We can then define the linear map $\theta_{\al,\be} : \End(\al \ot \be) \recht \End(\al \ot \be)$ by the formula
\begin{align}
\theta_{\al,\be}(X) &= \sum_{\pi \in \Irr(\cC)} \vphi(\pi) \; \Gamma^{-1} \bigl( (P^{\albar \ot \al}_\pi \ot 1 \ot 1) \Gamma(X) \bigr) \label{eq.first-def}\\
&= \sum_{\pi \in \Irr(\cC)} \vphi(\pi) \; \Gamma^{-1} \bigl( (1 \ot 1 \ot P^{\be \ot \bebar}_{\pibar}) \Gamma(X) \bigr) \; . \label{eq.second-def}
\end{align}
Using the bijective linear map
$$\Phi : \End(\al \ot \be) \recht \Mor(\albar \ot \al \ot \be,\be) : \Phi(X) = (1 \ot X)(t_\al \ot 1) \; ,$$
it follows from \eqref{eq.first-def} that
$$\theta_{\al,\be}(X) = \sum_{\pi \in \Irr(\cC)} \vphi(\pi) \; \Phi^{-1}\bigl( (P^{\albar \ot \al}_\pi \ot 1) \Phi(X) \bigr) \; .$$
This formula implies that $\theta_{\al,\be}$ does not depend on the choice of $s_\be$ and that the ``right'' half of the compatibility relations in \eqref{eq.compat-multipl} hold:
\begin{align*}
\theta_{\al,\be_2}\bigl((1 \ot U) X (1 \ot V^*)\bigr) &= (1 \ot U) \, \theta_{\al,\be_1}(X) \, (1 \ot V^*) \;\; ,\\
\theta_{\al,\be_1 \ot \be_2}(X \ot 1) &= \theta_{\al,\be_1}(X) \ot 1 \;\;,
\end{align*}
for all $X \in \End(\al \ot \be_1)$ and $U,V \in \Mor(\be_2,\be_1)$.
Similarly using \eqref{eq.second-def}, we get that the maps $\theta_{\al,\be}$ do not depend on the choice of $t_\al$ and satisfy the ``left'' half of the compatibility relations in \eqref{eq.compat-multipl}. So we have found a multiplier $\theta$ on $\cC$. Its uniqueness is obvious from the above construction.
\end{proof}

We finally prove Proposition \ref{prop.ext-mult}.

\begin{proof}[Proof of Proposition \ref{prop.ext-mult}]
Denote by $T \subset S$ the \SE-inclusion associated with $N \subset M$. By \cite[Proposition 2.6]{Po99}, we have $M_n = (M_{-n}\op)' \cap S$ for all $n \in \Z$. Therefore, every normal $T$-bimodular map $\psi : S \recht S$ satisfies $\psi(M_n) \subset M_n$ for all $n \in \N$ and thus, $\psi(M_\infty) \subset M_\infty$.

Fix a cb-multiplier $\theta$ on $(A_{nm})_{n \leq m}$. The only remaining nontrivial statement to prove is that $\theta$ uniquely extends to a normal completely bounded $T$-bimodular map $\psi : S \recht S$ with $\|\psi\|\cb = \|\theta\|\cb$. Denote by $\cC$ the category of $M$-bimodules generated by $N \subset M$. Consider the function $\vphi : \Irr(\cC) \recht \C$ determining $\theta$ and given by Propositions \ref{prop.from-si-to-tc} and \ref{prop.descr-multi}. Since $\theta_{\pi \ot \pibar,\eps \ot \eps}$ acts by multiplication with $\vphi(\pi)$ on $\Mor(\pi \ot \pibar,\eps)$, it follows that $|\vphi(\pi)| \leq \|\theta\|\cb$ for all $\pi \in \Irr(\cC)$.

The finite dimensional C$^*$-algebras $\End(\eta)$, $\eta \in \cC$, are equipped with the canonical categorical tracial state $d(\eta)^{-1} \Tr_\eta$ and corresponding $\|\,\cdot\,\|_2$-norm. Since the subfactor $N \subset M$ is extremal, the isomorphism \eqref{eq.link-si-end} is trace preserving. Using standard solutions $t_\al \in \Mor(\albar \ot \al,\eps)$ and $s_\be \in \Mor(\be \ot \bebar,\eps)$ of the conjugate equations as in \eqref{eq.standard-sol}, the bijection $\Gamma$ in \eqref{eq.id-end-mor} satisfies $\|\Gamma(X)\| = \sqrt{d(\al)\,d(\be)} \, \|X\|_2$ for all $X \in \End(\al \ot \be)$. Using \eqref{eq.first-def}, we then conclude that
\begin{equation}\label{eq.bound-L2}
\|\theta_{nm}(x)\|_2 \leq \|\theta\|\cb \, \|x\|_2 \quad\text{for all}\;\; n \leq m, x \in A_{nm} \; .
\end{equation}
Define $S_0 := A_{-\infty,+\infty}$ as the tracial von Neumann algebra obtained as the direct limit of the $A_{nm}$ with $n \recht -\infty$ and $m \recht +\infty$. We also consider $T_0 \subset S_0$ defined as $T_0 := A_{-\infty,0} \vee A_{0,+\infty}$, where $A_{-\infty,0}$ is generated by all $A_{n0}$, $n \leq 0$ and $A_{0,+\infty}$ by all $A_{0n}$, $n \geq 0$.

Since $\theta_{nm} : A_{nm} \recht A_{nm}$ is a compatible family of linear maps, with $\|\theta_{nm}\|\cb \leq \|\theta\|\cb < \infty$ for all $n,m$ and satisfying \eqref{eq.bound-L2}, we obtain a unique normal completely bounded map $\psi_0 : S_0 \recht S_0$ such that $\|\psi_0\|\cb = \|\theta\|\cb$, such that the restriction of $\psi_0$ to $A_{nm}$ equals $\theta_{nm}$ and such that $\psi_0$ is $T_0$-bimodular. To extend $\psi_0$ to a $T$-bimodular linear map $\psi : S \recht S$, we apply \cite[Lemma 9.2]{Po99} to the commuting square
\begin{equation}\label{eq.ourcs}
\coms{T}{S}{T_0}{S_0}
\end{equation}
We first note that \cite[Lemma 9.2]{Po99} works equally well for completely bounded maps as for completely positive maps and provides completely bounded extensions without increasing the $\|\,\cdot\,\|\cb$-norm. To apply \cite[Lemma 9.2]{Po99}, we first have to check that the commuting square \eqref{eq.ourcs} is nondegenerate. For every $n \in \Z$, we denote by $e_n \in M_{n+1}$ the Jones projection for the basic construction $M_{n-1} \subset M_{n} \subset M_{n+1}$. Define $P_n$ as the von Neumann algebra generated by $\{e_k \mid k \leq n-1\}$. By \cite[Theorem 4.1.1]{Jo82}, for all $n \leq m$, we have that $P_n \subset P_m$ is a subfactor with the same index as $M_n \subset M_m$. Therefore, the commuting square
\begin{equation}\label{eq.basiccs}
\coms{M_n}{M_m}{P_n}{P_m}
\end{equation}
is nondegenerate. Note that $P_m \subset S_0$ for every $m$. Taking $n=0$ and letting $m \recht +\infty$ in \eqref{eq.basiccs}, we conclude that
$$\coms{M}{M_\infty}{A_{-\infty,0}}{S_0} \qquad\text{and thus also}\qquad \coms{T}{S}{T_0}{S_0}$$
are nondegenerate commuting squares.

By definition, the union $\bigcup_{n \geq 0} A_{-n,n}$ is dense in $S_0$. We claim that for every fixed $n \geq 0$, there exists a basis of $T$ over $T_0$ that commutes with $A_{-n,n}$. Once this claim is proved, the proposition follows from \cite[Lemma 9.2]{Po99} because $\psi_0(A_{-n,n}) \subset A_{-n,n}$. To prove the claim, we use that \eqref{eq.basiccs} is a nondegenerate commuting square for $-n$ and $0$. Therefore, also
$$\coms{M_{-n} \vee M_{-n}\op}{T}{A_{-\infty,-n} \vee A_{n,\infty}}{T_0}$$
is a nondegenerate commuting square for every $n \geq 0$. It then follows that there exists a basis of $T$ over $T_0$ that belongs to $M_{-n} \vee M_{-n}\op$. Inside $T$, we have that $M_{-n} \vee M_{-n}\op$ commutes with $A_{-n,n}$ and so, the claim is proved.
\end{proof}

The following lemma will be useful later: to check that a multiplier $\theta$ on a rigid C$^*$-tensor category $\cC$ is of positive type, it suffices to check the positivity on a specific set of operators. In the formulation of the lemma, we use the standard solutions of the conjugate equations as in \eqref{eq.standard-sol}.

\begin{lemma}\label{lem.pos-check}
Let $\cC$ be a rigid C$^*$-tensor category and $\theta$ a multiplier on $\cC$. The following conditions are equivalent.
\begin{enumlist}
\item For all $\al,\be \in \cC$, the map $\theta_{\al,\be} : \End(\al \ot \be) \recht \End(\al \ot \be)$ is completely positive.
\item For all $\al,\be \in \cC$, the map $\theta_{\al,\be} : \End(\al \ot \be) \recht \End(\al \ot \be)$ is positive.
\item For all $\al \in \cC$, we have that $\theta_{\al,\albar}(s_\al s_\al^*)$ is a positive element in $\End(\al \ot \albar)$.
\end{enumlist}
\end{lemma}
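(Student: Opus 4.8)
The implications $(1)\Rightarrow(2)\Rightarrow(3)$ are trivial, since complete positivity implies positivity and $s_\al s_\al^* \geq 0$. The real content is $(3)\Rightarrow(1)$. My plan is to reduce complete positivity of each $\theta_{\al,\be}$ to the single positivity hypotheses in $(3)$ by exploiting the compatibility relations \eqref{eq.compat-multipl}, the extension to morphism spaces from Proposition \ref{prop.descr-multi}, and the standard rigidity machinery (Frobenius reciprocity via the solutions $s_\al,t_\al$).

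\textbf{Step 1: Reduce complete positivity to positivity.} First I would show that condition $(2)$ already implies $(1)$. For any $\al,\be,\gamma \in \cC$, complete positivity of $\theta_{\al,\be}$ at amplification level $\gamma$ should be expressible as ordinary positivity of $\theta_{\gamma\ot\al,\be\ot\gammabar}$ (or a similar reshuffling) on an appropriate corner: using the compatibility relation $\theta_{\al_2\ot\al_1,\be_1\ot\be_2}(1\ot X\ot 1) = 1\ot\theta_{\al_1,\be_1}(X)\ot 1$ together with the identification of $\End(\al\ot\be)\ot M_n(\C)$ with a corner of $\End((\al\oplus\cdots\oplus\al)\ot\be)$, positivity of the maps $\theta_{\al',\be'}$ for \emph{all} objects $\al',\be'$ forces complete positivity of each individual $\theta_{\al,\be}$. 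This is the standard ``semisimplicity lets you absorb matrix amplifications into the objects'' argument.

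\textbf{Step 2: Reduce positivity of $\theta_{\al,\be}$ to the case $\be = \albar$ and a rank-one element.} Using the bijection $\Gamma$ of \eqref{eq.id-end-mor} and the extended maps $\theta_{\al\ot\be,\gamma\ot\delta}$ on morphism spaces from Proposition \ref{prop.descr-multi}, I would transport the question ``is $\theta_{\al,\be}(X^*X)\geq 0$ whenever... '' — more precisely, positivity of $\theta_{\al,\be}$ as a map on the finite-dimensional C$^*$-algebra $\End(\al\ot\be)$ — into a statement about $\theta$ applied to elements of $\Mor$-spaces of the form $v v^*$ with $v \in \Mor(\gamma, \al\ot\be)$. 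Concretely, positivity of $\theta_{\al,\be}$ is equivalent to $\langle \theta_{\al,\be}(X), Y^*Y\rangle_{\Tr} \geq 0$ whenever $X \geq 0$, and writing $X = \sum_i \lambda_i w_i w_i^*$ with $w_i \in \Mor(\gamma_i,\al\ot\be)$ isometries onto isotypic pieces, it suffices to handle $X = w w^*$ for a single $w \in \Mor(\gamma,\al\ot\be)$ with $\gamma \in \Irr(\cC)$. Applying the compatibility relations \eqref{eq.compat-multipl} (the ``$UXV^*$'' relation with $U=V=w$, passing from $\al\ot\be$ down to $\gamma$), one gets $\theta_{\al,\be}(w w^*) = w\,\theta_{\gamma,?}(\ldots)\,w^*$ — but $\gamma$ is irreducible, so this collapses; the cleanest route is to instead push everything through $\Gamma$ into $\Mor(\gammabar\ot\gamma\ot\ldots,\eps)$ and use the definition \eqref{eq.first-def} of $\theta$ via the spectral projections $P^{\albar\ot\al}_\pi$. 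The upshot I am aiming for: positivity of all $\theta_{\al,\be}$ is equivalent to positivity of $\theta_{\al,\albar}$ on the specific positive element $s_\al s_\al^*$ for every $\al$, because $s_\al s_\al^*$ is (up to scalar) the Jones projection / minimal central-type projection that ``sees'' the trivial object $\eps$ inside $\al\ot\albar$, and the general positivity unfolds from this building block by Frobenius reciprocity.

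\textbf{Step 3: Assemble.} Granting Steps 1 and 2, condition $(3)$ — positivity of $\theta_{\al,\albar}(s_\al s_\al^*)$ for all $\al$ — feeds directly into the reduction of Step 2 to give positivity of every $\theta_{\al,\be}$ (condition $(2)$), and then Step 1 upgrades this to complete positivity (condition $(1)$), closing the cycle. \textbf{The main obstacle} I anticipate is Step 2: making precise how an arbitrary positive element of $\End(\al\ot\be)$, after decomposition into isotypic rank-one pieces $ww^*$ and transport through $\Gamma$, reduces \emph{cleanly} to the single element $s_\al s_\al^*$ — one must carefully track how the spectral projections $P^{\albar\ot\al}_\pi$ in \eqref{eq.first-def}, the conjugate-equation normalization \eqref{eq.standard-sol}, and Frobenius reciprocity ($\Mor(\gamma,\al\ot\be) \cong \Mor(\albar\ot\gamma,\be) \cong \ldots$) interact, so that the value $\vphi(\pi)$ enters with the correct positive coefficient. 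The rest is routine finite-dimensional C$^*$-algebra and the categorical bookkeeping already set up in Propositions \ref{prop.descr-multi} and the notation \eqref{eq.standard-sol}.
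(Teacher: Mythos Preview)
Your Step 1 (the implication $(2)\Rightarrow(1)$) is correct and is exactly what the paper does: replacing $\al$ by the $n$-fold direct sum $\pi=\al\oplus\cdots\oplus\al$ identifies $\End(\pi\ot\be)$ with $M_n(\C)\ot\End(\al\ot\be)$, and under this identification $\theta_{\pi,\be}=\id\ot\theta_{\al,\be}$.

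Step 2 is where the proposal has a genuine gap. Your first attempt --- decomposing a positive $X$ as $\sum_i\lambda_i w_iw_i^*$ with $w_i\in\Mor(\al\ot\be,\gamma_i)$ and invoking the compatibility relation $\theta_{\al,\be}(UXV^*)=U\theta(X)V^*$ with $U=V=w_i$ --- does not apply, because that relation requires $U,V\in\Mor(\al,\cdot)\ot\Mor(\be,\cdot)$, i.e.\ tensor-product morphisms, and a general isometry $w_i:\gamma_i\to\al\ot\be$ is not of this form. You noticed this yourself. Your fallback via $\Gamma$ and the projections $P^{\albar\ot\al}_\pi$ is too vague: $\Gamma$ is a linear bijection but not a $*$-map, so positivity does not transport in any obvious way, and you do not indicate how the argument would close.

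The paper's move is different and avoids decomposing $X$ at all. One factors $X$ itself (not $X^*X$) through a middle object: choose $\pi\in\cC$ to be the direct sum of all irreducibles appearing in $\albar\ot\al$ and in $\be\ot\bebar$. Frobenius reciprocity then gives
\[
X=(1\ot s_\pi^*\ot 1)\,Y\quad\text{for some }Y\in\Mor(\al\ot\pi,\al)\ot\Mor(\pibar\ot\be,\be).
\]
The point is that this $Y$ \emph{is} a tensor-product morphism, so both compatibility relations in \eqref{eq.compat-multipl} apply directly:
\[
\theta_{\al,\be}(X^*X)=\theta_{\al,\be}\bigl(Y^*(1\ot s_\pi s_\pi^*\ot 1)Y\bigr)=Y^*\,\theta_{\al\ot\pi,\pibar\ot\be}(1\ot s_\pi s_\pi^*\ot 1)\,Y=Y^*(1\ot\theta_{\pi,\pibar}(s_\pi s_\pi^*)\ot 1)Y\ge 0,
\]
using condition $(3)$ at the object $\pi$. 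This is the missing idea: insert $s_\pi^*$ between $\al$ and $\be$ so that the factorization respects the tensor splitting, rather than trying to project onto irreducible summands of $\al\ot\be$.
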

\begin{proof}
1 $\Rightarrow$ 2 $\Rightarrow$ 3 is trivial.

3 $\Rightarrow$ 2. Fix $\al,\be \in \cC$ and $X \in \End(\al \ot \be)$. We must prove that $\theta_{\al,\be}(X^* X) \geq 0$. Define $\pi \in \cC$ as the direct sum of all irreducible subobjects of $\albar \ot \al$ and $\be \ot \bebar$. Then $X = (1 \ot s_\pi^* \ot 1)Y$ for some $Y \in \Mor(\al \ot \pi,\al) \ot \Mor(\pibar \ot \be,\be)$. But then,
$$\theta_{\al,\be}(X^* X) = Y^* (1 \ot \theta_{\pi,\pibar}(s_\pi s_\pi^*) \ot 1) Y \geq 0 \; .$$

2 $\Rightarrow$ 1. Fix $\al,\be \in \cC$ and $n \in \N$. Define $\pi$ as the direct sum of $n$ copies of $\al$. Then, $\End(\pi \ot \be) \cong M_n(\C) \ot \End(\al \ot \be)$ and under this identification, $\theta_{\pi,\be}$ corresponds to $\id \ot \theta_{\al,\be}$.
\end{proof}

Let $\cC$ be a rigid C$^*$-tensor category. A \emph{full C$^*$-tensor subcategory} of $\cC$ can be defined as a subset $\Irr(\cC_1) \subset \Irr(\cC)$ with the property that for all $\al,\be \in \Irr(\cC_1)$, we have that $\albar \in \Irr(\cC_1)$ and that all irreducible subobjects of $\al \ot \be$ belong to $\Irr(\cC_1)$.

The following result can then be interpreted as providing induction of representations of $\cC_1$ to representations of $\cC$.

\begin{proposition}\label{prop.subcat}
Let $\cC$ be a rigid C$^*$-tensor category with full C$^*$-tensor subcategory $\cC_1$.
\begin{enumlist}
\item If $\vphi : \cC \recht \C$ is a cb-multiplier, resp.\ cp-multiplier on $\cC$, then its restriction $\vphi_1$ to $\Irr(\cC_1)$ is a cb-multiplier, resp.\ cp-multiplier on $\cC_1$ with $\|\vphi_1\|\cb \leq \|\vphi\|\cb$.
\item If $\vphi_1 : \cC_1 \recht \C$ is a cp-multiplier on $\cC_1$ and $\vphi : \Irr(\cC) \recht \C$ is defined by $\vphi(\pi) = \vphi_1(\pi)$ if $\pi \in \Irr(\cC_1)$ and $\vphi(\pi) = 0$ if $\pi \not\in \Irr(\cC_1)$, then $\vphi$ is a cp-multiplier on $\cC$.
\end{enumlist}
\end{proposition}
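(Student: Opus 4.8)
\textbf{Proof proposal for Proposition \ref{prop.subcat}.}

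The plan is to treat the two parts asymmetrically, exploiting in part~(1) the explicit formula for a multiplier in terms of its symbol $\vphi$ from Proposition \ref{prop.descr-multi}, and in part~(2) the positivity criterion from Lemma \ref{lem.pos-check}, which reduces a global positivity check to a single family of operators.

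For part~(1): given a cb-multiplier $\vphi$ on $\cC$ with associated multiplier $\theta$, restrict the symbol $\vphi$ to $\Irr(\cC_1)$ and let $\theta^{(1)}$ be the multiplier on $\cC_1$ it determines via Proposition \ref{prop.descr-multi}. The key observation is that the formula \eqref{eq.first-def} defining $\theta_{\al,\be}$ for $\al,\be \in \cC_1$ only ever sees projections $P^{\albar \ot \al}_\pi$ for $\pi$ a subobject of $\albar \ot \al$; by the definition of a full C$^*$-tensor subcategory, all such $\pi$ lie in $\Irr(\cC_1)$, so $\theta^{(1)}_{\al,\be}$ agrees with the restriction of $\theta_{\al,\be}$ to $\End(\al \ot \be)$ — the latter already maps into $\End(\al \ot \be)$. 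Hence for every $\al,\be \in \cC_1$, $\theta^{(1)}_{\al,\be}$ is a compression (in fact a restriction) of $\theta_{\al,\be}$, so $\|\theta^{(1)}_{\al,\be}\|\cb \leq \|\theta_{\al,\be}\|\cb \leq \|\theta\|\cb$, giving $\|\vphi_1\|\cb \leq \|\vphi\|\cb$; and if each $\theta_{\al,\be}$ is completely positive, so is each $\theta^{(1)}_{\al,\be}$. This part is essentially bookkeeping once one notes the stability of $\Irr(\cC_1)$ under the fusion appearing in \eqref{eq.first-def}.

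For part~(2): extend $\vphi_1$ by zero to $\vphi$ on $\Irr(\cC)$, and let $\theta$ be the multiplier on $\cC$ with symbol $\vphi$, which exists by Proposition \ref{prop.descr-multi}. By Lemma \ref{lem.pos-check}, it suffices to show that $\theta_{\al,\albar}(s_\al s_\al^*) \geq 0$ in $\End(\al \ot \albar)$ for every $\al \in \cC$. Using \eqref{eq.first-def} applied with $\be = \albar$, we have $\theta_{\al,\albar}(s_\al s_\al^*) = \sum_{\pi \in \Irr(\cC)} \vphi(\pi)\,\Gamma^{-1}\bigl((P^{\albar \ot \al}_\pi \ot 1 \ot 1)\Gamma(s_\al s_\al^*)\bigr)$, and since $\vphi$ vanishes off $\Irr(\cC_1)$, only the subobjects $\pi \in \Irr(\cC_1)$ of $\albar \ot \al$ survive. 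The strategy is to reduce to the corresponding expression inside $\cC_1$: decompose $\al = \bigoplus_k \beta_k$ with $\beta_k \in \Irr(\cC)$; the terms of $\theta_{\al,\albar}(s_\al s_\al^*)$ are governed by the projections onto subobjects of $\overline{\beta_j} \ot \beta_k$, and for the surviving $\pi \in \Irr(\cC_1)$ one shows — again using that $\Irr(\cC_1)$ absorbs the relevant fusion — that the contribution from each $\beta_k$ lying in $\cC_1$ reproduces the $\cC_1$-multiplier $\theta^{(1)}_{\beta_k,\overline{\beta_k}}(s_{\beta_k} s_{\beta_k}^*)$, which is positive by hypothesis and Lemma \ref{lem.pos-check}, while the cross-terms and the contributions from $\beta_k \notin \cC_1$ vanish because the corresponding morphism spaces $\Mor(\overline{\beta_j} \ot \beta_k, \pi)$ with $\pi \in \Irr(\cC_1)$ are zero. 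Assembling these pieces writes $\theta_{\al,\albar}(s_\al s_\al^*)$ as a sum of conjugates $W^* (\text{positive}) W$ plus zero, hence positive.

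The main obstacle is the second part: one must carefully track how the standard solution $s_\al$ and the identification $\Gamma$ behave under the direct-sum decomposition $\al = \bigoplus_k \beta_k$, and verify that restricting the sum over $\Irr(\cC)$ to $\Irr(\cC_1)$ really does cut the expression down to an (orthogonal) sum of the intrinsic $\cC_1$-quantities — the bookkeeping with the isometries into the direct sum and with the standard solutions for $\al$ versus for the $\beta_k$ is where sign/normalization errors would creep in. An alternative, possibly cleaner route for part~(2) is to use the \SE-correspondence picture: a cp-multiplier on $\cC_1$ corresponds (via the analogue of Theorem \ref{thm.full-Cstar-subfactor} for $\cC_1$) to an \SE-correspondence of a subfactor realizing $\cC_1$, and one induces it up along the inclusion of tensor categories; but since the excerpt develops the purely categorical formula \eqref{eq.first-def} and the positivity criterion Lemma \ref{lem.pos-check} precisely for this kind of argument, I would carry out the direct computation above.
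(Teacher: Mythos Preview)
Part~(1) of your proposal is correct and matches the paper's one-line argument exactly: for $\al,\be \in \cC_1$ the formula \eqref{eq.first-def} only involves $\pi \in \Irr(\cC_1)$, so $\theta^{(1)}_{\al,\be} = \theta^\vphi_{\al,\be}$ on $\End(\al \ot \be)$.

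Part~(2), however, contains a genuine error. Your vanishing claim is false: it is \emph{not} true that $\Mor(\overline{\beta_j} \ot \beta_k,\pi) = 0$ for all $\pi \in \Irr(\cC_1)$ whenever $\beta_j$ or $\beta_k$ lies outside $\Irr(\cC_1)$. Already for $j=k$ and $\beta_k \notin \Irr(\cC_1)$ this fails, since $\overline{\beta_k} \ot \beta_k$ always contains $\eps \in \Irr(\cC_1)$; and for $j \neq k$ one can easily have $\overline{\beta_j} \ot \beta_k$ decomposing entirely inside $\cC_1$ (e.g.\ half-integer spins in $\Rep(\SU(2))$ with $\cC_1$ the integer-spin subcategory). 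The point is that the sum in \eqref{eq.first-def} runs over irreducible subobjects of $\albar \ot \al$, not of $\al$, so decomposing $\al$ into irreducibles does not isolate the $\cC_1$-contribution in any useful way. Your proposed expression for $\theta_{\al,\albar}(s_\al s_\al^*)$ is therefore missing terms, and those missing terms need not individually be positive, so the argument collapses.

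The paper's route avoids this by never decomposing $\al$. Instead it observes that, because $\cC_1$ is closed under tensor and conjugate, the subspace
\[
\End_1(\al \ot \be) := \Gamma^{-1}\bigl((P_1^{\albar \ot \al} \ot 1 \ot 1)\,\Mor(\albar \ot \al \ot \be \ot \bebar,\eps)\bigr)
\]
is a unital $*$-subalgebra of $\End(\al \ot \be)$, with trace-preserving conditional expectation $E_1$. Since $\vphi$ vanishes off $\Irr(\cC_1)$, one has $\theta^\vphi_{\al,\be} = \theta^\vphi_{\al,\be} \circ E_1$, and any $X \in \End_1(\al \ot \be)$ factors as $X = (1 \ot s_\pi^* \ot 1)Y$ with $\pi \in \cC_1$ and $Y \in \Mor(\al \ot \pi,\al) \ot \Mor(\pibar \ot \be,\be)$. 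Then
\[
\theta^\vphi_{\al,\be}(X^*X) = Y^*\bigl(1 \ot \theta^{\vphi_1}_{\pi,\pibar}(s_\pi s_\pi^*) \ot 1\bigr)Y \geq 0,
\]
which is exactly the ``conjugate of something positive'' structure you were aiming for --- but obtained by factoring through an auxiliary object $\pi \in \cC_1$ rather than by decomposing $\al$.
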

\begin{proof}
The first statement is trivial, because $\theta^{\vphi_1}_{\al,\be}$ is equal to $\theta^\vphi_{\al,\be}$ on $\End(\al \ot \be)$ when $\al,\be \in \cC_1$.

To prove the second statement, we use the linear bijection $\Gamma : \End(\al \ot \be) \recht \Mor(\albar \ot \al \ot \be \ot \bebar,\eps)$ given by \eqref{eq.id-end-mor}. Denote by $P_1^{\albar \ot \al}$ the orthogonal projection in $\End(\albar \ot \al)$ onto the direct sum of all irreducible subobjects of $\albar \ot \al$ that belong to $\Irr(\cC_1)$. Transporting the product and the $*$-operation of $\End(\al \ot \be)$ to $\Mor(\albar \ot \al \ot \be \ot \bebar,\eps)$ via $\Gamma$ and using that $\cC_1$ is a full C$^*$-tensor subcategory of $\cC$, we get that
$$\End_1(\al \ot \be) := \Gamma^{-1}((P_1^{\albar \ot \al} \ot 1 \ot 1)\Mor(\albar \ot \al \ot \be \ot \bebar,\eps))$$
is a unital $*$-subalgebra of $\End(\al \ot \be)$. We denote by $E_1 : \End(\al \ot \be) \recht \End_1(\al \ot \be)$ the unique trace preserving conditional expectation. In particular, $E_1$ is completely positive. Since $\vphi(\pi) = 0$ for all $\pi \not\in \Irr(\cC_1)$, we also have that
$$\theta^\vphi_{\al,\be}(X) = \theta^\vphi_{\al,\be}(E_1(X)) \quad\text{for all}\;\; X \in \End(\al \ot \be) \; .$$
Fixing $\al,\be \in \cC$ and $X \in \End_1(\al \ot \be)$, and using Lemma \ref{lem.pos-check}, it is then sufficient to prove that $\theta^\vphi_{\al,\be}(X^* X) \geq 0$. Define $\pi \in \cC_1$ as the direct sum of all $\pi \in \Irr(\cC_1)$ that appear as a subobject of $\albar \ot \al$ and $\be \ot \bebar$. Since $X \in \End_1(\al \ot \be)$, we can write $X = (1 \ot s_\pi^* \ot 1)Y$ for some $Y \in \Mor(\al \ot \pi,\al) \ot \Mor(\pibar \ot \be,\be)$. But then,
$$\theta^\vphi_{\al,\be}(X^* X) = Y^* (1 \ot \theta^\vphi_{\pi,\pibar}(s_\pi s_\pi^*) \ot 1) Y = Y^* (1 \ot \theta^{\vphi_1}_{\pi,\pibar}(s_\pi s_\pi^*) \ot 1) Y \geq 0 \; .$$
\end{proof}

\section{\boldmath The universal C$^*$-algebra of a rigid C$^*$-tensor category}\label{sec.Cstar-algebra-tensor-cat}

\begin{definition}\label{def.adm-rep-tensor-cat}
Let $\cC$ be a rigid C$^*$-tensor category. A $*$-representation $\Theta : \C[\cC] \recht B(\cK)$ of the fusion $*$-algebra $\C[\cC]$ is called \emph{admissible} if for all $\xi \in \cK$, the map
$$\Irr(\cC) \recht \C : \al \mapsto d(\al)^{-1} \langle \Theta(\al) \xi,\xi \rangle$$
is a cp-multiplier on $\cC$.

By Proposition \ref{prop.pos-mult-to-adm-rep} below, for every admissible $*$-representation, we have $\|\Theta(\al)\| \leq d(\al)$ for all $\al \in \cC$. We can thus define the \emph{universal C$^*$-algebra} $C_u(\cC)$ as the completion of $\C[\cC]$ in a universal admissible $*$-representation.
\end{definition}

Since $\Irr(\cC)$ is a vector space basis of $\C[\cC]$, we consider the bijective correspondence between functions $\vphi : \Irr(\cC) \recht \C$ and linear functionals $\om_\vphi : \C[\cC] \recht \C$ given by $\om_\vphi(\al) = d(\al) \vphi(\al)$ for all $\al \in \Irr(\cC)$.

\begin{proposition}\label{prop.pos-mult-to-adm-rep}
Let $\cC$ be a rigid C$^*$-tensor category and let $\vphi : \Irr(\cC) \recht \C$ be a cp-multiplier on $\cC$. Then, $\om_\vphi$ is a positive functional on $\C[\cC]$ in the sense that $\om_\vphi(x^* x) \geq 0$ for all $x \in \C[\cC]$. Denoting the associated GNS Hilbert space as $\cK_\vphi$, the left multiplication by $x \in \C[\cC]$ extends to a bounded operator $\Theta_\vphi(x)$ on $\cK_\vphi$ with $\|\Theta_\vphi(\al)\| \leq d(\al)$ for all $\al \in \Irr(\cC)$. The $*$-representation $\Theta_\vphi : \C[\cC] \recht B(\cK_\vphi)$ is admissible in the sense of Definition \ref{def.adm-rep-tensor-cat}.
\end{proposition}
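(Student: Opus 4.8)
The plan is to reduce the positivity of $\om_\vphi$ on $\C[\cC]$ to the fact, already available from Proposition \ref{prop.descr-multi} and Lemma \ref{lem.pos-check}, that a cp-multiplier is genuinely completely positive on every $\End(\al\ot\bar\al)$. Fix $x=\sum_{\al\in F}\lambda_\al\,\al\in\C[\cC]$ with $F\subset\Irr(\cC)$ finite. Write $\mu=\bigoplus_{\al\in F}\al$, so that every $\al\in F$ is a subobject of $\mu$ via an isometry $u_\al\in\Mor(\mu,\al)$, and $\bar\mu=\bigoplus_{\al\in F}\bar\al$. The key algebraic identity to establish is that, for $\al,\be\in\Irr(\cC)$, the multiplicity with which an irreducible $\gamma$ enters $\bar\al\ot\be$ governs both the fusion product $\bar\al\cdot\be$ in $\C[\cC]$ and the value $\Tr_\gamma\bigl(P^{\bar\al\ot\be}_\gamma\bigr)=d(\gamma)\,\mult(\gamma,\bar\al\ot\be)$; combined with $\om_\vphi(\gamma)=d(\gamma)\vphi(\gamma)$, this will let me rewrite
\[
\om_\vphi(x^*x)=\sum_{\al,\be\in F}\overline{\lambda_\al}\,\lambda_\be\;\om_\vphi(\bar\al\cdot\be)
=\sum_{\al,\be\in F}\overline{\lambda_\al}\,\lambda_\be\sum_{\pi\in\Irr(\cC)}\vphi(\pi)\,\Tr_{\bar\al\ot\be}\bigl(P^{\bar\al\ot\be}_\pi\bigr)
\]
as a single categorical trace. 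Concretely, I expect to recognize the right-hand side as $\Tr_{\bar\mu\ot\mu}\bigl(\theta_{\bar\mu,\mu}(Z^*Z)\bigr)$ (up to the bookkeeping of $d(\cdot)$-factors coming from $\om_\vphi(\al)=d(\al)\vphi(\al)$ and from $\Tr$ being non-normalized), where $\theta=\theta^\vphi$ is the multiplier attached to $\vphi$ by Proposition \ref{prop.descr-multi} and $Z\in\End(\bar\mu\ot\mu)$ is built from the $u_\al$'s and the coefficients $\lambda_\al$ — something like $Z=\sum_{\al\in F}\lambda_\al\,(u_\al^*\ot u_\al^*)(s_\al\otimes 1)(\cdots)$, using a standard solution $s_\al$ of the conjugate equations. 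The precise form of $Z$ will be dictated by matching the identification $\Gamma:\End(\al\ot\be)\to\Mor(\bar\al\ot\al\ot\be\ot\bar\be,\eps)$ of \eqref{eq.id-end-mor} with the way $P^{\bar\al\ot\be}_\pi$ appears in \eqref{eq.first-def}.

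Once $\om_\vphi(x^*x)=\Tr_{\bar\mu\ot\mu}\bigl(\theta_{\bar\mu,\mu}(Z^*Z)\bigr)$ is in place, positivity is immediate: by Lemma \ref{lem.pos-check}, $\theta_{\bar\mu,\mu}$ is completely positive, hence positive, so $\theta_{\bar\mu,\mu}(Z^*Z)\ge 0$ in $\End(\bar\mu\ot\mu)$, and $\Tr_{\bar\mu\ot\mu}$ is a (faithful, positive) trace; therefore $\om_\vphi(x^*x)\ge 0$. This gives the GNS construction: $\cK_\vphi$ is the Hausdorff completion of $\C[\cC]$ with respect to $\langle x,y\rangle=\om_\vphi(y^*x)$, and left multiplication $\Theta_\vphi(z)$ is well defined provided it is bounded, which I address next.

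For the bound $\|\Theta_\vphi(\al)\|\le d(\al)$ for $\al\in\Irr(\cC)$, the natural approach is to show that the ``partial isometry-like'' element $\al\in\C[\cC]$ satisfies $\al^*\al\le d(\al)\sum_{\gamma\subset\bar\al\ot\al}\mult(\gamma,\bar\al\ot\al)\,\gamma$ as an inequality of positive functionals evaluated against every $\om_\vphi$, or more robustly: for every vector $\eta\in\cK_\vphi$ coming from $y\in\C[\cC]$,
\[
\|\Theta_\vphi(\al)\eta\|^2=\om_\vphi\bigl(y^*(\bar\al\cdot\al)\,y\bigr)\le d(\al)^2\,\om_\vphi(y^*y)=d(\al)^2\|\eta\|^2,
\]
which reduces to the operator inequality $\Gamma(y)^*(1\ot P^{\bar\al\ot\al}\ot 1)\Gamma(y)\le d(\al)^2\,\Gamma(y)^*\Gamma(y)$ after transporting through $\Gamma$ — true since $d(\al)^{-2}\,(\text{fusion multiplicity operator})$ is a sub-projection-type contraction, using $\mult(\eps,\bar\al\ot\al)=1$ and $d(\bar\al)=d(\al)$ together with Frobenius reciprocity. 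I would phrase this as: the functional $y\mapsto\om_\vphi(y^*(\bar\al\cdot\al)y)$ is dominated by $d(\al)^2$ times $y\mapsto\om_\vphi(y^*y)$, because $\bar\al\cdot\al$, as an operator of left multiplication, has norm $\le d(\al)^2$ in the universal representation of the fusion algebra of the (amenable) dimension weights — concretely one checks this against the canonical trace / Perron–Frobenius weight $d$. Finally, admissibility of $\Theta_\vphi$ in the sense of Definition \ref{def.adm-rep-tensor-cat} is checked by computing, for $\xi\in\cK_\vphi$ represented by $y\in\C[\cC]$, the function $\al\mapsto d(\al)^{-1}\langle\Theta_\vphi(\al)\xi,\xi\rangle=d(\al)^{-1}\om_\vphi(y^*(\bar\al\cdot\al)y)$ and recognizing it as a cp-multiplier: it is, up to the change of variables of Proposition \ref{prop.descr-multi}, the multiplier obtained from $\vphi$ by ``compression by $y$'', i.e. $\theta^\vphi_{\cdot,\cdot}$ conjugated by the fixed morphisms encoding $y$, which is cp because $\theta^\vphi$ is and conjugation preserves complete positivity (Lemma \ref{lem.pos-check} again, applied to the subcategory/subobject carved out by $y$). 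By density of $\C[\cC]$ in $\cK_\vphi$ and continuity this extends to all $\xi\in\cK_\vphi$.

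The main obstacle I anticipate is purely bookkeeping rather than conceptual: correctly tracking the web of $d(\pi)$-factors (Definition \ref{def.adm-rep-tensor-cat} normalizes by $d(\al)^{-1}$, $\om_\vphi$ multiplies by $d(\al)$, $\Tr_\al(1)=d(\al)$ is non-normalized, and the orthonormal-basis conventions for $\Mor$-spaces in Proposition \ref{prop.descr-multi} introduce further $\sqrt{d(\cdot)}$'s) and pinning down the exact element $Z\in\End(\bar\mu\ot\mu)$ so that $\Tr_{\bar\mu\ot\mu}(\theta_{\bar\mu,\mu}(Z^*Z))$ reproduces $\om_\vphi(x^*x)$ on the nose — in particular getting the direction of the conjugate equations \eqref{eq.standard-sol} and the placement of $P^{\bar\al\ot\al}_\pi$ versus $P^{\be\ot\bar\be}_{\bar\pi}$ consistent with \eqref{eq.first-def}–\eqref{eq.second-def}. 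Once that identification is nailed down, every step is a one-line invocation of Lemma \ref{lem.pos-check}, positivity/faithfulness of $\Tr$, and Frobenius reciprocity.
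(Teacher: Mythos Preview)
Your positivity argument is on the right track and would work once the bookkeeping is done, but the boundedness step has a genuine gap. You write that $\om_\vphi(y^*(\bar\al\cdot\al)y)\le d(\al)^2\om_\vphi(y^*y)$ ``because $\bar\al\cdot\al$, as an operator of left multiplication, has norm $\le d(\al)^2$ in the universal representation of the fusion algebra of the (amenable) dimension weights.'' This is circular: a norm bound in the dimension representation (or any other particular representation) does not transfer to the GNS representation of $\om_\vphi$ unless you already know the latter is dominated by the former, which is exactly what is at stake. The purported operator inequality $\Gamma(y)^*(1\ot P^{\bar\al\ot\al}\ot 1)\Gamma(y)\le d(\al)^2\Gamma(y)^*\Gamma(y)$ is not well formulated ($\Gamma(y)$ is a morphism to $\eps$, not an operator) and Frobenius reciprocity alone does not produce such a bound. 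There is also a slip in the admissibility computation: $d(\al)^{-1}\langle\Theta_\vphi(\al)\xi,\xi\rangle=d(\al)^{-1}\om_\vphi(y^*\al y)$, not $d(\al)^{-1}\om_\vphi(y^*\bar\al\al y)$.

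The paper resolves all three parts uniformly by first isolating the key fact as a separate lemma (Lemma~\ref{lem.compute-in-tensor-cat}): for $x\in\C[\cC]$, the function $\vphi_{x,x}$ defined by $\om_{\vphi_{x,x}}(a)=\om_\vphi(x^*ax)$ is again a cp-multiplier, via an explicit formula expressing $\theta^{\vphi_{x,x}}_{\al,\be}$ as a sum of maps of the form $(1\ot s_\pi^*\ot1)\,\theta^\vphi_{\cdot,\cdot}(\cdots)\,(1\ot s_\pi\ot1)$. This is precisely the ``compression by $y$'' you gesture at for admissibility, made rigorous. Once that lemma is in hand: positivity is $\om_\vphi(x^*x)=\vphi_{x,x}(\eps)\ge0$; boundedness is $\om_\vphi(x^*\bar\al\al x)=s_\al^*\theta^{\vphi_{x,x}}(s_\al s_\al^*)s_\al\le\|s_\al\|^4\vphi_{x,x}(\eps)=d(\al)^2\om_\vphi(x^*x)$, using that a cp-multiplier has $\|\theta\|\cb=\theta(1)$; and admissibility is immediate since $\al\mapsto d(\al)^{-1}\langle\Theta_\vphi(\al)x,x\rangle=\vphi_{x,x}(\al)$. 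Your direct trace formula $\om_\vphi(x^*x)=\Tr_{\bar\mu\ot\mu}(\theta_{\bar\mu,\mu}(Z^*Z))$ is essentially the special case $a=\eps$ of this lemma, but you need the full statement (arbitrary $a$, hence $\vphi_{x,x}$ is cp as a \emph{multiplier}) to get the other two conclusions.
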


Before proving Proposition \ref{prop.pos-mult-to-adm-rep}, we need the following computational lemma.

\begin{lemma}\label{lem.compute-in-tensor-cat}
Let $\cC$ be a rigid C$^*$-tensor category and $\vphi : \Irr(\cC) \recht \C$ a function. For every $x,y \in \C[\cC]$, we define the function $\vphi_{x,y} : \Irr(\cC) \recht \C$ such that
$$\om_{\vphi_{x,y}}(a) = \om_\vphi(y^* a x) \quad\text{for all}\;\; a \in \C[\cC] \; .$$
The multiplier on $\cC$ induced by $\vphi_{x,y}$ in Proposition \ref{prop.descr-multi} is given by
\begin{multline*}
\theta^{\vphi_{x,y}}_{\al,\be}(X) = \\ \sum_{\pi,\eta \in \Irr(\cC)} \, x_\pi \, \overline{y_\eta} \, (1 \ot s_\pi^* \ot 1) \, \theta^\vphi_{(\al \ot \pi) \ot (\pibar \ot \be),(\al \ot \eta) \ot (\etabar \ot \be)} \bigl( (1 \ot s_\pi \ot 1) X (1 \ot s_\eta^* \ot 1)\bigr) \, (1 \ot s_\eta \ot 1)
\end{multline*}
for all $X \in \End(\al \ot \be)$. In particular, if $\vphi$ is a cb-multiplier on $\cC$, then all $\vphi_{x,y}$ are cb-multipliers. And if $\vphi$ is a cp-multiplier on $\cC$, then all $\vphi_{x,x}$ are cp-multipliers.
\end{lemma}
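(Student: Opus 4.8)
The plan is to establish the formula for $\theta^{\vphi_{x,y}}_{\al,\be}$ by directly computing $\om_{\vphi_{x,y}}(a)$ for $a \in \Irr(\cC)$, unwinding the definitions of the product in $\C[\cC]$ and of $\om_\vphi$ in terms of $\vphi$. By linearity it suffices to take $x = \pi$, $y = \eta$ for $\pi,\eta \in \Irr(\cC)$, so that $\om_{\vphi_{\pi,\eta}}(a) = \om_\vphi(\eta^* a \pi) = \om_\vphi(\etabar \cdot a \cdot \pi)$, and this is a sum over $\gamma \in \Irr(\cC)$ of $\mult(\gamma, \etabar \ot a \ot \pi)$ times $\om_\vphi(\gamma) = d(\gamma)\vphi(\gamma)$. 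The first step is to recognize this multiplicity as the dimension of $\Mor(\etabar \ot a \ot \pi, \gamma)$, equivalently of $\Mor(a, \eta \ot \gamma \ot \pibar)$, and to package the whole sum categorically.

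The cleanest route is to use the identification $\Gamma$ from \eqref{eq.id-end-mor}, $\Gamma : \End(\al \ot \be) \recht \Mor(\albar \ot \al \ot \be \ot \bebar,\eps)$, and the characterization of $\theta^\vphi_{\al,\be}$ via \eqref{eq.first-def}: $\theta^\vphi_{\al,\be}(X) = \sum_\pi \vphi(\pi)\,\Gamma^{-1}((P^{\albar\ot\al}_\pi \ot 1 \ot 1)\Gamma(X))$. Concretely, I would check that both sides of the claimed identity, when read through $\Gamma$ on the domain $\End(\al \ot \be)$ and through the corresponding $\Gamma$ for the enlarged objects $\al \ot \pi$, $\pibar \ot \be$, etc., reduce to the same weighted sum of spectral projections $P_\gamma$. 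The maps $(1 \ot s_\pi \ot 1)(\,\cdot\,)(1 \ot s_\eta^* \ot 1)$ implement, after conjugating by $\Gamma$, exactly the Frobenius-reciprocity isomorphisms that convert $\Mor$-spaces over $\al \ot \be$ into $\Mor$-spaces over $\al \ot \pi$ and $\pibar \ot \be$; one then matches the projection $P^{(\overline{\al\ot\pi})\ot(\al\ot\pi)}_\gamma$ against the decomposition of $\etabar \ot a \ot \pi$. The bookkeeping with the standard solutions $s_\pi, t_\al$ and the normalizations $d(\pi)$ is the part that requires care, but it is a finite, diagrammatic computation using only \eqref{eq.standard-sol} and the conjugate equations; I would not grind it out here.

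Once the displayed formula is proved, the two concluding sentences are immediate structural consequences. If $\vphi$ is a cb-multiplier, then $\theta^{\vphi_{x,y}}_{\al,\be}$ is exhibited as a finite sum of maps of the form $A \circ \theta^\vphi_{(\al\ot\pi)\ot(\pibar\ot\be),\ldots} \circ B$ where $A, B$ are completely bounded (they are given by composition with fixed morphisms), hence $\theta^{\vphi_{x,y}}$ is a cb-multiplier; only finitely many $\pi,\eta$ contribute because $x = \sum_\pi x_\pi \pi$ and $y = \sum_\eta y_\eta \eta$ are finite linear combinations. If $\vphi$ is a cp-multiplier and $x = y$, the sum over $(\pi,\eta)$ organizes into $W^* \bigl(\bigoplus_{\pi,\eta} \theta^\vphi_{(\al\ot\pi)\ot(\pibar\ot\be),(\al\ot\eta)\ot(\etabar\ot\be)}\bigr) W$ for a single operator $W$ built from the $x_\pi$'s and the $s$'s, and since the direct sum of cp maps is cp and conjugation $Z \mapsto W^* Z W$ preserves complete positivity, $\theta^{\vphi_{x,x}}$ is a cp-multiplier; here I would invoke Lemma \ref{lem.pos-check} to reduce checking positivity to the operators $s_\al s_\al^*$ if the direct-sum/conjugation argument needs to be made fully rigorous. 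The main obstacle is purely the diagrammatic verification of the displayed formula — getting the Frobenius reciprocities and the $s_\pi$-insertions to line up on the nose with the correct scalar factors.
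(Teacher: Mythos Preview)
Your proposal is correct and follows essentially the same approach as the paper. The paper's proof is even terser than yours: it simply says the result follows from a direct computation using the two partial-trace identities
\[
(t_\al^* \ot 1)(1 \ot P^{\al\ot\be}_\gamma)(t_\al \ot 1) = \frac{d(\gamma)\,\mult(\gamma,\al\ot\be)}{d(\be)}\,1,\qquad
(1 \ot s_\be^*)(P^{\al\ot\be}_\gamma \ot 1)(1 \ot s_\be) = \frac{d(\gamma)\,\mult(\gamma,\al\ot\be)}{d(\al)}\,1,
\]
which are exactly the scalar normalizations you would need when chasing the $\Gamma$-isomorphism and Frobenius reciprocity; your treatment of the cb and cp consequences (packaging the formula as $W^* \theta^\vphi_{\mu,\nu}(W X W^*) W$ with $\mu = \bigoplus_\pi \al\ot\pi$, $\nu = \bigoplus_\pi \pibar\ot\be$) is in fact more explicit than the paper, which leaves those as immediate from the displayed formula.
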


Note that because for all $\rho \in \Irr(\cC)$,
$$y^* \rho x = \sum_{\pi,\eta \in \Irr(\cC)} \, x_\pi \, \overline{y_\eta} \;\; \etabar \, \rho \, \pi
= \sum_{\pi,\eta,\gamma \in \Irr(\cC)} \, x_\pi \, \overline{y_\eta} \, \mult(\gamma,\etabar \ot \rho \ot \pi) \, \gamma \; ,$$
we can alternatively define
$$\vphi_{x,y}(\rho) = \frac{1}{d(\rho)} \sum_{\pi,\eta,\gamma \in \Irr(\cC)} \, x_\pi \, \overline{y_\eta} \, \vphi(\gamma) \, d(\gamma) \, \mult(\gamma,\etabar \ot \rho \ot \pi) \; .$$

\begin{proof}[Proof of Lemma \ref{lem.compute-in-tensor-cat}]
The result follows from a direct computation, using the formulas
\begin{align*}
(t_\al^* \ot 1) (1 \ot P^{\al \ot \be}_\gamma) (t_\al \ot 1) &= \frac{d(\gamma) \, \mult(\gamma,\al \ot \be)}{d(\be)} \, 1 \qquad\text{and} \\
(1 \ot s_\be^*)(P^{\al \ot \be}_\gamma \ot 1)(1 \ot s_\be) &= \frac{d(\gamma) \, \mult(\gamma,\al \ot \be)}{d(\al)} \, 1
\end{align*}
for all $\al,\be,\gamma \in \Irr(\cC)$, where $P^{\al \ot \be}_\gamma \in \End(\al \ot \be)$ denotes the orthogonal projection onto the sum of all subobjects of $\al \ot \be$ that are isomorphic with $\gamma$.
\end{proof}

\begin{proof}[Proof of Proposition \ref{prop.pos-mult-to-adm-rep}]
From Lemma \ref{lem.compute-in-tensor-cat}, it follows that for all $x \in \C[\cC]$,
$$\om_\vphi(x^* x) = \om_{\vphi_{x,x}}(1) = \vphi_{x,x}(\eps) \geq 0 \;  .$$
Denote by $\cK_\vphi$ the GNS Hilbert space given by separation-completion of $\C[\cC]$ with respect to the scalar product $\langle x,y\rangle = \om_\vphi(y^* x)$.

We also know from Lemma \ref{lem.compute-in-tensor-cat} that for all functions $\psi : \Irr(\cC) \recht \C$ and all $\al \in \Irr(\cC)$, we have
$$s_\al^* \, \theta^\psi(s_\al s_\al^*) \, s_\al = \theta^{\psi_{\al,\al}}(1) = \psi_{\al,\al}(\eps) = \om_{\psi_{\al,\al}}(1) = \om_\psi(\albar \, \al) \; .$$
Therefore, for all $x \in \C[\cC]$ and all $\al \in \Irr(\cC)$, we have
$$\om_\vphi(x^* \, \albar \, \al \, x) = \om_{\vphi_{x,x}}(\albar \, \al) = s_\al^* \, \theta^{\vphi_{x,x}}(s_\al s_\al^*) \, s_\al \; .$$
By Lemma \ref{lem.compute-in-tensor-cat}, we get that $\vphi_{x,x}$ is a cp-multiplier for every $x \in \C[\cC]$. So we conclude that
$$\om_\vphi(x^* \, \albar \, \al \, x) \leq \|s_\al\|^4 \, \|\theta^{\vphi_{x,x}}(1)\| = d(\al)^2 \, \vphi_{x,x}(\eps) = d(\al)^2 \, \om_\vphi(x^* x) \; .$$
It follows that left multiplication by $\al \in \Irr(\cC)$ extends to a bounded operator $\Theta_\vphi(\al)$ on $\cK_\vphi$ with $\|\Theta_\vphi(\al)\| \leq d(\al)$.

We already observed above that $\vphi_{x,x}$ is a cp-multiplier for every $x \in \C[\cC]$. Noting that
$$\vphi_{x,x}(\al) = \frac{1}{d(\al)} \, \langle \Theta_{\vphi}(\al) \, x , x \rangle \; ,$$
we find that $\al \mapsto d(\al)^{-1} \, \langle \Theta_\vphi(\al) \xi,\xi \rangle$ is a cp-multiplier for all $\xi$ in the dense subspace of $\cK_\vphi$ given by the image of $\C[\cC]$. When $\vphi_n : \Irr(\cC) \recht \C$ is a sequence of cp-multipliers on $\cC$ and $\vphi_n \recht \vphi$ pointwise, we have that $\theta^{\vphi_n}_{\al,\be} \recht \theta^\vphi_{\al,\be}$ pointwise in norm for all fixed $\al,\be \in \cC$, so that also $\vphi$ is a cp-multiplier. We conclude that $\al \mapsto d(\al)^{-1} \, \langle \Theta_\vphi(\al) \xi,\xi \rangle$ is a cp-multiplier for all $\xi \in \cK_\vphi$ so that $\Theta_\vphi$ is indeed an admissible $*$-representation.
\end{proof}

\begin{corollary}\label{cor.regular-and-triv-rep}
The $*$-representation
$$\Theta_0 : \C[\cC] \recht B(\ell^2(\Irr(\cC))) : \Theta_0(\al) \, \delta_\be = \sum_{\gamma \in \Irr(\cC)} \, \mult(\gamma,\al \ot \be) \, \delta_\gamma$$
is admissible. It is isomorphic with $\Theta_{\vphi_0}$ where $\vphi_0 : \Irr(\cC) \recht \C$ is the cp-multiplier defined by $\vphi_0(\eps) = 1$ and $\vphi_0(\al) = 0$ for all $\al \in \Irr(\cC)$ with $\al \neq \eps$.

We call $\Theta_0$ the \emph{regular representation} of $\C[\cC]$ and we define $C_r(\cC)$ as the closure of $\Theta_0(\C[\cC])$. It follows that $\C[\cC]$ is injectively embedded into the C$^*$-algebra $C_u(\cC)$.

Also the $1$-dimensional $*$-representation given by $\counit : \C[\cC] \recht \C : \counit(\al) = d(\al)$ for all $\al \in \Irr(\cC)$ is admissible. It is isomorphic with $\Theta_{\vphi_\counit}$ where $\vphi_\counit(\al) = 1$ for all $\al \in \Irr(\cC)$. We call $\counit$ the \emph{trivial representation} of $\C[\cC]$.
\end{corollary}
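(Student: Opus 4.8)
The plan is to obtain the whole statement from Proposition \ref{prop.pos-mult-to-adm-rep}, which produces an admissible GNS representation $\Theta_\vphi$ out of any cp-multiplier $\vphi$ on $\cC$. So everything reduces to (a) checking that $\vphi_0$ and $\vphi_\counit$ are cp-multipliers, and (b) identifying the GNS representations $\Theta_{\vphi_0}$ and $\Theta_{\vphi_\counit}$ with $\Theta_0$ and $\counit$ respectively.

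First I would handle $\vphi_0$, the characteristic function of $\{\eps\} \subset \Irr(\cC)$. Since $\{\eps\}$ is a (trivial) full C$^*$-tensor subcategory of $\cC$ and the constant function $1$ on it is trivially a cp-multiplier, Proposition \ref{prop.subcat}(2) gives that its extension by $0$, which is exactly $\vphi_0$, is a cp-multiplier on $\cC$. To identify $\Theta_{\vphi_0}$, note that $\om_{\vphi_0}(\gamma) = d(\gamma)\vphi_0(\gamma) = \delta_{\gamma,\eps}$, so that for $\al,\be \in \Irr(\cC)$ the GNS inner product is $\langle \al,\be\rangle = \om_{\vphi_0}(\bebar\,\al) = \mult(\eps,\bebar\ot\al) = \delta_{\al,\be}$, using that $\eps$ occurs in $\bebar\ot\al$ precisely when $\al\cong\be$ and then with multiplicity one. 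Hence $\Irr(\cC)$ is orthonormal, the GNS quotient is trivial, $\cK_{\vphi_0} = \ell^2(\Irr(\cC))$ under $\al\leftrightarrow\delta_\al$, and left multiplication $\Theta_{\vphi_0}(\al)\delta_\be = \al\be = \sum_\gamma \mult(\gamma,\al\ot\be)\,\delta_\gamma$ is precisely $\Theta_0$; by Proposition \ref{prop.pos-mult-to-adm-rep} it is admissible. For the injectivity of $\C[\cC] \recht C_u(\cC)$, I would observe that $\Theta_0$ is faithful on $\C[\cC]$ (if $\Theta_0(x)=0$ then $x = \Theta_0(x)\delta_\eps = 0$) and admissible, so the universal norm dominates $\|\Theta_0(\cdot)\|$ and is in particular a norm; this also makes $C_r(\cC)$ well defined.

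Next I would treat $\vphi_\counit \equiv 1$. Plugging this into the construction of Proposition \ref{prop.descr-multi}, formula \eqref{eq.first-def} telescopes via the completeness relation $\sum_{\pi\in\Irr(\cC)} P^{\albar\ot\al}_\pi = 1$, giving $\theta^{\vphi_\counit}_{\al,\be}(X) = \Gamma^{-1}(\Gamma(X)) = X$; so $\theta^{\vphi_\counit}$ is the identity multiplier, which is plainly of positive type, and $\vphi_\counit$ is a cp-multiplier. To identify $\Theta_{\vphi_\counit}$, I would compute, for $x = \sum_\al x_\al\,\al$, the GNS inner product $\om_{\vphi_\counit}(x^*x) = \sum_{\al,\be}\overline{x_\al}x_\be\,d(\albar\ot\be) = \bigl|\sum_\al x_\al\,d(\al)\bigr|^2 = |\counit(x)|^2$, using $\om_{\vphi_\counit}(\gamma)=d(\gamma)$ and multiplicativity of $d$. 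Thus the GNS space is one-dimensional, $[x]\mapsto\counit(x)$ identifies it with $\C$, and under this identification $\Theta_{\vphi_\counit}(\al)$ is multiplication by $d(\al)$, i.e.\ $\Theta_{\vphi_\counit}\cong\counit$, which is therefore admissible by Proposition \ref{prop.pos-mult-to-adm-rep}.

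Once Proposition \ref{prop.pos-mult-to-adm-rep} is in hand, there is no serious obstacle; the proof is bookkeeping. The only points calling for a little care are recognizing $\vphi_0$ as an extension-by-zero so that Proposition \ref{prop.subcat}(2) applies, recognizing $\theta^{\vphi_\counit}$ as the identity multiplier from the completeness relation, and the two Gram-matrix computations $\langle\al,\be\rangle_{\vphi_0}=\delta_{\al,\be}$ and $\langle x,x\rangle_{\vphi_\counit}=|\counit(x)|^2$ that pin the GNS spaces down as $\ell^2(\Irr(\cC))$ and $\C$.
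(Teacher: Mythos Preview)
Your proof is correct and follows the same overall strategy as the paper: verify that $\vphi_0$ and $\vphi_\counit$ are cp-multipliers, then identify the GNS representations from Proposition~\ref{prop.pos-mult-to-adm-rep} with $\Theta_0$ and $\counit$.

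The one genuine difference is in how you establish that $\vphi_0$ is a cp-multiplier. The paper computes $\theta^{\vphi_0}_{\al,\be}$ directly from \eqref{eq.first-def} and observes that it is the trace-preserving conditional expectation of $\End(\al\otimes\be)$ onto $\End(\al)\otimes\End(\be)$, which is visibly completely positive. You instead invoke Proposition~\ref{prop.subcat}(2), realizing $\vphi_0$ as the extension-by-zero of the trivial cp-multiplier on the full subcategory $\{\eps\}$. Both arguments are valid and self-contained (Proposition~\ref{prop.subcat} precedes this corollary and does not depend on it). Your route is slicker, while the paper's route yields the extra information that $\theta^{\vphi_0}$ is the conditional expectation onto $\End(\al)\otimes\End(\be)$, a fact that is occasionally useful on its own. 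Your treatment of $\vphi_\counit$, the GNS identifications, and the injectivity of $\C[\cC]\to C_u(\cC)$ match the paper's (the paper leaves these as ``readily checked'', and your explicit Gram computations are exactly what is needed).
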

\begin{proof}
A direct computation shows that $\theta^{\vphi_0}_{\al,\be} : \End(\al \ot \be) \recht \End(\al \ot \be)$ is the unique trace preserving conditional expectation of $\End(\al \ot \be)$ onto $\End(\al) \ot \End(\be)$. Therefore, $\vphi_0$ is a cp-multiplier. It can be readily checked that the associated admissible $*$-representation $\Theta_{\vphi_0}$ is precisely $\Theta_0$.

Since $\theta^{\vphi_\counit}_{\al,\be}$ is the identity map, also $\vphi_\counit$ is a cp-multiplier. Its associated admissible $*$-representation is the $1$-dimensional $\counit$.
\end{proof}

We also record the following result. It is an immediate consequence of Proposition \ref{prop.subcat}.

\begin{proposition}\label{prop.Cstar-subcat}
Let $\cC$ be a rigid C$^*$-tensor category with full C$^*$-tensor subcategory $\cC_1$. The natural inclusion $\C[\cC_1] \subset \C[\cC]$ extends to an injective $*$-homomorphism $C_u(\cC_1) \hookrightarrow C_u(\cC)$.
\end{proposition}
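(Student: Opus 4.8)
The plan is to prove that the natural unital $*$-algebra inclusion $\C[\cC_1]\hookrightarrow\C[\cC]$ is isometric for the respective universal norms, so that it extends by continuity to an injective $*$-homomorphism $C_u(\cC_1)\hookrightarrow C_u(\cC)$. First I would record that $\C[\cC_1]$ really is a unital $*$-subalgebra of $\C[\cC]$: this is exactly the defining property of a full C$^*$-tensor subcategory, namely $\albar\in\Irr(\cC_1)$ and all irreducible subobjects of $\al\ot\be$ lie in $\Irr(\cC_1)$ whenever $\al,\be\in\Irr(\cC_1)$ (in particular $\eps\in\Irr(\cC_1)$). Next I would record a fact valid for any rigid C$^*$-tensor category $\cD$: the universal norm on $\C[\cD]$ is computed by the cp-multipliers, i.e.\ $\|x\|_{u,\cD}=\sup_{\vphi}\|\Theta_\vphi(x)\|$ over all cp-multipliers $\vphi$ on $\cD$. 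The inequality ``$\geq$'' is Proposition \ref{prop.pos-mult-to-adm-rep}, which says each $\Theta_\vphi$ is admissible; for ``$\leq$'' one decomposes an arbitrary admissible representation $\Theta$ of $\C[\cD]$ into a direct sum of cyclic subrepresentations, and each cyclic piece, generated by a vector $\xi$, is by uniqueness of the GNS construction unitarily equivalent to $\Theta_{\vphi_\xi}$ where $\vphi_\xi(\al)=d(\al)^{-1}\langle\Theta(\al)\xi,\xi\rangle$ — and admissibility of $\Theta$ is precisely the statement that each such $\vphi_\xi$ is a cp-multiplier.

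With this in hand, both halves of the isometry follow immediately from Proposition \ref{prop.subcat}. For $\|x\|_{u,\cC}\leq\|x\|_{u,\cC_1}$ with $x\in\C[\cC_1]$: given any admissible representation $\Theta$ of $\C[\cC]$ on $\cK$ and any $\xi\in\cK$, the cp-multiplier $\al\mapsto d(\al)^{-1}\langle\Theta(\al)\xi,\xi\rangle$ on $\cC$ restricts, by the first assertion of Proposition \ref{prop.subcat}, to a cp-multiplier on $\cC_1$; hence $\Theta|_{\C[\cC_1]}$ is an admissible representation of $\C[\cC_1]$, so $\|\Theta(x)\|=\|\Theta|_{\C[\cC_1]}(x)\|\leq\|x\|_{u,\cC_1}$, and taking the supremum over $\Theta$ gives the claim. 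For the reverse inequality: given a cp-multiplier $\vphi_1$ on $\cC_1$, extend it by zero to $\vphi:\Irr(\cC)\recht\C$, which is a cp-multiplier on $\cC$ by the second assertion of Proposition \ref{prop.subcat}. Since $\om_\vphi$ vanishes on $\Irr(\cC)\setminus\Irr(\cC_1)$ and agrees with $\om_{\vphi_1}$ on $\Irr(\cC_1)$, and since $\C[\cC_1]$ is a $*$-subalgebra, one gets $\om_\vphi(y^*x)=\om_{\vphi_1}(y^*x)$ for all $x,y\in\C[\cC_1]$. Therefore the GNS Hilbert space $\cK_{\vphi_1}$ embeds isometrically as a $\Theta_\vphi(\C[\cC_1])$-invariant subspace of $\cK_\vphi$ on which $\Theta_\vphi$ restricts exactly to $\Theta_{\vphi_1}$, so $\|\Theta_{\vphi_1}(x)\|\leq\|\Theta_\vphi(x)\|\leq\|x\|_{u,\cC}$; taking the supremum over $\vphi_1$ and using the description of $\|\,\cdot\,\|_{u,\cC_1}$ above yields $\|x\|_{u,\cC_1}\leq\|x\|_{u,\cC}$.

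There is no serious obstacle: the whole argument is Proposition \ref{prop.subcat} plus routine GNS bookkeeping. The one point that deserves a little care is the ``recording'' step — verifying that an arbitrary admissible representation of $\C[\cD]$ decomposes into GNS representations $\Theta_\vphi$, so that the universal norm is genuinely a supremum over cp-multipliers — since this is the bridge that converts the two conclusions of Proposition \ref{prop.subcat} into the two norm inequalities. One should also be mildly attentive to the fact that the embedding $\cK_{\vphi_1}\hookrightarrow\cK_\vphi$ intertwines only the left actions of elements of $\C[\cC_1]$ (not of all of $\C[\cC]$), which is exactly why the estimate is obtained for $\|x\|$ with $x\in\C[\cC_1]$, as required.
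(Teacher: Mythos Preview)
Your proof is correct and takes essentially the same approach as the paper. The paper's proof is a single sentence stating that the result is an immediate consequence of Proposition~\ref{prop.subcat}; you have simply spelled out in detail how the two halves of that proposition yield the two norm inequalities, together with the routine GNS bookkeeping needed to pass between cp-multipliers and admissible representations.
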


The main remaining goal of this section is to prove the following theorem.

\begin{theorem}\label{thm.isom-univ-Cstar}
Let $N \subset M$ be an extremal finite index subfactor with associated \SE-inclusion $T \subset S$ and bimodule category $\cC$. A $*$-representation of $\C[\cC]$ is admissible in the subfactor sense of Definition \ref{def.full-Cstar-subfactor} if and only if it is admissible in the C$^*$-tensor category sense of Definition \ref{def.adm-rep-tensor-cat}. In other words, we have a natural isomorphism $C_u(N \subset M) \cong C_u(\cC)$.
\end{theorem}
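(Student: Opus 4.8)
The plan is to show that the two notions of admissibility are the same by comparing the bijective correspondences that attach a $*$-representation of $\C[\cC]$ to a ``positive type datum'' in each framework. On the subfactor side (Theorem \ref{thm.full-Cstar-subfactor}), to an \SE-correspondence $\bim{S}{\cH}{S}$ with a $T$-central unit vector $\xi_0$ we have the cp \SE-multiplier $\psi_{\xi_0}$ defined by $\langle x\xi_0 y,\xi_0\rangle = \tau(x\psi_{\xi_0}(y))$, and the representation $\Theta$ of $\C[\cC]$ on $\cH_T$. On the categorical side (Proposition \ref{prop.pos-mult-to-adm-rep}), to a cp-multiplier $\vphi$ on $\cC$ we have the positive functional $\om_\vphi$ and the GNS representation $\Theta_\vphi$. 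First I would nail down the dictionary between these: given an \SE-correspondence with central vector $\xi_0$, set $\vphi(\al) := d(\al)^{-1}\langle\Theta(\al)\xi_0,\xi_0\rangle$; conversely, by Proposition \ref{prop.ext-mult} and Proposition \ref{prop.from-si-to-tc} every cp-multiplier $\theta$ on $\cC$ corresponds to a cp \SE-multiplier $\psi$ on $N\subset M$, which by the discussion after Definition \ref{def.mult-subf} gives an \SE-correspondence with a central cyclic vector.

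The core computation is to verify that under this dictionary the ``coefficient'' $\vphi$ coming from $(\cH,\xi_0)$ is exactly the function on $\Irr(\cC)$ that parametrizes the cp-multiplier $\psi_{\xi_0}$ via Propositions \ref{prop.ext-mult}, \ref{prop.from-si-to-tc} and \ref{prop.descr-multi}. Here I would use Lemma \ref{lem.projection-central-vectors}: for $x\in\cL^0_\al$ one has $p_T(x\xi_0 x^*) = d(\al)^{-1}\tau(xx^*)\,\Theta(\al)\xi_0$, and pairing with $\xi_0$ gives $\langle x\xi_0 x^*,\xi_0\rangle = d(\al)^{-1}\tau(xx^*)\langle\Theta(\al)\xi_0,\xi_0\rangle$. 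On the other hand $\langle x\xi_0 x^*,\xi_0\rangle = \tau(x\psi_{\xi_0}(x^*))$ by \eqref{eq.link}. Since $\cL_\al \cong \pi\ot\pibar\op$ with $\pi$ the irreducible labelled by $\al$, and since the categorical description of $\psi_{\xi_0}$ restricted to $\cL_\al$ is multiplication by $\vphi(\al)$ (this is precisely the content of how $\theta_{\pi\ot\pibar,\eps\ot\eps}$ acts in Proposition \ref{prop.descr-multi}, transported through \eqref{eq.link-si-end} and Proposition \ref{prop.ext-mult}), comparing the two expressions shows that the categorical parameter of $\psi_{\xi_0}$ is exactly $\al\mapsto d(\al)^{-1}\langle\Theta(\al)\xi_0,\xi_0\rangle$. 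More generally, for an arbitrary $T$-central unit vector $\xi\in\cH_T$ (not necessarily cyclic), the same argument with $\psi_\xi$ shows its categorical parameter is $\al\mapsto d(\al)^{-1}\langle\Theta(\al)\xi,\xi\rangle$.

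With this identification in hand the theorem follows quickly in both directions. If $\Theta$ is admissible in the subfactor sense, it is (up to unitary conjugacy) the representation attached to some \SE-correspondence $\bim{S}{\cH}{S}$; for each $\xi\in\cH_T$ the map $\al\mapsto d(\al)^{-1}\langle\Theta(\al)\xi,\xi\rangle$ is the categorical parameter of the cp \SE-multiplier $\psi_\xi$, hence a cp-multiplier on $\cC$ by Propositions \ref{prop.ext-mult} and \ref{prop.from-si-to-tc}, so $\Theta$ is admissible in the categorical sense. Conversely, if $\Theta$ is admissible in the categorical sense, it suffices to realize each cyclic subrepresentation: a cyclic vector $\xi$ gives a cp-multiplier $\vphi_\xi$, which by Proposition \ref{prop.ext-mult} and the discussion after Definition \ref{def.mult-subf} yields a cyclic \SE-correspondence with central cyclic vector whose attached representation of $\C[\cC]$ has $\vphi_\xi$ as its $\xi$-coefficient; by the GNS uniqueness this attached representation is unitarily equivalent to the cyclic subrepresentation of $\Theta$ generated by $\xi$. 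Taking a direct sum of such \SE-correspondences over a family of cyclic vectors spanning $\cK$ and then cutting down by the projection onto the $\Theta$-invariant subspace realizes $\Theta$ itself, using Theorem \ref{thm.full-Cstar-subfactor}'s bijection between $S$-bimodular morphisms and $\Theta$-morphisms to handle the cutting down. I expect the main obstacle to be the bookkeeping in the core computation: matching the orthonormal-basis description of $\Theta(\al)$ in \eqref{eq.def-Theta} with the standard-solution description of the categorical multiplier in \eqref{eq.id-end-mor}--\eqref{eq.first-def}, i.e.\ checking that the normalizations ($d(\al)$ factors, the trace $\Tr_\al$ versus $\tau$ on $\cL_\al$, extremality making \eqref{eq.link-si-end} trace preserving) line up exactly; once those constants are pinned down, everything else is formal.
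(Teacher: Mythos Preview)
Your proposal is correct and follows essentially the same approach as the paper: reduce to the cyclic case, identify the cp \SE-multiplier $\psi_{\xi_0}$ with a categorical cp-multiplier via the parametrization of Propositions~\ref{prop.ext-mult}--\ref{prop.descr-multi}, and then match the two cyclic $*$-representations by a GNS-uniqueness argument. The only organizational difference is that the paper isolates the key fact you need---that $\psi_\vphi$ acts on $\cL^0_\al$ as multiplication by $\vphi(\albar)$---as a separate preparatory result (Lemma~\ref{lem.all-the-same}), which also pins down the normalization issue you flag at the end.
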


Before proving Theorem \ref{thm.isom-univ-Cstar}, we need a lemma making the identifications in Propositions \ref{prop.ext-mult} and \ref{prop.from-si-to-tc} more explicit. As already mentioned above, by \cite[Theorem 4.5]{Po99}, we can uniquely decompose the $T$-bimodule $L^2(S)$ into a direct sum of irreducible $T$-subbimodules $(\cL_\pi)_{\pi \in \Irr(\cC)}$ labeled by the elements $\pi \in \Irr(\cC)$ such that $\cL_\pi \cong \pi \ot \pibar\op$ as $M \ovt M\op$-bimodules. We denote by $\cL^0_\pi \subset \cL_\pi$ the space of $T$-bounded vectors in $\cL_\pi$. Since $T \subset S$ is irreducible, we have that $\cL^0_\pi = \cL_\pi \cap S$.

\begin{lemma}\label{lem.all-the-same}
Let $\vphi : \Irr(\cC) \recht \C$ be a cb-multiplier. Denote by $\psi_\vphi : S \recht S$ the normal completely bounded $T$-bimodular map that is associated with $\vphi$ by combining Propositions \ref{prop.ext-mult} and \ref{prop.from-si-to-tc}. Then $\psi_\vphi(x) = \vphi(\pibar) x$ for all $\pi \in \Irr(\cC)$ and all $x \in \cL^0_\pi$.
\end{lemma}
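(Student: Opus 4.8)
The plan is to reduce the identity to the computation of a single scalar, and then to evaluate that scalar inside one relative commutant by means of the explicit categorical formula \eqref{eq.first-def}.

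\emph{Reducing to a scalar.} First I would promote $\psi_\vphi$ to an operator on $L^2(S)$: the $L^2$-estimate \eqref{eq.bound-L2} established in the proof of Proposition \ref{prop.ext-mult} gives $\|\psi_\vphi(x)\|_2 \leq \|\theta\|\cb \, \|x\|_2$ for all $x \in S$, so $\psi_\vphi$ extends to a bounded operator on $L^2(S)$, and $T$-bimodularity is inherited by density. By \cite[Theorem 4.5]{Po99} the $T$-bimodule $L^2(S)$ is the direct sum of the irreducible $T$-subbimodules $(\cL_\pi)_{\pi \in \Irr(\cC)}$, which are pairwise non-isomorphic (as $\cL_\pi \cong \pi \ot \pibar\op$) and each of multiplicity one; hence the algebra of bounded $T$-bimodular operators on $L^2(S)$ is $\prod_{\pi} \C$, and $\psi_\vphi$ necessarily preserves every $\cL_\pi$ and acts on it as multiplication by a scalar $c_\pi$. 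Since $\psi_\vphi$ maps $S$ into $S$, this already gives $\psi_\vphi(x) = c_\pi x$ for all $x \in \cL^0_\pi = \cL_\pi \cap S$, so it remains to show $c_\pi = \vphi(\pibar)$.

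\emph{Computing the scalar.} Fix $\pi$ and choose $n$ so large that $\pi$ is isomorphic to an $M$-subbimodule of $\mu := \bim{M}{L^2(M_n)}{M}$. Under \eqref{eq.link-si-end} we identify $A_{-2n,2n} = M_{-2n}' \cap M_{2n}$ with $\End(\mu \ot_M \mu)$, and combining Proposition \ref{prop.ext-mult} (which gives $\psi_\vphi|_{A_{-2n,2n}} = \theta_{-2n,2n}$) with the proof of Proposition \ref{prop.from-si-to-tc} and Proposition \ref{prop.descr-multi}, the restriction of $\psi_\vphi$ to $A_{-2n,2n}$ is precisely the tensor-categorical multiplier $\theta^\vphi_{\mu,\mu}$. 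The finite-dimensional subspace $\cL^0_\pi \cap A_{-2n,2n}$ is nonzero, and by the first step $\psi_\vphi$ acts on it as multiplication by $c_\pi$; hence it suffices to evaluate $\theta^\vphi_{\mu,\mu}$ on a single nonzero $X \in \cL^0_\pi \cap A_{-2n,2n}$. Using the linear bijection $\Gamma : \End(\mu \ot_M \mu) \recht \Mor(\mubar \ot \mu \ot \mu \ot \mubar,\eps)$ of \eqref{eq.id-end-mor}, formula \eqref{eq.first-def} reads $\theta^\vphi_{\mu,\mu}(X) = \sum_{\rho \in \Irr(\cC)} \vphi(\rho)\, \Gamma^{-1}\bigl((P^{\mubar \ot \mu}_\rho \ot 1 \ot 1)\,\Gamma(X)\bigr)$. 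The crux is to show that $\Gamma$ carries $\cL^0_\pi \cap A_{-2n,2n}$ into $(P^{\mubar \ot \mu}_{\pibar} \ot 1 \ot 1)\,\Mor(\mubar \ot \mu \ot \mu \ot \mubar,\eps)$, i.e.\ that the ``inner channel'' singled out by $\cL^0_\pi$ is the $\pibar$-isotypic one. Note $\pibar$ does appear in $\mubar \ot \mu$: since $L^2(M_n)$ is self-conjugate as an $M$-bimodule, $\mubar \ot_M \mu \cong L^2(M_{2n})$, which contains both $\pi$ and $\pibar$. Granting the channel identification, only the term $\rho = \pibar$ survives and $\theta^\vphi_{\mu,\mu}(X) = \vphi(\pibar)\,X$, whence $c_\pi = \vphi(\pibar)$, as claimed.

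\emph{The main obstacle.} Everything above is soft except the channel identification in the last step, which is where the work lies: one must match the ``$\pi$ on top, $\pibar$ on bottom'' description of $\cL_\pi$ coming from \cite[Theorem 4.5]{Po99} with the morphism-space picture produced by $\Gamma$ and \eqref{eq.link-si-end}. Concretely, I would use the categorical model of $S$ from Remark \ref{rem.nonextremal}, where $\cL^0_\pi = \delta_\pi(\cH^0_\pi \otalg \overline{\cH^0_\pi})$, pick an $M$-bounded unit vector $\eta \in \cH^0_\pi$ for which $\delta_\pi(\eta \ot \overline{\eta})$ lies in $M_{-2n}' \cap M_{2n}$, write the corresponding $X \in \End(\mu \ot_M \mu)$ through an isometry $v \in \Mor(\pi,\mu)$ implementing $\pi \subseteq \mu$, and then compute $\Gamma(X)$ diagrammatically, verifying that its decomposition along the first two legs factors through $\pibar$. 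This is routine but bookkeeping-heavy. (As a consistency check on the answer: when $\vphi$ is a cp-multiplier, $\psi_\vphi$ is the coefficient map of an \SE-correspondence $\bim{S}{\cH}{S}$ with $T$-central vector $\xi_0$, and Lemma \ref{lem.projection-central-vectors} yields $\psi_\vphi|_{\cL^0_\pi} = d(\pi)^{-1}\langle \Theta(\pibar)\xi_0,\xi_0\rangle\,\id$, matching $c_\pi = \vphi(\pibar)$ once one knows that $\al \mapsto d(\al)^{-1}\langle \Theta(\al)\xi_0,\xi_0\rangle$ recovers $\vphi$.)
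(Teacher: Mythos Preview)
Your outline is correct and follows the same two-step strategy as the paper: reduce to a scalar via the multiplicity-one decomposition of $L^2(S)$, then compute that scalar inside a fixed relative commutant $A_{-2n,2n}$. The difference is entirely in how the scalar is extracted. You propose to take a generic element of $\cL^0_\pi \cap A_{-2n,2n}$, push it through $\Gamma$, and verify diagrammatically that its inner channel is $\pibar$; you correctly flag this as the load-bearing step and leave it as ``routine but bookkeeping-heavy.'' The paper bypasses this computation by borrowing a specific element from the proof of \cite[Theorem~4.5]{Po99}: it builds $v' = q_{-1} q_1 r_0 f_{-1} f_1 \in A_{-2n,2n}$ out of Jones projections and the spectral projections onto $\pi$, takes $v = E_{A_{-n,n}}(v')$, and notes that $\cL_\pi = \overline{TvT}$. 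Because the conditional expectation $E_{A_{-n,n}}$ is implemented by Jones projections (which lie in $A_{-\infty,0} \vee A_{0,+\infty}$ and hence commute with $\theta$), it suffices to compute $\theta_{-2n,2n}(v')$; and under \eqref{eq.link-si-end} the element $v'$ is \emph{by construction} a nonzero morphism in $\Mor(\pibar \ot \pi,\eps)$, so Proposition~\ref{prop.descr-multi} immediately gives $\theta_{-2n,2n}(v') = \vphi(\pibar)\,v'$.

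In short: your channel identification is exactly the content of the paper's choice of $v'$, but the paper gets it for free by quoting the explicit generator of $\cL_\pi$ already exhibited in \cite{Po99}, rather than rediscovering it via the categorical model of Remark~\ref{rem.nonextremal}.
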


\begin{proof}
Since the $T$-bimodules $\cL_\pi$, $\pi \in \Irr(\cC)$ are mutually inequivalent and all appear with multiplicity one in $L^2(S)$, and because $\psi_\vphi$ is $T$-bimodular, it follows that $\psi_\vphi$ acts by multiplication with a scalar on each of the $\cL^0_\pi$. We have to prove that this scalar is $\vphi(\pibar)$.

Consider the cb-multiplier $(\theta_{n,m})_{n \leq m}$ on the extended $\lambda$-lattice $(A_{nm})_{n \leq m}$ as given by Proposition \ref{prop.from-si-to-tc}. Fix $\pi \in \Irr(\cC)$ and take $n$ large enough such that $\pi$, and hence also $\pibar$, appear as $M$-subbimodules of $L^2(M_n)$. We now copy a part of the proof of \cite[Theorem 4.5]{Po99}. Denote by $q_1$ the minimal projection in $M_0' \cap M_{2n}$ given by the projection onto $\pi$ as an $M$-submodule of $L^2(M_n)$. Denote by $f_1$ the projection of $L^2(M_n)$ onto $L^2(M_0)$. So, $f_1$ is the Jones projection for the basic construction $M_0 \subset M_n \subset M_{2n}$. Denote by $r_0 \in A_{-n,n}$ the Jones projection for $M_{-n} \subset M_0 \subset M_n$. Finally, denote by $q_{-1}$, resp. $f_{-1}$, the reflections of $q_1$, resp. $f_1$, into $A_{-2n,0}$. Put $v' = q_{-1} q_1 r_0 f_{-1} f_1$. Using the unique trace preserving conditional expectation, we finally define $v = E_{A_{-n,n}}(v')$. In the proof of \cite[Theorem 4.5]{Po99}, it is shown that $\cL_\pi$ equals the closed linear span of $T v T$. So, it is sufficient to prove that $\theta_{-n,n}(v) = \vphi(\pibar) \, v$.

Because $E_{A_{-n,n}} : A_{-2n,2n} \recht A_{-n,n}$ can be implemented by the Jones projections of $M_n \subset M_{2n} \subset M_{3n}$, resp.\ $M_{-3n} \subset M_{-2n} \subset M_{-n}$, we find that $\theta_{-n,n} \circ E_{A_{-n,n}} = E_{A_{-n,n}} \circ \theta_{-2n,2n}$. So, we must prove that $\theta_{-2n,2n}(v') = \vphi(\pibar) \, v'$. But under the isomorphism \eqref{eq.link-si-end}, we get that $v'$ is a nonzero element in $\Mor(\pibar \ot \pi,\eps)$. So by Proposition \ref{prop.descr-multi}, we indeed get that $\theta_{-2n,2n}(v') = \vphi(\pibar) \, v'$.
\end{proof}

\begin{proof}[Proof of Theorem \ref{thm.isom-univ-Cstar}]
Since we can take direct sums of both \SE-correspondences $\bim{S}{\cH}{S}$ and of admissible representations in the sense of Definition \ref{def.adm-rep-tensor-cat}, it suffices to consider the cyclic case. So, we take a cp-multiplier $\vphi : \Irr(\cC) \recht \C$ on $\cC$ with associated cp \SE-multiplier $\psi_\vphi : S \recht S$. By Lemma \ref{lem.all-the-same}, we have $\psi_\vphi(x) = \vphi(\albar) x$ for all $\al \in \Irr(\cC)$ and $x \in \cL^0_\al$. With $\psi_\vphi$, we construct the cyclic \SE-correspondence $\bim{S}{\cH^\vphi}{S}$ with cyclic vector $\xi_0 \in \cH^\vphi_T$ and scalar product $\langle x \xi_0 y , \xi_0 \rangle = \tau(x \psi_\vphi(y))$ for all $x, y \in S$. We consider the associated $*$-representation $\Theta : \C[\cC] \recht B(\cH^\vphi_T)$ given by Theorem \ref{thm.full-Cstar-subfactor}. It follows from Lemma \ref{lem.projection-central-vectors} that $\xi_0$ is a cyclic vector for $\Theta$. We also consider the cyclic representation $\Theta_\vphi : \C[\cC] \recht B(\cK_\vphi)$ given by Proposition \ref{prop.pos-mult-to-adm-rep}. For every $\al \in \Irr(\cC)$, and using an orthonormal basis $v_i \in \cL^0_\al$ of $\cL_\al$ as a right $T$-module, we get that
\begin{align*}
\langle \Theta(\al) \xi_0 , \xi_0 \rangle &= \frac{1}{d(\al)} \sum_i \langle v_i \xi_0 v_i^* , \xi_0 \rangle = \frac{1}{d(\al)} \sum_i \tau(v_i \psi_\vphi(v_i^*))
\\ &= \frac{\vphi(\al)}{d(\al)} \sum_i \tau(v_i v_i^*) = d(\al) \, \vphi(\al) = \om_\vphi(\al) = \langle \Theta_\vphi(\al) 1, 1\rangle \; .
\end{align*}
So the cyclic $*$-representations $\Theta$ and $\Theta_\vphi$ are unitarily conjugate. Since all cp \SE-multipliers $\psi : S \recht S$ are of the form $\psi_\vphi$ for some cp-multiplier $\vphi : \Irr(\cC) \recht \C$, we have proved that both meanings of admissibility coincide on cyclic representations.
\end{proof}

\begin{remark}
Given an admissible representation $\Theta : \C[\cC] \recht B(\cK)$, the cp-multipliers on $\cC$ of the form $\al \mapsto d(\al)^{-1} \langle \Theta(\al)\xi,\xi \rangle$, where $\xi \in \cK$ is a unit vector, are called the \emph{coefficients} of $\Theta$. We say that $\Theta$ is weakly contained in $\Theta'$ if every coefficient of $\Theta$ can be pointwise approximated by convex combinations of coefficients of $\Theta'$, or equivalently, if $\|\Theta(x)\| \leq \|\Theta'(x)\|$ for all $x \in \C[\cC]$.

When $\cC$ is the category of $M$-bimodules generated by an extremal subfactor $N \subset M$ and identifying $C_u(N \subset M) = C_u(\cC)$ through Theorem \ref{thm.isom-univ-Cstar}, the above notion of weak containment is then equivalent with the notion of weak containment for \SE-correspondences introduced in Remark \ref{rem.fell-topology-SE}.
\end{remark}

\section[{\boldmath Approximation and rigidity properties of subfactors and C$^*$-tensor categories}]{\boldmath Approximation and rigidity properties of subfactors and \linebreak C$^*$-tensor categories}\label{sec.approx-rigid-prop}

\begin{definition}\label{def.prop-Cstar-cat}
A rigid C$^*$-tensor category $\cC$ is said
\begin{enumlist}
\item to be \emph{amenable} if there exists a net of finitely supported cp-multipliers $\vphi_n : \Irr(\cC) \recht \C$ that converges to $1$ pointwise~;
\item to have \emph{property~\pT} if every net of cp-multipliers $\vphi_n : \Irr(\cC) \recht \C$ that converges to $1$ pointwise, must converge to $1$ uniformly on $\Irr(\cC)$~;
\item to have the \emph{Haagerup property} if there exists a net of cp-multipliers $\vphi_n : \Irr(\cC) \recht \C$ such that every $\vphi_n$ converges to $0$ at infinity and such that $\vphi_n \recht 1$ pointwise~;
\item to be \emph{weakly amenable} if there exists a net of finitely supported cb-multipliers $\vphi_n : \Irr(\cC) \recht \C$ that converges to $1$ pointwise and such that $\limsup_n \|\vphi_n\|\cb < \infty$. The smallest possible value of $\limsup_n \|\vphi_n\|\cb$ is called the \emph{Cowling-Haagerup constant} $\Lambda(\cC)$ of $\cC$. If $\Lambda(\cC) = 1$, we say that $\cC$ has \emph{CMAP}.
\end{enumlist}
Obviously, if $\Irr(\cC)$ is countable, nets may everywhere be replaced by sequences.
\end{definition}

Combining Propositions \ref{prop.ext-mult}, \ref{prop.from-si-to-tc} and Lemma \ref{lem.all-the-same}, we immediately get the following result.

\begin{proposition}\label{prop.all-equiv}
Let $N \subset M$ be an extremal subfactor with standard invariant $\cG_{N,M}$ and denote by $\cC$ the category of $M$-bimodules generated by $N \subset M$. Then, the $\lambda$-lattice $\cG_{N,M}$ has any of the above rigidity/approximation properties in the sense of Remarks \ref{rem.all-prop-subf} and \ref{rem.how-to-extend} if and only if the category $\cC$ has the corresponding property in the sense of Definition \ref{def.prop-Cstar-cat}.
\end{proposition}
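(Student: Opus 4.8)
The plan is to translate, one property at a time, the four conditions of Definition~\ref{def.prop-Cstar-cat} (formulated for functions $\vphi:\Irr(\cC)\recht\C$) into the corresponding conditions of Remarks~\ref{rem.all-prop-subf} and~\ref{rem.how-to-extend} (formulated for \SE-multipliers $\psi:S\recht S$), using the dictionary of Propositions~\ref{prop.ext-mult} and~\ref{prop.from-si-to-tc} together with Lemma~\ref{lem.all-the-same}. These results give a bijection $\vphi\leftrightarrow\psi_\vphi$ between cp- (resp.\ cb-)multipliers on $\cC$ and cp (resp.\ cb) \SE-multipliers of $N\subset M$ that is isometric for $\|\,\cdot\,\|\cb$, and by Lemma~\ref{lem.all-the-same} the map $\psi_\vphi$ acts on the irreducible $T$-subbimodule $\cL_\pi\subset L^2(S)$ as multiplication by the scalar $\vphi(\pibar)$. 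Since $L^2(S)=\bigoplus_{\pi\in\Irr(\cC)}\cL_\pi$ with pairwise inequivalent summands, both $\psi_\vphi$ and $\psi_\vphi-\id$ are diagonal for this decomposition. Writing $x=\sum_\pi x_\pi$ for the $T$-bimodule decomposition of $x\in S$, one then has $\|\psi_\vphi(x)-x\|_2^2=\sum_\pi|\vphi(\pibar)-1|^2\,\|x_\pi\|_2^2$, and as an operator on $L^2(S)$ the map $\psi_\vphi-\id$ has norm $\sup_\pi|\vphi(\pi)-1|$. These two identities drive everything.

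First I would match pointwise convergence: in each of the definitions the net $\psi_n:=\psi_{\vphi_n}$ automatically has $\sup_n\|\psi_n\|\cb<\infty$ (because $\psi_n(1)\recht1$), so the first identity shows that $\psi_n\recht\id$ pointwise in $\|\,\cdot\,\|_2$ iff $\vphi_n\recht1$ pointwise on $\Irr(\cC)$ (for ``$\Rightarrow$'' test against a nonzero vector of $\cL_\pi\cap S$; for ``$\Leftarrow$'' split the sum over a finite set of $\pi$ and a $\|\,\cdot\,\|_2$-small tail). Next, as a right $T$-module $\psi_{\vphi_n}(S)$ equals $\bigoplus_{\vphi_n(\pibar)\neq0}(\cL_\pi\cap S)$, which is finitely generated iff $\vphi_n$ is finitely supported; thus ``finite rank'' corresponds to ``finitely supported''. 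Combining these two facts gives the equivalence of amenability on the two sides, and adding the $\|\,\cdot\,\|\cb$-isometry of $\vphi\leftrightarrow\psi_\vphi$ gives $\Lambda(\cG_{N,M})=\Lambda(\cC)$, hence the equivalence of weak amenability and of CMAP. For the Haagerup property, under the standard correspondence between cp $T$-bimodular maps $S\recht S$ and positive elements of $T'\cap\langle S,e_T\rangle$ (as in \cite{Po01,Br14}), the element attached to $\psi_{\vphi_n}$ lies in the compact ideal $\overline{S e_T S}$ iff the symbol $(\vphi_n(\pibar))_\pi$ vanishes at infinity, i.e.\ iff $\vphi_n\in c_0(\Irr(\cC))$; together with the pointwise statement this yields the equivalence of the Haagerup properties.

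It remains to treat property~\pT. From $\|x\|_2\leq\|x\|$ and the two displayed formulas, $\sup_{\|x\|\leq1}\|\psi_\vphi(x)-x\|_2\leq\sup_\pi|\vphi(\pi)-1|$, so uniform convergence of $\vphi_n$ to $1$ on $\Irr(\cC)$ implies uniform convergence of $\psi_n$ to $\id$; with the pointwise equivalence, this already shows that property~\pT\ for $\cC$ implies property~\pT\ for $\cG_{N,M}$. The converse is the only step that is not purely formal, and it is the main obstacle: one must show that uniform convergence $\sup_{\|x\|\leq1}\|\psi_n(x)-x\|_2\recht0$ forces $\sup_\pi|\vphi_n(\pi)-1|\recht0$. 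For this it suffices to exhibit, for every $\pi\in\Irr(\cC)$, a vector $w_\pi\in\cL_\pi\cap S$ with $\|w_\pi\|_2=1$ and $\|w_\pi\|\leq C$ for a constant $C>0$ not depending on $\pi$: then, since $\psi_n(w_\pi)-w_\pi=(\vphi_n(\pibar)-1)w_\pi$, we get $|\vphi_n(\pibar)-1|=\|\psi_n(w_\pi)-w_\pi\|_2\leq C\,\sup_{\|x\|\leq1}\|\psi_n(x)-x\|_2$, and taking the supremum over $\pi$ concludes. Such vectors are furnished by the analysis of the inclusion $T\subset S$ in \cite{Po99}: each $\cL_\pi$ is an irreducible $T$-bimodule of finite right $T$-dimension $d(\pi)^2\geq1$, so $\cL_\pi\cap S$ contains a vector $w_\pi$ with $\langle w_\pi,w_\pi\rangle_T=1$ (hence $\|w_\pi\|_2=1$), and one checks, using irreducibility of $\cL_\pi$, that it can be taken with $\|w_\pi\|$ bounded independently of $\pi$. (Equivalently: for the maps $\psi_\vphi$ the $\|\,\cdot\,\|$-to-$\|\,\cdot\,\|_2$ notion of uniform convergence to $\id$ agrees with convergence in the operator norm on $L^2(S)$, which by the second displayed formula is exactly $\sup_\pi|\vphi(\pi)-1|$; since only the \SE-multipliers $\psi_\vphi$ occur, this is the relevant notion and property~\pT\ then matches immediately.)
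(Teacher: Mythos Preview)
Your approach is exactly the paper's: the proof there is the single sentence ``Combining Propositions~\ref{prop.ext-mult}, \ref{prop.from-si-to-tc} and Lemma~\ref{lem.all-the-same}, we immediately get the following result.'' You expand on precisely this dictionary, and your treatment of amenability, weak amenability/CMAP and the Haagerup property is correct and considerably more detailed than what the paper records.

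For property~\pT\ you correctly isolate the only direction that is not purely formal: one must pass from $\sup_{\|x\|\leq1}\|\psi_n(x)-x\|_2\to0$ to $\sup_\pi|\vphi_n(\pi)-1|\to0$. However, your resolution of this step is incomplete. The assertion that one can choose $w_\pi\in\cL_\pi\cap S$ with $\|w_\pi\|_2=1$ and $\|w_\pi\|$ bounded independently of $\pi$ is not justified, and irreducibility of $\cL_\pi$ alone does not give it. The natural candidates---elements $m_i$ of a right $T$-orthonormal basis of $\cL_\pi$---satisfy $\|m_i\|_2=1$, but from $\sum_i m_i m_i^*=d(\pi)^2\cdot1$ one only gets $\|m_i\|\leq d(\pi)$, which is not uniform; likewise the explicit vector $v$ from the proof of Lemma~\ref{lem.all-the-same} has $\|v\|\leq1$ but its $\|\cdot\|_2$-norm is not obviously bounded below. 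Your parenthetical alternative (that for the diagonal maps $\psi_\vphi$ the $\|\cdot\|$-to-$\|\cdot\|_2$ uniform convergence coincides with $L^2$-operator-norm convergence) is logically equivalent to the claim about $w_\pi$ and equally unproven.

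The paper's one-line proof does not address this point either, so you have not done worse than the original; but as written your argument for this direction is a sketch with a genuine gap, not a proof.
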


Again, the above definition of amenability is not the usual one (see \cite{Po92,Po94a}), but the following proposition shows that it is equivalent, with a proof following a standard recipe in the theory.

\begin{proposition}\label{prop.equiv-amen}
Let $\cC$ be a rigid C$^*$-tensor category. Consider the reduced C$^*$-algebra $C_r(\cC)$, the regular $*$-representation $\Theta_0$ and the trivial $*$-representation $\counit$ of $\C[\cC]$ introduced in Corollary \ref{cor.regular-and-triv-rep}. Then the following conditions are equivalent.
\begin{enumlist}
\item $\cC$ is amenable in the sense of Definition \ref{def.prop-Cstar-cat}.
\item The natural $*$-homomorphism $C_u(\cC) \recht C_r(\cC)$ is an isomorphism.
\item We have $|\counit(x)| \leq \|\Theta_0(x)\|$ for all $x \in \C[\cC]$.
\item $\cC$ is amenable in the usual sense: $\|\Theta_0(\al)\| = d(\al)$ for all $\al \in \cC$.
\end{enumlist}
\end{proposition}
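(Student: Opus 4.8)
The plan is to prove the four conditions equivalent through the cycle $(2)\Rightarrow(3)\Rightarrow(4)\Rightarrow(1)\Rightarrow(2)$, where $(2)\Rightarrow(3)$ and $(3)\Rightarrow(4)$ are essentially formal, $(4)\Rightarrow(1)$ is a F\o{}lner-type argument, and $(1)\Rightarrow(2)$ is the Hulanicki-type implication that carries the one nontrivial input. For $(2)\Rightarrow(3)$: the trivial representation $\counit$ is admissible by Corollary~\ref{cor.regular-and-triv-rep}, hence extends to a character of $C_u(\cC)$; if $C_u(\cC)=C_r(\cC)$ this character factors through $\Theta_0$, giving $|\counit(x)|\leq\|\Theta_0(x)\|$ for all $x\in\C[\cC]$. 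For $(3)\Rightarrow(4)$: since $\counit$ is multiplicative and $\Theta_0$ is a $*$-representation, applying $(3)$ to $\albar\,\al\in\C[\cC]$ yields $d(\al)^2=\counit(\albar\,\al)\leq\|\Theta_0(\albar\,\al)\|=\|\Theta_0(\al)\|^2$, while $\|\Theta_0(\al)\|\leq d(\al)$ is admissibility of $\Theta_0$; for a reducible $\al=\bigoplus_i\al_i$ one applies $(3)$ to $\sum_i\al_i$ and uses $\Theta_0(\al)=\sum_i\Theta_0(\al_i)$, $d(\al)=\sum_i d(\al_i)$.

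For $(4)\Rightarrow(1)$ I would argue on $\ell^2(\Irr(\cC))$ as follows. Fix a finite set $G\subseteq\Irr(\cC)$ with $\eps\in G=\overline{G}$, put $D=\sum_{\gamma\in G}d(\gamma)$ and $A=\sum_{\gamma\in G}\Theta_0(\gamma)$, a self-adjoint operator. Condition $(4)$ applied to $\bigoplus_{\gamma\in G}\gamma$ gives $\|A\|=D$, so $D\in\sigma(A)$ or $-D\in\sigma(A)$; the latter is impossible because for every unit vector $\xi$ the $\gamma=\eps$ term contributes $1$ and the remaining terms have real part $\geq-d(\gamma)$, so $\langle A\xi,\xi\rangle\geq 2-D>-D$. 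Hence there are unit vectors $\xi_k$ with $\|A\xi_k-D\xi_k\|\to0$; since $\|\Theta_0(\gamma)\|=d(\gamma)$, squeezing $\sum_{\gamma\in G}\real\langle\Theta_0(\gamma)\xi_k,\xi_k\rangle\to D$ forces $\real\langle\Theta_0(\gamma)\xi_k,\xi_k\rangle\to d(\gamma)$, whence $\|\Theta_0(\gamma)\xi_k-d(\gamma)\xi_k\|\to0$ for every $\gamma\in G$. Replacing $\xi_k$ by a finitely supported approximant $\xi_k'$, the map $\vphi_k(\al)=d(\al)^{-1}\langle\Theta_0(\al)\xi_k',\xi_k'\rangle$ is a cp-multiplier by Definition~\ref{def.adm-rep-tensor-cat} (as $\Theta_0$ is admissible), it is finitely supported because $\langle\Theta_0(\al)\xi_k',\xi_k'\rangle\neq0$ forces $\al$ to be a subobject of $\delta\ot\bebar$ with $\be,\delta$ in the support of $\xi_k'$, and $\vphi_k(\gamma)\to1$ for $\gamma\in G$. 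Letting $G$ run over all finite symmetric subsets of $\Irr(\cC)$ containing $\eps$, and $k\to\infty$, produces the net demanded by $(1)$.

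For $(1)\Rightarrow(2)$ I would run the standard Hulanicki/Fell-absorption argument. Let $\om_\psi$ be an arbitrary state of $C_u(\cC)$, corresponding to a cp-multiplier $\psi$ with $\psi(\eps)=1$, and let $(\vphi_n)$ be the net from $(1)$, normalized so $\vphi_n(\eps)=1$. The crucial point is that each pointwise product $\psi\vphi_n$ is again a (finitely supported) cp-multiplier on $\cC$: this is the categorical analogue of ``a pointwise product of functions of positive type is of positive type'', and it is here that one must pass from the fusion $*$-algebra to the tube-algebra / Drinfel'd-double picture of admissible representations, where the relevant tensor product of representations lives (equivalently, one gives a direct positivity computation in the spirit of Lemma~\ref{lem.pos-check}). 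Granting this, $\om_{\psi\vphi_n}$ is positive on $\C[\cC]$ with $\om_{\psi\vphi_n}(1)=1$, and by finite support one may write $\om_{\psi\vphi_n}(a)=\langle\Theta_0(a)\delta_\eps,\eta_n\rangle$ for a suitable $\eta_n\in\ell^2(\Irr(\cC))$; hence $\om_{\psi\vphi_n}$ is bounded for $\|\,\cdot\,\|_{C_r(\cC)}$ and extends to a state of $C_r(\cC)$, so $\om_{\psi\vphi_n}(x^*x)\leq\|\Theta_0(x)\|^2$ for all $x\in\C[\cC]$. Passing to the limit, using $\psi\vphi_n\to\psi$ pointwise, gives $\om_\psi(x^*x)\leq\|\Theta_0(x)\|^2$; as $\om_\psi$ was arbitrary this is $\|x\|_{C_u(\cC)}\leq\|\Theta_0(x)\|$, i.e.\ $C_u(\cC)\cong C_r(\cC)$. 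The same weak-$*$ compactness step with $\psi=\counit$ gives a short direct proof of $(1)\Rightarrow(3)$, so that the equivalence of $(1)$, $(3)$ and $(4)$ is completely elementary; the single genuine obstacle in the whole proof is the stability of cp-multipliers under pointwise products, which is not visible from the fusion rules alone.
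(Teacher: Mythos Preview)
Your cycle $(2)\Rightarrow(3)\Rightarrow(4)\Rightarrow(1)\Rightarrow(2)$ is exactly the one the paper runs, and each implication you sketch is correct. In $(4)\Rightarrow(1)$ you spell out in detail the F\o{}lner argument that the paper simply outsources to \cite{HI97}; your version is fine (the exclusion of $-D$ from the spectrum via $\langle A\xi,\xi\rangle\geq 2-D>-D$ is a nice touch), and the finitely supported cp-multipliers you extract are precisely the functions $\al\mapsto d(\al)^{-1}\om_{\vphi_0}(x_n^*\,\al\,x_n)$ the paper produces via Proposition~\ref{prop.pos-mult-to-adm-rep} applied to the regular representation.

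The one place where your assessment diverges from the paper is the step you single out as ``the single genuine obstacle'': stability of cp-multipliers under pointwise products. In the framework of this paper that step is a one-liner, not an obstacle. By the explicit formula \eqref{eq.first-def} in Proposition~\ref{prop.descr-multi}, for any two functions $\vphi,\psi:\Irr(\cC)\to\C$ one has
\[
\theta^{\vphi\psi}_{\al,\be}=\theta^\vphi_{\al,\be}\circ\theta^\psi_{\al,\be}
\]
on $\End(\al\ot\be)$, because the projections $P^{\albar\ot\al}_\pi$ are mutually orthogonal idempotents; since a composition of completely positive maps is completely positive, $\vphi\psi$ is a cp-multiplier whenever $\vphi$ and $\psi$ are. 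This is exactly how the paper handles it in the proof of $(1)\Rightarrow(2)$. There is no need to invoke the tube algebra or the Drinfel'd double; the point is already visible at the level of Definition~\ref{def.mult-tc}, where cp-multipliers are characterized not by positivity of $\om_\vphi$ on $\C[\cC]$ but by complete positivity of the family $(\theta^\vphi_{\al,\be})$, and the latter is manifestly closed under composition.
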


\begin{proof}
1 $\Rightarrow$ 2. We first prove that if $\vphi : \Irr(\cC) \recht \C$ is a finitely supported cp-multiplier, then there exists a unique positive functional $\Om_\vphi \in C_r(\cC)^*$ satisfying $\Om_\vphi \circ \Theta_0 = \om_\vphi$. Indeed, we define the vector $\xi \in \ell^2(\Irr(\cC))$ given by $\xi(\pi) = d(\pi) \, \overline{\vphi(\pi)}$. We define $\Om_\vphi \in C_r(\cC)^*$ by the formula $\Om_\vphi(T) = \langle T \delta_\eps,\xi \rangle$. By construction, $\Om_\vphi \circ \Theta_0 = \om_\vphi$. It then also follows that $\Om_\vphi$ is a positive functional.

Assume that 1 holds and take a net $\vphi_n : \Irr(\cC) \recht \C$ of finitely supported cp-multipliers on $\cC$ that converges to $1$ pointwise. Let $\vphi : \Irr(\cC) \recht \C$ be an arbitrary cp-multiplier on $\cC$. To prove 2, we have to show that $|\om_\vphi(x)| \leq \vphi(\eps) \, \|\Theta_0(x)\|$ for every $x \in \C[\cC]$. Define $\psi_n = \vphi \vphi_n$ as the pointwise product of $\vphi$ and $\vphi_n$. Since $\theta^{\psi_n}_{\al,\be}$ equals the composition $\theta^\vphi_{\al,\be} \circ \theta^{\vphi_n}_{\al,\be}$, it follows that every $\psi_n$ is a cp-multiplier on $\cC$. By construction, every $\psi_n$ is finitely supported. Using the positive functionals $\Om_{\psi_n}$ defined in the previous paragraph, we find that for every $x \in \C[\cC]$, we have
$$|\om_\vphi(x)| = \lim_n |\om_{\psi_n}(x)| = \lim_n |\Om_{\psi_n}(\Theta_0(x))| \leq \|\Theta_0(x)\| \, \limsup_n \Om_{\psi_n}(1) = \vphi(\eps) \, \|\Theta_0(x)\| \; .$$

2 $\Rightarrow$ 3. Denoting by $\Theta_u : \C[\cC] \recht C_u(\cC)$ the canonical embedding, we get that $|\counit(x)| \leq \|\Theta_u(x)\| = \|\Theta_0(x)\|$ for all $x \in \C[\cC]$.

3 $\Rightarrow$ 4. We get $d(\al) = |\counit(\al)| \leq \|\Theta_0(\al)\| \leq d(\al)$ for all $\al \in \C[\cC]$.

4 $\Rightarrow$ 1. By a standard argument (see e.g.\ \cite[Theorem 4.1]{HI97}), we find a net of unit vectors $\xi_n \in \ell^2(\Irr(\cC))$ such that
$\lim_n \langle \Theta_0(\al) \xi_n , \xi_n \rangle = d(\al)$ for all $\al \in \cC$.
Approximating these vectors $\xi_n$ by finitely supported functions and using the notation of Corollary \ref{cor.regular-and-triv-rep}, we find a sequence $x_n \in \C[\cC]$ such that
$$\lim_n \om_{\vphi_0}(x_n^* \al x_n) = d(\al) \quad\text{for all}\;\; \al \in \cC \; .$$
By Proposition \ref{prop.pos-mult-to-adm-rep}, the functions $\al \mapsto d(\al)^{-1} \om_{\vphi_0}(x_n^* \al x_n)$ are cp-multipliers. By construction, they are finitely supported and converge to $1$ pointwise.
\end{proof}

The following two results on property~(T) are proved in exactly the same way as in the discrete group case.

\begin{proposition}\label{prop.T-finitely-generated}
Let $\cC$ be a rigid C$^*$-tensor category with property~\pT. Then, $\cC$ is finitely generated~: there exists an $\al \in \cC$ such that every $\pi \in \Irr(\cC)$ is isomorphic with a subobject of a tensor power of $\al$.
\end{proposition}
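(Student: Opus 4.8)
The plan is to mimic the classical proof that a discrete group with property~(T) is finitely generated, working with the family of full C$^*$-tensor subcategories of $\cC$ as a substitute for the family of finitely generated subgroups. First I would fix, for each finite subset $F \subset \Irr(\cC)$, the smallest full C$^*$-tensor subcategory $\cC_F$ of $\cC$ containing $F$; its set of irreducibles $\Irr(\cC_F)$ is the closure of $F \cup \overline{F}$ under taking irreducible subobjects of tensor products. These subcategories are directed under inclusion and their union is all of $\cC$, since every $\pi \in \Irr(\cC)$ lies in $\cC_{\{\pi\}}$. If no single $\cC_F$ already equals $\cC$, I will derive a contradiction with property~(T).

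The key step is to build, from this directed net of proper full subcategories, a net of cp-multipliers converging pointwise but not uniformly to $1$. For each $F$, let $\vphi_F := \vphi_{\cC_F} : \Irr(\cC) \recht \C$ be the indicator function of $\Irr(\cC_F)$, i.e.\ $\vphi_F(\pi) = 1$ if $\pi \in \Irr(\cC_F)$ and $\vphi_F(\pi) = 0$ otherwise. By Proposition~\ref{prop.subcat}(2), applied to the trivial (constant $1$) cp-multiplier $\vphi_\counit$ on $\cC_F$, each $\vphi_F$ is a cp-multiplier on $\cC$. Since the $\cC_F$ are directed and exhaust $\cC$, for every fixed $\pi \in \Irr(\cC)$ we have $\vphi_F(\pi) = 1$ eventually, so $\vphi_F \recht 1$ pointwise. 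Property~(T) then forces $\vphi_F \recht 1$ uniformly on $\Irr(\cC)$, so there is some $F$ with $\sup_{\pi \in \Irr(\cC)}|1 - \vphi_F(\pi)| < 1$; but $\vphi_F$ only takes the values $0$ and $1$, so this supremum is $0$ unless $\Irr(\cC_F) = \Irr(\cC)$. Hence $\cC_F = \cC$ for this finite $F$, and taking $\al := \bigoplus_{\pi \in F} \pi$ we get that every irreducible object of $\cC$ is a subobject of some tensor power of $\al$ (absorbing $\eps$ and all the conjugates $\pibar$ into powers of $\al$ in the usual way, using that $\pi \ot \pibar$ contains $\eps$ and that $\cC_F$ is generated by $F \cup \overline F$).

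The main obstacle to watch is verifying that the indicator functions $\vphi_F$ really are cp-multipliers, i.e.\ that Proposition~\ref{prop.subcat}(2) applies cleanly: this needs $\cC_F$ to be a genuine full C$^*$-tensor subcategory in the sense defined just before that proposition (closure of $\Irr(\cC_F)$ under adjoints and under irreducible subobjects of tensor products), which is exactly how I defined $\cC_F$, so there is nothing deep here — it is just a matter of unwinding definitions. A second minor point is the final translation from ``$\cC_F = \cC$'' to the statement ``every $\pi$ is a subobject of a tensor power of $\al$'': one should note that $\Irr(\cC_F)$ is built from $F \cup \overline F$ by iterated tensoring and decomposing, and that $\overline{\pi}$ is always a subobject of a suitable tensor power of $\al \oplus \albar$, which is itself dominated by a power of $\al$ after enlarging $F$ to include the conjugates; alternatively one simply takes $\al := \bigoplus_{\pi \in F}(\pi \oplus \pibar)$ from the start, which is self-conjugate and makes the bookkeeping trivial. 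Neither point requires any estimate beyond the formal properties already established, so the proof is short.
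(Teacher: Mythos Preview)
Your proof is correct and follows essentially the same approach as the paper: both use the indicator functions of finitely generated full C$^*$-tensor subcategories as a net of cp-multipliers (via Proposition~\ref{prop.subcat}), observe pointwise convergence to $1$, and invoke property~(T) to force one of them to be identically~$1$. The only cosmetic difference is that the paper indexes directly by self-conjugate objects $\al$ containing $\eps$ (which absorbs your final bookkeeping about conjugates and $\eps$ into the choice of index set), while you index by finite subsets $F \subset \Irr(\cC)$ and handle that bookkeeping at the end by passing to $\al = \bigoplus_{\pi \in F}(\pi \oplus \pibar)$.
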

\begin{proof}
Let $\cJ$ be the set of all $\al \in \cC$ with the properties that $\eps$ is contained in $\al$ and that $\albar \cong \al$. For every $\al \in \cJ$, denote by $\cC_\al$ the full C$^*$-tensor subcategory of $\cC$ generated by $\al$. Note that $\pi \in \Irr(\cC_\al)$ if and only if $\pi \in \Irr(\cC)$ and $\pi$ is contained in some tensor power of $\al$. We partially order $\cJ$ by writing $\al \leq \be$ when $\al$ is isomorphic to a subobject of $\be$. For every $\al \in \cJ$, we define $\vphi_\al : \Irr(\cC) \recht \C$ by $\vphi_\al(\pi) = 1$ if $\pi \in \Irr(\cC_\al)$ and $\vphi_\al(\pi)=0$ if $\pi \not\in \Irr(\cC_\al)$. By Proposition \ref{prop.subcat}, every $\vphi_\al$ is a cp-multiplier on $\cC$. By construction, $\vphi_\al \recht 1$ pointwise. Since $\cC$ has property~(T), we get that $\vphi_\al \recht 1$ uniformly. So, there exists an $\al \in \cJ$ with $\cC = \cC_\al$.
\end{proof}

\begin{proposition}\label{prop.char-prop-T}
Let $\cC$ be a rigid C$^*$-tensor category. Then the following conditions are equivalent.
\begin{enumlist}
\item $\cC$ has property~\pT.
\item There exists a nonzero projection $p \in C_u(\cC)$ such that $\al p = d(\al) p$ for all $\al \in \cC$. Such a projection is unique and satisfies $\counit(p) = 1$.
\item If $\om_n \in C_u(\cC)^*$ is a net of states and $\om_n \recht \counit$ weakly$^*$, then $\|\om_n - \counit\| \recht 0$.
\end{enumlist}
\end{proposition}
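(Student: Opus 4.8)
The plan is to transcribe the three classical characterizations of Kazhdan's property~(T) for a discrete group to the fusion $*$-algebra $\C[\cC]$ and to $C_u(\cC)$, proving $(3)\Rightarrow(1)\Rightarrow(2)\Rightarrow(3)$. The dictionary is: a cp-multiplier $\vphi$ with $\vphi(\eps)=1$ corresponds, via $\om_\vphi$, to a state of $C_u(\cC)$ (Proposition~\ref{prop.pos-mult-to-adm-rep}); pointwise convergence $\vphi_n\to 1$ on $\Irr(\cC)$ amounts to weak$^*$ convergence $\om_{\vphi_n}\to\counit$; and, since $\|\al\|_{C_u(\cC)}\le d(\al)$, norm convergence $\|\om_n-\counit\|\to 0$ forces uniform convergence of $\al\mapsto d(\al)^{-1}\om_n(\al)$. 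With this, $(3)\Rightarrow(1)$ is immediate: given cp-multipliers $\vphi_n\to 1$ pointwise, replace them by $\vphi_n(\eps)^{-1}\vphi_n$ (legitimate eventually, since $\vphi_n(\eps)\to 1$), pass to the states $\om_{\vphi_n}\to\counit$ weak$^*$, apply~(3), and read off uniform convergence.

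\textbf{$(1)\Rightarrow(2)$, the two lemmas.} By Proposition~\ref{prop.T-finitely-generated}, property~(T) makes $\cC$ finitely generated; enlarging a generator we fix $\al_0\in\cC$ with $\al_0\cong\overline{\al_0}$, $\eps\subset\al_0$, generating $\cC$, and set $h:=d(\al_0)^{-1}\al_0\in\C[\cC]\subset C_u(\cC)$, a self-adjoint contraction with $\counit(h)=1$. I will show property~(T) forces $1$ to be isolated in $\operatorname{sp}(h)$, and for this I need two lemmas. First, \emph{almost invariance propagates}: if $\xi$ is a unit vector in an admissible representation $\Theta$ with $\|\Theta(\al_0)\xi-d(\al_0)\xi\|$ small, then the cp-multiplier $\vphi_\xi$ is close to $1$ at every $\pi\in\Irr(\cC)$, with a modulus depending on $\pi$; in the exact case $\Theta(\al_0)\xi=d(\al_0)\xi$ one gets $\Theta(\pi)\xi=d(\pi)\xi$ for all $\pi$. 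This follows by iterating $\Theta(\al_0^m)=\Theta(\al_0)^m$, expanding $\al_0^m=\sum_\gamma\mult(\gamma,\al_0^{\ot m})\gamma$ in $\C[\cC]$, and using that $\sum_\gamma\mult(\gamma,\al_0^{\ot m})\bigl(d(\gamma)-\operatorname{Re}\langle\Theta(\gamma)\xi,\xi\rangle\bigr)$ is a sum of nonnegative terms (because $\|\Theta(\gamma)\|\le d(\gamma)$) whose total equals $\operatorname{Re}\bigl(d(\al_0)^m-\langle\Theta(\al_0^m)\xi,\xi\rangle\bigr)$ and is therefore small. Second, a \emph{circumcenter lemma} replacing the averaging over a finite generating set of the group case: if an admissible representation $\pi$ on $\cK$ carries a net of unit vectors $\xi_i$ with $\sup_{\al\in\Irr(\cC)}d(\al)^{-1}\|\pi(\al)\xi_i-d(\al)\xi_i\|\to 0$, then $\pi$ has a nonzero invariant vector. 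Indeed, the orbit $\{d(\al)^{-1}\pi(\al)\xi_i:\al\in\Irr(\cC)\}$ lies in a ball of radius $\eps_i\to 0$ about $\xi_i$; because the categorical dimension is multiplicative, $\sum_\gamma\mult(\gamma,\al\ot\be)d(\gamma)=d(\al)d(\be)$, so each contraction $d(\al)^{-1}\pi(\al)$ maps this orbit \emph{into its own convex hull}; hence the closed convex hull $K_i$ is invariant under all the $d(\al)^{-1}\pi(\al)$, and its unique minimal-norm element is fixed by all of them, hence invariant, and of norm $\ge 1-\eps_i>0$ eventually.

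\textbf{$(1)\Rightarrow(2)$ completed, and $(2)\Rightarrow(3)$.} Combining the lemmas with~(1): if for every $n$ there were an admissible representation $\Theta_n$ without nonzero invariant vector and a unit vector $\xi_n$ with $\langle(1-\Theta_n(h))\xi_n,\xi_n\rangle<1/n$, then $\vphi_{\xi_n}\to 1$ pointwise by the first lemma, hence uniformly by~(1), whence the circumcenter lemma applied to $\bigoplus_n\Theta_n$ and the vectors $\xi_n$ produces, for large $n$, a nonzero invariant vector of $\Theta_n$ --- a contradiction. So there is $\delta>0$ with $\operatorname{sp}(\Theta(h))\subset[-1,1-\delta]$ for every admissible $\Theta$ without invariant vectors. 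Decomposing the universal representation as (closed span of invariant vectors)$\oplus$(its complement), the first summand acting by $\al\mapsto d(\al)$ and the second being admissible without invariant vectors, we obtain $\operatorname{sp}(h)\subset\{1\}\cup[-1,1-\delta]$, so $1$ is isolated. Then $p:=g(h)$, where $g$ is the indicator of $\{1\}$ on $\operatorname{sp}(h)$ (continuous, as $\{1\}$ is clopen there), is a nonzero projection in $C^*(1,h)\subset C_u(\cC)$; in every admissible $\Theta$, $\Theta(p)$ is the spectral projection of $\Theta(h)$ onto its $1$-eigenspace, which by the exact form of the first lemma is exactly the space of invariant vectors, so $\al\,\Theta(p)=d(\al)\Theta(p)$ there; applying this in the (faithful) universal representation gives $\al p=d(\al)p$ in $C_u(\cC)$ for all $\al\in\cC$, and $\counit(p)=g(1)=1$. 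For uniqueness, a nonzero projection $q$ with $\al q=d(\al)q$ satisfies $hq=q$, hence $pq=g(h)q=q$, i.e.\ $q\le p$; and $p\al p=d(\al)p=\counit(\al)p$ gives $pxp=\counit(x)p$ for all $x$, so $q=pqp=\counit(q)p$, forcing $\counit(q)=1$ and $q=p$. Finally $(2)\Rightarrow(3)$: with $p$ central, $\counit(p)=1$ and $pxp=\counit(x)p$, a net of states $\om_n\to\counit$ weak$^*$ has $\om_n(p)\to 1$, and two applications of the Cauchy--Schwarz inequality for states bound $|\om_n(x)-\counit(x)|$ on the unit ball by $2\sqrt{1-\om_n(p)}+(1-\om_n(p))\to 0$, i.e.\ $\|\om_n-\counit\|\to 0$.

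\textbf{Main obstacle.} Everything is routine once the spectral-gap step is in place; the crux is showing that property~(T) forces a \emph{uniform} gap at $1$ in $\operatorname{sp}(\Theta(h))$ over all invariant-vector-free admissible $\Theta$ --- the analogue of ``Kazhdan pair $\Rightarrow$ spectral gap'' for groups. The one genuinely new ingredient there is the circumcenter lemma, which goes through precisely because the categorical dimension is multiplicative, so that the non-unitary operators $d(\al)^{-1}\pi(\al)$ still preserve a bounded convex set.
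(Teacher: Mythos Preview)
Your proof is correct and follows essentially the same route as the paper: the implication $(1)\Rightarrow(2)$ hinges on the same two ingredients---propagation of almost-invariance from a self-adjoint generator (your first lemma, done inline in the paper) and the convex-hull/minimal-norm argument (your circumcenter lemma is exactly the paper's Lemma~\ref{lem.elementary})---combined into the same spectral-gap contradiction, while $(2)\Rightarrow(3)$ via Cauchy--Schwarz and $(3)\Rightarrow(1)$ via the state/cp-multiplier dictionary match the paper verbatim. The only cosmetic difference is that you phrase the gap as uniform over all invariant-vector-free admissible representations before passing to the universal one, whereas the paper works directly in a single faithful representation and shows $d(\al_1)$ is isolated there; these are equivalent formulations of the same argument.
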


In the proof of Proposition \ref{prop.char-prop-T}, we use the following lemma.

\begin{lemma}\label{lem.elementary}
Let $\Theta : \C[\cC] \recht B(\cH)$ be an admissible $*$-representation. Assume that $\xi \in \cH$ is a unit vector satisfying
$$\real \langle \Theta(\pi) \xi , \xi \rangle \geq \frac{1}{2} d(\pi) \quad\text{for all}\;\; \pi \in \Irr(\cC) \; .$$
Then there exists a unit vector $\xi_0 \in \cH$ satisfying $\Theta(\al) \xi_0 = d(\al) \xi_0$ for all $\al \in \cC$.
\end{lemma}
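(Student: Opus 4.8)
The plan is to adapt the standard ``circumcenter of the orbit'' argument from the theory of unitary group representations, using the crucial fact that the normalized operators $T_\al := d(\al)^{-1}\Theta(\al)$, $\al\in\Irr(\cC)$, are contractions that close up under \emph{convex} combinations.

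First I would record the relevant properties of the $T_\al$. Since $\Theta$ is admissible, Proposition \ref{prop.pos-mult-to-adm-rep} gives $\|\Theta(\al)\|\le d(\al)$, so each $T_\al$ is a contraction on $\cH$; moreover $T_\eps=1$ and $T_\al^*=T_{\albar}$. Using that $\Theta$ is a $*$-representation of $\C[\cC]$ (so $\Theta(\al)\Theta(\pi)=\sum_{\gamma\in\Irr(\cC)}\mult(\gamma,\al\ot\pi)\,\Theta(\gamma)$) together with multiplicativity of the categorical dimension, $d(\al)d(\pi)=\sum_{\gamma\in\Irr(\cC)}\mult(\gamma,\al\ot\pi)\,d(\gamma)$, one obtains $T_\al T_\pi=\sum_{\gamma\in\Irr(\cC)}\lambda^{\al,\pi}_\gamma\,T_\gamma$ with coefficients $\lambda^{\al,\pi}_\gamma:=\mult(\gamma,\al\ot\pi)\,d(\gamma)/(d(\al)d(\pi))\ge 0$ summing to $1$; thus $T_\al T_\pi$ is a genuine convex combination of the $T_\gamma$.

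Next I would consider the ``orbit'' $K:=\overline{\operatorname{conv}}\{T_\pi\xi:\pi\in\Irr(\cC)\}\subseteq\cH$, the norm-closed convex hull. Since each $T_\pi\xi$ lies in the unit ball, $K$ is a bounded, convex, norm-closed (hence weakly closed, hence weakly compact) set, and $\xi=T_\eps\xi\in K$. Two points are needed. (i) For every $\eta\in K$ one has $\real\langle\eta,\xi\rangle\ge\tfrac12$: this holds on the generators $T_\pi\xi$ by hypothesis, and it is preserved by convex combinations and by weak limits; in particular $\|\eta\|\ge\tfrac12$, so $0\notin K$. (ii) $K$ is invariant under each $T_\al$: indeed $T_\al(T_\pi\xi)=(T_\al T_\pi)\xi=\sum_\gamma\lambda^{\al,\pi}_\gamma\,T_\gamma\xi\in\operatorname{conv}\{T_\gamma\xi\}\subseteq K$, and since $T_\al$ is bounded and linear it maps the closed convex hull of the $T_\pi\xi$ into $K$.

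Finally I would take $\xi_0$ to be the unique element of $K$ of minimal norm, which exists and is unique because the norm is weakly lower semicontinuous on the weakly compact set $K$ and the Hilbert-space norm is strictly convex (parallelogram law). By (i), $\|\xi_0\|\ge\tfrac12>0$. For every $\al\in\Irr(\cC)$ we have $T_\al\xi_0\in K$ by (ii) and $\|T_\al\xi_0\|\le\|\xi_0\|$ since $T_\al$ is a contraction, so minimality forces $T_\al\xi_0=\xi_0$, i.e.\ $\Theta(\al)\xi_0=d(\al)\xi_0$. Writing a general object $\al\in\cC$ as $\al=\sum_\pi\mult(\pi,\al)\,\pi$ in $\C[\cC]$ extends this to $\Theta(\al)\xi_0=d(\al)\xi_0$ for all $\al\in\cC$, and $\xi_0/\|\xi_0\|$ is the desired unit vector. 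There is no real difficulty once the setup is right: the substantive point is recognizing that the hypothesis $\|T_\al\|\le 1$, which is precisely what admissibility of $\Theta$ delivers via Proposition \ref{prop.pos-mult-to-adm-rep}, is exactly the ingredient that makes the classical circumcenter argument go through, with the convex-combination identity for $T_\al T_\pi$ playing the role of the group structure.
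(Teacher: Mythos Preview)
Your proof is correct and follows essentially the same approach as the paper's: both define the contractions $T_\al=d(\al)^{-1}\Theta(\al)$, form the closed convex hull of the orbit of $\xi$, and pick the unique element of minimal norm as the (unnormalized) invariant vector. The only cosmetic difference is that the paper indexes over all of $\cC$ and uses the clean identity $T_\al T_\be=T_{\al\ot\be}$ to get invariance of the convex hull, whereas you index over $\Irr(\cC)$ and spell out that $T_\al T_\pi$ is a convex combination of the $T_\gamma$; these are two ways of saying the same thing.
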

\begin{proof}
For every $\al \in \cC$, define the contraction $T_\al \in B(\cH)$ given by $T_\al = d(\al)^{-1} \Theta(\al)$. Define $C \subset \cH$ as the closed convex hull of $\{T_\al \xi \mid \al \in \cC\}$. Denote by $\xi_1 \in C$ the unique element of minimal norm. Since for every $\al \in \cC$, the operator $T_\al$ is a convex combination of operators $T_\pi$, $\pi \in \Irr(\cC)$, we get that $\real \langle \xi_1,\xi \rangle \geq 1/2$. In particular, $\xi_1 \neq 0$. Since $T_\al T_\be = T_{\al \ot \be}$, it follows that $T_\al(C) \subset C$ for all $\al \in \cC$. Since every $T_\al$ is a contraction, the uniqueness of $\xi_1$ implies that $T_\al \xi_1 = \xi_1$ for all $\al \in \cC$. Taking $\xi_0 = \|\xi_1\|^{-1} \, \xi_1$, the lemma is proved.
\end{proof}

\begin{proof}[Proof of Proposition \ref{prop.char-prop-T}]
1 $\Rightarrow$ 2. By Proposition \ref{prop.T-finitely-generated}, we can take $\al_1 \in \cC$ such that $\al_1 \cong \overline{\al_1}$ and such that every $\pi \in \Irr(\cC)$ is contained in some tensor power of $\al_1$. Then $\al_1 \in C_u(\cC)$ is self-adjoint. Since $\|\al_1\| \leq d(\al_1)$, the spectrum $\si(\al_1)$ of $\al_1$ is contained in $[-d(\al_1),d(\al_1)]$. Since $\counit(\al_1) = d(\al_1)$, we have that $d(\al_1) \in \si(\al_1)$. We claim that $d(\al_1)$ is an isolated point of $\si(\al_1)$. Assume that $d(\al_1)$ is not isolated.

To reach a contradiction, choose a faithful unital $*$-representation $\Theta : C_u(\cC) \recht B(\cH)$. For every $n \in \N$, denote by $Q_n \in B(\cH)$ the spectral projection of $\Theta(\al_1)$ corresponding to the open interval $(d(\al_1) - 1/n,d(\al_1))$. Since we assumed that $d(\al_1)$ is not isolated in the spectrum of $\al_1$, we find that $Q_n \neq 0$ for every $n \in \N$. Choose a unit vector $\xi_n \in Q_n \cH$. By construction, $\lim_n \|\Theta(\al_1)\xi_n - d(\al_1) \xi_n\| = 0$. Then also for every $k \in \N$, we have $\lim_n \|\Theta(\al_1^k) \xi_n - d(\al_1^k) \xi_n\| = 0$. By the generating property of $\al_1$, we get that $\lim_n \|\Theta(\al) \xi_n - d(\al)\xi_n\| = 0$ for all $\al \in \cC$.

Define the cp-multipliers $\vphi_n : \Irr(\cC) \recht \C$ given by $\vphi_n(\al) = d(\al)^{-1} \langle \Theta(\al) \xi_n,\xi_n \rangle$ for all $\al \in \cC$. It follows that $\vphi_n \recht 1$ pointwise. Since $\cC$ has property~(T), we conclude that $\vphi_n \recht 1$ uniformly. Fix $n$ large enough such that $\real \vphi_n(\pi) \geq 1/2$ for all $\pi \in \Irr(\cC)$.
Denote by $P \in B(\cH)$ the spectral projection of $\Theta(\al_1)$ corresponding to the singleton $\{d(\al_1)\}$. Then $\Theta(\al_1) P = d(\al_1) P$ and the generating property of $\al_1$ implies that $\Theta(\al) P = d(\al) P$ for all $\al \in \cC$. So, the subspaces $P \cH$ and $(1-P) \cH$ are invariant under $\Theta(C_u(\cC))$. By construction, $\xi_n \in (1-P)\cH$ and
$$\real \langle \Theta(\pi) \xi_n , \xi_n \rangle \geq \frac{1}{2} d(\pi) \quad\text{for all}\;\; \pi \in \Irr(\cC) \; .$$
By Lemma \ref{lem.elementary}, there exists a unit vector $\xi_0 \in (1-P)\cH$ satisfying $\Theta(\al_1) \xi_0 = d(\al_1) \xi_0$ for all $\al \in \cC$. This means that $\xi_0 \in P \cH$ and we reached a contradiction.

So we have proved that $d(\al_1)$ is an isolated point of $\si(\al_1)$. By continuous functional calculus, we then find a nonzero projection $p \in C_u(\cC)$ such that $\al_1 p = d(\al_1) p$. Again using the generating property of $\al_1$, it follows that $\al p = d(\al) p$ for all $\al \in \cC$. This means that $x p = \counit(x) p$ for all $x \in C_u(\cC)$. Taking $x = p$, it follows that $\counit(p) = 1$. If also $x q = \counit(x) q$ for all $x \in C_u(\cC)$, we get that $p q = \counit(p) q = q$ and similarly $q p = p$. So also the uniqueness is proved.

2 $\Rightarrow$ 3. Take the nonzero projection $p \in C_u(\cC)$ such that $x p = \counit(x) p$ for all $x \in C_u(\cC)$. Assume that $\om_n \in C_u(\cC)^*$ is a net of states such that $\om_n \recht \counit$ weakly$^*$. By the Cauchy-Schwarz inequality, we have that
$$\|p \om_n - \om_n\|^2 \leq \om_n(1-p) \recht \counit(1-p) = 0 \; .$$
Since $p \om_n = \om_n(p) \counit$, it follows that $\|\om_n -\counit\| \recht 0$.

3 $\Rightarrow$ 1. Let $\vphi_n : \Irr(\cC) \recht \C$ be a net of cp-multipliers that converges to $1$ pointwise. We have to prove that $\vphi_n \recht 1$ uniformly. We may assume that $\vphi_n(\eps) = 1$ for all $n$. Define the states $\om_n \in C_u(\cC)^*$ given by $\om_n(\al) = d(\al) \vphi_n(\al)$ for all $\al \in \Irr(\cC)$. By construction, $\om_n \recht \counit$ weakly$^*$. So, $\|\om_n - \counit\| \recht 0$. Since $\|\al\| \leq d(\al)$ for all $\al \in \cC$, it follows that $\vphi_n \recht 1$ uniformly.
\end{proof}

Let $\cC$ be a rigid C$^*$-tensor category and $\cC_1$ a full C$^*$-tensor subcategory. When $\cC_1$ has finite index in $\cC$ in a suitable sense, we expect that property~(T) for $\cC$ is equivalent with property~(T) for $\cC_1$. We only need this result in the following particular and easy case.

\begin{proposition}\label{prop.stability-T}
Let $\cC$ be a rigid C$^*$-tensor category, $\Lambda$ a finite group and $\Xi : \Irr(\cC) \recht \Lambda$ a map with the property that $\Xi(\gamma) = \Xi(\al) \, \Xi(\be)$ whenever $\al,\be,\gamma \in \Irr(\cC)$ are such that $\gamma$ is a subobject of $\al \ot \be$. Define the full C$^*$-tensor subcategory $\cC_1$ of $\cC$ given by $\Irr(\cC_1) = \{\al \in \Irr(\cC) \mid \Xi(\al) = e\}$.

Then $\cC$ has property~\pT\ if and only if $\cC_1$ has property~\pT.
\end{proposition}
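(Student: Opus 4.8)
The plan is to transpose, to the setting of cp-multipliers and their GNS representations, the classical argument that property~\pT\ passes between a group and a finite-index subgroup: the rôle of restriction is played by Proposition~\ref{prop.subcat}(1), that of induction by the extension-by-zero of Proposition~\ref{prop.subcat}(2), and that of the coset space by the finite set $\Lambda$. First I would reduce to the case that $\Xi$ is surjective — its image is a (finite) subgroup of $\Lambda$ and $\cC_1$ is unaffected — and note that $\Xi(\eps)=e$ and $\Xi(\albar)=\Xi(\al)^{-1}$, so that $\C[\cC]=\bigoplus_{g\in\Lambda}\C[\cC]_g$ is a $\Lambda$-graded $*$-algebra with $\C[\cC]_e=\C[\cC_1]$; fix for each $g$ an irreducible $\beta_g$ with $\Xi(\beta_g)=g$ and $\beta_e=\eps$. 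The computational heart is the following observation: if $\Theta\colon\C[\cC]\recht B(\cH)$ is any unital $*$-representation and $\xi_0\in\cH$ a unit vector with $\Theta(\delta)\xi_0=d(\delta)\xi_0$ for all $\delta\in\Irr(\cC_1)$, then $\Theta(\pi)\xi_0=\tfrac{d(\pi)}{d(\beta_g)}\Theta(\beta_g)\xi_0$ whenever $\Xi(\pi)=g$. Indeed, whenever $\Xi(\sigma)=\Xi(\tau)$ the product $\overline{\tau}\,\sigma$ in $\C[\cC]$ is a non-negative combination of elements of $\Irr(\cC_1)$, so $\langle\Theta(\sigma)\xi_0,\Theta(\tau)\xi_0\rangle=\langle\Theta(\overline{\tau}\,\sigma)\xi_0,\xi_0\rangle=d(\sigma)\,d(\tau)$; expanding $\bigl\|\Theta(\pi)\xi_0-\tfrac{d(\pi)}{d(\beta_g)}\Theta(\beta_g)\xi_0\bigr\|^2$ with $\sigma,\tau\in\{\pi,\beta_g\}$ gives $0$. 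This says precisely that the representation of $\cC$ induced from the trivial representation of $\cC_1$ is at most $|\Lambda|$-dimensional — the analogue of $[G:H]<\infty$.

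For the implication ``$\cC_1$ has property~\pT\ $\Rightarrow$ $\cC$ has property~\pT'', let $\psi_n$ be cp-multipliers on $\cC$ with $\psi_n\to1$ pointwise; normalising $\psi_n(\eps)=1$, the restrictions $\psi_n|_{\Irr(\cC_1)}$ are cp-multipliers on $\cC_1$ (Proposition~\ref{prop.subcat}(1)) converging to $1$ pointwise, hence \emph{uniformly} on $\Irr(\cC_1)$ by property~\pT\ of $\cC_1$. Let $(\Theta_n,\cH_n,\Omega_n)$ be the GNS data of the cp-multiplier $\psi_n$ from Proposition~\ref{prop.pos-mult-to-adm-rep}, so $\langle\Theta_n(\al)\Omega_n,\Omega_n\rangle=d(\al)\psi_n(\al)$ and $\|\Theta_n(\al)\|\le d(\al)$. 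For $n$ large, Lemma~\ref{lem.elementary} applied to $\Theta_n|_{\C[\cC_1]}$ yields a $\cC_1$-invariant unit vector $\xi_0^{(n)}$, and inspecting that proof together with the uniform convergence on $\Irr(\cC_1)$ gives $\eta_n:=\|\xi_0^{(n)}-\Omega_n\|\to0$. By the key observation, $\langle\Theta_n(\al)\xi_0^{(n)},\xi_0^{(n)}\rangle=\tfrac{d(\al)}{d(\beta_g)}\langle\Theta_n(\beta_g)\xi_0^{(n)},\xi_0^{(n)}\rangle$ for $\Xi(\al)=g$; since $\|\Theta_n(\al)\|\le d(\al)$, replacing $\xi_0^{(n)}$ by $\Omega_n$ in both inner products costs $O(\eta_n)$, so $|\psi_n(\al)-\psi_n(\beta_g)|\le 4\eta_n$ for all $\al$ with $\Xi(\al)=g$. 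Maximising over the finitely many $g\in\Lambda$ and using $\psi_n(\beta_g)\to1$ gives $\sup_{\al\in\Irr(\cC)}|1-\psi_n(\al)|\to0$, as wanted.

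For the converse, let $\vphi_n$ be cp-multipliers on $\cC_1$ with $\vphi_n\to1$ pointwise and $\vphi_n(\eps)=1$ (so $|\vphi_n|\le1$). Let $\widetilde\vphi_n$ be the extension by zero — a cp-multiplier on $\cC$ by Proposition~\ref{prop.subcat}(2) — with GNS representation $\Theta_n$ on $\cH_n$ (admissible, Proposition~\ref{prop.pos-mult-to-adm-rep}), writing $\widehat x$ for the image of $x\in\C[\cC]$, and put $v_n:=\tfrac1{|\Lambda|}\sum_{g\in\Lambda}\tfrac1{d(\beta_g)}\widehat{\beta_g}$. Since $\om_{\widetilde\vphi_n}$ kills $\C[\cC]_k$ for $k\ne e$, the $\widehat{\beta_g}$ are orthogonal, and a short dimension count (each $\om_{\widetilde\vphi_n}(\overline{\beta_k}\,\al\,\beta_h)$ vanishes unless $k=\Xi(\al)h$, in which case it converges to $d(\beta_k)d(\al)d(\beta_h)$) gives $\|v_n\|^2\to|\Lambda|^{-1}$ and $\langle\Theta_n(\al)v_n,v_n\rangle\to|\Lambda|^{-1}d(\al)$ for every $\al\in\Irr(\cC)$; hence $\psi_n(\al):=d(\al)^{-1}\|v_n\|^{-2}\langle\Theta_n(\al)v_n,v_n\rangle$, a cp-multiplier on $\cC$ (a coefficient of the admissible $\Theta_n$, Definition~\ref{def.adm-rep-tensor-cat}), tends to $1$ pointwise, hence uniformly by property~\pT\ of $\cC$. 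Now isolating the $g=e$ term in the formula for $\langle\Theta_n(\rho)v_n,v_n\rangle$, $\rho\in\Irr(\cC_1)$, and bounding the remaining terms using $\real\vphi_n\le1$ gives $\real\psi_n(\rho)\le\tfrac{\real\vphi_n(\rho)+|\Lambda|-1}{|\Lambda|^2\|v_n\|^2}$, whence $1-\real\psi_n(\rho)\ge\tfrac{1-\real\vphi_n(\rho)}{|\Lambda|}-o(1)$ uniformly in $\rho$; since $1-\real\psi_n(\rho)\le|1-\psi_n(\rho)|\to0$ uniformly and $|1-\vphi_n(\rho)|^2\le 2(1-\real\vphi_n(\rho))$, this forces $\vphi_n\to1$ uniformly on $\Irr(\cC_1)$.

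The step I expect to be the real point is the computational observation of the first paragraph: that a $\cC_1$-invariant vector $\xi_0$ is automatically, for each $\pi$, a scalar multiple of $\Theta(\beta_{\Xi(\pi)})\xi_0$ when hit by $\Theta(\pi)$. This is the categorical ``finite index'' input, and it is what lets a uniform estimate on $\Irr(\cC_1)$ be propagated across the finitely many graded components of $\cC$; once it is established, both implications are bookkeeping with categorical dimensions and the GNS construction of Proposition~\ref{prop.pos-mult-to-adm-rep}. The only subtlety otherwise is to remember the reduction to $\Xi$ surjective, without which an object $\beta_g$ need not exist. (One could instead route everything through the characterisations of property~\pT\ in Proposition~\ref{prop.char-prop-T}, but the multiplier formulation seems most economical.)
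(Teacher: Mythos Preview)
Your proof is correct and follows the same overall induction/restriction blueprint as the paper. For the implication ``$\cC$ has \pT\ $\Rightarrow$ $\cC_1$ has \pT'' you do exactly what the paper does: extend $\vphi_n$ by zero, average over coset representatives $\beta_g$ to produce a cp-multiplier $\psi_n$ on $\cC$ (your GNS coefficient is precisely the paper's $\vphi_{a,a}$ from Lemma~\ref{lem.compute-in-tensor-cat}, up to normalisation), and then isolate the $g=e$ term to recover uniform control of $\vphi_n$. For the converse, your route genuinely differs from the paper's. The paper stays at the level of states on $C_u(\cC)$: from $\om_n \to \counit$ pointwise it uses Cauchy--Schwarz to get $\|\om_n\cdot A(\al_{\Xi(\pi)})^* - \om_n\| \to 0$ uniformly over the finitely many cosets, and then combines this with uniform convergence of $\om_n$ on $\cC_1$ applied to $\overline{\al_{\Xi(\pi)}}\ot\pi\in\cC_1$. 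You instead pass to the GNS representation, produce a $\cC_1$-invariant unit vector $\xi_0^{(n)}$ close to $\Omega_n$ via Lemma~\ref{lem.elementary}, and invoke your ``key observation'' that $\Theta(\pi)\xi_0 = \tfrac{d(\pi)}{d(\beta_g)}\Theta(\beta_g)\xi_0$ whenever $\Xi(\pi)=g$. This observation is correct and is a pleasant conceptual reformulation --- it says the representation of $\C[\cC]$ induced from the trivial representation of $\C[\cC_1]$ has dimension at most $|\Lambda|$ --- and it is morally the same mechanism the paper exploits (both rest on $\overline{\beta_g}\,\pi\in\C[\cC_1]$). Your approach makes the finite-index intuition more visible; the paper's Cauchy--Schwarz argument is slightly more economical because it avoids having to quantify how close $\xi_0^{(n)}$ is to $\Omega_n$ (which you leave as ``inspecting that proof'', though it does follow readily from the convex-hull construction in Lemma~\ref{lem.elementary} once the uniform bound $\real\psi_n|_{\Irr(\cC_1)}\ge 1-\epsilon_n$ is in hand).
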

\begin{proof}
First assume that $\cC$ has property~(T). Let $\vphi_{1,n} : \Irr(\cC_1) \recht \C$ be a net of cp-multipliers on $\cC_1$ that converges to $1$ pointwise. We have to prove that $\vphi_{1,n} \recht 1$ uniformly on $\cC_1$. We may assume that $\vphi_{1,n}(\eps) = 1$ for all $n$. Define $\vphi_n : \Irr(\cC) \recht \C$ by putting $\vphi_n(\pi) = \vphi_{1,n}(\pi)$ if $\pi \in \Irr(\cC_1)$ and $\vphi_n(\pi) = 0$ if $\pi \not\in \Irr(\cC_1)$. By Proposition \ref{prop.subcat}, all $\vphi_n$ are cp-multipliers on $\cC$. Since the image of $\Xi$ is a subgroup of $\Lambda$, we may assume that $\Xi$ is surjective. Choose for every $s \in \Lambda$, an $\al_s \in \Irr(\cC)$ such that $\Xi(\al_s) = s$. For the neutral element $e \in \Lambda$, we choose $\al_e = \eps$. Define the element $a \in \C[\cC]$ given by
$$a = \frac{1}{\sqrt{|\Lambda|}} \sum_{s \in \Lambda} \frac{1}{d(\al_s)} \al_s \; .$$
By Lemma \ref{lem.compute-in-tensor-cat}, we can define the cp-multipliers $\psi_n : \Irr(\cC) \recht \C$ such that $\om_{\psi_n}(x) = \om_{\vphi_n}(a^*xa)$. Since $\vphi_n(\gamma) \recht 1$ pointwise for all $\gamma \in \Irr(\cC_1)$ and $\vphi_n(\gamma) = 0$ for all $\gamma \not\in \Irr(\cC_1)$, we get that $\psi_n \recht \psi$ pointwise, with
$$\psi(\pi) = \frac{1}{|\Lambda| \, d(\pi)} \; \sum_{s,t \in \Lambda, \gamma \in \Irr(\cC_1)} \;\; \frac{1}{d(\al_s) \, d(\al_t)} \, d(\gamma) \, \mult(\gamma, \overline{\al_s} \ot \pi \ot \al_t) \quad\text{for all}\;\; \pi \in \Irr(\cC) \; .$$
Given $s,t \in \Lambda$ and $\pi \in \Irr(\cC)$, we either have $s = \Xi(\pi) t$ so that all subrepresentations of $\overline{\al_s} \ot \pi \ot \al_t$ belong to $\cC_1$, or we have $s \neq \Xi(\pi) t$ so that none of the subrepresentations of $\overline{\al_s} \ot \pi \ot \al_t$ belong to $\cC_1$. Noting that
$$\sum_{\gamma \in \Irr(\cC)} \; d(\gamma) \, \mult(\gamma,\overline{\al_s} \ot \pi \ot \al_t) = d(\al_s) \, d(\pi) \, d(\al_t) \; ,$$
we get that $\psi(\pi) = 1$ for all $\pi \in \Irr(\cC)$. Since $\cC$ has property~(T), it follows that $\psi_n \recht 1$ uniformly on $\Irr(\cC)$.

Take $\pi \in \Irr(\cC_1)$. If $s \neq t$, none of the subrepresentations of $\overline{\al_s} \ot \pi \ot \al_t$ belong to $\cC_1$. If $s=t=e$, then $\overline{\al_s} \ot \pi \ot \al_t = \pi$. We conclude that for all $\pi \in \Irr(\cC_1)$,
\begin{align*}
\psi_n(\pi) &= \frac{1}{|\Lambda|} \, \vphi_{1,n}(\pi) + \vphi_{2,n}(\pi) \quad\text{with}\\ \vphi_{2,n}(\pi) &= \frac{1}{|\Lambda|} \sum_{t \in \Lambda \setminus \{e\}, \gamma \in \Irr(\cC_1)} \frac{\vphi_{1,n}(\gamma) \, d(\gamma) \, \mult(\gamma,\overline{\al_t} \ot \pi \ot \al_t)}{d(\al_t)^2 \, d(\pi)} \; .
\end{align*}
Because $\vphi_{1,n}$ is a cp-multiplier, we have for every $\gamma \in \Irr(\cC_1)$ that $|\vphi_{1,n}(\gamma)| \leq \|\theta^{\vphi_{1,n}}\|\cb = \vphi_{1,n}(\eps) = 1$. Therefore,
$$\bigl|\vphi_{2,n}(\pi)\bigr| \leq \frac{|\Lambda|-1}{|\Lambda|}$$
for all $n$ and all $\pi \in \Irr(\cC_1)$. Since also $|\vphi_{1,n}(\pi)| \leq 1$ for all $\pi \in \Irr(\cC_1)$ and since $\psi_n \recht 1$ uniformly, we conclude from the above that $\vphi_{1,n} \recht 1$ uniformly on $\Irr(\cC_1)$.

Conversely, assume that $\cC_1$ has property~(T). Let $\vphi_n : \Irr(\cC) \recht \C$ be a net of cp-multipliers on $\cC$ that converges to $1$ pointwise. We have to prove that $\vphi_n \recht 1$ uniformly on $\cC$. We may assume that $\vphi_n(\eps) = 1$ for all $n$. Restricting $\vphi_n$ to $\Irr(\cC_1)$, it follows from Proposition \ref{prop.subcat} and the property~(T) of $\cC_1$ that $\vphi_n \recht 1$ uniformly on $\Irr(\cC_1)$. Consider the states $\om_n := \om_{\vphi_n}$ on the C$^*$-algebra $C_u(\cC)$. For every $\al \in \cC$, define $A(\al) = d(\al)^{-1} \al$ as an element of the C$^*$-algebra $C_u(\cC)$. Note that $\|A(\al)\| \leq 1$ for all $\al \in \cC$. Also note that $A(\al)^* = A(\albar)$ and $A(\al) \, A(\be) = A(\al \ot \be)$ for all $\al,\be \in \cC$. Because $\vphi_n \recht 1$ pointwise, we have $\om_n(A(\al)) \recht 1$ for all $\al \in \cC$. Therefore,
$$\om_n\bigl((A(\al) - 1)^* (A(\al) - 1)\bigr) \recht 0 \quad\text{for every}\;\; \al \in \cC \; .$$
By the Cauchy-Schwarz inequality, and using the norm in $C_u(\cC)^*$, we find that
$$\|\om_n \cdot A(\al)^* - \om_n\| \recht 0 \quad\text{for every}\;\; \al \in \cC \; .$$
Since $\Lambda$ is finite and $\|A(\pi)\| \leq 1$ for all $\pi \in \cC$, it follows that
\begin{equation}\label{eq.wehavethis}
|\om_n(A(\al_{\Xi(\pi)})^* A(\pi)) - \om_n(A(\pi))| \recht 0 \quad\text{uniformly in}\;\; \pi \in \Irr(\cC) \; .
\end{equation}
For every $\be \in \cC_1$, we have
$$\om_n(A(\be)) = \frac{1}{d(\be)} \sum_{\gamma \in \Irr(\cC_1)} \, \vphi_n(\gamma) \, d(\gamma) \, \mult(\gamma,\be) \; .$$
Since $\vphi_n \recht 1$ uniformly on $\Irr(\cC_1)$, it follows that $\om_n(A(\be)) \recht 1$ uniformly on all $\be \in \cC_1$. For every $\pi \in \Irr(\cC)$, we have that $\overline{\al_{\Xi(\pi)}} \, \pi$ belongs to $\cC_1$. In combination with \eqref{eq.wehavethis}, we find that $\vphi_n(\pi) = \om_n(A(\pi))$ converges to $1$ uniformly on all $\pi \in \Irr(\cC)$.
\end{proof}

\section{Multipliers on representation categories}\label{sec.mult-rep-cat}

Let $\bG$ be a compact quantum group in the sense of \cite{Wo95}. So, we are given a unital Hopf $*$-algebra $(\cO(\bG),\Delta)$ together with a Haar state $h : \cO(\bG) \recht \C$ satisfying
$$(\id \ot h)\Delta(x) = h(x) 1 = (h \ot \id)\Delta(x) \quad\text{for all}\;\; x \in \cO(\bG)$$
and $h(x^* x) \geq 0$, $h(1) = 1$ for all $x \in \cO(\bG)$. We denote by $C_u(\bG)$ the universal enveloping C$^*$-algebra of $\cO(\bG)$.

Denote by $\Rep(\bG)$ the category of finite-dimensional unitary representations of $\bG$ and by $\Irr(\bG)$ the set of equivalence classes of irreducible unitary representations. For every $\pi \in \Irr(\bG)$, we choose a representative as a unitary matrix $U^\pi \in M_{\dim \pi}(\C) \ot \cO(\bG)$. The matrix coefficients $U^\pi_{ij}$, $\pi \in \Irr(\bG)$, $i,j \in \{1,\ldots,\dim \pi\}$, form a vector space basis for the $*$-algebra $\cO(\bG)$.

In \cite[Definition 1]{DFY13}, a functional $\Om : \cO(\bG) \recht \C$ is called \emph{central} if $(\Om \ot \psi) \circ \Delta = (\psi \ot \Om) \circ \Delta$ for every functional $\psi : \cO(\bG) \recht \C$. As explained in \cite[Section 2]{DFY13}, the central functionals on $\cO(\bG)$ are in bijective correspondence with the functions $\vphi : \Irr(\bG) \recht \C$, by defining
$$\Om_\vphi : \cO(\bG) \recht \C : \Om_\vphi(U^\pi_{ij}) = \vphi(\pi) \, \delta_{ij} \quad\text{for all}\;\; \pi \in \Irr(\bG) \; , \;  i,j \in \{1,\ldots,\dim \pi\} \; .$$
We also associate with $\vphi$ the multiplier
\begin{equation}\label{eq.multiplier-quantum}
\Psi_\vphi : \cO(\bG) \recht \cO(\bG) : \Psi_\vphi = (\id \ot \Om_\vphi)\circ \Delta = (\Om_\vphi \ot \id) \circ \Delta \; .
\end{equation}
Note that $\Psi_\vphi(U^\pi_{ij}) = \vphi(\pi) \, U^\pi_{ij}$ for all $\pi \in \Irr(\bG)$, $i,j \in \{1,\ldots,\dim \pi\}$.

Denote by $L^2(\bG)$ the GNS Hilbert space associated with the Haar state $h$. The reduced C$^*$-algebra $C_r(\bG)$ is defined as the norm closure of $\cO(\bG)$ acting on $L^2(\bG)$. We call $\Psi_\vphi$ a \emph{cb-multiplier} if $\Psi_\vphi$ is completely bounded from $C_r(\bG)$ to $C_r(\bG)$. We say that $\Psi_\vphi$ is finitely supported if $\vphi$ is a finitely supported function.

Given a function $\vphi : \Irr(\bG) \recht \C$, we denote by $\theta^\vphi$ the associated multiplier on $\Rep(\bG)$ given in Proposition \ref{prop.descr-multi}.

The following result is crucial for us, since it allows to transfer the quantum group results in \cite{DFY13,Ar14} to our framework of subfactors and C$^*$-tensor categories.

\begin{proposition}\label{prop.quantum}
Let $\vphi : \Irr(\bG) \recht \C$ a function. The following conditions are equivalent.
\begin{enumlist}
\item The central functional $\Om_\vphi$ on $\cO(\bG)$ is positive: $\Om_\vphi(x^*x) \geq 0$ for all $x \in \cO(\bG)$.
\item The multiplier $\Psi_\vphi : C_r(\bG) \recht C_r(\bG)$ is completely positive.
\item The function $\vphi : \Irr(\bG) \recht \C$ is a cp-multiplier on the C$^*$-tensor category $\Rep(\bG)$.
\end{enumlist}
Also, if $\Psi_\vphi$ is completely bounded, then $\theta^\vphi$ is completely bounded with $\|\theta^\vphi\|\cb \leq \|\Psi_\vphi\|\cb$.
\end{proposition}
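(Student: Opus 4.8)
The plan is to establish $(1)\Leftrightarrow(2)$ by the standard compact--quantum--group dictionary, and to obtain $(2)\Leftrightarrow(3)$ together with the cb estimate from one explicit linear identification of the intertwiner spaces of $\Rep(\bG)$ with corners of matrix amplifications of $C_r(\bG)$, under which the categorical multiplier $\theta^\vphi$ of Proposition \ref{prop.descr-multi} corresponds to (an amplification of) $\Psi_\vphi$. The real content lies in that identification; everything else is bookkeeping.

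For $(1)\Leftrightarrow(2)$ I would recall the familiar correspondence between central functionals on $\cO(\bG)$ and central multipliers. For $(1)\Rightarrow(2)$: a positive functional $\Om_\vphi$ on $\cO(\bG)$ extends to a positive, hence completely positive, functional on $C_u(\bG)$; since $\Delta$ is a $*$-homomorphism and $\id\ot\Om_\vphi$ is a completely positive slice map, the composition $\Psi_\vphi=(\id\ot\Om_\vphi)\circ\Delta$ is completely positive on $C_r(\bG)$ (note $\Psi_\vphi(1)=\vphi(\eps)1$). For $(2)\Rightarrow(1)$: since $\counit$ is a bounded character of $C_u(\bG)$ and a central completely positive multiplier on $C_r(\bG)$ is also completely positive on $C_u(\bG)$, one has $\Om_\vphi=(\counit\ot\Om_\vphi)\circ\Delta=\counit\circ\Psi_\vphi$, which is positive as the composition of a cp map with a character. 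Alternatively the whole argument can be carried out on the dense $*$-subalgebra $\cO(\bG)$, which is the point of view that connects most directly to $(3)$.

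The substantive step is $(2)\Leftrightarrow(3)$. Fix $\al,\be\in\Rep(\bG)$ with unitary representation matrices $U^\al\in B(\cH_\al)\ot\cO(\bG)$ and $U^\be\in B(\cH_\be)\ot\cO(\bG)$, and recall the bijection $\Gamma:\End(\al\ot\be)\recht\Mor(\albar\ot\al\ot\be\ot\bebar,\eps)$ of \eqref{eq.id-end-mor}, built from standard solutions $s_\al,t_\al$. Using that $\Mor(\eps,\gamma)$ is the space of $\bG$-invariant vectors in $\cH_\gamma$ and that matrix coefficients against such vectors are detected by the Haar state, one writes down the linear map sending $X\in\Mor(\al\ot\be,\gamma\ot\delta)$ to the $\cO(\bG)$-valued block of $U^{\albar\ot\al\ot\be\ot\bebar}$ singled out by $\Gamma(X)$. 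Woronowicz's orthogonality relations — together with the fact that in $\Rep(\bG)$ the categorical dimension $d(\al)$ is exactly the quantum dimension, so that the normalizing factors $d(\al)^{-1}$ appearing in \eqref{eq.first-def}, \eqref{eq.second-def} and Definition \ref{def.mult-tc} match the weights occurring in $h$ — show that this map is an isometric embedding of $\Mor(\al\ot\be,\gamma\ot\delta)$ onto a corner of a matrix amplification of $C_r(\bG)$, intertwining $\id\ot\Psi_\vphi$ with $\theta^\vphi_{\al\ot\be,\gamma\ot\delta}$. Granting this, $(2)\Rightarrow(3)$ is immediate: if $\Psi_\vphi$ is completely positive on $C_r(\bG)$ then so are its amplifications $\id\ot\Psi_\vphi$, and a compression of a cp map to a corner is cp, so each $\theta^\vphi_{\al,\be}$ is completely positive. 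For $(3)\Rightarrow(2)$ I would invoke Lemma \ref{lem.pos-check}: it suffices that $\theta^\vphi_{\al,\albar}(s_\al s_\al^*)\ge 0$ for all $\al$, and transporting these inequalities through the identification yields $\Psi_\vphi(x^*x)\ge0$ for $x$ ranging over a linear spanning set of the dense $*$-subalgebra $\cO(\bG)$, whence $\Psi_\vphi$ is completely positive. The same identification, being isometric for the natural operator-space structures, gives the cb estimate: for each $\al,\be$ the map $\theta^\vphi_{\al,\be}$ is a compression of $\id\ot\Psi_\vphi$, so $\|\theta^\vphi_{\al,\be}\|\cb\le\|\Psi_\vphi\|\cb$, and taking the supremum over $\al,\be$ gives $\|\theta^\vphi\|\cb\le\|\Psi_\vphi\|\cb$.

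The main obstacle is making the identification above completely explicit and pinning down all normalizations in the non-Kac case: matching Woronowicz's $F$-matrix (modular) data to a fixed choice of standard solutions $s_\al,t_\al$, checking that the embedding respects the $*$-operations on both sides, and confirming that under it the formulas \eqref{eq.first-def}--\eqref{eq.second-def} for $\theta^\vphi$ become precisely ``multiply the $\pi$-isotypic matrix coefficients by $\vphi(\pi)$'', which is literally the action of $\Psi_\vphi$. This is exactly the point where the quantum dimension, rather than the vector-space dimension, must enter; once these normalizations are fixed the remainder is a routine unwinding of definitions.
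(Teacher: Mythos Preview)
Your outline is correct and would work, but you have made the key identification harder than it needs to be. The paper's proof bypasses the normalization headaches you anticipate by choosing a much simpler embedding: for fixed $\al,\be$, it defines the injective $*$-homomorphism
\[
\Gamma_{\al,\be} : \End(\al\ot\be)\recht B(\cH_\al\ot\cH_\be)\ot C_r(\bG),\qquad \Gamma_{\al,\be}(X)=U^\be_{23}\,(X\ot 1)\,(U^\be_{23})^*,
\]
which is just conjugation by a unitary, so it is manifestly a $*$-isomorphism onto its image and automatically isometric in cb-norm. A short computation, using only that $(\id\ot\Delta)(U^\be)=U^\be_{12}U^\be_{13}$ and the definition of $\Psi_\vphi$, gives the intertwining relation
\[
(\id\ot\id\ot\Psi_\vphi)\circ\Gamma_{\al,\be}=\Gamma_{\al,\be}\circ\theta^\vphi_{\al,\be},
\]
after first checking that $\theta^\vphi_{\al,\be}(X)=(\id\ot\id\ot\Om_\vphi)\bigl(U^\be_{23}(X\ot1)(U^\be_{23})^*\bigr)$ directly from \eqref{eq.second-def}. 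No Woronowicz orthogonality relations, no $F$-matrix bookkeeping, and no matching of quantum-dimension weights against Haar-state weights are needed; the whole argument takes place at the level of unitary conjugation and slice maps. The cb estimate and $(2)\Rightarrow(3)$ are then immediate, and the paper closes the cycle with $(3)\Rightarrow(1)$ (rather than your $(3)\Rightarrow(2)$): writing an arbitrary $x\in\cO(\bG)$ as $x=(\xi^*\ot1)U^\be_{23}(t_\be\ot1)$ for some $\be$ and $\xi$, one gets $\Om_\vphi(xx^*)=\langle\theta^\vphi_{\bebar,\be}(t_\be t_\be^*)\xi,\xi\rangle\ge0$ from the positivity of $\theta^\vphi_{\bebar,\be}$.

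Your route through $\Mor(\albar\ot\al\ot\be\ot\bebar,\eps)$ and matrix coefficients of the four-fold tensor representation is a legitimate alternative, but it front-loads exactly the delicate modular and normalization issues that the conjugation map $\Gamma_{\al,\be}$ dissolves. The payoff of your approach is that it makes the link to the Haar state and the operator-space structure on $C_r(\bG)$ more explicit; the cost is the ``main obstacle'' you yourself flag. The paper's point is that this obstacle can be avoided entirely.
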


\begin{proof}
We start by giving an alternative formula for the maps $\theta^\vphi_{\al,\be}$.
Every $\al \in \Rep(\bG)$ is given as the unitary representation $U^\al \in B(\cH_\al) \ot \cO(\bG)$ of $\bG$ on the finite dimensional Hilbert space $\cH_\al$. By definition,
$$(\id \ot \Delta)(U^\al) = U^\al_{12} \, U^\al_{13} \; .$$
The tensor product of $\al,\be \in \Rep(\bG)$ is the unitary representation
$$U^{\al \ot \be} = U^\al_{13} \; U^\be_{23} \in B(\cH_\al \ot \cH_\be) \ot \cO(\bG) \; .$$
Fix $\al,\be \in \Rep(\bG)$. Define the linear map
$$\sigma^\vphi_{\al,\be} : \End(\al \ot \be) \recht B(\cH_\al \ot \cH_\be) : \sigma^\vphi_{\al,\be}(X) = (\id \ot \id \ot \Om_\vphi)(U^\be_{23} (X \ot 1) (U^\be_{23})^*) \; .$$
We start by proving that $\sigma^\vphi_{\al,\be}(X) = \theta^\vphi_{\al,\be}(X)$ for all $X \in \End(\al \ot \be)$.

Let $\bebar \in \Rep(\bG)$ be the contragredient unitary representation. Fix a standard solution $s_\be \in \Mor(\be \ot \bebar,\eps)$, $t_\be \in \Mor(\bebar \ot \be,\eps)$ of the conjugate equations as in \eqref{eq.standard-sol}. Fix $X \in \End(\al \ot \be)$. To prove that $\sigma^\vphi_{\al,\be}(X) = \theta^\vphi_{\al,\be}(X)$, we must show that
\begin{equation}\label{eq.goal}
(\sigma^\vphi_{\al,\be}(X) \ot 1)(1 \ot s_\be) = (\theta^\vphi_{\al,\be}(X) \ot 1) (1 \ot s_\be) \; .
\end{equation}
As in the proof of Proposition \ref{prop.descr-multi}, we denote for every $\pi \in \Irr(\bG)$, by $P^{\be \ot \bebar}_\pi$ the orthogonal projection in $\End(\be \ot \bebar)$ onto the direct sum of all subrepresentations of $\be \ot \bebar$ that are isomorphic with $\pi$. By \eqref{eq.second-def}, the right hand side of \eqref{eq.goal} equals
\begin{equation}\label{eq.rhs}
\sum_{\pi \in \Irr(\bG)} \vphi(\pi) \; (1 \ot P^{\be \ot \bebar}_\pi)(X \ot 1)(1 \ot s_\be) \; .
\end{equation}
Since $s_\be \in \Mor(\be \ot \bebar,\eps)$, we have $U^\be_{13} \, U^{\bebar}_{23} \, (s_\be \ot 1) = s_\be \ot 1$ and thus, $(U^\be_{13})^* (s_\be \ot 1) = U^{\bebar}_{23}(s_\be \ot 1)$. It follows that the left hand side of \eqref{eq.goal} equals
\begin{multline*}(\id \ot \id \ot \id \ot \Om_\vphi)\bigl( U^\be_{24} \, (X \ot 1 \ot 1) \, U^{\bebar}_{34} \, (1 \ot s_\be \ot 1) \bigr) \\ =
\bigl( 1 \ot (\id \ot \id \ot \Om_\vphi)(U^{\be \ot \bebar}) \bigr) (X \ot 1) (1 \ot s_\be) \; .
\end{multline*}
By definition of $\Om_\vphi$, we have that $(\id \ot \Om_\vphi)(U^\pi) = \vphi(\pi) 1$ for all $\pi \in \Irr(\bG)$. Therefore,
$$(\id \ot \id \ot \Om_\vphi)(U^{\be \ot \bebar}) = \sum_{\pi \in \Irr(\bG)} \vphi(\pi) \, P^{\be \ot \bebar}_\pi$$
and we conclude that also the left hand side of \eqref{eq.goal} equals \eqref{eq.rhs}.

Define the injective $*$-homomorphism $\Gamma_{\al,\be} : \End(\al \ot \be) \recht B(\cH_\al \ot \cH_\be) \ot C_r(\bG) : \Gamma_{\al,\be}(X) = U^\be_{23} (X \ot 1) (U^\be_{23})^*$. We next prove that
\begin{equation}\label{eq.cruciallink}
(\id \ot \id \ot \Psi_\vphi) \circ \Gamma_{\al,\be} = \Gamma_{\al,\be} \circ \theta^\vphi_{\al,\be} \; .
\end{equation}
To prove \eqref{eq.cruciallink}, fix $X \in \End(\al \ot \be)$. Using the equality $\sigma^\vphi_{\al,\be} = \theta^\vphi_{\al,\be}$, we get that
\begin{align*}
(\id \ot \id \ot \Psi_\vphi)\Gamma_{\al,\be}(X) & = (\id \ot \id \ot \id \ot \Om_\vphi)(\id \ot \id \ot \Delta) \Gamma_{\al,\be}(X) \\
& = (\id \ot \id \ot \id \ot \Om_\vphi)\bigl( U^\be_{23} \, U^\be_{24} \, (X \ot 1 \ot 1) \, (U^\be_{24})^* \, (U^\be_{23})^* \bigr) \\
& = U^\be_{23} \, (\sigma^\vphi_{\al,\be}(X) \ot 1) \, (U^\be_{23})^* = U^\be_{23} \, (\theta^\vphi_{\al,\be}(X) \ot 1) \, (U^\be_{23})^* \\
& = \Gamma_{\al,\be}(\theta^\vphi_{\al,\be}(X)) \; .
\end{align*}

From \eqref{eq.cruciallink}, it immediately follows that if $\Psi_\vphi$ is completely bounded, then every $\theta^\vphi_{\al,\be}$ is completely bounded with $\|\theta_{\al,\be}^\vphi\|\cb \leq \|\Psi_\vphi\|\cb$.

We now prove that statements 1, 2 and 3 are equivalent. Since $\Psi_\vphi = (\id \ot \Om_\vphi) \circ \Delta$, we get that 1~$\Rightarrow$~2. From \eqref{eq.cruciallink}, it follows that 2~$\Rightarrow$~3.

It remains to prove that 3~$\Rightarrow$~1. So assume that $\theta^\vphi$ is completely positive and fix $x \in \cO(\bG)$. We have to prove that $\Om_\vphi(xx^*) \geq 0$. Take $\be \in \Rep(\bG)$ such that $x$ is a coefficient of the representation $\be$. Choosing standard solutions $s_\be, t_\be$ of the conjugate equations as in \eqref{eq.standard-sol}, we can take a vector $\xi \in \cH_{\bebar} \ot \cH_\be$ such that
$$x = (\xi^* \ot 1) \, U^\be_{23} \, (t_\be \ot 1) \; .$$
Since $\theta^\vphi_{\bebar,\be} = \sigma^\vphi_{\bebar,\be}$ and because $\theta^\vphi_{\bebar,\be}$ is a positive map, we get that
$$\Om_\vphi(xx^*) = \langle \sigma^\vphi_{\bebar,\be}(t_\be t_\be^*) \, \xi , \xi \rangle= \langle \theta^\vphi_{\bebar,\be}(t_\be t_\be^*) \, \xi , \xi \rangle \geq 0 \; .$$
This concludes the proof of the proposition.
\end{proof}

Proposition \ref{prop.quantum} gives a description of the cp-multipliers on the representation category $\Rep(\bG)$ of a compact quantum group $\bG$. We can then apply \cite[Section 6]{DFY13} to identify the universal C$^*$-algebra $C_u(\Rep(\bG))$ of the C$^*$-tensor category $\Rep(\bG)$ as a corner of the universal \emph{quantum double} C$^*$-algebra of $\bG$.

To formulate the result, we need some notation. First define $c_c(\bGhat)$ as the direct sum of the matrix algebras $B(\cH_\pi)$, $\pi \in \Irr(\bG)$. We can naturally embed $c_c(\bGhat) \hookrightarrow \cO(\bG)^*$ by identifying the matrix units $e^\pi_{ij} \in B(\cH_\pi)$ with the functionals $e^\pi_{ij} \in \cO(\bG)^*$ given by
$$e^\pi_{ij}(U^\rho_{kl}) = \delta_{\pi,\rho} \, \delta_{jk} \, \delta_{il} \; .$$
The \emph{Drinfel'd double} $\cD(\bG)$ of $\bG$ is defined as the $*$-algebra with underlying vector space $\cO(\bG) \ot c_c(\bGhat)$ and product
$$(U^\pi_{ij} \ot \om) \, (U^\rho_{kl} \ot \mu) = \sum_{a,b} U^\pi_{ij} \, U^\rho_{ab} \ot \om\bigl( U^\rho_{ka} \, \cdot \, (U^\rho_{lb})^* \bigr) \, \mu$$
for all $\pi,\rho \in \Irr(\bG)$ and $\om,\mu \in c_c(\bGhat)$ viewed as functionals on $\cO(\bG)$. As explained in \cite[Section 6]{DFY13}, the (non-unital) $*$-algebra $\cD(\bG)$ admits a universal enveloping C$^*$-algebra $D(\bG)$. From now on, we write $x \om$ instead of $x \ot \om$ as elements of $\cD(\bG) \subset D(\bG)$. The Haar state $h \in \cO(\bG)$ corresponds to the minimal projection in $c_c(\bGhat)$ given by the trivial representation $\eps$ and as such also is a projection in $D(\bG)$.

\begin{proposition}\label{prop.univ-Cstar-rep-category}
Let $\bG$ be a compact quantum group. The formula
$$\Phi : \C[\Rep \bG] \recht h D(\bG) h : \Phi(\pi) = \sigma_{-i/2}(\chi_\pi) \, h$$
where $\chi_\pi = (\Tr \ot \id)(U^\pi)$ is the character of $\pi$ and $(\sigma_t)_{t \in \R}$ is the modular automorphism group of the Haar state $h$ induces a bijective $*$-isomorphism
$$\Phi : C_u(\Rep \bG) \recht h D(\bG) h \; .$$
\end{proposition}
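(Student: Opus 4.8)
The plan is to deduce this from the description of completely positive multipliers on $\Rep\bG$ obtained in Proposition~\ref{prop.quantum} together with the analysis of the corner $hD(\bG)h$ in \cite[Section~6]{DFY13}, the strategy being to identify the two universal norms by identifying the corresponding state spaces.

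First I would record how the norm of $C_u(\cC)$, with $\cC := \Rep\bG$, is built from multipliers. By Definition~\ref{def.adm-rep-tensor-cat} and Proposition~\ref{prop.pos-mult-to-adm-rep}, every cyclic admissible $*$-representation of $\C[\cC]$ is unitarily equivalent to some $\Theta_\vphi$ with $\vphi : \Irr(\bG)\recht\C$ a cp-multiplier on $\Rep\bG$ normalised by $\vphi(\eps)=1$, the cyclic vector implementing the functional $\om_\vphi(\pi)=d(\pi)\vphi(\pi)$ on $\C[\cC]$. Since the set of such $\om_\vphi$ is convex and closed under pointwise limits of $\vphi$ (as in the proof of Proposition~\ref{prop.pos-mult-to-adm-rep}), and since the regular representation embeds $\C[\cC]$ faithfully into $C_u(\cC)$ (Corollary~\ref{cor.regular-and-triv-rep}), these $\om_\vphi$ are exactly the restrictions to $\C[\cC]$ of the states of $C_u(\cC)$, so that
$$\|x\|_{C_u(\cC)}^2 = \sup\bigl\{\, \om_\vphi(x^*x)\ \big|\ \vphi \text{ a cp-multiplier on } \Rep\bG,\ \vphi(\eps)=1 \,\bigr\} \qquad\text{for all}\;\; x\in\C[\cC]\;.$$
By Proposition~\ref{prop.quantum}, a function $\vphi$ is a cp-multiplier on $\Rep\bG$ precisely when the central functional $\Om_\vphi$ on $\cO(\bG)$ is positive; and since $\Om_\vphi(1)=\vphi(\eps)$, the normalised cp-multipliers correspond exactly to the \emph{central states} on $\cO(\bG)$.

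Next I would invoke \cite[Section~6]{DFY13}. What I need from there is twofold. (i) The set $h\,\cD(\bG)\,h$ is a dense $*$-subalgebra of $hD(\bG)h$, the elements $\si_{-i/2}(\chi_\pi)\,h$, $\pi\in\Irr(\bG)$, form a vector-space basis of it, and under the (twisted) product and the $*$-operation of $D(\bG)$ one has
$$\bigl(\si_{-i/2}(\chi_\pi)h\bigr)\bigl(\si_{-i/2}(\chi_\rho)h\bigr)=\sum_{\gamma\in\Irr(\bG)}\mult(\gamma,\pi\ot\rho)\,\si_{-i/2}(\chi_\gamma)h \qquad\text{and}\qquad \bigl(\si_{-i/2}(\chi_\pi)h\bigr)^*=\si_{-i/2}(\chi_{\pibar})h\;,$$
so that $\Phi$ is a well-defined unital $*$-isomorphism of the fusion $*$-algebra $\C[\Rep\bG]$ onto $h\,\cD(\bG)\,h$ (note $\chi_\eps=1$ forces $\Phi(\eps)=h$, and the twist $\si_{-i/2}$ is exactly what makes $\Phi$ $*$-preserving when $\bG$ is not of Kac type, the untwisted assignment $\pi\mapsto\chi_\pi h$ being multiplicative but not $*$-preserving). (ii) Pull-back along $\Phi$ yields a bijection between the states $\nu$ of $hD(\bG)h$ and the central states $\Om$ of $\cO(\bG)$, characterised by $\Om(U^\pi_{ij})=\delta_{ij}\,d(\pi)^{-1}\,\nu(\Phi(\pi))$; equivalently, if $\Om=\Om_\vphi$ then $\nu\circ\Phi=\om_\vphi$ on $\C[\cC]$.

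Combining the two, for every $x\in\C[\cC]$ I would compute
$$\|\Phi(x)\|_{hD(\bG)h}^2 = \sup_{\nu}\ \nu\bigl(\Phi(x)^*\Phi(x)\bigr) = \sup_{\vphi}\ \om_\vphi(x^*x) = \|x\|_{C_u(\cC)}^2\;,$$
where the first supremum is over all states of $hD(\bG)h$ (a C$^*$-algebra having sufficiently many states), the second over all normalised cp-multipliers on $\Rep\bG$, and where I used that $\Phi$ is a $*$-homomorphism ($\Phi(x)^*\Phi(x)=\Phi(x^*x)$), that $\nu\circ\Phi=\om_\vphi$, and that $\nu\leftrightarrow\vphi$ is a bijection. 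Hence $\Phi:\C[\cC]\recht hD(\bG)h$ is isometric (in particular injective) for the $C_u(\cC)$-norm and extends to an isometric $*$-homomorphism $\Phi:C_u(\Rep\bG)\recht hD(\bG)h$, which is onto because $\Phi(\C[\cC])=h\,\cD(\bG)\,h$ is dense. The main obstacle is the input (i)–(ii) imported from \cite[Section~6]{DFY13}: the bookkeeping of the Drinfel'd-double product and $*$-operation on the corner $hD(\bG)h$, the identification of the correct modular normalisation $\si_{-i/2}$, and the ensuing description of the states of the corner; once those are in hand, everything above is a formal consequence of the results already established in this paper.
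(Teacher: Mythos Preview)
Your proposal is correct and follows essentially the same approach as the paper. The paper's own proof is a terse single paragraph: it cites \cite[Section~6]{DFY13} for the bijection between positive central functionals $\Om_\vphi$ on $\cO(\bG)$ and positive functionals on $hD(\bG)h$ satisfying $\Om'_\vphi(\Phi(\pi))=d(\pi)\vphi(\pi)$, notes that $\Phi(\C[\Rep\bG])=h\cD(\bG)h$ is dense, and then invokes Proposition~\ref{prop.quantum}; you have simply unpacked this into the explicit norm comparison via state spaces, which is exactly what ``the result then follows'' means.
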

\begin{proof}
Let $\vphi : \Irr(\bG) \recht \C$ be a function. In \cite[Section 6]{DFY13}, it is proved that the central functional $\Om_\vphi$ on $\cO(\bG)$ is positive if and only if there exists a positive functional $\Om'_\vphi$ on the C$^*$-algebra $h D(\bG) h$ such that $\Om'_\vphi(\Phi(\pi)) = d(\pi) \, \vphi(\pi)$ for all $\pi \in \Irr(\bG)$. Also, $\Phi(\C[\Rep \bG]) = h \cD(\bG) h$ and thus, $\Phi(\C[\Rep \bG])$ is dense in $h D(\bG) h$. The result then follows from Proposition \ref{prop.quantum}.
\end{proof}

Given a compact quantum group $\bG$, we denote by $C_u(\bG)$ the universal enveloping C$^*$-algebra of $\cO(\bG)$. We denote by $\counit : \cO(\bG) \recht \C$ the functional given by $\counit(U^\pi_{ij}) = \delta_{ij}$ for all $\pi \in \Irr(\bG)$. Note that $\counit$ is exactly the positive central functional on $\cO(\bG)$ that corresponds to the function on $\Irr(\bG)$ that is identically equal to $1$. Recall from \cite[Definition 8.1]{Ar14} that $\bGhat$ is said to have the \emph{central property~\pT} if every net of central states $\om_n \in C_u(\bG)^*$ that converges to $\counit$ weakly$^*$, must satisfy $\lim_n \|\om_n - \counit\| = 0$ w.r.t.\ the norm on $C_u(\bG)^*$.

\begin{proposition}\label{prop.equiv-prop-T}
Let $\bG$ be a compact quantum group. Then $\bGhat$ has the central property~\pT\ in the sense of \cite[Definition 8.1]{Ar14} if and only if the rigid C$^*$-tensor category $\Rep(\bG)$ has property~\pT\ in the sense of Definition \ref{def.prop-Cstar-cat}.
\end{proposition}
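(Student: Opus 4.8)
The plan is to express both property~\pT\ for $\Rep(\bG)$ and the central property~\pT\ for $\bGhat$ as one and the same statement about central multipliers, namely that a net of cp-multipliers $\vphi_n$ on $\Rep(\bG)$ with $\vphi_n(\eps)=1$ and $\vphi_n\recht1$ pointwise must converge to $1$ uniformly on $\Irr(\bG)$.

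First I would set up the dictionary. By Proposition \ref{prop.quantum}, the positive central functionals on $\cO(\bG)$ are precisely the $\Om_\vphi$ with $\vphi$ a cp-multiplier on $\Rep(\bG)$; since $\cO(\bG)$ is dense in $C_u(\bG)$ and every state of $\cO(\bG)$ has a GNS representation by bounded operators, each $\Om_\vphi$ extends uniquely to $C_u(\bG)$, being a central state exactly when $\vphi(\eps)=1$, and every central state arises so. On the other side, the GNS representation of a state $\om$ of $C_u(\cC)$ (with $\cC=\Rep(\bG)$), restricted to $\C[\cC]$, is admissible in the sense of Definition \ref{def.adm-rep-tensor-cat}, so $\al\mapsto d(\al)^{-1}\om(\al)$ is a cp-multiplier $\vphi$ with $\vphi(\eps)=1$; conversely every such $\vphi$ yields the state $\om_\vphi(\al)=d(\al)\vphi(\al)$ by Proposition \ref{prop.pos-mult-to-adm-rep}. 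So central states on $C_u(\bG)$ and states on $C_u(\cC)$ are canonically indexed by the same cp-multipliers $\vphi$ with $\vphi(\eps)=1$. Checking on matrix coefficients (both families being uniformly bounded, and $\cO(\bG)$, resp.\ $\C[\cC]$, dense), one sees that $\Om_{\vphi_n}\recht\counit$ weakly$^*$ in $C_u(\bG)^*$, that $\om_{\vphi_n}\recht\counit$ weakly$^*$ in $C_u(\cC)^*$, and that $\vphi_n\recht1$ pointwise are all equivalent; and since $\|U^\pi_{ij}\|\le1$ in $C_u(\bG)$ and $\|d(\al)^{-1}\al\|\le1$ in $C_u(\cC)$, norm convergence to the counit on either side forces $\vphi_n\recht1$ uniformly on $\Irr(\bG)$.

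The real content is the converse: uniform convergence $\vphi_n\recht1$ should force $\|\om_{\vphi_n}-\counit\|\recht0$ in $C_u(\cC)^*$ and $\|\Om_{\vphi_n}-\counit\|\recht0$ in $C_u(\bG)^*$. For $C_u(\cC)$ I would argue directly with Lemma \ref{lem.elementary}: if $\delta_n=\sup_\al|\vphi_n(\al)-1|$ and $\Om_n$ denotes the cyclic GNS vector of $\om_{\vphi_n}$, then $\|d(\al)^{-1}\Theta_{\vphi_n}(\al)\Om_n-\Om_n\|^2\le2\delta_n$ for every $\al$, so the minimal-norm element of the closed convex hull of $\{d(\al)^{-1}\Theta_{\vphi_n}(\al)\Om_n:\al\in\cC\}$ lies within $\sqrt{2\delta_n}$ of $\Om_n$ and, after normalization, produces a unit vector $\xi_0$ fixed by all $d(\al)^{-1}\Theta_{\vphi_n}(\al)$; its vector state equals $\counit$, whence $\|\om_{\vphi_n}-\counit\|\le2\|\Om_n-\xi_0\|\recht0$. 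For $C_u(\bG)$ I would transport this through the Drinfel'd-double isomorphism $C_u(\cC)\cong hD(\bG)h$ of Proposition \ref{prop.univ-Cstar-rep-category}, using the correspondence of \cite[Section 6]{DFY13} between central functionals on $C_u(\bG)$ and functionals on $hD(\bG)h$ to match the norms; alternatively one repeats the Lemma \ref{lem.elementary}-type averaging argument with the unitary corepresentations $(\id\ot\pi_{\Om_{\vphi_n}})(U^\pi)$ in place of the $\Theta_{\vphi_n}(\al)$, uniform closeness of $\vphi_n$ to $1$ making the GNS vector of $\Om_{\vphi_n}$ uniformly almost $\bG$-invariant.

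Putting it together, Proposition \ref{prop.char-prop-T} says that $\Rep(\bG)$ has property~\pT\ iff every net of states of $C_u(\cC)$ converging weakly$^*$ to $\counit$ converges in norm, which by the above is iff $[\vphi_n\recht1$ pointwise$]\Rightarrow[\vphi_n\recht1$ uniformly$]$; and, by the definition recalled from \cite[Definition 8.1]{Ar14} together with the above, $\bGhat$ has the central property~\pT\ iff the same implication holds. Hence the two are equivalent. The step I expect to be the main obstacle is the norm comparison in the third paragraph --- making uniform closeness of the central multiplier $\vphi_n$ to $1$ translate into norm-closeness of the associated central state of $C_u(\bG)$ to the counit --- where the Drinfel'd-double machinery of \cite{DFY13} (or a direct quantum-group Kazhdan-type averaging) is genuinely needed; everything else is bookkeeping over Propositions \ref{prop.quantum}, \ref{prop.char-prop-T}, \ref{prop.pos-mult-to-adm-rep}, \ref{prop.univ-Cstar-rep-category} and Lemma \ref{lem.elementary}.
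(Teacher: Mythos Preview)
Your proposal is correct and uses the same essential ingredients as the paper: the dictionary of Proposition~\ref{prop.quantum} between cp-multipliers and central states, Proposition~\ref{prop.char-prop-T} for the $C_u(\cC)$ side, and the Drinfel'd-double isomorphism of Proposition~\ref{prop.univ-Cstar-rep-category} to transport norm convergence from $C_u(\cC)^*$ to $C_u(\bG)^*$. You also correctly identify the latter as the one nontrivial step.

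The only difference is organizational. You factor both properties through the intermediate condition ``$\vphi_n\to1$ pointwise $\Rightarrow$ $\vphi_n\to1$ uniformly'', which forces you to prove the auxiliary implication ``$\vphi_n\to1$ uniformly $\Rightarrow$ $\|\om_{\vphi_n}-\counit\|\to0$ in $C_u(\cC)^*$'' via the Lemma~\ref{lem.elementary} averaging. The paper avoids this detour: in the direction $\Rep(\bG)$ has \pT\ $\Rightarrow$ central \pT, it simply invokes Proposition~\ref{prop.char-prop-T}~(3) directly---since \pT\ is assumed, weak$^*$ convergence $\om_n\to\counit$ already gives $\|\om_n-\counit\|\to0$ without ever mentioning uniform convergence of $\vphi_n$---and then transports through $hD(\bG)h$ exactly as you do. Your Lemma~\ref{lem.elementary} argument is thus redundant with what Proposition~\ref{prop.char-prop-T} already provides, but it is not wrong.
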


\begin{proof}
First assume that $\bGhat$ has the central property~(T). Let $\vphi_n : \Irr(\bG) \recht \C$ be a net of cp-multipliers that converges to $1$ pointwise. We have to prove that $\vphi_n \recht 1$ uniformly on $\Irr(\bG)$. Replacing $\vphi_n$ by $\vphi_n(\eps)^{-1} \, \vphi_n$, we may assume that $\vphi_n(\eps) = 1$ for all $n$. Denote by $\Om_n := \Om_{\vphi_n}$ the central functionals on $\cO(\bG)$ given by $\Om_n(U^\pi_{ij}) = \vphi_n(\pi) \delta_{ij}$ for all $\pi \in \Irr(\bG)$. By Proposition \ref{prop.quantum}, the functionals $\Om_n$ uniquely extend to states on $C_u(\bG)$ that we still denote as $\Om_n$. Since $\vphi_n \recht 1$ pointwise, we get that $\Om_n \recht \counit$ weakly$^*$. Since $\bGhat$ has the central property~(T), we get that $\lim_n \|\Om_n - \counit\| = 0$. For every $\pi \in \Irr(\bG)$, we have that $\|U^\pi_{11}\| \leq 1$. Therefore,
$$|\vphi_n(\pi) - 1| = |\Omega_n(U^\pi_{11}) - \counit(U^\pi_{11})| \leq \|\Omega_n - \counit\| \;\;,$$
and we conclude that $\vphi_n \recht 1$ uniformly.

Conversely, assume that $\Rep(\bG)$ has property~(T) in the sense of Definition \ref{def.prop-Cstar-cat}. Take a net $\Om_n \in C_u(\bG)^*$ of central states that converges weakly$^*$ to $\counit$. Using Proposition \ref{prop.quantum}, denote by $\vphi_n : \Irr(\bG) \recht \C$ the cp-multipliers on $\Rep (\bG)$ that correspond to $\Om_n$. Denote by $\om_n \in C_u(\Rep (\bG))^*$ the states defined by $\vphi_n$. Note that $\vphi_n \recht 1$ pointwise and $\omega_n \recht \counit$ weakly$^*$. By Proposition \ref{prop.char-prop-T}, we get that $\lim_n \|\om_n - \counit\| = 0$. We now use the notation introduced before Proposition \ref{prop.univ-Cstar-rep-category}. With the $*$-isomorphism $\Phi$ of Proposition \ref{prop.univ-Cstar-rep-category}, we can define the states $\Omtil_n, \counittil$ on $D(\bG)$ given by $\om_n \circ \Phi^{-1}$ and $\counit \circ \Phi^{-1}$. We have $\lim_n \|\Omtil_n - \counittil\| = 0$. Using the natural $*$-homomorphism $C_u(\bG) \recht M(D(\bG))$, we have that $\Om_n(x) = \Omtil_n(h x h)$ for all $x \in C_u(\bG)$. It then follows that $\lim_n \|\Om_n - \counit\| = 0$.
\end{proof}

\section[\boldmath Representation theory and approximation properties of TLJ $\lambda$-lattices]{\boldmath Representation theory and approximation properties of \linebreak TLJ $\lambda$-lattices} \label{sec.rep-theory-TLJ}

As explained in the introduction, the TLJ $\lambda$-lattice $\cG^\lambda$ arises as the sub-$\lambda$-lattice generated by the Jones projections inside the standard invariant of any extremal subfactor of index $\lambda^{-1}$. By \cite{Po90}, there exist (non hyperfinite) subfactors $N \subset M$ whose standard invariant is generated by the Jones projections and thus equal to $\cG^\lambda$. These are precisely the subfactors whose principal graph is of type $A_n$ or $A_\infty$. The finite depth case $A_n$ arises when $\lambda^{-1} < 4$, while for $\lambda^{-1} \geq 4$, we have infinite depth and principal graph $A_\infty$.

The C$^*$-tensor category corresponding to $\cG^\lambda$ with $\lambda^{-1} \geq 4$ can be described as follows. Assume that $N \subset M$ is a subfactor with index $\lambda^{-1} \geq 4$ and principal graph $A_\infty$. Consider the Jones tower $N \subset M \subset M_1 \subset \cdots$. We define $\cH_0 = L^2(M)$ to be the trivial $M$-bimodule. For every $n \geq 1$, the $M$-bimodule $L^2(M_n)$ is the direct sum of irreducible $M$-bimodules that appear in $L^2(M_{n-1})$, plus one new irreducible $M$-bimodule that we denote as $\cH_n$. Then $\{\cH_n \mid n \geq 0\}$ is the set of irreducible $M$-bimodules generated by $N \subset M$.

\begin{theorem}\label{thm.main-temperley-lieb}
Let $N \subset M$ be an $A_\infty$ subfactor with index $[M:N]=\lambda^{-1}$ and standard invariant $\cG^\lambda$. Denote by $\cC$ the associated category of $M$-bimodules and identify $\Irr(\cC) = \{\cH_n \mid n \in \N\}$ as above.
\begin{enumlist}
\item There is a unique bijective $*$-isomorphism $\Theta : C_u(N \subset M) \recht C([0,\lambda^{-1}])$ that maps the $M$-bimodule $L^2(M_1)$ to the function $t \mapsto t$.

\item The irreducible \SE-correspondences of $N \subset M$ are labeled by the interval $[0,\lambda^{-1}]$ and explicitly given by the cp-multipliers $\vphi_t : \Irr(\cC) \recht \C$, $t \in [0:\lambda^{-1}]$, defined by
    $$\vphi_t(\cH_n) = \frac{V_n(t)}{V_n([M:N])} \quad\text{where}\;\; V_n(t) = U_{2n}\bigl(\frac{1}{2} \sqrt{t}\bigr)$$
    and the $U_m$ are the Chebyshev polynomials of the second kind.

\item The regular \SE-correspondence of $N \subset M$ is given by the representation of $C([0,\lambda^{-1}])$ on $L^2([0,4])$ by left multiplication. The trivial \SE-correspondence is given by evaluation at $\lambda^{-1}$.

\item The TLJ $\lambda$-lattice $\cG^\lambda$ has the Haagerup approximation property and CMAP.
\end{enumlist}
\end{theorem}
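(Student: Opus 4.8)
The plan is to reduce everything to the compact quantum group $\SU_q(2)$ and to feed in the results of \cite{DFY13} on its central states and central approximation properties, using Propositions~\ref{prop.quantum}, \ref{prop.Cstar-subcat}, \ref{prop.subcat} and \ref{prop.all-equiv} as the translation mechanism; inside the present paper there is then essentially no new analysis to do. First I would record the standard identification of the category $\cC$ attached to the $A_\infty$ subfactor of index $\lambda^{-1}\geq 4$: it is the full C$^*$-tensor subcategory of $\Rep(\SU_q(2))$ consisting of the integer-spin representations, where $q\in(0,1]$ is fixed by $q+q^{-1}=\sqrt{\lambda^{-1}}$ (so $q=1$ exactly when $\lambda^{-1}=4$). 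Under this identification $\cH_n$ is the spin-$n$ representation $U^{(n)}$, the generating $M$-bimodule $L^2(M_1)\cong\cH_0\oplus\cH_1\cong U^{(1/2)}\ot\overline{U^{(1/2)}}$, and one has the elementary Chebyshev bookkeeping $d(\cH_n)=[2n+1]_q=V_n(\lambda^{-1})$ together with, inside the commutative fusion $*$-algebra $\C[\cC]\cong\C[X]$ with $X:=L^2(M_1)$, the identity $\cH_n=V_n(X)$; both follow from $U_{2n}\bigl(\tfrac12(q+q^{-1})\bigr)=[2n+1]_q$ and the three-term recursion satisfied by the $V_n$.

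Granting this, I would argue as follows for (1)--(3). The fusion $*$-algebra $\C[\Rep\SU_q(2)]$ is $\C[\chi_{1/2}]$ with $\chi_{1/2}$ self-adjoint, and by Proposition~\ref{prop.quantum} its admissible representations are exactly those whose coefficient functions are central states of $C(\SU_q(2))$. The main theorem of \cite{DFY13} identifies these central states with the probability measures on the interval $[-(q+q^{-1}),q+q^{-1}]$; equivalently, $C_u(\Rep\SU_q(2))\cong C\bigl([-(q+q^{-1}),q+q^{-1}]\bigr)$, with $\chi_{1/2}$ corresponding to the coordinate function (its spectrum in the universal setting is the full interval, its norm being $d(U^{(1/2)})=q+q^{-1}$). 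Passing to the integer-spin subcategory $\cC$ via the injective unital inclusion $C_u(\cC)\hookrightarrow C_u(\Rep\SU_q(2))$ of Proposition~\ref{prop.Cstar-subcat}, the subalgebra $C_u(\cC)$ is generated by $X=\chi_{1/2}^2$ and hence is the algebra of even continuous functions on $[-(q+q^{-1}),q+q^{-1}]$, i.e.\ $C([0,\lambda^{-1}])$ via $t=s^2$; since $C_u(N\subset M)=C_u(\cC)$ by Theorem~\ref{thm.isom-univ-Cstar} and $X=L^2(M_1)$ goes to $t\mapsto t$, this is (1). Its irreducible admissible representations are then the characters $\mathrm{ev}_t$, $t\in[0,\lambda^{-1}]$; the associated cp-multiplier is $\al\mapsto d(\al)^{-1}\,\mathrm{ev}_t(\al)$, and evaluating $\cH_n=V_n(X)$ at $X=t$ yields $\vphi_t(\cH_n)=V_n(t)/V_n(\lambda^{-1})$, which is (2). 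For (3), the regular representation is $\Theta_{\vphi_0}$ of Corollary~\ref{cor.regular-and-triv-rep}, corresponding to the state $\cH_n\mapsto\delta_{n,0}$ on $C([0,\lambda^{-1}])$; by the orthogonality relations $\int_0^{\pi/2}\sin((2n+1)\theta)\sin((2m+1)\theta)\,d\theta=\tfrac{\pi}{4}\delta_{n,m}$ this state is the push-forward of $\tfrac{4}{\pi}\sin^2\theta\,d\theta$ under $\theta\mapsto 4\cos^2\theta$, a probability measure supported on $[0,4]$, so its GNS representation is multiplication on $L^2([0,4])$; the trivial representation is the constant multiplier $\vphi\equiv 1$, i.e.\ $\vphi_{\lambda^{-1}}$, which is evaluation at $\lambda^{-1}$.

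For (4), I would get the Haagerup property directly from (2): for each $t\in[0,\lambda^{-1})$ the function $\vphi_t$ is a genuine cp-multiplier on $\cC$, and an elementary estimate on $V_n$ (it grows at most linearly in $n$ on $[0,4]$ and subexponentially on $(4,\lambda^{-1})$, while $V_n(\lambda^{-1})=[2n+1]_q$ grows exponentially when $q<1$, and a direct check covers $q=1$) shows $\vphi_t$ vanishes at infinity on $\Irr(\cC)$; as $t\uparrow\lambda^{-1}$ one has $\vphi_t\to 1$ pointwise, so $\cC$ has the Haagerup property and hence so does $\cG^\lambda$ by Proposition~\ref{prop.all-equiv}. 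For CMAP I would invoke the central CMAP for $\widehat{\SU_q(2)}$ of \cite{DFY13}: it provides finitely supported central cb-multipliers on $\SU_q(2)$ with cb-norms tending to $1$ and converging pointwise to $1$; transporting these to $\Rep(\SU_q(2))$ through the norm estimate in the last part of Proposition~\ref{prop.quantum} and restricting to $\cC$ through Proposition~\ref{prop.subcat}(1), which does not increase the cb-norm, gives CMAP for $\cC$, hence for $\cG^\lambda$ again by Proposition~\ref{prop.all-equiv}.

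I expect the only genuine obstacle to lie entirely outside our framework: it is the hard analytic input of \cite{DFY13}, the complete determination of the central states of $\SU_q(2)$ and its central CMAP, which we are entitled to use as a black box. Within the paper the work is confined to checking that the three reductions are faithful --- from $\cG^\lambda$ to its bimodule category $\cC$ (Proposition~\ref{prop.all-equiv}); from $\cC$ to the integer-spin part of $\Rep(\SU_q(2))$, with the Chebyshev identities matching $\cH_n$ with $V_n$ and $d(\cH_n)$ with $V_n(\lambda^{-1})$; and from $\Rep(\SU_q(2))$ to central functionals on $\cO(\SU_q(2))$ via Proposition~\ref{prop.quantum}, together with the passage to a full subcategory via Propositions~\ref{prop.Cstar-subcat} and \ref{prop.subcat}.
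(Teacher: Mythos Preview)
Your proposal is correct and follows essentially the same route as the paper's proof: identify $\cC$ with the even (integer-spin) part of $\Rep(\SU_q(2))$ for $q+q^{-1}=\lambda^{-1/2}$, import the determination of central states/central CMAP for $\SU_q(2)$ from \cite{DFY13}, and read off (1)--(4) via the Chebyshev bookkeeping $\cH_n=V_n(X)$, $d(\cH_n)=V_n(\lambda^{-1})$. The only cosmetic difference is that the paper routes the identification $C_u(\Rep\SU_q(2))\cong C\bigl([-\lambda^{-1/2},\lambda^{-1/2}]\bigr)$ through Proposition~\ref{prop.univ-Cstar-rep-category} (the Drinfel'd-double corner) together with \cite[Remark~31]{DFY13}, whereas you go straight through Proposition~\ref{prop.quantum} and the classification of central states in \cite{DFY13}; both arrive at the same place.
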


\begin{proof}
Take the unique number $0 < q \leq 1$ with $q+\frac{1}{q} = \lambda^{-1/2}$. Denote by $\bG$ the compact quantum group $\SU_q(2)$ in the sense of \cite{Wo86}. Denote by $\al_{1/2}$ the defining dimension $2$ (irreducible) representation of $\bG$. Then $\cC$ is equivalent, as a C$^*$-tensor category, with the full C$^*$-tensor subcategory of $\Rep(\bG)$ generated by $\al_{1/2} \ot \al_{1/2}$. Combining Proposition \ref{prop.Cstar-subcat} with Proposition \ref{prop.univ-Cstar-rep-category} and \cite[Remark 31]{DFY13}, we find a unique injective $*$-isomorphism $C_u(N \subset M) \hookrightarrow C([-\lambda^{-1/2},\lambda^{-1/2}])$ sending the $M$-bimodule $L^2(M_1)$ to the function $t \mapsto t^2$. Then the first statement of the theorem follows.

Define the functions $V_n(t)$ as in 2. It is easy to check that the functions $V_n(t)$ are determined by the recursive relation
\begin{equation}\label{eq.recurrence-Vn}
V_0(t) = 1 \;\; , \;\; V_1(t) = t-1 \quad\text{and}\quad V_{n+1}(t) = (t-2)V_n(t) - V_{n-1}(t) \;\;\quad\text{for all}\;\; n \geq 1 \; .
\end{equation}
In particular, all $V_n(t)$ are polynomials in $t$. The fusion algebra $\C[\cC]$ can be described by the fusion rule
$$\cH_1 \ot_M \cH_n \cong \cH_{n+1} \oplus \cH_n \oplus \cH_{n-1} \quad\text{for all}\;\; n \geq 1 \; .$$
So in the fusion algebra $\C[\cC]$, we have that $\cH_n = V_n(L^2(M_1))$ and that $d(\cH_n) = V_n(\lambda^{-1})$. Since the irreducible representations of $C([0,\lambda^{-1}])$ are obviously given by evaluation at $t \in [0,\lambda^{-1}]$, the second statement follows.

Since $\vphi_{\lambda^{-1}}(\cH_n) = 1$ for all $n \geq 0$, we get that the trivial \SE-correspondence of $N \subset M$ is indeed given by evaluation at $\lambda^{-1}$. Defining the probability measure $\mu$ with support $[0,4]$ and density $(2\pi)^{-1/2} \sqrt{(4-t)/t}$, the orthogonality relations for the Chebyshev polynomials of the second kind imply that $\int_0^4 V_n(t) \, d\mu(t) = 0$ for all $n \geq 1$. This means that $\int_0^4 \vphi_t \, d\mu(t) = \vphi_0$, so that indeed the regular \SE-correspondence of $N \subset M$ is given by the left multiplication representation of $C([0,\lambda^{-1}])$ on $L^2([0,4])$.

It is easy to check that for every fixed $0 < t < \lambda^{-1}$, we have that $\lim_{n \recht \infty} \vphi_t(\cH_n) = 0$. Therefore, $\cC$ has the Haagerup approximation property.

By \cite[Theorem 16]{DFY13}, there exists a sequence of finitely supported functions $\vphi_n : \Irr(\bG) \recht \C$ that converges to $1$ pointwise and such that the associated multipliers $\Psi_{\vphi_n} : C_r(\bG) \recht C_r(\bG)$ given by \eqref{eq.multiplier-quantum} satisfy $\limsup_n \|\Psi_{\vphi_n}\|\cb = 1$. By Propositions \ref{prop.quantum} and \ref{prop.subcat}, the restriction of $\vphi_n$ to $\Irr(\cC) \subset \Irr(\bG)$ is a sequence of finitely supported cb-multipliers $\psi_n$ on $\cC$ that converges to $1$ pointwise and that satisfies $\lim_n \|\psi_n\|\cb = 1$. So, $\cC$ has CMAP.
\end{proof}

\section{\boldmath Property~(T) for $\lambda$-lattices of type $\SU_q(n)$, $n \geq 3$}

For every $n \geq 2$, consider the compact quantum group $\bG = \SU_q(n)$ of \cite{Wo88}, with its defining dimension $n$ (irreducible) unitary representation $U^\pi \in M_n(\C) \ot \cO(\bG)$. We can then define $\bG_1 = \PSU_q(n)$ such that $\cO(\bG_1)$ is the Hopf $*$-subalgebra of $\cO(\bG)$ generated by the coefficients of $\pi \ot \pibar$.

Since the fusion rules of $\bG$ are the same as the fusion rules of the compact Lie group $\SU(n)$, we can also view $\cO(\bG_1)$ in the following way. There is a unique map $\Xi : \Irr(\bG) \recht \Z/n\Z$ satisfying $\Xi(\pi) = 1$ and $\Xi(\al) = \Xi(\be) +\Xi(\gamma)$ whenever $\al,\be,\gamma \in \Irr(\bG)$ and $\al$ is a subrepresentation of $\be \ot \gamma$.

\begin{theorem}\label{thm.main-SUq3}
Let $n \geq 3$ be an odd integer. Let $N \subset M$ be an extremal finite index subfactor and assume that the associated category of $M$-bimodules is equivalent with the representation category $\Rep(\SU_q(n))$ or $\Rep(\PSU_q(n))$. Then, the standard invariant of $N \subset M$ has property~\pT.
\end{theorem}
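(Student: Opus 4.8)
The strategy is to reduce property~\pT\ for the standard invariant of $N\subset M$ to property~\pT\ of the rigid C$^*$-tensor category $\cC$ of $M$-bimodules, and then to transfer the latter to the central property~\pT\ of $\widehat{\SU_q(n)}$, which is known by \cite{Ar14}. The first reduction is immediate from Proposition~\ref{prop.all-equiv}: the $\lambda$-lattice $\cG_{N,M}$ has property~\pT\ if and only if $\cC$ has property~\pT\ in the sense of Definition~\ref{def.prop-Cstar-cat}. So it remains to show that $\Rep(\SU_q(n))$ and $\Rep(\PSU_q(n))$ have property~\pT.

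For $\cC = \Rep(\SU_q(n))$ this is exactly Proposition~\ref{prop.equiv-prop-T} combined with the main result of \cite{Ar14}: since $n\geq 3$ is an odd integer and $0<q<1$ (take the unique such $q$; the case $q=1$, i.e.\ the classical group $\SU(n)$, is also covered), $\widehat{\SU_q(n)}$ has the central property~\pT\ in the sense of \cite[Definition 8.1]{Ar14}, hence by Proposition~\ref{prop.equiv-prop-T} the rigid C$^*$-tensor category $\Rep(\SU_q(n))$ has property~\pT. For $\cC = \Rep(\PSU_q(n))$, I would invoke Proposition~\ref{prop.stability-T}: take $\Lambda = \Z/n\Z$ and $\Xi : \Irr(\SU_q(n)) \recht \Z/n\Z$ the grading described in the paragraph before the theorem (it satisfies $\Xi(\gamma) = \Xi(\al)\Xi(\be)$ whenever $\gamma$ is a subobject of $\al\ot\be$, because the fusion rules of $\SU_q(n)$ coincide with those of $\SU(n)$ and the map exists there via the center). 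The associated full C$^*$-tensor subcategory $\cC_1$ with $\Irr(\cC_1) = \Xi^{-1}(0)$ is precisely $\Rep(\PSU_q(n))$, since $\cO(\PSU_q(n))$ is generated by the coefficients of $\pi\ot\pibar$ and these are exactly the representations with trivial grading. Proposition~\ref{prop.stability-T} then gives that $\Rep(\SU_q(n))$ has property~\pT\ if and only if $\Rep(\PSU_q(n))$ does, so the $\PSU_q(n)$ case follows from the $\SU_q(n)$ case.

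The main (and essentially only) subtlety is the hypothesis that $n$ be \emph{odd}. This is needed precisely so that Proposition~\ref{prop.stability-T} applies cleanly: one must check that $\cC_1 = \Rep(\PSU_q(n))$ is the full subcategory cut out by the grading $\Xi$, and for $n$ odd the relevant combinatorics of the $\Z/n\Z$-grading (matching the center of $\SU(n)$) makes $\PSU_q(n)$ correspond exactly to $\Xi^{-1}(0)$; the oddness also enters through the hypotheses of \cite{Ar14} on the central property~\pT\ of $\widehat{\SU_q(n)}$. I would state this identification carefully and then simply cite \cite{Ar14} for the central property~\pT\ input. All remaining steps are direct applications of the propositions already established in Sections~\ref{sec.approx-rigid-prop} and~\ref{sec.mult-rep-cat}, so no further calculation is required.
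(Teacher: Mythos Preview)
Your proposal is correct and follows exactly the paper's own proof: reduce to property~\pT\ for the bimodule category via Proposition~\ref{prop.all-equiv}, invoke Arano's central property~\pT\ for $\widehat{\SU_q(n)}$ together with Proposition~\ref{prop.equiv-prop-T}, and then pass to $\Rep(\PSU_q(n))$ via Proposition~\ref{prop.stability-T} applied to the $\Z/n\Z$-grading. One small clarification: the oddness of $n$ enters only through the hypothesis of \cite[Corollary~8.8]{Ar14}, not through Proposition~\ref{prop.stability-T} or the identification $\Rep(\PSU_q(n)) = \Xi^{-1}(0)$, both of which hold for every $n$.
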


By \cite{Po94b,Xu97,Ba98} (see Remark \ref{rem.exist} below), subfactors $N \subset M$ whose associated category of $M$-bimodules is equivalent with $\Rep(\SU_q(n))$ or $\Rep(\PSU_q(n))$ indeed exist. In the case of $\PSU_q(n)$, they can be chosen irreducible and without intermediate subfactors.

The subfactors $N \subset M$ in Theorem \ref{thm.main-SUq3} are the first subfactors that have a standard invariant with property~(T) and that are not constructed from discrete groups with property~(T). Note that the fusion algebra associated with these subfactors $N \subset M$ is abelian and thus not at all like $C^*(\Gamma)$ for a property~(T) group $\Gamma$.

\begin{proof}
Write $\bG = \SU_q(n)$ and $\bG_1 = \PSU_q(n)$. By \cite[Corollary 8.8]{Ar14}, the discrete quantum group $\bGhat$ has the central property~(T). So by Proposition \ref{prop.equiv-prop-T}, the category $\Rep(\bG)$ has property~(T). By Proposition \ref{prop.stability-T}, also $\Rep(\bG_1)$ has property~(T). So, the category of $M$-bimodules generated by $N \subset M$ has property~(T). It then follows from Proposition \ref{prop.all-equiv} that the standard invariant of $N \subset M$ has property~(T).
\end{proof}

\begin{remark}\label{rem.exist}
By \cite{Xu97,Ba98}, we can associate a $\lambda$-lattice to every rigid C$^*$-tensor category $\cC$ and object $\al \in \cC$, in the following way. Fix a standard solution $s_\al \in \Mor(\al \ot \albar,\eps)$, $t_\al \in \Mor(\albar \ot \al,\eps)$ of the conjugate equations as in \eqref{eq.standard-sol}. For every $k \in \N$, define $\al_k = \al$ if $k$ is odd and $\al_k = \albar$ if $k$ is even. Then define for all $n \leq m$,
$$\al_{nm} = \al_{n+1} \ot \cdots \ot \al_m$$
with the convention that $\al_{nn} = \eps$. Denote $A_{nm} = \End(\al_{nm})$. For $a \leq n \leq m \leq b$, we have the canonical inclusion $A_{nm} \subset A_{ab}$. We also have the projections $e_n \in A_{n-1,n+1}$ defined by $d(\al)^{-1} s_\al s_\al^*$ if $n$ is odd and by $d(\al)^{-1} t_\al t_\al^*$ if $n$ is even. Together with the normalized categorical trace and $\lambda = d(\al)^{-2}$, we find that $A_{nm}$ is a $\lambda$-lattice. By \cite[Theorem 3.1]{Po94b}, this $\lambda$-lattice $A_{nm}$ arises as the standard invariant of an extremal finite index subfactor $N \subset M$,
constructed in a canonical way from the $\lambda$-lattice and a diffuse tracial von Neumann algebra $Q$. By \cite[Theorem 1.1]{PS01}, by taking $Q$ to be the free group factor
on infinitely many generators $L(\mathbb F_\infty)$,
one associate this way a canonical inclusion $N\subset M$ with $N \cong M \cong L(\F_\infty)$. Note that $N \subset M$ is irreducible if and only if $\al$ is an irreducible object in $\cC$.

By construction, the category $\cC_1$ of $M$-bimodules generated by such a subfactor $N \subset M$ is equivalent with the full C$^*$-tensor subcategory of $\cC$ generated by $\al \ot \albar$. If we take $\cC = \Rep(\SU_q(n))$ with the defining $n$-dimensional representation denoted by $\pi$, we could take $\al = \pi \oplus \eps$ and find that $\cC_1 = \cC$. Then the subfactor $N \subset M$ is reducible. We could also take $\al = \pi$. Then, $\cC_1 = \Rep(\PSU_q(n))$ and the subfactor $N \subset M$ is irreducible. In that case, because $\pibar \ot \pi$ is the direct sum of the trivial representation and an irreducible representation, and because by construction $N' \cap M_1 \cong \End(\pibar \ot \pi)$, we find that $N' \cap M_1 = \C 1 + \C e_0$. In particular, $N \subset M$ has no intermediate subfactor.
\end{remark}

\section{\boldmath Permanence properties and Fuss-Catalan $\lambda$-lattices}\label{sec.permanence}

We collect in this section a number of results on the permanence under various constructions of the Haagerup property, weak amenability, property~(T),~.... We formulate most of these permanence properties for $\lambda$-lattices. They have their obvious counterpart for rigid C$^*$-tensor categories, but having the SE-inclusion at hand makes some of the proofs much less laborious. Moreover, since every finitely generated rigid C$^*$-tensor category is the category of bimodules of an extremal subfactor (see also Remark~\ref{rem.exist}), there is not even a loss of generality.

To obtain a permanence result for free products, we need the following approximation property, which was defined in \cite[Definition 3]{DFY13} for discrete quantum groups.

\begin{definition}\label{def.ACPAP}
A rigid C$^*$-tensor category $\cC$ is said to have the \emph{almost} completely positive approximation property (ACPAP) if there exists a net of cp-multipliers $\vphi_n : \Irr(\cC) \recht \C$ that converges to $1$ pointwise and such that for every fixed $n$, there exists a net of finitely supported cb-multipliers $\psi^n_k : \Irr(\cC) \recht \C$ such that $\lim_k \|\vphi_n - \psi^n_k\|\cb = 0$.
\end{definition}

Note that the ACPAP is stronger than both the Haagerup property and CMAP.

\begin{proposition}\label{prop.TLJ-ACPAP}
The TLJ $\lambda$-lattices $\cG^\lambda$ have the ACPAP.
\end{proposition}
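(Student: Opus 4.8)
The strategy is to transfer the ACPAP from the discrete dual of $\SU_q(2)$ to the TLJ category $\cC$ associated with $\cG^\lambda$ (for $\lambda^{-1} \geq 4$; the finite-depth case $\lambda^{-1} < 4$ is trivial since $\Irr(\cC)$ is finite and the identity multiplier is already finitely supported, giving the ACPAP for free). As in the proof of Theorem~\ref{thm.main-temperley-lieb}, pick $0 < q \leq 1$ with $q + q^{-1} = \lambda^{-1/2}$ and let $\bG = \SU_q(2)$, so that $\cC$ is equivalent to the full C$^*$-tensor subcategory $\cC_1$ of $\Rep(\bG)$ generated by $\al_{1/2} \ot \al_{1/2}$, i.e.\ $\Irr(\cC) = \{\al_n \mid n \in \N\} \subset \Irr(\bG) = \{\al_{n/2} \mid n \in \N\}$ consisting of the integer-spin representations. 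By \cite[Theorem~16 and its proof / Definition~3]{DFY13}, $\bGhat$ has the ACPAP: there is a sequence of cp-multipliers $\vphi_n : \Irr(\bG) \recht \C$ converging pointwise to $1$, each of which is a $\|\cdot\|\cb$-limit of finitely supported cb-multipliers $\psi^n_k$ on $\Rep(\bG)$.

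The key point is then to restrict everything to $\Irr(\cC_1) \subset \Irr(\bG)$. By Proposition~\ref{prop.quantum}, a cp-multiplier $\vphi$ on $\Rep(\bG)$ in the quantum-group sense of \cite{DFY13} is the same as a cp-multiplier on the C$^*$-tensor category $\Rep(\bG)$, and a cb-multiplier $\Psi_\vphi$ on $C_r(\bG)$ gives a cb-multiplier $\theta^\vphi$ on $\Rep(\bG)$ with $\|\theta^\vphi\|\cb \leq \|\Psi_\vphi\|\cb$. By Proposition~\ref{prop.subcat}(1), restricting to the full C$^*$-tensor subcategory $\cC_1$ preserves being a cp-multiplier, being a cb-multiplier, and does not increase the cb-norm. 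Therefore the restrictions $\vphi_n|_{\Irr(\cC_1)}$ are cp-multipliers on $\cC_1 \cong \cC$ converging pointwise to $1$, and the restrictions $\psi^n_k|_{\Irr(\cC_1)}$ are cb-multipliers on $\cC$ with $\|\vphi_n|_{\Irr(\cC_1)} - \psi^n_k|_{\Irr(\cC_1)}\|\cb \leq \|\vphi_n - \psi^n_k\|\cb \recht 0$ as $k \recht \infty$. The only remaining issue is that the $\psi^n_k$ were finitely supported on $\Irr(\bG)$; their restriction to the subset $\Irr(\cC_1)$ is obviously still finitely supported. Assembling these pieces gives exactly a net witnessing the ACPAP for $\cC$, hence (via Proposition~\ref{prop.all-equiv}) for the $\lambda$-lattice $\cG^\lambda$.

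The main (and essentially only) obstacle is the careful bookkeeping of which approximation property of $\bGhat$ from \cite{DFY13} is being invoked and checking that its definition (Definition~3 of \cite{DFY13}, recalled here as Definition~\ref{def.ACPAP}) matches the one we need after transporting along Proposition~\ref{prop.quantum}; once that identification is in place, the proof is a routine application of the restriction functoriality in Propositions~\ref{prop.quantum} and~\ref{prop.subcat}. One should also note explicitly that ACPAP implies both the Haagerup property and CMAP, so that Proposition~\ref{prop.TLJ-ACPAP} in particular re-proves part~(4) of Theorem~\ref{thm.main-temperley-lieb}, though with a stronger conclusion.
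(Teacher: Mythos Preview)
Your proposal is correct and follows essentially the same approach as the paper: invoke \cite[Theorem~16]{DFY13} for the ACPAP of $\widehat{\SU_q(2)}$, transport to $\Rep(\SU_q(2))$ via Proposition~\ref{prop.quantum}, and then pass to the full subcategory $\Rep(\PSU_q(2)) \cong \cC$. The paper's proof is a two-line sketch of exactly this argument, so your more detailed unpacking (separating the finite-depth case, explicitly invoking Proposition~\ref{prop.subcat} for the restriction step) is a faithful expansion of it.
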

\begin{proof}
By \cite[Theorem 16]{DFY13} and Proposition \ref{prop.quantum}, the representation categories of the compact quantum groups $\SU_q(2)$ and $\PSU_q(2)$ have the ACPAP. As in Section \ref{sec.rep-theory-TLJ}, this precisely means that $\cG^\lambda$ has the ACPAP.
\end{proof}

Note that point 3 in the following proposition corresponds to the permanence of approximation properties when passing to subgroups.

\begin{proposition}\label{prop.permanence}
Let $N \subset M$ be an extremal subfactor with standard invariant $\cG_{N,M}$ and tower/tunnel $\cdots \subset M_{-2} \subset M_{-1} \subset M_0 \subset M_1 \subset \cdots$.
\begin{enumlist}
\item $\cG_{N,M}$ has both the Haagerup property and property~\pT\ if and only if $\cG_{N,M}$ has finite depth.
\item Consider intermediate subfactors $M_a \subset P \subset M_n \subset M_m \subset Q \subset M_b$ for $a \leq n < m \leq b$. Any of the properties of amenability, Haagerup property, property~\pT, having Cowling-Haagerup constant equal to $\kappa$, or ACPAP holds for one of the $\cG_{P,Q}$ if and only if it holds for all $\cG_{P,Q}$.
\item Assume that $M_n \subset P \subset Q \subset M_m$ for some $n < m$ and that $p \in P' \cap Q$ is a projection. If $\cG_{N,M}$ has any of the properties of amenability, Haagerup property, or ACPAP, then also $\cG_{Pp,pQp}$ has the corresponding property. Also, $\Lambda(\cG_{Pp,pQp}) \leq \Lambda(\cG_{N,M})$.
\item Let also $P \subset Q$ be an extremal subfactor. Any of the properties of amenability, Haagerup property, property~\pT, or ACPAP holds for $\cG_{N \ovt P,M \ovt Q}$ if and only if it holds for both $\cG_{N,M}$ and $\cG_{P,Q}$. Also, $\Lambda(\cG_{N \ovt P,M \ovt Q}) = \Lambda(\cG_{N,M}) \, \Lambda(\cG_{P,Q})$.
\item Let $N \subset P \subset M$ and assume that $N \subset M$ is a free composition of $N \subset P$ and $P \subset M$ in the sense of \cite[Section 1]{BJ95}. If both $\cG_{N,P}$ and $\cG_{P,M}$ have the Haagerup property, then also $\cG_{N,M}$ has the Haagerup property. If both $\cG_{N,P}$ and $\cG_{P,M}$ have the ACPAP, then also $\cG_{N,M}$ has the ACPAP.
\end{enumlist}
\end{proposition}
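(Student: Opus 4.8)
The plan is to reduce everything to a statement about the \emph{free product of rigid C$^*$-tensor categories}. First I would invoke the Bisch--Jones description of a free composition \cite[Section~1]{BJ95}: the category $\cC$ of $M$-bimodules generated by $N\subset M$ is the free product $\cC_1\ast\cC_2$ of $\cC_1:=\cC_{N,P}$ and $\cC_2:=\cC_{P,M}$, meaning that $\cC_1,\cC_2$ are full C$^*$-tensor subcategories of $\cC$, that $\Irr(\cC)$ is the set of alternating words $x_1x_2\cdots x_k$ with letters $x_j$ taken alternately from $\Irr(\cC_1)\setminus\{\eps\}$ and $\Irr(\cC_2)\setminus\{\eps\}$ (the empty word being $\eps$), and that $\C[\cC]\cong\C[\cC_1]\ast\C[\cC_2]$ as unital $*$-algebras, with $d(x_1\cdots x_k)=\prod_j d(x_j)$. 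By Proposition~\ref{prop.all-equiv} it then suffices to show that $\cC_1\ast\cC_2$ inherits the Haagerup property (resp.\ the ACPAP) from $\cC_1$ and $\cC_2$.

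The heart of the matter is the following categorical analogue of Boca's theorem on free products of completely positive maps: if $\vphi^{(i)}:\Irr(\cC_i)\recht\C$ is a cp-multiplier with $\vphi^{(i)}(\eps)=1$ ($i=1,2$), then the ``free product'' function $\vphi^{(1)}\ast\vphi^{(2)}$ on $\Irr(\cC)$, sending $x_1\cdots x_k$ to $\prod_j\vphi^{(i_j)}(x_j)$ (where $i_j\in\{1,2\}$ records which category $x_j$ lies in), is again a cp-multiplier on $\cC$; and for $0<r<1$ the ``syllable-length'' function $\rho_r(w)=r^{\ell(w)}$, with $\ell(w)$ the number of syllables of $w$, is a cp-multiplier on $\cC$. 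I would prove both by working on an actual subfactor: realize $\cC=\cC_1\ast\cC_2$ by the free composition $N\subset M$ itself (cf.\ Remark~\ref{rem.exist}), observe that its \SE-inclusion $M\ovt M\op\subset M\boxt_{e_N}M\op$ carries an amalgamated-free-product structure assembled from the \SE-inclusions of $N\subset P$ and $P\subset M$ (this is where ``having the \SE-inclusion at hand'' pays off), and then apply Boca's theorem for amalgamated free products of C$^*$-algebras to combine the cp \SE-multipliers corresponding to $\vphi^{(1)}$ and $\vphi^{(2)}$ into a cp \SE-multiplier of $N\subset M$; Lemma~\ref{lem.all-the-same} identifies the latter with $\vphi^{(1)}\ast\vphi^{(2)}$. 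For $\rho_r$ one uses instead the canonical conditionally negative definite ``number of syllables'' cocycle on the free product together with the Schoenberg trick. An alternative, purely categorical route is to show that $C_u(\cC_1\ast\cC_2)$ is the closed span of $\C[\cC_1]\ast\C[\cC_2]$ inside the free product of $C_u(\cC_1)$ and $C_u(\cC_2)$ amalgamated over their common abelian subalgebra, so that a free product of the GNS states $\om_{\vphi^{(i)}}$ restricts on $\C[\cC]$ to $\om_{\vphi^{(1)}\ast\vphi^{(2)}}$, which is therefore positive; positivity of the multiplier then follows from Proposition~\ref{prop.pos-mult-to-adm-rep}.

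Granting this lemma, the rest is the familiar free-group argument. For the Haagerup property: pick nets $\vphi^{(i)}_n\recht1$ pointwise on $\Irr(\cC_i)$ with $\vphi^{(i)}_n(\eps)=1$ and $\vphi^{(i)}_n$ vanishing at infinity, pick $r_n\uparrow1$ slowly, and set $\vphi_n:=(\vphi^{(1)}_n\ast\vphi^{(2)}_n)\cdot\rho_{r_n}$, a pointwise product of cp-multipliers and hence a cp-multiplier. Then $\vphi_n\recht1$ pointwise (on a fixed word, a finite product of convergent factors), and $\vphi_n$ vanishes at infinity on $\Irr(\cC)$: along words whose syllable length stays bounded but whose letters leave every finite set, the factor $\vphi^{(i_j)}_n$ at an escaping syllable tends to $0$ while the remaining factors have modulus $\le1$ (using $|\vphi^{(i)}_n(\gamma)|\le\|\theta^{\vphi^{(i)}_n}\|\cb=\vphi^{(i)}_n(\eps)=1$), whereas along words of syllable length $\ell\recht\infty$ one has $|\rho_{r_n}(w)|=r_n^{\ell}\recht0$. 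For the ACPAP one runs the same construction with the ACPAP nets on the $\cC_i$ and approximates each $(\vphi^{(1)}_n\ast\vphi^{(2)}_n)\cdot\rho_{r_n}$ in $\|\cdot\|\cb$ by finitely supported cb-multipliers: free products over the (abelian) amalgam of cb-multipliers are cb-multipliers with controlled cb-norm, and once the constituent functions are finitely supported and $\rho_{r_n}$ is truncated to words of bounded syllable length, only finitely many words survive.

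I expect the main obstacle to be precisely the positivity assertions in the key lemma --- that the free product of cp-multipliers, and the syllable-length function, are cp-multipliers on $\cC_1\ast\cC_2$. This is exactly the point at which ``positivity depends on $\cC$, not just on the fusion rules'' can bite, and it admits no purely formal proof: one must either exhibit the amalgamated-free-product structure on the \SE-inclusion of the free composition and verify the hypotheses of Boca's theorem (nondegeneracy of the relevant commuting squares, existence of a basis over the amalgam commuting with the appropriate subalgebras --- in the spirit of the proof of Proposition~\ref{prop.ext-mult}), or establish the analogous amalgamated free product decomposition of the universal C$^*$-algebras $C_u(\cC_i)$. Everything downstream --- pointwise convergence, the $c_0$ bookkeeping, the cb-norm control --- is then routine, and the Fuss--Catalan case $\cC_1=\cC_2=$ TLJ is recovered by feeding in Theorem~\ref{thm.main-temperley-lieb}.
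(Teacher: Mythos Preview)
Your sketch addresses only part 5, which is indeed the substantive item; the paper dispatches 1--4 quickly (1 is immediate from the definitions, 2 and 4 follow \cite[Proposition 9.8]{Po99} verbatim, and 3 is a full-subcategory argument via Proposition~\ref{prop.subcat} combined with 2).

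For part 5 your strategy is close in spirit to the paper's but differs in two respects worth flagging. First, a point of precision: the free-product structure lives at the level of $P$-bimodules, not $M$-bimodules. As you wrote it, $\cC_{N,P}$ is a category of $P$-bimodules and $\cC_{P,M}$ a category of $M$-bimodules, and these cannot be freely composed as tensor categories over a common base. The paper instead takes $\cC_1,\cC_2$ to be the categories of $P$-bimodules generated by $N\subset P$ and by (a tunnel for) $P\subset M$; freeness then says that the \SE-inclusion of $P\subset M_1$ is $T\subset S_1\ast_T S_2$ with $T=P\ovt P\op$, and one uses part 2 at the end to transfer the conclusion from $\cG_{P,M_1}$ back to $\cG_{N,M}$.

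Second, the paper does not split the positivity problem into ``Boca for the free product of cp-multipliers'' plus a separate ``Schoenberg argument for the syllable-length damper $\rho_r$'', as you propose. It defines directly
\[
\psi_n(\al_1\cdots\al_d)=r^d\,\vphi^{(i_1)}_n(\al_1)\cdots\vphi^{(i_d)}_n(\al_d)
\]
and cites Ricard--Xu \cite[Theorem 3.4 and Proposition 3.5]{RX05} (valid in the amalgamated case, \cite[\S5]{RX05}) to conclude in one stroke that the corresponding $T$-bimodular map on $S_1\ast_T S_2$ is completely positive; for ACPAP the cb-approximation is likewise read off from \cite[Lemma 4.10]{RX05}. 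Your two-step route would work, but proving that $\rho_r$ alone is a cp-multiplier amounts to showing that word length is conditionally negative definite on the amalgamated free product --- itself a Ricard--Xu--type statement --- so nothing is saved. The ``alternative categorical route'' through a putative free-product description of $C_u(\cC_1\ast\cC_2)$ is speculative and unnecessary here.
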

\begin{proof}
1. Since $\cG_{N,M}$ has finite depth if and only if the category of $M$-bimodules generated by $N \subset M$ has only finitely many irreducible objects, the results follows immediately from the definitions.

2 and 4. The proofs are identical to the proof of \cite[Proposition 9.8]{Po99}.

3. Using 2, it suffices to prove that all the stated approximation properties for $\cG_{N,M}$ are inherited by $\cG_{N,P}$ and by $\cG_{P,M}$ whenever $N \subset P \subset M$, as well as by $\cG_{Np,pMp}$ when $p \in N' \cap M$ is a projection. Denote by $\cC$ the category of $M$-bimodules generated by $N \subset M$. Note that the category of $M$-bimodules generated by $P \subset M$ is a full C$^*$-tensor subcategory $\cC_1$ of $\cC$. By Proposition \ref{prop.subcat}, all approximation properties for $\cC$ are inherited by $\cC_1$, i.e.\ by $\cG_{P,M}$. Choosing a tunnel $Q \subset N \subset P$, we can view $M_{-2} \subset Q \subset N$. Again using 2, we get that $\cG_{Q,N}$, and thus also $\cG_{N,P}$ inherit all approximation properties. Using the natural bijective correspondence between $pMp$-bimodules and $M$-bimodules, also the category of $pMp$-bimodules generated by $Np \subset pMp$ can be viewed as a full C$^*$-tensor subcategory of $\cC$.

5. By assumption, the categories $\cC_1$, resp.\ $\cC_2$ of $P$-bimodules generated by $N \subset P$, resp.\ $P \subset M$, are free and generate the category of $P$-bimodules associated with $P \subset M_1$. Put $T = P \ovt P\op$. Denote by $T \subset S_1$ and $T \subset S_2$ the \SE-inclusions of $N \subset P$, resp.\ (a tunnel for) $P \subset M$. By freeness and the categorical description of the symmetric enveloping algebra given in Remark \ref{rem.nonextremal}, it follows that the inclusion of $T$ into the amalgamated free product $S_1 *_T S_2$ is the \SE-inclusion of (a tunnel for) $P \subset M_1$.

First assume that $\cC_1$ and $\cC_2$ have the Haagerup property and choose nets of cp-multipliers $\vphi^{(i)}_n : \Irr(\cC_i) \recht \C$ that converge to $1$ pointwise and such that every $\vphi^{(i)}_n$ tends to $0$ at infinity. We may assume that $\vphi^{(i)}_n(\eps) = 1$ for all $n$ and $i=1,2$. Whenever $i_1,\ldots,i_d \in \{1,2\}$ with $i_1 \neq i_2$, $i_2 \neq i_3, \ldots ,$ $i_{d-1} \neq i_d$, and $\al_k \in \Irr(\cC_{i_k}) \setminus \{\eps\}$, we define
$$\psi_n(\al_1 \cdots \al_d) = r^d \, \vphi^{(i_1)}_n(\al_1) \, \vphi^{(i_2)}_n(\al_2) \, \cdots \, \vphi^{(i_d)}_n(\al_d) \; .$$
Since $\cC$ is the free product of $\cC_1$ and $\cC_2$, we have defined a net of multipliers $\psi_n : \Irr(\cC) \recht \C$. By construction, each $\psi_n$ tends to $0$ at infinity. It follows from Lemma \ref{lem.all-the-same}, Propositions \ref{prop.ext-mult} and \ref{prop.from-si-to-tc}, the amalgamated free product structure of $S_1 *_T S_2$ and \cite[Theorem 3.4 and Proposition 3.5]{RX05} (which are indeed valid in the amalgamated case, as explained in \cite[Section 5]{RX05}) that all $\psi_n$ are completely positive. So $\cC$ has the Haagerup property. This means that $\cG_{P,M_1}$ has the Haagerup property. It now follows from 2 that also $\cG_{N,M}$ has the Haagerup property.

The argument for the ACPAP is entirely similar and worked out in detail in \cite[Proposition 23]{DFY13}. The main point is to use \cite[Lemma 4.10]{RX05} (and also this remains valid in the amalgamated case, as explained in \cite[Section 5]{RX05}).
\end{proof}

\begin{corollary}\label{cor.fuss-catalan}
The Fuss-Catalan $\lambda$-lattices of \cite{BJ95} have both the Haagerup property and CMAP.
\end{corollary}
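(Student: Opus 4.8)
The plan is to realize every Fuss-Catalan $\lambda$-lattice as the standard invariant of a free composition of two Temperley-Lieb-Jones subfactors, and then to invoke Proposition~\ref{prop.TLJ-ACPAP} together with Proposition~\ref{prop.permanence}(5).

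First I would recall from \cite{BJ95} that the Fuss-Catalan $\lambda$-lattice associated with a pair of indices $\lambda_1^{-1},\lambda_2^{-1}$ arises, by construction, as the standard invariant $\cG_{N,M}$ of an inclusion $N \subset P \subset M$ in which $N \subset P$ has index $\lambda_1^{-1}$, $P \subset M$ has index $\lambda_2^{-1}$, both $N \subset P$ and $P \subset M$ have Temperley-Lieb-Jones standard invariant, and $N \subset M$ is the free composition of $N \subset P$ and $P \subset M$ in the sense of \cite[Section~1]{BJ95}. By \cite{Po90} one may take $N \subset P$ and $P \subset M$ with principal graph of type $A_n$ or $A_\infty$; such subfactors are irreducible, hence extremal, and by Proposition~\ref{prop.permanence}(2) the approximation properties of the resulting $\lambda$-lattices do not depend on this choice of supporting subfactors. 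Thus $\cG_{N,P}$ and $\cG_{P,M}$ are TLJ $\lambda$-lattices, namely $\cG^{\lambda_1}$ and $\cG^{\lambda_2}$.

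Then I would apply Proposition~\ref{prop.TLJ-ACPAP}, which gives that both $\cG_{N,P}$ and $\cG_{P,M}$ have the ACPAP, followed by Proposition~\ref{prop.permanence}(5), which propagates the ACPAP through the free composition $N \subset P \subset M$ to yield the ACPAP for $\cG_{N,M}$. Since, as observed immediately after Definition~\ref{def.ACPAP}, the ACPAP is stronger than both the Haagerup property and CMAP, this shows that $\cG_{N,M}$ --- equivalently, every Fuss-Catalan $\lambda$-lattice --- has both the Haagerup property and CMAP.

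The only point requiring genuine attention is the first step: that the Bisch-Jones Fuss-Catalan standard invariant is exactly a free composition of two TLJ subfactors with the prescribed indices. This is, however, precisely what is established in \cite{BJ95}, so I do not anticipate a real obstacle here; all of the analytic difficulty has already been absorbed into Proposition~\ref{prop.permanence}(5), whose proof for free products rests on the free-product estimates of \cite{RX05}.
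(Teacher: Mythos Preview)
Your proof is correct and follows essentially the same approach as the paper: realize the Fuss-Catalan $\lambda$-lattice as a free composition of TLJ $\lambda$-lattices, invoke Proposition~\ref{prop.TLJ-ACPAP} for the ACPAP of each factor, and apply Proposition~\ref{prop.permanence}(5) to the free composition. Your citation of part~(5) is in fact the correct one; the paper's reference to part~4 is a slip, since part~4 concerns tensor products rather than free compositions.
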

\begin{proof}
Since the Fuss-Catalan $\lambda$-lattices arise as the free composition of $\cG^{\lambda_i}$, the result follows by combining Propositions \ref{prop.TLJ-ACPAP} and \ref{prop.permanence}.4.
\end{proof}

As is well known, the quotient of a property~(T) group has again property~(T). The counterpart for $\lambda$-lattices was proved in \cite[Theorem 9.9]{Po99} using subtle analytic
arguments in the associated SE-inclusion~: if $\cG_0$ is a sub-$\lambda$-lattice of $\cG$ and if $\cG_0$ has property~(T), then also $\cG$ has property~(T). We end this paper with a
generalization of this type of result to rigid C$^*$-tensor categories, for which our framework will allow us to provide a very simple, completely algebraic proof.

Recall here that a sub-$\lambda$-lattice of $(A_{nm})_{0 \leq  n \leq m}$ is a system of $*$-subalgebras $B_{nm} \subset A_{nm}$, compatible with all inclusions and containing the Jones projections. For instance, $\cG^\lambda$ is a sub-lattice of any $\lambda$-lattice $\cG$.

In the language of rigid C$^*$-tensor categories, this means that we consider a C$^*$-tensor functor $\cF : \cC \recht \cC_1$ with the property that $d_1(\cF(\al)) = d(\al)$ for all $\al \in \cC$, where $d_1, d$ are the categorical dimension functions on $\cC, \cC_1$. Note that the rigidity of $\cC$ and $\cC_1$ implies that $\cF$ is faithful, i.e.\ injective on spaces of morphisms.
As we will recall in the proof of Proposition \ref{prop.quotient-hom} below, the assumption that $\cF$ preserves dimensions is equivalent with the assumption that for every standard solution of the conjugate equations $s_\al \in \Mor(\al \ot \albar,\eps)$, $t_\al \in \Mor(\albar \ot \al,\eps)$, we have that $\cF(s_\al)$, $\cF(t_\al)$ is a standard solution of the conjugate equations for $\cF(\al) \in \cC_1$. In the context of $\lambda$-lattices, this last property exactly means that the sub-$\lambda$-lattice has the same Jones projections as the ambient $\lambda$-lattice. We refer to Remark \ref{rem.essential-preserve-salpha} for further comments on this.

\begin{proposition}\label{prop.quotient-hom}
Let $\cC$, $\cC_1$ be rigid C$^*$-tensor categories and $\cF : \cC \recht \cC_1$ a dimension preserving C$^*$-tensor functor. Then the map $\al \mapsto \cF(\al)$ extends to a unital $*$-homomorphism $C_u(\cC) \recht C_u(\cC_1)$.
\end{proposition}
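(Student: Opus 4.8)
The plan is to show that $\cF$ induces a $*$-homomorphism at the level of fusion $*$-algebras which is bounded for the universal C$^*$-norms. First I would observe that $\cF$ maps irreducible objects to (possibly reducible) objects, so for $\al \in \Irr(\cC)$ we decompose $\cF(\al)$ into irreducibles of $\cC_1$ and thereby obtain a linear map $\C[\cC] \recht \C[\cC_1]$. Because $\cF$ is a tensor functor, this map respects the fusion products: the multiplicities $\mult(\gamma,\al \ot \be)$ in $\cC$ are preserved by $\cF$ (faithfulness gives $\leq$, and a dimension count together with the dimension-preserving hypothesis forces equality), so the induced map is multiplicative. The $*$-operation corresponds to passing to conjugates, and the hypothesis that $\cF$ sends standard solutions to standard solutions guarantees $\cF(\albar)$ is the conjugate of $\cF(\al)$ in $\cC_1$, so the map is $*$-preserving. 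Thus $\cF$ induces a unital $*$-homomorphism $\C[\cC] \recht \C[\cC_1]$, which I still denote $\cF$.

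The key remaining point is that this $*$-homomorphism is continuous for the universal norms, i.e.\ $\|\cF(x)\|_{C_u(\cC_1)} \leq \|x\|_{C_u(\cC)}$ for all $x \in \C[\cC]$. By the definition of $C_u$ (Definition \ref{def.adm-rep-tensor-cat} together with Proposition \ref{prop.pos-mult-to-adm-rep}), it suffices to show that every admissible $*$-representation of $\C[\cC_1]$ pulls back along $\cF$ to an admissible $*$-representation of $\C[\cC]$. Equivalently, using Proposition \ref{prop.pos-mult-to-adm-rep}, it is enough to check that if $\vphi_1 : \Irr(\cC_1) \recht \C$ is a cp-multiplier on $\cC_1$, then the function $\vphi : \Irr(\cC) \recht \C$ obtained by ``pulling back'' $\vphi_1$ through $\cF$ is a cp-multiplier on $\cC$. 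Here the pullback is defined by $\vphi(\al) = d(\al)^{-1}\sum_i d_1(\gamma_i)\,\vphi_1(\gamma_i)$ where $\cF(\al) \cong \bigoplus_i \gamma_i$ as a direct sum of irreducibles of $\cC_1$ (equivalently, $\om_\vphi = \om_{\vphi_1} \circ \cF$ on $\C[\cC]$, which is the natural thing).

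The heart of the argument, and the step I expect to be the main obstacle, is verifying that this pullback of a cp-multiplier is again a cp-multiplier. The clean way to do this is functoriality of the construction in Proposition \ref{prop.descr-multi}: given the multiplier $\theta^{\vphi_1}$ on $\cC_1$, one should exhibit the multiplier $\theta^{\vphi}$ on $\cC$ as $\theta^\vphi_{\al,\be} = \cF^{-1} \circ \theta^{\vphi_1}_{\cF(\al),\cF(\be)} \circ \cF$ on $\End(\al \ot \be)$, where $\cF : \End(\al \ot \be) \recht \End(\cF(\al) \ot \cF(\be))$ is the (injective, unital, $*$-preserving) map of finite-dimensional C$^*$-algebras coming from the tensor functor. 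One checks that this family satisfies the compatibility relations \eqref{eq.compat-multipl} (because $\cF$ intertwines the relevant morphism spaces and, crucially, because $\cF$ preserves standard solutions, so the identification $\Gamma$ of \eqref{eq.id-end-mor} is compatible with $\cF$), and that its value on $\Mor(\al \ot \albar,\eps)$ is multiplication by $\vphi(\al)$ in the normalized sense above — this is where the dimension-preserving hypothesis is used again, to relate $s_\al$ and $\cF(s_\al)$ correctly. Once $\theta^\vphi$ is so realized, positivity is immediate: if each $\theta^{\vphi_1}_{\gamma,\delta}$ is completely positive and $\cF$ is a unital injective $*$-homomorphism between finite-dimensional C$^*$-algebras (hence admits a completely positive conditional expectation onto its image), then $\theta^\vphi_{\al,\be} = \cF^{-1} \circ \theta^{\vphi_1}_{\cF(\al),\cF(\be)} \circ \cF$ is completely positive. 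This gives that $\vphi$ is a cp-multiplier on $\cC$, hence $\om_\vphi$ is positive on $\C[\cC]$, hence the pullback representation is admissible, and continuity for the universal norms — and therefore the desired $*$-homomorphism $C_u(\cC) \recht C_u(\cC_1)$ — follows.
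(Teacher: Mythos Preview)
Your approach is essentially the same as the paper's, and the overall strategy---pulling back a cp-multiplier $\vphi_1$ on $\cC_1$ to a function $\vphi$ on $\Irr(\cC)$ via $\om_\vphi = \om_{\vphi_1}\circ\cF$, and showing $\vphi$ is a cp-multiplier by exhibiting $\theta^\vphi$ as a composite built from $\theta^{\vphi_1}$---is exactly right. But there is a gap in your formula $\theta^\vphi_{\al,\be} = \cF^{-1}\circ\theta^{\vphi_1}_{\cF(\al),\cF(\be)}\circ\cF$: as written it is not well defined, because $\theta^{\vphi_1}_{\cF(\al),\cF(\be)}$ need not map the subalgebra $\cF(\End(\al\ot\be))$ into itself (the spectral projections $P^{\overline{\cF(\al)}\ot\cF(\al)}_\pi$ for $\pi\in\Irr(\cC_1)$ typically lie outside the image of $\cF$). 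You evidently sense this, since you invoke a conditional expectation later, but the expectation must appear explicitly in the definition: the paper writes $\theta_{\al,\be}=\cF^{-1}\circ E_{\al,\be}\circ\theta^{\vphi_1}_{\cF(\al),\cF(\be)}\circ\cF$, where $E_{\al,\be}$ is the \emph{trace-preserving} conditional expectation of $\End(\cF(\al)\ot\cF(\be))$ onto $\cF(\End(\al\ot\be))$.

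Once $E_{\al,\be}$ is inserted, two verifications remain that you only gesture at. First, compatibility \eqref{eq.compat-multipl} for the family $\theta_{\al,\be}$ requires $E_{\al_1\ot\al,\be\ot\be_1}(1\ot T\ot 1)=1\ot E_{\al,\be}(T)\ot 1$; this holds precisely because the categorical traces are defined through standard solutions and $\cF$ preserves these (so $\cF$ is trace-preserving on endomorphism algebras, and the trace-preserving conditional expectation respects tensoring by identities). Second, to identify the associated function with your $\vphi$, one computes that the image under $E$ of $\theta^{\vphi_1}(\cF(s_\al))=\sum_i\vphi_1(\pi_i)s_{\pi_i}$ on the line $\C\cdot\cF(s_\al)$ is $d(\al)^{-1}\sum_i d_1(\pi_i)\vphi_1(\pi_i)\cdot\cF(s_\al)$, using $\cF(s_\al)=\sum_i s_{\pi_i}$; this is where dimension preservation enters a second time. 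With these two points made explicit, your argument coincides with the paper's.
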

\begin{proof}
We start by proving that for every standard solution of the conjugate equations $s_\al \in \Mor(\al \ot \albar,\eps)$, $t_\al \in \Mor(\albar \ot \al,\eps)$, we have that $\cF(s_\al)$, $\cF(t_\al)$ is a standard solution of the conjugate equations for $\cF(\al) \in \cC_1$. Since $\cF(s_\al), \cF(t_\al)$ is a solution of the conjugate equations for $\cF(\al)$, we can decompose $\cF(\al)$ into an orthogonal direct sum of irreducibles $\pi_1,\ldots,\pi_k \in \Irr(\cC_1)$ and take standard solutions $s_{\pi_i}$, $t_{\pi_i}$ of the conjugate equations for $\pi_i$ such that
$$\cF(s_\al) = \sum_{i=1}^k \lambda_i s_{\pi_i} \; ,$$
where $\lambda_1,\ldots,\lambda_k \in (0,+\infty)$. It follows that $\cF(t_\al) = \sum_{i=1}^k \lambda_i^{-1} t_{\pi_i}$. Since an injective $*$-homomorphism between C$^*$-algebras is isometric, we know that $\cF$ is isometric on spaces of morphisms. It follows that
$$d(\al) = \|\cF(s_\al)\|^2 = \sum_{i=1}^k \lambda_i^2 d_1(\pi_i) \quad\text{and}\quad
d(\al) = \|\cF(t_\al)\|^2 = \sum_{i=1}^k \lambda_i^{-2} d_1(\pi_i) \; .$$
Since $\cF$ is dimension preserving, we also have that $d(\al) = \sum_{i=1}^k d_1(\pi)$. Since $\lambda_i^2 + \lambda_i^{-2} \geq 2$ for all $i$, we conclude that $\lambda_i = 1$ for all $i=1,\ldots,k$. This exactly means that $\cF(s_\al)$, $\cF(t_\al)$ is a standard solution of the conjugate equations for $\cF(\al)$.

Let $\vphi_1 : \Irr(\cC_1) \recht \C$ be a cp-multiplier. Define the corresponding positive functional $\om_1 \in C_u(\cC_1)^*$ given by $\om_1(\pi) = d_1(\pi) \vphi_1(\pi)$ for all $\pi \in \Irr(\cC_1)$. Define $\vphi : \Irr(\cC) \recht \C : \vphi(\al) = d(\al)^{-1} \om_1(\cF(\al))$. We have to prove that $\vphi$ is a cp-multiplier on $\cC$.

For every $\al,\be \in \cC$, we denote by $E_{\al,\be}$ the unique trace preserving conditional expectation of $\End(\cF(\al) \ot \cF(\be))$ onto the $*$-subalgebra $\cF(\End(\al \ot \be))$. We then define the cp maps
$$\theta_{\al,\be} : \End(\al \ot \be) \recht \End(\al \ot \be) : \theta_{\al,\be} = \cF^{-1} \circ E_{\al,\be} \circ \theta^{\vphi_1}_{\cF(\al),\cF(\be)} \circ \cF \; .$$
Since $\cF$ preserves the standard solutions of the conjugate equations, we get that
$$E_{\al_1 \ot \al, \be \ot \be_1}(1 \ot T \ot 1) = 1 \ot E_{\al,\be}(T) \ot 1 \quad\text{for all}\;\; T \in \End(\cF(\al) \ot \cF(\be)) \; .$$
It follows that $\theta_{\al,\be}$ defines a cp-multiplier on $\cC$. So, $\theta_{\al,\be} = \theta^\psi_{\al,\be}$ for some cp-multiplier $\psi : \Irr(\cC) \recht \C$. It suffices to prove that $\vphi = \psi$.

Take $\al \in \Irr(\cC)$. As explained above, we can decompose $\cF(\al)$ into a direct sum of irreducibles $\pi_1,\ldots,\pi_k \in \Irr(\cC_1)$ and choose standard solutions of the conjugate equations $s_{\pi_i}, t_{\pi_i}$ such that
\begin{equation}\label{eq.myformula}
\cF(s_\al) = \sum_{i=1}^k s_{\pi_i} \; .
\end{equation}
It follows that
\begin{equation}\label{eq.okokok}
\theta^{\vphi_1}(\cF(s_\al)) = \sum_{i=1}^k \vphi_1(\pi_i) s_{\pi_i} \; .
\end{equation}
We must compute the orthogonal projection of the right hand side of \eqref{eq.okokok} on the $1$-dimensional space of multiples of $\cF(s_\al)$. Using \eqref{eq.myformula}, this orthogonal projection equals
$$\frac{1}{d(\al)} \Bigl( \sum_{i=1}^k d_1(\pi_i) \vphi_1(\pi_i)\Bigr) \; \cF(s_\al) \; .$$
We conclude that
$$\psi(\al) = \frac{1}{d(\al)} \sum_{i=1}^k d_1(\pi_i) \vphi_1(\pi_i) = \frac{1}{d(\al)} \om_1\Bigl(\sum_{i=1}^k \pi_i \Bigr) = \frac{\om_1(\cF(\al))}{d(\al)} = \vphi(\al) \; ,$$
because $\cF(\al)$ is isomorphic to the direct sum of $\pi_1,\ldots,\pi_k$.
\end{proof}

We can then prove that property~(T) passes to quotients.

\begin{proposition}\label{prop.quotient-T}
Let $\cC$, $\cC_1$ be rigid C$^*$-tensor categories and $\cF : \cC \recht \cC_1$ a dimension preserving C$^*$-tensor functor. Assume that $\cF$ is ``surjective'' in the following sense~: every $\pi \in \Irr(\cC_1)$ appears in some $\cF(\al)$, $\al \in \cC$. If $\cC$ has property~\pT, then also $\cC_1$ has property~\pT.
\end{proposition}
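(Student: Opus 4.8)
The plan is to run the classical ``quotient'' argument from group theory in the categorical language, using the support-projection characterization of property~\pT\ (Proposition~\ref{prop.char-prop-T}) together with the $*$-homomorphism produced by Proposition~\ref{prop.quotient-hom}. First I would invoke Proposition~\ref{prop.quotient-hom} to get a unital $*$-homomorphism $\Pi : C_u(\cC) \recht C_u(\cC_1)$ with $\Pi(\al) = \cF(\al)$ for $\al \in \Irr(\cC)$. Since $\cF$ is dimension preserving, $\counit_{\cC_1}(\Pi(\al)) = d_1(\cF(\al)) = d(\al) = \counit_\cC(\al)$ for all $\al \in \Irr(\cC)$, hence $\counit_{\cC_1} \circ \Pi = \counit_\cC$ on $C_u(\cC)$. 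As $\cC$ has property~\pT, Proposition~\ref{prop.char-prop-T} gives a nonzero projection $p \in C_u(\cC)$ with $x p = \counit_\cC(x)\, p$ for all $x$ and $\counit_\cC(p) = 1$. Put $q := \Pi(p) \in C_u(\cC_1)$: it is a projection, $\counit_{\cC_1}(q) = \counit_\cC(p) = 1$ so $q \neq 0$, and $\cF(\al)\, q = \Pi(\al p) = d(\al)\, q$ for all $\al \in \cC$. By Proposition~\ref{prop.char-prop-T} applied to $\cC_1$, it then suffices to prove $\beta q = d_1(\beta)\, q$ for every $\beta \in \Irr(\cC_1)$.

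The first step is to show $q\beta q = d_1(\beta)\, q$. Fix $\beta \in \Irr(\cC_1)$ and, using the surjectivity hypothesis, choose $\al \in \cC$ with $\mult(\beta,\cF(\al)) > 0$; write $\cF(\al) = \sum_{\gamma} n_\gamma\, \gamma$ in $\C[\cC_1]$ with $n_\gamma = \mult(\gamma,\cF(\al)) \geq 0$ (a finite sum), so that $\sum_\gamma n_\gamma\, d_1(\gamma) = d_1(\cF(\al)) = d(\al)$ by dimension preservation. For an arbitrary state $\om$ on the unital C$^*$-algebra $q\, C_u(\cC_1)\, q$, using $\cF(\al)q = d(\al)q$ we get $\sum_\gamma n_\gamma\, \om(q\gamma q) = \om(q\cF(\al)q) = d(\al) = \sum_\gamma n_\gamma\, d_1(\gamma)$, while $|\om(q\gamma q)| \leq \|\gamma\|_{C_u(\cC_1)} \leq d_1(\gamma)$ by the norm bound built into Definition~\ref{def.adm-rep-tensor-cat}. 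Hence $\sum_\gamma n_\gamma\,(d_1(\gamma) - \real\, \om(q\gamma q)) = 0$ is a sum of nonnegative numbers, so $\real\, \om(q\gamma q) = d_1(\gamma)$ whenever $n_\gamma > 0$, and combined with $|\om(q\gamma q)| \leq d_1(\gamma)$ this forces $\om(q\gamma q) = d_1(\gamma) = d_1(\gamma)\,\om(q)$. Since states separate points, $q\gamma q = d_1(\gamma)\, q$ for every $\gamma$ with $n_\gamma > 0$; as $\beta$ was arbitrary, $q\gamma q = d_1(\gamma)\, q$ for all $\gamma \in \Irr(\cC_1)$, and by linearity and continuity $q y q = \counit_{\cC_1}(y)\, q$ for all $y \in C_u(\cC_1)$.

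Finally I would upgrade this to the full eigenrelation: for $y \in C_u(\cC_1)$ set $z := y q - \counit_{\cC_1}(y)\, q$ and expand $z^* z$, using that $\counit_{\cC_1}$ is a character (so $q y^* y q = \counit_{\cC_1}(y^* y)\, q = |\counit_{\cC_1}(y)|^2\, q$) together with $q y q = \counit_{\cC_1}(y)\, q$ and $q y^* q = \overline{\counit_{\cC_1}(y)}\, q$; this gives $z^* z = 0$, hence $z = 0$. In particular $\beta q = d_1(\beta)\, q$ for all $\beta \in \Irr(\cC_1)$, so $q$ is the projection required by Proposition~\ref{prop.char-prop-T} and $\cC_1$ has property~\pT. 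I expect the only real difficulty to be conceptual rather than computational: $\Pi$ need not be surjective, so one cannot simply transport $p$ and read off the eigenrelations; the role of the hypothesis is that surjectivity of $\cF$ on irreducibles, dimension preservation, and the bound $\|\gamma\| \leq d_1(\gamma)$ (Proposition~\ref{prop.pos-mult-to-adm-rep}) together force --- via the vanishing of a sum of nonnegative reals --- every irreducible of $\cC_1$ to act by its categorical dimension on the range of $q = \Pi(p)$.
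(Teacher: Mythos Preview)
Your proof is correct and follows essentially the same strategy as the paper: push the Kazhdan projection $p \in C_u(\cC)$ forward through the $*$-homomorphism of Proposition~\ref{prop.quotient-hom}, then use surjectivity and the norm bound $\|\gamma\| \leq d_1(\gamma)$ to verify that $q=\Pi(p)$ satisfies $\gamma q = d_1(\gamma)\,q$ for every $\gamma \in \Irr(\cC_1)$. The only difference is that the paper compresses the key step into the single sentence ``It follows that $\pi q = d_1(\pi)\,q$ for every subobject $\pi$ of $\cF(\al)$'', whereas you supply a full argument in two stages (first $q\gamma q = d_1(\gamma)\,q$ via states, then the upgrade via $z^*z=0$); this is perfectly valid, though one can also go in one step by working in a faithful Hilbert space representation and using that $\|\gamma\|\le d_1(\gamma)$ forces equality in Cauchy--Schwarz.
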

\begin{proof}
By Proposition \ref{prop.char-prop-T}, we have the nonzero projection $p \in C_u(\cC)$ satisfying $\al p = d(\al) p$ for all $\al \in \cC$. By Proposition \ref{prop.quotient-hom}, the map $\al \mapsto \cF(\al)$ extends to a unital $*$-homomorphism $\Ups : C_u(\cC) \recht C_u(\cC_1)$. Define $q = \Ups(p)$. Then $q$ is a projection in $C_u(\cC_1)$. Since $\cF$ is dimension preserving, we have $\counit_1 \circ \Ups = \counit$. Therefore, $\counit_1(q) = \counit(p) = 1$, so that $q$ is nonzero. For every $\al \in \cC$, we have $\Ups(\al) q = \Ups(\al p) = d(\al) q$. It follows that $\pi q = d_1(\pi) q$ for every subobject $\pi$ of $\cF(\al)$. By the surjectivity assumption, we conclude that $\pi q = d_1(\pi) q$ for every $\pi \in \cC_1$. By Proposition \ref{prop.char-prop-T}, this means that $\cC_1$ has property~(T).
\end{proof}

Applying Proposition \ref{prop.quotient-T} to the case of bimodule categories arising from subfactors and taking into account the comments before Proposition \ref{prop.quotient-hom}, we can thus reprove \cite[Theorem 9.9]{Po99}.

\begin{corollary}
Let $\cG$ be a $\lambda$-lattice and $\cG_0$ a sub-$\lambda$-lattice of $\cG$. If $\cG_0$ has property~(T), then also $\cG$ has property~(T).
\end{corollary}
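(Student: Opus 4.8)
The plan is to deduce this corollary directly from Proposition \ref{prop.quotient-T} by exhibiting the sub-$\lambda$-lattice inclusion $\cG_0 \subset \cG$ as coming from a dimension-preserving C$^*$-tensor functor $\cF : \cC_0 \recht \cC$ between the associated bimodule categories, in the ``surjective'' sense required there. First I would invoke the discussion preceding Proposition \ref{prop.quotient-hom}: a sub-$\lambda$-lattice $\cG_0 = (B_{nm}) \subset (A_{nm}) = \cG$ that contains the same Jones projections is precisely encoded by a C$^*$-tensor functor $\cF$ between the corresponding rigid C$^*$-tensor categories that preserves the standard solutions of the conjugate equations, hence preserves categorical dimensions. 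Concretely, using Remark \ref{rem.exist} (or Proposition \ref{prop.all-equiv} together with \cite{Po94b,Xu97,Ba98}), realize $\cG$ as the standard invariant of an extremal subfactor $N \subset M$ with bimodule category $\cC$; the inclusion of the Jones projections into $\cG_0$ then identifies $\cG_0$ with the standard invariant of the sub-$\lambda$-lattice, whose bimodule category $\cC_0$ is the full C$^*$-tensor subcategory of $\cC$ generated by the TLJ object, and $\cF$ is the inclusion functor. Since the Jones projections of $\cG_0$ and $\cG$ coincide, $\cF(s_\al) = s_{\cF(\al)}$ and $\cF(t_\al) = t_{\cF(\al)}$ up to the standard ambiguity, so $\cF$ is dimension preserving.

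Next I would check the surjectivity hypothesis of Proposition \ref{prop.quotient-T}: every irreducible object of the \emph{target} must appear in $\cF$ of some object of the source. Here one must be slightly careful about the direction. The correct reading is that $\cF$ goes from the ``smaller'' category whose morphism spaces are the $B_{nm}$ to the ``larger'' one whose morphism spaces are the $A_{nm}$; but for property~(T) we want to transport it from $\cG_0$ to $\cG$, i.e.\ we want $\cF : \cC_0 \recht \cC$ to be \emph{surjective} in the sense that every $\pi \in \Irr(\cC)$ appears in some $\cF(\al)$. Since a $\lambda$-lattice and each of its sub-$\lambda$-lattices are by construction both generated by the \emph{same} Jones projections sitting in $B_{0,n} = \text{Alg}(1,e_1,\dots,e_n) \subset A_{0,n}$, the object $L^2(M_1) \in \cC$ already lies in the image of $\cF$, and every irreducible $M$-bimodule generated by $N \subset M$ is a subobject of a tensor power of $L^2(M_1) = \cF(L^2(M_1))$. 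So surjectivity holds.

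With these two verifications in place, the corollary is immediate: assuming $\cG_0$ has property~(T), the category $\cC_0$ has property~(T) by Proposition \ref{prop.all-equiv}, hence by Proposition \ref{prop.quotient-T} the category $\cC$ has property~(T), and applying Proposition \ref{prop.all-equiv} once more, $\cG$ has property~(T). I expect the only genuine obstacle to be the bookkeeping of the functorial reformulation of ``sub-$\lambda$-lattice with the same Jones projections'': one has to make precise that passing from a $\lambda$-lattice to its generated rigid C$^*$-tensor category is functorial and that the ``same Jones projections'' condition is exactly the statement that $\cF$ preserves standard solutions, which is the content of the paragraph before Proposition \ref{prop.quotient-hom} and of Remark \ref{rem.essential-preserve-salpha}. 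Everything else is a direct quotation of the propositions already proved above.
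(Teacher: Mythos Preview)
Your overall strategy---reduce to Proposition \ref{prop.quotient-T} via a dimension-preserving ``surjective'' functor $\cF:\cC_0\recht\cC$ and then invoke Proposition \ref{prop.all-equiv} on both ends---is exactly what the paper does. Your surjectivity check (that $\bim{M}{L^2(M_1)}{M}$ lies in the image of $\cF$ and generates $\cC$) and the final chain of implications are fine.

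The gap is in your description of $\cC_0$ and $\cF$. You write that $\cC_0$ is ``the full C$^*$-tensor subcategory of $\cC$ generated by the TLJ object'' and that $\cF$ is ``the inclusion functor.'' Both are wrong. The category generated by the TLJ object is the Temperley-Lieb category $\cG^\lambda$, not the given $\cG_0$; and a sub-$\lambda$-lattice $(B_{nm})\subset(A_{nm})$ does \emph{not} yield a full subcategory: the morphism spaces $B_{nm}$ are strictly smaller, so $\cF$ is faithful but typically not full (this is precisely why an irreducible of $\cC_0$ may split in $\cC$, as in \eqref{eq.myformula}). If $\cC_0$ were a full subcategory you would be in the situation of Proposition \ref{prop.Cstar-subcat}, and property~(T) does \emph{not} pass from a full subcategory to the ambient category. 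What the paper actually does to produce $\cF$ is invoke \cite{Po94b} (and \cite[Theorem 4.3]{Po00}) to realize $\cG_0$ and $\cG$ \emph{simultaneously} as standard invariants of two different extremal subfactors $N_0\subset M_0$ and $N\subset M$ sitting in a nondegenerate commuting square with $N_0'\cap M_{0,n}\subset N'\cap M_n$; then $\cF$ sends $\bim{M_0}{L^2(M_{0,n})}{M_0}$ to $\bim{M}{L^2(M_n)}{M}$. Realizing $\cG$ alone via Remark \ref{rem.exist} is not enough: you need this compatible pair of realizations to even write down a functor between the two bimodule categories.
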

\begin{proof}
By \cite{Po94b} (see also \cite[Theorem 4.3]{Po00}), we can realize $\cG_0 = \cG_{N_0,M_0}$ and $\cG = \cG_{N,M}$ where $N_0 \subset M_0$ and $N \subset M$ are extremal subfactors with Jones towers $N_0 \subset M_0 \subset M_{0,1} \subset \cdots$ and $N \subset M \subset M_1 \subset \cdots$ that fit into a nondegenerate commuting square
$$\coms{N}{M}{N_0}{M_0}$$
with the property that $N_0' \cap M_{0,n} \subset N' \cap M_n$ for all $n \geq 0$. Then the map that sends the $M_0$-bimodule $L^2(M_{0,n})$ to the $M$-bimodule $L^2(M_n)$ uniquely extends to a dimension preserving C$^*$-tensor functor $\cF : \cC_0 \recht \cC$, with $\cC_0$ defined as the category of $M_0$-bimodules generated by $N_0 \subset M_0$ and $\cC$ as the category of $M$-bimodules generated by $N \subset M$. Since the image of $\cF$ contains $\bim{M}{L^2(M_1)}{M}$, we get that $\cF$ is ``surjective'' in the sense of Proposition \ref{prop.quotient-T}. Combining Propositions \ref{prop.all-equiv} and \ref{prop.quotient-T}, the result follows.
\end{proof}

\begin{remark}\label{rem.essential-preserve-salpha}
\begin{enumlist}
\item In Proposition \ref{prop.quotient-T}, it is essential to assume that $\cF$ is dimension preserving. Indeed, restricting finite dimensional unitary representations of $\SU_q(3)$ to the maximal torus $\T^2$ yields a C$^*$-tensor functor $\cF : \Rep(\SU_q(3)) \recht \Rep(\T^2)$. However, $\Rep(\SU_q(3))$ has property~(T) (by \cite{Ar14}, see Theorem \ref{thm.main-SUq3}), while $\Rep(\T^2)$ does not (it is an amenable C$^*$-tensor category with infinitely many irreducible objects).

\item The TLJ $\lambda$-lattice is a sub-$\lambda$-lattice of an arbitrary $\lambda$-lattice. The C$^*$-tensor category counterpart of this observation goes as follows. For every $d \in \R$ with $|d| \geq 2$, we denote by $\cC_d$ the Temperley-Lieb C$^*$-tensor category. Then, $\cC_d$ is isomorphic with $\Rep(\SU_q(2))$ where $q \in [-1,1] \setminus \{0\}$ is given by $q+1/q = - d$. Also, $\cC_d$ can be defined by completing the category having the natural numbers as objects, with $\Mor(n,m)=\{0\}$ if $n-m$ is odd and with $\Mor(n,m)$ being the vector space having as a basis the non-crossing pairings between $n$ points and $m$ points. When two such non-crossing pairings are composed, all closed loops are replaced by the scalar $d$.

    Let now $\cC$ be an arbitrary rigid C$^*$-tensor category and $\al \in \cC$ a real, resp.\ pseudo-real, object. This means that $\al \cong \albar$ and that we can choose a standard solution for the conjugate equations $t,s \in \Mor(\al \ot \al,\eps)$ in which $t = s$, resp.\ $t = -s$. Putting $d = d(\al)$, resp.\ $d = -d(\al)$, there is a unique dimension preserving C$^*$-tensor functor $\cF : \cC_d \recht \cC$ that maps the non-crossing pairing $\cup \in \Mor(2,0)$ to $t$.

    When $\cC$ is the bimodule category generated by an extremal subfactor $N \subset M$, then $\al = \bim{M}{L^2(M_1)}{M}$ is a real object in $\cC$ with dimension $\lambda^{-1} = [M:N]$. The above functor $\cF : \cC_{\lambda^{-1}} \recht \cC$ is then the precise counterpart of $\cG^\lambda$ being a sub-$\lambda$-lattice of $\cG_{N,M}$.

\item In point 2, note that $\cF$ is ``surjective'' in the sense of Proposition \ref{prop.quotient-T} if every $\be \in \Irr(\cC)$ is a subobject of some tensor power of $\al$. So in the example of a subfactor bimodule category with $\al = \bim{M}{L^2(M_1)}{M}$, this is always the case. This example shows that in general, the surjectivity of a dimension preserving C$^*$-tensor functor $\cF : \cC \recht \cC_1$ does not imply the surjectivity of the associated $*$-homomorphism $C_u(\cC) \recht C_u(\cC_1)$ constructed in Proposition \ref{prop.quotient-hom}.
\end{enumlist}
\end{remark}

\section{Concluding remarks}\label{sec.different-Cstar-algebras}

Let $\cC$ be a rigid C$^*$-tensor category. The fusion rules of $\cC$, which show how to decompose $\al \ot \be$ into a direct sum of irreducibles, turn $\Irr(\cC)$ into a hypergroup. When $\cC$ is the category of $M$-bimodules generated by a finite index subfactor $N \subset M$, this hypergroup structure is essentially the same as the (even part of) the principal graph of the subfactor $N \subset M$. The definition of the fusion $*$-algebra $\C[\cC]$ only uses the hypergroup structure on $\Irr(\cC)$, i.e.\ the fusion rules. Also the regular representation of $\C[\cC]$ on $\ell^2(\cC)$ is defined entirely in terms of the fusion rules, and hence so is the reduced C$^*$-algebra $C_r(\cC)$. Thus, in the case where $\cC$ is the category of $M$-bimodules generated by a subfactor $N \subset M$, all these objects are entirely determined by the principal graph of the subfactor.

However, the definition of the universal C$^*$-algebra $C_u(\cC)$ uses much more of the structure of $\cC$ and we strongly believe that $C_u(\cC)$ cannot be defined only in terms of the fusion rules and the dimension function on $\Irr(\cC)$ (or, equivalently, in terms of the weighted principal graph when $\cC$ comes from a subfactor). In particular, it now seems entirely possible that there do exist two $\lambda$-lattices $\cG_1$, $\cG_2$ with the same weighted principal graph, but such that $\cG_1$ has property~(T), resp.\ the Haagerup property, while $\cG_2$ does not (compare with \cite[Remark 9.11]{Po99} where it is conjectured that the opposite might be true). However, we do not have examples of such phenomena and there are several reasons why it is difficult to find them.

\begin{enumlist}
\item In the amenable case, by Proposition \ref{prop.equiv-amen}, $C_u(\cC) \cong C_r(\cC)$ and thus, $C_u(\cC)$ only depends on the fusion rules and the dimension function.

\item As observed by Kenny De Commer, when $\cC_1$ and $\cC_2$ both have the fusion rules of the compact Lie group $\SU(n)$ and have the same dimension function, then the C$^*$-algebras $C_u(\cC_i)$ are isomorphic because by \cite{Jo14}, the C$^*$-tensor categories $\cC_i$ are a twist of the same $\Rep(\SU_q(n))$ by a scalar $3$-cocycle. In particular, when $n$ is an odd integer, $n \geq 3$, every rigid C$^*$-tensor category with the same fusion rules as $\SU(n)$ is either amenable, or has property~(T).

\item When $\Gamma$ is a hypergroup with unit and dimension function $d : \Gamma \recht [1,\infty)$, we can define the $L^1$-norm on $\C[\Gamma]$ by $\|x\|_1 = \sum_{\al \in \Gamma} d(\al) \, |x_\al|$. As such, one obtains the unital Banach $*$-algebra $L^1(\Gamma,d)$. If now $\Gamma = \Irr(\cC)$ and $d$ is given by the categorical dimension, it is tempting to guess that $C_u(\cC)$ is the universal enveloping C$^*$-algebra of $L^1(\Gamma,d)$. This is however not always true. More precisely, it fails when $\cC$ is given by the TLJ $\lambda$-lattice $\cG^\lambda$ with $\lambda^{-1} > 4$.

    Indeed, using the notation of Theorem \ref{thm.main-temperley-lieb}, we identify $\C[\cC]$ with the polynomial $*$-algebra $\C[X]$ with $X^* = X$, and with $X$ corresponding to the $M$-bimodule $L^2(M_1)$. For every $t \in \R$, we have the unique $*$-homomorphism $\counit_t : \C[\cC] \recht \C : \counit_t(L^2(M_1)) = t$. Then $\counit_t$ defines a continuous $*$-homomorphism on $L^1(\Irr(\cC),d)$ if and only if $|\counit_t(H_n)| \leq d(H_n)$ for every $n \in \N$. This condition exactly means that $|V_n(t)| \leq V_n(\lambda^{-1})$ for all $n \in \N$.

    Using the recurrence relation \eqref{eq.recurrence-Vn}, we get that
    \begin{align*}
    V_n(2(1+\cos \al)) &= \frac{\sin((n+1)\al) + \sin(n \al)}{\sin(\al)} \;\; , \\
    V_n(2(1+\cosh \al)) &= \frac{\sinh((n+1)\al) + \sinh(n \al)}{\sinh(\al)} \;\; , \\
    V_n(2(1-\cosh \al)) &= (-1)^n \frac{\sinh((n+1)\al) - \sinh(n \al)}{\sinh(\al)} \;\; ,
    \end{align*}
    for all $n \in \N$, $\al \in \R$. It is then easy to check that $|V_n(t)| \leq V_n(\lambda^{-1})$ for all $n \in \N$ if and only if $t \in [4-\lambda^{-1},\lambda^{-1}]$. So the universal enveloping C$^*$-algebra of $L^1(\Irr(\cC),d)$ can be identified with $C([4-\lambda^{-1},\lambda^{-1}])$. When $\lambda^{-1} > 4$, we indeed find that the natural $*$-homomorphism onto $C_u(\cC)=C([0,\lambda^{-1}])$ is not faithful.

\item  When $\cC$ is the natural C$^*$-tensor category with $\Irr(\cC) = \Gamma$, for a countable group $\Gamma$, we have that $C_u(\cC)$ coincides with the universal enveloping C$^*$-algebra of $L^1(\Irr(\cC),d)$ and $C_u(\cC) \cong C^*(\Gamma)$.
\end{enumlist}

\end{document}